\numberwithin{equation}{section}
\DeclareMathAlphabet{\realcal}{U}{rsfs}{m}{n}
\def\Cl#1{\ensuremath{\realcal #1}}
\renewcommand{\to}{\longrightarrow}
\def\ci#1{\ensuremath{{\mathcal {#1}}}}
\def\z#1{\ensuremath{{{#1}}^{\ZZ}}}
\def\KK{\ensuremath{\mathbb K}}
\def\dia{\diamond}
\newcommand{\NN}{\mathbb N}
\def\NN{\ensuremath{\mathbb N}}
\def\ZZ{\ensuremath{\mathbb Z}}
\def\pv#1{\ensuremath{{\sf#1}}}
\def\D{\ensuremath{\ci D}}
\def\J{\ensuremath{\ci J}}
\def\R{\ensuremath{\ci R}}
\renewcommand{\L}{\ensuremath{\ci L}}
\renewcommand{\H}{\ensuremath{\ci H}}
\newtheorem{Thm}{Theorem}[section]
\newtheorem{Prop}[Thm]{Proposition}
\newtheorem{Lemma}[Thm]{Lemma}
\newtheorem{Cor}[Thm]{Corollary}
{\theoremstyle{remark}

}
{\theoremstyle{remark}
\newtheorem{Example}[Thm]{Example}}
{\theoremstyle{definition}
}
{\theoremstyle{definition}
\newtheorem{Def}[Thm]{Definition}}
  \theoremstyle{remark}
\newtheorem{Rmk}[Thm]{Remark}
\newcommand{\inv}{^{-1}}
\title{A categorical invariant of flow equivalence of shifts}
\thanks{The first author was supported by the Centro
  de Matem\'atica da Universidade de Coimbra (CMUC),
  funded by the European Regional Development Fund through
  the program COMPETE and by the Portuguese Government
  through the Funda\c {c}\~ao para a Ci\^encia e a
Tecnologia (FCT)
under the project
PEst-C/MAT/UI0324/2011. He was also supported by the FCT post-doctoral grant
  SFRH/BPD/46415/2008 and by the FCT
  project PTDC/MAT/65481/2006, within the framework of European programmes
 COMPETE and FEDER.  Some of this work was performed while the second author was at the School of Mathematics and Statistics, Carleton University, Ottawa, Canada under the auspices of an NSERC grant. This work was partially supported by a grant from the Simons Foundation (\#245268 to Benjamin Steinberg)  and the Binational Science
Foundation of Israel and the US (\#2012080 to Benjamin Steinberg)}
\author{Alfredo Costa}
\address{CMUC, Department of Mathematics, University of Coimbra,
  3001-454 Coimbra, Portugal.}
\email{amgc@mat.uc.pt}
\author{Benjamin Steinberg}
\address{Department of Mathematics\\
City College of New York\\
NAC 8/133\\
Convent Ave at 138th Street\\
New York, NY 10031}
\email{bsteinberg@ccny.cuny.edu}
\date{April 11, 2013; revised \today}
\begin{document}
\begin{abstract}
  We prove that the Karoubi envelope of a shift --- defined
   as the Karoubi envelope of the syntactic semigroup
   of the language of blocks of the shift ---
   is, up to natural equivalence of categories,
   an invariant of flow equivalence. More precisely, we show
   that the action of the Karoubi envelope on the Krieger cover of the
   shift is a flow invariant. An analogous result
   concerning the Fischer cover of a synchronizing shift is also obtained.
    From these main results, several flow equivalence invariants --- some new and some old ---
    are obtained.
    We also show that
    the Karoubi envelope is, in a natural sense,
    the best possible syntactic invariant
    of flow equivalence of sofic shifts.
   
   Another application concerns the classification
   of Markov-Dyck and Markov-Motzkin shifts:
   it is shown that, under mild conditions, two graphs define flow equivalent
   shifts if and only if they are isomorphic.

    Shifts with property ($\mathscr A$) and their associated
   semigroups,  introduced by Wolfgang
   Krieger, are interpreted in terms of the Karoubi envelope,
   yielding a proof
   of the  flow invariance of the associated semigroups
   in the cases usually considered (a result recently
   announced by Krieger),
   and also a proof that
   property ($\mathscr A$) is decidable for sofic shifts.
\end{abstract}

\keywords{Shift space, flow equivalence, semigroup, Karoubi envelope,
  sofic shift, Markov-Dyck shift, property
  ($\mathscr A$).}
  \makeatletter
  \@namedef{subjclassname@2010}{%
    \textup{2010} Mathematics Subject Classification}
  \makeatother
\subjclass[2010]{Principal 37B10; Secondary 18B99, 20M35, 20M18}

\maketitle

\tableofcontents

\section{Introduction}

Two discrete-time dynamical systems are
\emph{flow equivalent} if their
suspension flows (or mapping tori) are equivalent.
For symbolic dynamical systems,
Parry and Sullivan
characterized flow equivalence
as the equivalence relation between shifts generated by
conjugacy and a non-symmetric relation which at present
is called \emph{symbol expansion}~\cite{Parry&Sullivan:1975};
see also~\cite[Section 13.7]{MarcusandLind} and
\cite{Beal&Berstel&Eilers&Perrin:2010arxiv,Bates&Eilers&Pask:2011}.
Special attention has been given to the classification of shifts of finite type
up to flow equivalence; in that context,
complete and decidable algebraic invariants were obtained
in the irreducible case~\cite{Franks:1984}. Complete algebraic invariants
were also obtained in the reducible case~\cite{Huang:2001,Boyle:2002}.
However, as pointed out in~\cite{Bates&Eilers&Pask:2011},
only small progress have been made in
the strictly sofic case, where
one finds few useful flow equivalence
invariants, even for irreducible shifts.

The role of the syntactic semigroup of the language of
finite blocks of a shift $\Cl X$,
which we call the \emph{syntactic semigroup of $\Cl X$},
has been considered in the
literature~\cite{Beauquier:1985,Jonoska:1996a,Jonoska:1998,Beal&Fiorenzi&Perrin:2005b,Beal&Fiorenzi&Perrin:2005a,Costa:2006,Costa:2006b}
essentially in the context of (strictly) sofic shifts.
 In~\cite{Costa&Steinberg:2011}, one finds a
 characterization of the abstract semigroups which are
 the syntactic semigroup of an irreducible sofic shift.

For a semigroup $S$, let $\KK(S)$ be
the Karoubi envelope (also known as Cauchy completion or idempotent
splitting) of $S$. It is a certain small category that plays a crucial
role in finite semigroup
theory thanks in part to the Delay Theorem of Tilson (see~\cite{Tilson:1987},
where $\KK(S)$ is denoted by $S_E$.)

For a shift $\Cl X$, let $S(\Cl X)$
be its syntactic semigroup.
In this paper we prove that the Karoubi envelope $\KK(S (\Cl X))$ (more briefly denoted  $\KK(\Cl X)$) is, up to equivalence of categories, a flow
equivalence invariant of $\Cl X$. This
says, in a sense to be made more precise, that $\Cl X$ determines $S(\Cl
X)$ up to Morita equivalence. We also show that among sofic shifts
this is the best possible syntactic invariant of flow equivalence, as
every flow equivalence invariant of sofic shifts
which is also invariant under isomorphism of syntactic semigroups is shown
to factor through the equivalence relation identifying shifts with equivalent
Karoubi envelopes.

The category $\KK(\Cl X)$ is of little use when dealing with
shifts of finite type. Indeed,
a shift of finite
type is conjugate with an edge shift, and it is
easy to see that if
$\Cl X$ is an irreducible edge shift then $S(\Cl X)$
is isomorphic to a Brandt semigroup $B_n$ for some
$n$~\cite[Remark 2.23]{Costa:2007}.  Unfortunately, all finite Brandt semigroups have equivalent Karoubi envelopes.
However, for other classes, including strictly sofic shifts,
we do obtain interesting results.

We also investigate
the actions of the category $\KK(\Cl X)$ on
the Krieger cover of $\Cl X$ and, if $\Cl X$
is synchronizing, on its Fischer cover. We show that these actions are
invariant under flow equivalence.
As seen in Section~\ref{sec:prop-comun-graph},
this enables a new proof of the invariance under flow
equivalence of the proper communication graph of a sofic
shift (a result from~\cite{Bates&Eilers&Pask:2011}).

In Section~\ref{sec:label-preord-set}, we provide
examples of pairs of almost finite type
shifts $\Cl X$ and $\Cl Y$
such that $\KK(\Cl X)$ and $\KK(\Cl Y)$ are not equivalent,
whereas other
flow equivalence invariants
fail to separate them.
In the first of these examples we use a labeled poset, derived
from Green's relations on $S(\Cl X)$, which was shown
in~\cite{Costa:2006} to be
a conjugacy invariant of sofic shifts.  This is a more refined version of an invariant
previously considered in~\cite{Beal&Fiorenzi&Perrin:2005a}.

In Section~\ref{sec:classes-sofic-shifts} we see further examples
of how the Karoubi envelope can be used to show that
some classes of shifts
are closed under flow equivalence.
In particular we deduce again that the class of
 almost finite type shifts is stable under flow equivalence,
 a result from~\cite{Fujiwara&Osikawa:1987}.

The Karoubi envelope is applied in
Section~\ref{sec:markov-dyck-shifts} to classify up to flow
equivalence two classes of non-sofic shifts,
the Markov-Dyck and the Markov-Motzkin shifts of Krieger and
Matsumoto~\cite{Krieger&Matsumoto:2011},
thus reproving and generalizing the
classification of Dyck shifts, first obtained
in~\cite{Matsumoto:2011}.

The Karoubi envelope is also used in Section~\ref{sec:shifts-with-kriegers}
to give a new perspective
on the class of shifts with property
($\mathscr A$), introduced by Wolfgang Krieger
in~\cite{Krieger:2000}, and also studied in~\cite{Krieger:2012,Hamachi&Krieger:2013,Hamachi&Krieger:2013b}.
The semigroup which Krieger associated
to each shift with property ($\mathscr A$)
is shown to be encapsulated in the Karoubi envelope.
From that result one deduces the invariance,
recently announced by Krieger, of such semigroup under flow equivalence,
assuming the density of a special subset of the shift,
an assumption which is usually made when studying property ($\mathscr A$)
shifts.
The interpretation in terms of the Karoubi envelope is also used to
show that property  ($\mathscr A$) is decidable for sofic shifts and to construct the first examples of sofic shifts without propert ($\mathscr A$).

 The poset of subsynchronizing subshifts of a sofic shift
 was shown in~\cite{Jonoska:1998} to be a conjugacy invariant of sofic
 shifts, with the aim of studying the
 structure of reducible sofic shifts.
 In Section~\ref{sec:subsynchr-subsh-sofi} this invariance is
 recovered and extended to flow equivalence, using the Karoubi envelope.

The basic outline of the paper is as follows.
We begin with
two sections of  preliminaries.
The first contains some aspects of
 category theory and semigroup theory that we shall require in order to extract flow equivalence invariants from
the category $\KK(\Cl X)$.
The second preliminary section is about symbolic dynamics.
Section~\ref{sec:stat-main-results}
states our main results.
In Sections~\ref{sec:prop-comun-graph}
to~\ref{sec:subsynchr-subsh-sofi}, consequences of our main results are explored, as well as relations
 with previous work, from the viewpoint of the classification of
 shifts up to flow equivalence.
 Although the principal motivation in this paper
is the study of flow equivalence, we also deduce
in Section~\ref{sec:eventual-conjugacy} that
the Karoubi envelope is an invariant of
eventual conjugacy in the case of sofic shifts.
The proofs of the main results are left
to Sections~\ref{sec:proof-theor-reft:m}
and~\ref{sec:master}.

 There is also a short appendix completing
 an argument in the proof
 of Theorem~\ref{t:flow-inv-sub-sync}, using
 technical tools from Section~\ref{sec:proof-theor-reft:m}.

\section{The Karoubi envelope of a semigroup}

\subsection{Categorical preliminaries}

The reader is referred
to~\cite{MacLane:1998} for basic notions from
category theory.  A \emph{category} $C$ consists of a class $C_0$ of \emph{objects}, a class $C_1$ of \emph{arrows} or \emph{morphisms} and mappings $d,r\colon C_1\to C_0$ selecting the domain $d(f)$ and range $r(f)$ of each arrow $f$.  In addition, there is an associative product on pairs of composable arrows $(f,g)\mapsto fg$, where composable means that $d(f)=r(g)$, and for each object $c\in C_0$, there is an identity arrow $1_c$ so that $1_cf=f$ and $g1_c=g$ when these compositions make sense.
A category is said to be \emph{small} if its objects and
arrows form a set. The set of all arrows $f\colon c\to d$ is denoted $C(c,d)$ and is called a~\emph{hom-set}.

A \emph{functor} $F\colon C\to D$ consists of a pair of mappings
$F\colon C_0\to D_0$ and $F\colon C_1\to D_1$ preserving all the above
structure, e.g., $F(1_c)=1_{F(c)}$ and $F(fg)=F(f)F(g)$, etc.  A
\emph{natural transformation} $\eta\colon F\Rightarrow G$ of functors
$F,G\colon C\to D$ is a family $\{\eta_c\}_{c\in C_0}$ of arrows of
$D$ such that $\eta_c\colon F(c)\to G(c)$ and, for each arrow $f\colon
c\to c'$ of $C$, the diagram
\[\xymatrix{F(c)\ar[r]^{F(f)}\ar[d]_{\eta_c} &
  F(c')\ar[d]^{\eta_{c'}}\\ G(c)\ar[r]_{G(f)} & G(c')}\]
commutes. The
class of natural transformations $F\Rightarrow G$ forms a category in
the obvious way and two functors are \emph{isomorphic}, written
$F\cong G$ if they are isomorphic in this category.  Contravariant functors are defined similarly except that $F(fg)=F(g)F(f)$.

 Two categories $C$ and $D$ are
\emph{equivalent} if there are functors $F\colon C\to D$ and $G\colon
D\to C$ such that $FG\cong 1_D$ and $GF\cong 1_C$.
Such a functor $G$ is said to be a \emph{quasi-inverse} of $F$.
A functor $F\colon
C\to D$ between small categories is an equivalence (i.e., it
has a quasi-inverse) if and only if it is fully faithful and essentially
surjective.  \emph{Fully faithful} means bijective on hom-sets,
whereas \emph{essentially surjective} means that every object of $D$
is isomorphic to an object of $F(C)$.
The former is in accordance with the usual terminology
for functors which are injective on hom-sets (the \emph{faithful} functors)
and for those surjective on hom-sets (the \emph{full} functors.)

\subsection{The Karoubi envelope and Morita equivalence of semigroups}
An important notion in this paper is the \emph{Karoubi envelope}
$\KK(S)$ (also known as \emph{Cauchy completion} or \emph{idempotent splitting}) of a
semigroup $S$.  It is a small category whose object set is the set
$E(S)$ of idempotents of $S$.
Morphisms in $\KK(S)$ from $f$ to $e$ are represented by arrows
$e\longleftarrow f$, with source on the
right.  All other categories will be treated as usual with arrows
drawn from left to right. This is to keep our notation for arrows of $\KK(S)$ consistent with~\cite{Tilson:1987}.
A  morphism $e\longleftarrow f$ is a triple
$(e,s,f)$ where $s\in eSf$. Note that
$s\in eSf$ if and only if $s=esf$, because $e$ and $f$ are idempotents.
Composition of morphisms is given
by
$(e,s,f)(f,t,g)=(e,st,g)$.
The identity at $e$ is $(e,e,e)$.  Note that the Karoubi envelope is
functorial.

It is easy to show that idempotents $e,f$ are isomorphic in $\KK(S)$ if and
only if there exist $x,x'\in S$ such that $xx'x=x$, $x'xx'=x'$,
$x'x=e$ and $xx'=f$.  In semigroup terms (which will be explained in more detail
in Subsection~\ref{sec:greens-relat-relat}), this says that $e,f$ are $\D$-equivalent~\cite{Rhodes&Steinberg:2009}, whereas in analytic terms this corresponds to von Neumann-Murray equivalence.

If $e$ is an idempotent of the semigroup $S$, then $eSe$ is
a monoid with identity~$e$, which is called the \emph{local monoid of
  $S$ at $e$}.
The \emph{local monoid of a category $C$ at an object $c$}
is the endomorphism monoid of $c$ in $C$.
The local monoids of $S$ correspond to the local monoids
of $\KK(S)$, more precisely, $eSe$ and $\KK(S)(e,e)$ are isomorphic for
every $e\in E(S)$.

An element $s$ of a semigroup $S$ has \emph{local units $e$ and $f$}, where $e$
and $f$ are idempotents of $S$, if
$s=esf$.
The set ${LU}(S)=E(S)SE(S)$ of elements of $S$ with
local units is a subsemigroup of $S$.
If ${LU}(S)=S$, then
we say that $S$ has \emph{local units}.
In general, ${LU}(S)$ is the largest subsemigroup of $S$ which has
local units (it may be empty.)  Clearly $\KK(S)=\KK({LU}(S))$ and so the
Karoubi envelope does not distinguish between these two semigroups.
Talwar defined in~\cite{Talwar3} a notion of Morita equivalence of
semigroups with local units in terms of equivalence of certain
categories of actions.  It was shown
in~\cite{Lawson:2011,FunkLawsonSteinberg} that semigroups
$S$ and $T$ with local units are Morita equivalent if and only if
$\KK(S)$ and $\KK(T)$ are equivalent categories.  Thus we shall say that
semigroups $S$ and $T$ are \emph{Morita equivalent up to local units}
if $\KK(S)$ and $\KK(T)$ are equivalent categories, or in other words if
$LU(S)$ is Morita equivalent to $LU(T)$. In this paper,
we will show that flow equivalent shifts have syntactic semigroups
that are Morita equivalent up to
local units.

If $C$ is a category, then we shall say an assignment $S\mapsto F(S)$ of an object $F(S)$ of $C$ to each semigroup $S$ is a \emph{Karoubi invariant} if $F(S)\cong F(T)$ whenever $\KK(S)$ and $\KK(T)$ are equivalent.

\subsection{Categories with zero}

A \emph{semigroup with zero} is a semigroup $S$ with an element $0$
such that $0s=0=s0$ for all $s\in S$.  The element $0$ is unique.

A \emph{pointed set} $(X,x)$ is a set $X$ together with a distinguished element $x\in X$ called the \emph{base point}.  A morphism of pointed sets $f\colon (X,x)\to (Y,y)$ is a function $f\colon X\to Y$ with $f(x)=y$.  We will customarily denote the base point by $0$ if confusion cannot arise. The category of pointed sets will be denoted $\mathbf{Set}_0$.

A \emph{category with zero} is a category $C$ enriched over the
monoidal category of pointed sets.  What this means concretely is
that, for all objects $c,d$ of $C$, there is a zero morphism
$0_{c,d}\in C(c,d)$
such that
for all $f\colon c'\to c$ and $g\colon d\to d'$ one has
$0_{c,d}f = 0_{c',d}$ and $g0_{c,d} = 0_{c,d'}$.
It is easy to check that the zero morphisms
are uniquely determined and so,
from now on, we will drop the subscripts on zero morphisms
when convenient.

The category of pointed sets is an example of a category with $0$, where the zero
map sends each element to the base point.
The most important example for us is the case where $S$ is a semigroup with zero and $C$ is the Karoubi envelope $\KK(S)$.  Then $0_{e,f}=(f,0,e)$ is the zero morphism of $\KK(S)(e,f)$. An object $c$ of a category with zero is said to be \emph{trivial} if
$1_c=0_c$.  Notice then that the only morphisms into and out of a
trivial object are zero morphisms.

Note that
if $F$ is a full functor between categories with zero, then $F(0)=0$.
This enables us to register the following remark, for later reference.

\begin{Rmk}\label{r:fully-faithful-are-strict}
  If $F$ is a fully faithful functor between categories with zero
  then $F(x)=0$ if and only if $x=0$.
\end{Rmk}

In general, a morphism of categories
with zero should be defined as a
functor preserving zero morphisms.

\subsection{Actions of $\KK(S)$}\label{sec:actions}

Recall that a (right) 
\emph{action} of a semigroup $S$ on a set $Q$
is a function $\mu\colon Q\times S\to S$,
with notation $\mu(q,s)=q\cdot s=qs$,
such that $q\cdot (st)=(q\cdot s)\cdot t$.
The action defines a function $\varphi$ from $S$ to the monoid
$Q^Q$ of transformations of~$Q$, given
by $\varphi(s)(q)=q\cdot s$. An action of a semigroup $S$ with zero on a \emph{pointed set} $(Q,0)$ is an action such that $0s=0$ and $q0=0$ for all $s\in S$ and $q\in Q$. Actions on pointed sets are essentially the same thing as actions by partial functions. Morphisms between actions on pointed sets are defined in the obvious way.

A (right)
\emph{action} of a small category $C$ with zero on a pointed set $Q$
is a contravariant
functor $\mathbb A\colon C\to \mathbf{Set}_0$, preserving $0$,
such that $\mathbb A(c)$ is a pointed subset of $Q$, for every object
$c$ of $C$. If $s\colon c\to d$ is a morphism of $C$,
we use the notation
$q\cdot s$ for $\mathbb A(s)(q)$, where $q\in \mathbb A(d)$,
and the notation $\mathbb A(d)\cdot s$
for the image of the function $\mathbb A(s)\colon \mathbb A(d)\to
\mathbb A(c)$. The notations $q\cdot s$ and $\mathbb A(d)\cdot s$
may be simplified to $qs$ and $\mathbb A(d)s$.

In concrete terms this means that $\mathbb A(c)$ is a subset of $Q$ containing $0$ for each object of $c\in C$; if $s\colon c\to d$ is an arrow then we have a map $\mathbb A(d)\to \mathbb A(c)$ given by $q\mapsto qs$ such that $0s=0$ and $q0=0$ for all $s,q$ and where the obvious associativity and identity actions hold.

We can define a category $\mathbf{Act}_0$ whose objects consist of
pairs $(\mathbb A,C)$ where $C$ is a category with zero and $\mathbb
A$ is an action of $C$ on a pointed set.  A morphism $F\colon (\mathbb
A,C)\to (\mathbb B,D)$ is a pair $(\eta,F)$ where $F\colon C\to D$ is
a zero-preserving functor and $\eta\colon \mathbb A\to \mathbb B\circ
F$ is a natural transformation.
Two actions $(\mathbb A,C)$
and $(\mathbb A',D)$ of categories with zero on pointed sets are
\emph{equivalent}, written $\mathbb A\sim\mathbb A'$, if they are isomorphic in the category $\mathbf{Act}_0$.

\begin{Def}\label{def:action-of-K(S)}
  Consider an action of a semigroup
  $S$ with zero on a pointed set $Q$.
  Let $\mathbb A_Q$ be the action of $\KK(S)$ on $Q$ such that
  $\mathbb A_Q(e)=Qe$ for
  every object $e$ of $\KK(S)$,
  and such that $q\cdot (e,s,f)=q\cdot s$ for $q\in Qe$.
\end{Def}

It is easy to check the construction of
Definition~\ref{def:action-of-K(S)} is functorial in
an appropriate category, yielding
that isomorphic actions of semigroups
produce isomorphic actions of the respective Karoubi envelopes

Let $C$ be a category.  An assignment of an object $F(Q,S)$ of $C$ to each action of a semigroup $S$ with zero on a pointed set $Q$ is said to be a \emph{Karoubi invariant} of the action if $F(Q,S)$ is isomorphic to $F(Q',T)$ whenever the actions $(\KK(S),\mathbb A_Q)$ and $(\KK (T),\mathbb A_{Q'})$ are equivalent.

\subsection{Green's relations}\label{sec:greens-relat-relat}

Throughout this paper we use basic notions from semigroup theory
that can be found
in standard
texts~\cite{Clifford&Preston:1961,Lallement:1979,Rhodes&Steinberg:2009}.
Green's (equivalence) relations, which we next recall,  are among them.
The relation $\J$ is defined
on a semigroup $S$ by putting $s\mathrel{\J} t$ if $s$ and $t$ generate the same
two-sided principal ideal, that is, if $S^1sS^1=S^1tS^1$, where
$S^1$ denotes the monoid obtained from $S$ adjoining an identity.
Similarly, one
defines the $\R$- and $\L$-relations,
by replacing two-sided ideals with right (respectively,
left) ideals.
The intersection of the equivalence relations $\R$ and $\L$ is
denoted by $\H$.
The maximal subgroups of $S$ are the $\H$-classes containing idempotents.
Finally, Green's relation $\D$ is defined by $s\mathrel{\D} t$ if and only if there exists $u\in S$ with $s\mathrel{\R} u \mathrel{\L} t$. This is known to be equivalent to the existence of $v\in S$ with $s\mathrel{\L} v\mathrel{\R} t$. This is the smallest equivalence relation containing $\L$ and $\R$. Details can be found in~\cite{Clifford&Preston:1961}.
For finite semigroups, one has $\D=\J$~\cite[Appendix A]{Rhodes&Steinberg:2009}.

An element $s$ of a semigroup $S$ is \emph{regular} if
$s=sxs$ for some $x\in S$.
A $\D$-class contains regular elements if and only if
it contains an idempotent, if and only if
all its elements are regular.
A $\D$-class with regular elements is called \emph{regular}.

Let $H$ be an $\H$-class of $S$.
The set $T=\{x\in S^1\colon xH\subseteq H\}$ is a submonoid of $S^1$,
called the \emph{left stabilizer} of $H$.
The quotient of $T$ by its left action on $H$
is a group known as the \emph{Sch\"utzenberger group of $H$}.
Exchanging right and left, one obtains an isomorphic group.
If $H$ is a group (which occurs if and only if it contains an idempotent), then it is isomorphic to its Sch\"utzenberger group.
Two $\H$-classes contained in the same $\D$-class have isomorphic
Sch\"utzenberger groups, hence the
expression~\emph{Sch\"utzenberger group of a $\D$-class} is
meaningful.  See~\cite{Clifford&Preston:1961} for details.

The following lemma concerning the Karoubi envelope $\KK(S)$ of a semigroup is well known and easy to prove~\cite{Lawson:2011,Rhodes&Steinberg:2009}.

\begin{Lemma}\label{l:isomorphism-criterion}
  Two objects $e$ and $f$ of $\KK(S)$ are isomorphic
  if and only if
  $e$ and $f$ are $\D$-equivalent in $S$.  Moreover, the automorphism group of $e$ is isomorphic to the Sch\"utzenberger group of the $\D$-class of $e$.
\end{Lemma}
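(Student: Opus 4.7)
My plan is to unpack the definition of an isomorphism in $\KK(S)$ directly and match the resulting identities with the standard characterizations of Green's relations.

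For the first assertion, an isomorphism $e\leftarrow f$ in $\KK(S)$ consists of a pair of morphisms $(e,s,f)$ and $(f,t,e)$, with $s\in eSf$ and $t\in fSe$, satisfying $(e,s,f)(f,t,e)=(e,e,e)$ and $(f,t,e)(e,s,f)=(f,f,f)$; that is, $st=e$ and $ts=f$. From these equations we get $sts=se=s$ and $tst=tf=t$, so setting $x=s$ and $x'=t$ we have $xx'x=x$, $x'xx'=x'$, $xx'=e$ and $x'x=f$. Conversely, given $x,x'\in S$ with those four identities, the triples $(e,x,f)$ and $(f,x',e)$ lie in $eSf$ and $fSe$ respectively (since $x=exf$ and $x'=fx'e$ follow from the identities) and are mutually inverse. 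The existence of such $x,x'$ is precisely the standard description of $e\mathrel{\D} f$ (via the factorization $e\mathrel{\R} x\mathrel{\L} f$), so this half is done.

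For the second assertion, by construction the local monoid $\KK(S)(e,e)$ is isomorphic to $eSe$ (as noted in the excerpt), and under this isomorphism $\mathrm{Aut}(e)$ corresponds to the group of units of $eSe$. It is a standard fact in semigroup theory that the group of units of the local monoid $eSe$ at an idempotent $e$ coincides with the $\H$-class $H_e$ of $e$ in $S$, which is the maximal subgroup of $S$ at $e$. Finally, as recalled in the subsection on Green's relations, when $H$ is a group $\H$-class it is isomorphic to the Schützenberger group of its $\D$-class; hence $\mathrm{Aut}(e)\cong H_e$ is isomorphic to the Schützenberger group of the $\D$-class of $e$.

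I expect no real obstacle: the entire argument is just a matter of translating isomorphism in $\KK(S)$ into the existence of inverse pairs of triples, and then recognizing the identities $xx'x=x$, $x'xx'=x'$, $xx'=e$, $x'x=f$ as the definition of $\D$-equivalence of idempotents. The only step that requires a moment of care is checking that the group of units of $eSe$ equals the maximal subgroup $H_e$ and that the resulting identification is the right one, but this is a well-known basic fact about Green's relations and can be cited from the references already mentioned in the excerpt.
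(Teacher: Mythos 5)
Your proof is correct and follows exactly the route the paper indicates: the paper gives no proof of this lemma (citing it as well known), but its earlier remark on the Karoubi envelope records precisely your characterization of isomorphisms in $\KK(S)$ via mutually inverse elements $x,x'$ with $xx'=e$, $x'x=f$, and your second paragraph is the standard identification of the unit group of the local monoid $eSe$ with the maximal subgroup $H_e$, whence with the Sch\"utzenberger group of the $\D$-class. One trivial slip: the intermediate expressions $sts=se$ and $tst=tf$ should read $sts=es$ (or $sf$) and $tst=te$ (or $ft$), since $s=esf$ and $t=fte$; the conclusions $sts=s$ and $tst=t$ are unaffected.
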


The following lemma will be useful several times.

\begin{Lemma}\label{l:a-lemma-on-conjugate-words}
Suppose that $u,v$ are two elements of a semigroup $S$ such that $u=zt$ and $v=tz$ with $z,t\in S$.  If $u$ is an idempotent, then so is $v^2$.  Moreover, $u\mathrel{\D} v$.
\end{Lemma}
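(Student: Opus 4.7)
The plan is to verify both claims via direct calculations exploiting the identity $u^2=u$ (equivalently, $ztzt=zt$), and to extract the $\D$-equivalence from Lemma~\ref{l:isomorphism-criterion} via an explicit isomorphism in $\KK(S)$.

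For the first claim, I would show directly that $v^4=v^2$. The key manipulation is the regrouping
\[
v^4=(tz)(tz)(tz)(tz)=t(zt)(zt)(zt)z=tu^3z,
\]
which by $u^k=u$ for $k\ge 1$ reduces to $tuz=t(zt)z=(tz)(tz)=v^2$. Hence $(v^2)^2=v^2$, so $v^2$ is an idempotent, and moreover $v^n=v^2$ for every $n\ge 2$ (since $v^3=v\cdot v^2$ equals $v^2$ by the same collapse).

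For the $\D$-equivalence claim, my approach is to construct mutually inverse morphisms in $\KK(S)$ between $u$ and the idempotent $v^2$, which by Lemma~\ref{l:isomorphism-criterion} yields $u\mathrel{\D}v^2$, and then to transfer the conclusion to $v$. The natural candidates are
\[
\alpha=(u,\,zv^2,\,v^2)\colon v^2\to u\quad\text{and}\quad\beta=(v^2,\,v^2 t,\,u)\colon u\to v^2.
\]
I would first verify that these are well-defined morphisms of $\KK(S)$, i.e.\ that $zv^2=u(zv^2)v^2$ and $v^2t=v^2(v^2t)u$; this is immediate from the identities $uz=zv$ and $tu=vt$ that follow directly from $u=zt$, together with $v^n=v^2$ for $n\ge 2$. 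The two compositions then collapse to identity triples: on one hand,
\[
\alpha\beta=(u,\,zv^4t,\,u)=(u,\,zv^2t,\,u)=(u,\,(zt)(ztzt),\,u)=(u,\,u^3,\,u)=1_u,
\]
and on the other,
\[
\beta\alpha=(v^2,\,v^2(tz)v^2,\,v^2)=(v^2,\,v^2\cdot v\cdot v^2,\,v^2)=(v^2,\,v^5,\,v^2)=1_{v^2}.
\]
Thus $u$ and $v^2$ are isomorphic in $\KK(S)$, and Lemma~\ref{l:isomorphism-criterion} delivers $u\mathrel{\D}v^2$ in $S$.

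The step I expect to be the main obstacle is the passage from $u\mathrel{\D}v^2$ to the stated $u\mathrel{\D}v$. In a general semigroup one cannot expect an element to be $\D$-equivalent to its idempotent power, so the argument here must exploit the specific factorisation $v=tz$ together with the idempotence of $u=zt$; concretely, one should use the relations $u\mathrel{\R}zv=uz$ and $u\mathrel{\L}vt=tu$ already produced above, and combine them with the idempotence of $v^2$ to place $v$ in the common $\D$-class of $u$ and $v^2$.
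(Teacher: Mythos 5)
Everything you actually prove is correct and is, in substance, the paper's own proof. The computation $v^4=t(zt)^3z=tuz=v^2$ is identical to the paper's (whose displayed ``$v^4=tu^3v$'' is itself a misprint for $tu^3z$). For the second part, note that your morphism component satisfies $zv^2=ztztz=u^2z=uz$, which is exactly the linking element $s=uz$ the paper uses to write $u\mathrel{\R}s\mathrel{\L}v^2$; routing this through $\KK(S)$ and Lemma~\ref{l:isomorphism-criterion} is a harmless but unnecessary detour, since the four identities $st=u$, $s=uz$, $ts=v^2$, $zv^2=s$ already exhibit the two Green relations directly in $S$.

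The ``main obstacle'' you flag at the end is not one you should try to overcome. The paper's own proof also stops at $u\mathrel{\R}uz\mathrel{\L}v^2$, i.e.\ at $u\mathrel{\D}v^2$, and never derives $u\mathrel{\D}v$; the literal claim is in fact false in general. In the semigroup $S=\langle z,t\mid (zt)^2=zt\rangle$ the congruence class of the word $tz$ is the singleton $\{tz\}$ (no rewriting applies to it), while every element of $S^1(zt)S^1$ other than $zt$ itself is represented only by words of length at least $4$; hence $v=tz\notin S^1uS^1$, so $u$ and $v$ are not even $\J$-equivalent, a fortiori not $\D$-equivalent. The ``$v$'' in the statement should read ``$v^2$'': that is what the proof yields, and it is the only form ever invoked elsewhere (in Lemma~\ref{l:biglemma} and in the proof of Proposition~\ref{p:symbol-expansion-embed} the conclusion drawn is always that the \emph{square} is an idempotent $\D$-equivalent to $u$). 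Your closing suggestion of combining $u\mathrel{\R}uz$ and $tu\mathrel{\L}u$ cannot rescue the literal statement, because nothing in the hypotheses places $v$ itself in the two-sided ideal generated by $u$.
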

\begin{proof}
Trivially, $v^4=tu^3v=tuz=v^2$ and so $v^2$ is idempotent.  Next, let $s=uz$ and observe that $st=uzt=u^2=u$.  Also, $ts=tztz=v^2$ and $zv^2=ztztz=u^2z=uz=s$.  Thus we have $u\mathrel{\R} s\mathrel{\L} v^2$.
\end{proof}

\section{Symbolic dynamics}\label{sec:basic-defin-symb}

\subsection{Shifts}

A good reference for the notions that we shall use here from symbolic
dynamics is~\cite{MarcusandLind}.
To make the paper reasonably
self-contained and to introduce notation, we recall some basic
definitions.

Let $A$ be a finite alphabet, and consider the set
$\z A$ of all bi-infinite sequences over $A$.
The \emph{shift} on $\z A$ is the homeomorphism
$\sigma_A\colon \z A\to \z A$ (or just $\sigma$) defined by
 $\sigma_A((x_i)_{i\in \ZZ})=(x_{i+1})_{i\in \ZZ}$.
We endow $\z A$ with the product topology with respect to the
discrete topology on $A$. In particular, $\z A$ is a compact totally
disconnected space (we include the Hausdorff property in the definition of
compact.)

We assume henceforth that all alphabets are finite.
A \emph{symbolic dynamical system}
is a non-empty closed subset $\Cl X$ of $\z A$, for some
alphabet $A$, such that $\sigma(\Cl X)=\Cl X$.
Symbolic dynamical systems are also known as
\emph{shift spaces}, \emph{subshifts}, or, more plainly, just
\emph{shifts}. We shall generally prefer the latter option except
when emphasizing that one shift is a subshift of another.

We can consider the category
of shifts, whose objects are the shifts
and where a morphism
between two shifts $\Cl X\subseteq \z A$ and $\Cl Y\subseteq \z B$
is a continuous function $\Phi\colon\Cl X\to\Cl Y$ such that
$\Phi\circ\sigma_A=\sigma_B\circ \Phi$.
In this category, an isomorphism is called a \emph{conjugacy}.
Isomorphic shifts are said to be \emph{conjugate}.

  A \emph{block} of $(x_i)_{i\in\ZZ}\in \z A$ is a word
  of the form $x_ix_{i+1}\cdots x_{i+n}$
  (briefly denoted by $x_{[i,i+n]}$), where $i\in\ZZ$ and
  $n\geq 0$. If $\Cl X$ is a subset of $A^{\ZZ}$ then
  $L(\Cl X)$ denotes the set of blocks of elements of $\Cl X$.
  If $\Cl X$ is a subshift of $A^{\ZZ}$
  and $x\in A^{\ZZ}$, then $x\in \Cl X$ if and only if
  $L(\{x\})\subseteq L(\Cl X)$~\cite[Corollary
  1.3.5]{MarcusandLind}.

  If $x$ is a periodic point with period $n\geq 1$
  (that is, if $\sigma^n(x)=x$), then
  we may represent $x$ by $u^\infty$, where
  $u=x_{[0,n-1]}$.

\subsection{The syntactic semigroup of a shift}

We shall make use of several well-known fundamental ideas and
facts about the interplay between finite automata, formal languages
and semigroups which, with some variations, can be found in several books, such
as~\cite{Eilenberg:1976,PinBook}.

  The free semigroup and the free monoid over an
  alphabet $A$ are denoted by $A^+$ and $A^\ast$, respectively.
Recall that a subset of $A^+$ is in this context called a
   \emph{language}.
If $A$ is an alphabet, let $A^+_0 = A^+\cup \{0\}$ be the free semigroup on $A$ with zero.  The multiplication of $A^+$ is extended to make $0$ a zero element.
Let $L\subseteq A^+$ be a language and let $u\in
A^+_0$.
The
\emph{context of $u$ in $L$} is the set $[u]_L=\{(x,y)\in A^\ast\colon xuy\in L\}$.
Of course, $[0]_L=\emptyset$.
The relation $\equiv_L$ on $A^+_0$
such that $u\equiv_L v$ if and only if $[u]_L=[v]_L$
is a semigroup congruence,
and the quotient semigroup $A^+_0/{\equiv_L}$
is the \emph{syntactic semigroup (with zero) of $L$}, denoted $S(L)$.
The quotient homomorphism $A^+_0\to S(L)$ is the \emph{syntactic
  homomorphism}, denoted $\delta_L$.
Note that the class of $0$ is the zero of $S(L)$.
Since $u\equiv_L v$ if and only if
$[u]_L$ and $[v]_L$ are equal,
we may identify the $\equiv_L$-equivalence class of $u$
with~$[u]_L$.

An onto homomorphism $\varphi\colon A^+_0\to S$
\emph{recognizes} the language $L$ of $A^+$ if $L=\varphi^{-1}(P)$ for
some subset $P$ of $S$, in which case we also say that $S$
recognizes~$L$,
a property equivalent
to  $S$ being isomorphic to a quotient
of $A^+_0$ by a congruence that saturates $L$
(a semigroup is \emph{saturated} by a congruence when it is a union of some of its congruence classes.)
It turns out that the syntactic congruence $\equiv_L$
is the greatest congruence (for the inclusion) that saturates~$L$.
This means that if a semigroup $S$ recognizes $L$ as a subset of $A^+_0$,
then $S(L)$ is a homomorphic image of $S$.

For a subshift \Cl X of \z A, we use
the notations $\delta_{\Cl X}$,
$S(\Cl X)$, ${LU}(\Cl X)$ instead of $\delta_{L(\Cl X)}$, $S(L(\Cl X))$
and ${LU}(S(\Cl X))$ respectively.
Also, instead of $[u]_{L(\Cl X)}$, we may use
$[u]_{\Cl X}$ or simply $[u]$ if no confusion arises.
We say that $S(\Cl X)$ is
the \emph{syntactic semigroup of $\Cl X$}.
If $\Cl X\subseteq \z A$ and $A\subseteq B$, then the
syntactic semigroups of $\Cl X$ viewed as a subset of $\z A$ and of
$\z B$ coincide.

\begin{Rmk}\label{rmk:idempotents-implies-periodic-points}
  One has $u\in A^+_0\setminus L(\Cl X)$
if and only if $[u]=\emptyset$.
In particular, if $[u]$ is a non-zero idempotent, then
the periodic point $u^\infty$ belongs to~$\Cl X$.
\end{Rmk}

    Let $S$ be a semigroup with zero
    such that $SsS\neq \{0\}$ for every $s\in S\setminus \{0\}$.  We say such a semigroup is \emph{prolongable}.
  Suppose $S$ is generated by a finite set~$A$,
  and let $\varphi\colon A^+\to S$
  be an onto homomorphism $S$.
  Then $L=\varphi^{-1}(S\setminus \{0\})$
  is clearly a factorial prolongable language,
  and so there is a unique subshift $\Cl X_\varphi$ of $\z A$
  such that $L=L(\Cl X_\varphi)$
  (cf.~\cite[Proposition 1.3.4]{MarcusandLind}.)
  We say that \emph{$\Cl X_\varphi$ is the shift induced by $\varphi$},
  or, more vaguely, \emph{induced by $S$}.

  Let us call a congruence \emph{trivial} if it is the
equality relation.  A semigroup $S$ with zero is \emph{$0$-disjunctive} if
  the greatest congruence saturating  $\{0\}$, (equivalently,
  saturating $S\setminus \{0\}$) is the trivial one.
  The next simple lemma concerns
  a well known property about syntactic semigroups
  (cf.~\cite[Proposition 5.3]{Lallement:1979}.)
 
  \begin{Lemma}\label{l:0-disjunctive-are-syntactic-semigroups}
    The syntactic semigroup $S(\Cl X)$ of a shift $\Cl X$ is $0$-disjunctive.
  Conversely, if $S$ is a prolongable semigroup with zero which is
    $0$-disjunctive, then~$S$ is the syntactic semigroup of every
    shift induced by $S$.
  \end{Lemma}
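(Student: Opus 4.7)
The plan is to deduce both halves from the defining property of $\equiv_L$ as the greatest congruence on $A^+_0$ saturating $L$. The unifying observation is that for any semigroup $T$ recognizing $L$ via a surjection $\varphi\colon A^+_0\to T$, a congruence on $T$ saturates $\{0\}$ if and only if its pullback along $\varphi$ is a congruence on $A^+_0$ that saturates $L$. So $0$-disjunctivity of $T$ corresponds exactly to $\ker\varphi$ being maximal among congruences saturating $L$.

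For the first direction, let $\theta$ be a congruence on $S(\Cl X)$ that saturates $\{0\}$, and let $\theta'$ be its pullback to $A^+_0$ along $\delta_{\Cl X}$. Since the zero class of $S(\Cl X)$ is $\delta_{\Cl X}(A^+_0\setminus L(\Cl X))$ (by Remark~\ref{rmk:idempotents-implies-periodic-points} and the observation that $[u]_{\Cl X}=\emptyset$ iff $u\notin L(\Cl X)$), the congruence $\theta'$ saturates $A^+_0\setminus L(\Cl X)$, hence also $L(\Cl X)$. Maximality of $\equiv_{L(\Cl X)}$ gives $\theta'\subseteq {\equiv_{L(\Cl X)}}$; the reverse inclusion holds automatically, as $\theta'$ is the preimage of a congruence on the quotient $A^+_0/{\equiv_{L(\Cl X)}}$. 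Thus $\theta' = {\equiv_{L(\Cl X)}}$, so $\theta$ is the equality relation on $S(\Cl X)$.

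For the converse, let $\varphi\colon A^+\to S$ be the surjection used to induce $\Cl X_\varphi$, extended to a zero-preserving surjection $A^+_0\to S$. By construction $L:=L(\Cl X_\varphi)=\varphi^{-1}(S\setminus\{0\})$, so the kernel of $\varphi$ is a congruence on $A^+_0$ that saturates~$L$. Maximality of $\equiv_L$ yields $\ker\varphi\subseteq{\equiv_L}$, so $\varphi$ factors through $\delta_{\Cl X_\varphi}$ via a surjective homomorphism $\psi\colon S\to S(\Cl X_\varphi)$. The key point is that $\psi^{-1}(0)=\{0\}$: if $\psi(\varphi(u))=0$, then $[u]_L$ is the zero of $S(\Cl X_\varphi)$, so $u\notin L$, which forces $\varphi(u)=0$. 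Hence $\ker\psi$ is a congruence on $S$ saturating $\{0\}$, and $0$-disjunctivity of $S$ forces it to be the equality relation. Therefore $\psi$ is an isomorphism.

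I do not expect any real obstacle: the argument is a careful chase through the definitions of syntactic congruence, $0$-disjunctivity, and the induced shift. The only point requiring attention is bookkeeping around the zero element when extending from $A^+$ to $A^+_0$ and identifying the zero class of $S(L)$ with $A^+_0\setminus L$; prolongability of $S$ enters only to guarantee that $\varphi^{-1}(S\setminus\{0\})$ is a factorial, prolongable language so that $\Cl X_\varphi$ is well defined.
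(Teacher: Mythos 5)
Your argument is correct, and it is exactly the standard congruence-chasing proof: the paper does not prove this lemma itself but merely cites Lallement (Proposition 5.3), and your two directions (pulling a congruence saturating $\{0\}$ back to one saturating $L$ and invoking maximality of $\equiv_L$; factoring $\varphi$ through $\delta_{L}$ and using $0$-disjunctivity of $S$ to kill $\ker\psi$) are precisely that argument. The only point worth double-checking, which you handle correctly, is the identification of the zero class of $S(L)$ with $A^+_0\setminus L$, which uses that $L$ is factorial.
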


\subsection{Labeled graphs and sofic shifts}

\subsubsection{Labeled graphs}

In this paper, \emph{graph} will always mean a multi-edge directed
graph. By a~\emph{labeled graph (over an alphabet $A$)} we mean
a pair $(G,\lambda)$
consisting of a graph $G$ and a map $\lambda\colon E(G)\to A$,
where $E(G)$ is the set of edges of $G$,
such that if $e$ and $f$ are distinct edges
with the same origin and the same terminus, then
$\lambda(e)\neq\lambda(f)$.
The letter $\lambda(e)$ is the \emph{label} of $e$.
The labeled graph $(G,\lambda)$ is \emph{right-resolving} if
distinct edges with the same origin have distinct labels. It is
\emph{complete} if, for every vertex $q$ and every letter
$a\in A$, there is an edge starting in $q$ with label $a$.
Note that a complete, right-resolving labeled graph over  $A$ may be viewed as
an action of $A^+$ over the set of vertices: the existence of
an edge from $q$ to $r$ labeled $a$ corresponds to the equality $q\cdot a=r$.

\subsubsection{Minimal automaton}

An \emph{automaton} is a labeled graph (whose vertices
in this context are often called \emph{states})
together with a set $I$ of \emph{initial states} and a set $F$
of \emph{final states}.
The automaton is \emph{deterministic} if it is
right-resolving and has a
single initial state.
The set of words labeling paths from $I$
to $F$ is the language \emph{recognized} by the automaton.
We view a labeled graph as an automaton in which all states are
initial and final.

Consider a language $L\subseteq A^+$.
Let $u$ be an element of $A^\ast_0=A^\ast\cup\{0\}$, the free monoid
with zero. The \emph{right context of $u$ in $L$} is the
set
  $R_{L}(u)=\{w\in A^\ast\mid uw\in L\}$.
Viewing $L$ as a language over the alphabet $A\cup \{0\}$,
we can consider its \emph{minimal complete deterministic
  automaton}, which is the terminal object in the category of
complete deterministic automatons, over $A\cup \{0\}$,
recognizing~$L$.
This automaton, which we denote by $\mathfrak M(L)$,
can be realized as follows: the states are the
right contexts of $L$,
the initial state is $R_L(1)$,
the final states are the
right contexts $R_L(u)$ such that $u\in L$,
and the action of an element $v$ of $A^+_0$
is given by $R_{L}(u)\cdot v=R_{L}(uv)$.
A \emph{sink} in a labeled graph
is a vertex $z$ such that all edges starting in $z$ are loops;
the vertex $R_L(0)=\emptyset$ is the unique sink of $\mathfrak M(L)$.
The language $L$ is \emph{recognizable} if it can be recognized by a finite
automaton; $L$ is recognizable if and only if $\mathfrak M(L)$ is
finite, if and only if $S(L)$ is finite~\cite{Eilenberg:1976,PinBook}.

Note that $u,v\in A^+_0$ have the
same action on the states of $\mathfrak M(L)$ if and only if
$[u]_L=[v]_L$. In particular, we may consider the action of
$S(L)$ on the set of states defined by
$R_{L}(u)\cdot [v]_L=R_{L}(uv)$.

 For a subshift \Cl X of \z A, we use
 the notations $\mathfrak M(\Cl X)$
 and $R_{\Cl X}(u)$
 instead of $\mathfrak M(L(\Cl X))$
 and $R_{L(\Cl X)}(u)$.

\subsubsection{Sofic shifts}

A graph $G$ is \emph{essential} if
the in-degree and the out-degree
of each vertex is at least one.
If $L(\Cl X)$, for a shift $\Cl X$, is recognized by the essential labeled graph $(G,\lambda)$, then $\Cl X$ is the \emph{shift presented by $(G,\lambda)$}.

The shifts that can be presented by a finite labeled graph are
called \emph{sofic}~\cite[Chapter 3]{MarcusandLind}.
The sofic shifts are the shifts $\Cl X$
such that $L(\Cl X)$ is recognizable.
That is, $\Cl X$ is sofic if and only if $S(\Cl X)$ is finite.
The most studied class of sofic shifts is that of
\emph{finite type shifts}~\cite[Chapter 2]{MarcusandLind}: a
subshift $\Cl X$ of $\z A$
is of finite type when $L(\Cl X)=A^+\setminus A^\ast W A^\ast$, for
some finite subset $W$ of $A^+$. 
An \emph{edge shift} is a shift presented
by a finite essential labeled graph
$(G,\lambda)$
such that the mapping $\lambda$ is one-to-one.
 One of the characterizations of the shifts of finite type is that
 they are the shifts conjugate to edge
 shifts~\cite[Theorem 2.3.2]{MarcusandLind}.

   A subshift $\Cl X$ of \z A is \emph{irreducible} if, for all
$u,v\in L(\Cl X)$, there is $w\in A^\ast$ such that
$uwv\in L(\Cl X)$.
A sofic shift is irreducible if and only if it can be presented by a
strongly connected labeled graph~\cite[Proposition 3.3.11]{MarcusandLind}.

\subsection{Synchronizing shifts}\label{sec:synchronizing-shifts}

A word $u$ of $L(\Cl X)$
is \emph{synchronizing}\footnote{In~\cite{MarcusandLind},
  a synchronizing word is called an \emph{intrinsically
    synchronizing} word. There is some diversity of terminology in
  the literature (cf.~\cite[Remark 2.6]{Bates&Eilers&Pask:2011}.)}
if $vu,uw\in L(\Cl X)$ implies $vuw\in L(\Cl X)$.
An irreducible shift $\Cl X$ is \emph{synchronizing} if
 $L(\Cl X)$ contains a synchronizing word.
Every irreducible sofic shift is synchronizing
(cf.~\cite[Lemma 2]{Fischer:1975} and~\cite[Proposition
3.1]{Blanchard&Hansel:1986}.)
Also, a shift is of finite type if and only if there is some $n$ such
that every word of $L(\Cl X)$ of length at least $n$ is synchronizing,
in which case the shift is said to be an
\emph{$n$-step shift}~\cite[Theorem 2.1.8]{MarcusandLind}.

The following lemma about synchronizing words can be useful.

\begin{Lemma}\label{l:synchwords}
Let $\Cl X\subseteq A^{\mathbb Z}$ be a shift for which $L(\Cl X)$ has a
synchronizing word (e.g., a synchronizing shift.)
\begin{enumerate}
\item The union of $A^+\setminus L(\Cl X)$
  with the set of synchronizing words of $L(\Cl X)$ is an ideal of
  $A^+$.
\item If $u$ is synchronizing, then $[u]$ is idempotent if and only if $u^2\in L(\Cl X)$.
\item If $v$ is synchronizing, then $uv\in L(\Cl X)$ implies
  $R_{\Cl X}(uv) = R_{\Cl X}(v)$.
\end{enumerate}
\end{Lemma}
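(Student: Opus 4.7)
My plan is to prove the three parts in order, using only the definitions of context, synchronizing word, and right context, together with the fact that $L(\Cl X)$ is factorial.

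For (1), I would split into two cases depending on whether a given word $w$ is in the union because it is outside $L(\Cl X)$ or because it is a synchronizing word of $L(\Cl X)$. If $w\notin L(\Cl X)$, then for any $x,y\in A^*$, the word $xwy$ cannot lie in $L(\Cl X)$ by factoriality, so it is in the union. If $w\in L(\Cl X)$ is synchronizing, then for any $x,y$, either $xwy\notin L(\Cl X)$ (and we are done) or $xwy\in L(\Cl X)$; in the latter case I would check synchronization directly: suppose $\alpha(xwy)\in L(\Cl X)$ and $(xwy)\beta\in L(\Cl X)$, extract the factors $(\alpha x)w$ and $w(y\beta)$, apply the synchronization property of $w$ to deduce $(\alpha x)w(y\beta) = \alpha(xwy)\beta\in L(\Cl X)$.

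For (2), I would prove the two inclusions $[u^2]\subseteq[u]$ and $[u]\subseteq[u^2]$ under the assumption that $u$ is synchronizing and $u^2\in L(\Cl X)$. For the first inclusion, given $xu^2y\in L(\Cl X)$, the factor $xu$ and the factor $uy$ both lie in $L(\Cl X)$, and synchronization of $u$ yields $xuy\in L(\Cl X)$. For the reverse, given $xuy\in L(\Cl X)$ (and $u^2\in L(\Cl X)$), I would apply synchronization twice: first to $uu\in L(\Cl X)$ and $uy\in L(\Cl X)$ to get $u^2y\in L(\Cl X)$, then to $xu\in L(\Cl X)$ and $u(uy)\in L(\Cl X)$ to get $xu^2y\in L(\Cl X)$. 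For the converse direction, I observe that since $u\in L(\Cl X)$ gives $(1,1)\in [u]$, and $[u]$ being idempotent means $[u]=[u^2]$, we obtain $(1,1)\in[u^2]$, i.e., $u^2\in L(\Cl X)$.

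For (3), the inclusion $R_{\Cl X}(uv)\subseteq R_{\Cl X}(v)$ is immediate from factoriality: if $uvw\in L(\Cl X)$ then $vw\in L(\Cl X)$. The reverse inclusion uses synchronization of $v$ applied to the pair $uv\in L(\Cl X)$ and $vw\in L(\Cl X)$ to conclude $uvw\in L(\Cl X)$.

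Nothing here looks like a real obstacle; the only subtle point is the direction $[u]\subseteq[u^2]$ in (2), where one has to apply the synchronization property twice in succession rather than once. The whole lemma is essentially an exercise in unwinding definitions, and I would be careful to treat the zero element of $A^+_0$ separately only where necessary (it plays no role here since synchronizing words lie in $L(\Cl X)\subseteq A^+$).
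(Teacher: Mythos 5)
Your proposal is correct and follows essentially the same route as the paper's proof: part (1) by verifying the synchronization property of $xwy$ directly from that of $w$ (the paper does the two one-sided cases $ru$ and $ur$ separately, but the argument is identical), part (2) by proving the two context inclusions with a single and a double application of synchronization respectively, and part (3) by combining factoriality with one application of synchronization. No gaps.
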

\begin{proof}
Suppose that $u$ is synchronizing and let $r\in A^+$.  Then if
$vru,ruw\in L(\Cl X)$, one has that $(vr)uw\in L(\Cl X)$ because $u$
is synchronizing.  Thus $ru$ is synchronizing.  Similarly, $ur$ is
synchronizing. This proves the first item.

Suppose that $u$ maps to an idempotent of $S(\Cl X)$.  Then trivially $u^2\in L(\Cl X)$.  For the converse, suppose that $u^2\in L(\Cl X)$.  If $v,w\in A^*$ with $vuw\in L(\Cl X)$, then because $vu,uu\in L(\Cl X)$, we have $vuu\in L(\Cl X)$.  But then $vuu,uw\in L(\Cl X)$ implies that $vu^2w\in L(\Cl X)$.  Conversely, if $vu^2w\in L(\Cl X)$, then $vu,uw\in L(\Cl X)$ and so $vuw\in L(\Cl X)$ because $u$ is synchronizing.

The final statement follows because $vw\in L(\Cl X)$ if and only if $uvw\in L(\Cl X)$ by the definition of a synchronizing word.
\end{proof}

The main results of this paper concern the Karoubi envelope
of $S(\Cl X)$, which has at least one object, namely $0$.
The next proposition
gives a sufficient condition for
the existence of other objects; it is not necessary,
as witnessed by the classes of shifts analyzed in
Section~\ref{sec:markov-dyck-shifts}.

\begin{Prop}\label{p:sync-stabilizing-idemp}
  Let $\Cl X$ be a synchronizing shift.
  If $u$ is
  a synchronizing word of $L(\Cl X)$, then
  $R_{\Cl X}(u)=R_{\Cl X}(1)\cdot e$ for some
  idempotent $e\in S(\Cl X)\setminus \{0\}$.
\end{Prop}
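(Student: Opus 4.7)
The plan is to exhibit the required idempotent $e$ explicitly as the syntactic class of a word of the form $wu$, with the synchronizing word $u$ sitting at the right end.

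First, since every synchronizing shift is by definition irreducible, applying irreducibility to the pair $(u,u)$ produces a word $w\in A^{\ast}$ with $uwu\in L(\Cl X)$; in particular $wu\in L(\Cl X)$ as $L(\Cl X)$ is factorial. Next I invoke the defining synchronizing property of $u$ --- i.e., $\alpha u,\,u\beta\in L(\Cl X)\Rightarrow \alpha u\beta\in L(\Cl X)$ --- with $\alpha=w$ and $\beta=wu$. Both hypotheses hold ($wu\in L(\Cl X)$ and $uwu\in L(\Cl X)$), so one obtains $wuwu\in L(\Cl X)$, that is, $(wu)^{2}\in L(\Cl X)$.

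Now set $v=wu$. Because $v$ has the synchronizing suffix $u$, Lemma~\ref{l:synchwords}(1) ensures $v$ is itself synchronizing; combined with $v^{2}\in L(\Cl X)$, Lemma~\ref{l:synchwords}(2) then gives that $e:=[v]_{\Cl X}$ is an idempotent of $S(\Cl X)$, while Remark~\ref{rmk:idempotents-implies-periodic-points} guarantees $e\neq 0$ since $v\in L(\Cl X)$. Finally, Lemma~\ref{l:synchwords}(3), applied with the synchronizing word $u$ and prefix $w$, yields $R_{\Cl X}(wu)=R_{\Cl X}(u)$, whence
\[
R_{\Cl X}(1)\cdot e\;=\;R_{\Cl X}(v)\;=\;R_{\Cl X}(wu)\;=\;R_{\Cl X}(u),
\]
as required.

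The only subtle point is the choice of $v$. The naive candidates $v=u$, $v=uw$, and $v=uwu$ each satisfy one of the two needed properties ($[v]$ idempotent, or $R_{\Cl X}(v)=R_{\Cl X}(u)$) but not manifestly both: one would typically need $u^{2}\in L(\Cl X)$ to pass to idempotence for $v=u$ or $v=uwu$, and one loses the equality of right contexts when $u$ is not a suffix, as for $v=uw$. Placing $u$ at the right end in $v=wu$ makes the right-context equality immediate from Lemma~\ref{l:synchwords}(3), while $v^{2}\in L(\Cl X)$ drops out of a single application of the synchronizing property of $u$.
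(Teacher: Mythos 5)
Your proof is correct and follows essentially the same route as the paper: irreducibility produces $w$ with $uwu\in L(\Cl X)$, the synchronizing property of $u$ gives $(wu)^2\in L(\Cl X)$, and Lemma~\ref{l:synchwords} then shows $e=[wu]$ is a non-zero idempotent with $R_{\Cl X}(u)=R_{\Cl X}(wu)=R_{\Cl X}(1)\cdot e$. The only difference is notational (your $w$ is the paper's $v$), and your closing remark about why $u$ must sit at the right end of the chosen word is a fair account of the one subtlety.
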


\begin{proof}
  As $\Cl X$ is irreducible, there is $v$ such
  that $uvu\in L(\Cl X)$. Then $vu,uvu\in L(\Cl X)$
  implies that $vuvu\in L(\Cl X)$.  By Lemma~\ref{l:synchwords}, $vu$
  is synchronizing and $e=[vu]$ is a non-zero idempotent.  Lemma~\ref{l:synchwords} also yields $R_{\Cl X}(u) = R_{\Cl X}(vu) = R_{\Cl X}(1)\cdot e$.
\end{proof}

\subsection{Krieger and Fischer covers}\label{sec:krieg-fisch-covers}

Denote the set of negative integers by $\mathbb Z^-$,
and the set of non-negative integers by $\mathbb N$.
Given an element $x=(x_i)_{i\in\mathbb Z^-}$ of
$A^{\mathbb Z^-}$ and an element $y=(y_i)_{i\in\mathbb N}$
of $A^{\mathbb N}$, denote by $x.y$ the element
$z=(z_i)_{i\in\mathbb Z}$ of $A^{\mathbb Z}$
for which $z_i=x_i$ if $i<0$, and $z_i=y_i$ if $i\ge 0$.

If $x\in A^{\mathbb Z^-}$ and $u=a_1\cdots a_n\in A^+$,
with
$a_i\in A$ when $1\le i\le n$,
then $xu$ denotes the element of $A^{\mathbb Z^-}$
given by the left-infinite sequence
$\cdots x_{-3}x_{-2}x_{-1}a_1\cdots a_n$.
Similarly, if $x\in A^{\mathbb N}$ then
$ux\in A^{\mathbb N}$ is given by the
right-infinite sequence $a_1\cdots a_nx_{0}x_{1}x_{2}\cdots$.

Let $C_{\Cl X}(x)=\{y\in A^{\mathbb N}:x.y\in \Cl X\}$,
where $\Cl X$ is a subshift of $\z A$
and $x\in A^{\mathbb Z^-}$.
If $u\in A^+$, then $C_{\Cl X}(x)=C_{\Cl X}(z)$
implies $C_{\Cl X}(xu)=C_{\Cl X}(zu)$.
This enables the following definition.

\begin{Def}[Krieger cover]
  Let $Q(\Cl X)=\{C_{\Cl X}(x)\mid x\in A^{\mathbb Z^-}\}\cup\{\emptyset\}$.
  Denote by $\mathfrak K^0(\Cl X)$
  the right-resolving complete labeled graph over $A\cup\{0\}$,
  with vertex set $Q(\Cl X)$,
  defined by the action of $A^+_0$ on
  $Q(\Cl X)$ given by
  \begin{equation*}
  C_{\Cl X}(x)\cdot u=C_{\Cl X}(xu)\quad\text{if $u\in A^+$},\quad C_{\Cl X}(x)\cdot 0=\emptyset,
  \end{equation*}
  having $\emptyset$ as unique sink.
The labeled graph $\mathfrak K(\Cl X)$
over $A$
obtained from
$\mathfrak K^0(\Cl X)$ by elimination of the vertex $\emptyset$ is
the \emph{right Krieger cover of $\Cl X$}
(cf.~\cite[Definition 0.11]{Fiebig&Fiebig:1991}.)
\end{Def}

Krieger introduced in~\cite{Krieger:1984}
this cover for sofic shifts only.
If $\Cl X$ is sofic, then the right Krieger cover of $\Cl X$
embeds in the automaton $\mathfrak M(\Cl X)$ and it is
computable~\cite[Section 4]{Beal&Berstel&Eilers&Perrin:2010arxiv}.
There are examples of synchronizing
shifts whose Krieger graph is uncountable (cf., the example in the proof
of~\cite[Corollary 1.3]{Fiebig&Fiebig:1991}.)
Hence, in the non-sofic case, the Krieger cover
may not be a labeled subgraph of $\mathfrak M(\Cl X)$,
which is always at most countable.

We next relate $S(\Cl X)$ with the Krieger cover.

\begin{Lemma}\label{l:definition-of-action-on-Krieger-cover}
  Consider a subshift $\Cl X$ of $\z A$, and let $u,v\in A^+$.
  Then $[u]\subseteq  [v]$ if and only if $C_{\Cl X}(xu)\subseteq C_{\Cl X}(xv)$ for all
  $x\in  A^{\mathbb Z^-}$.
\end{Lemma}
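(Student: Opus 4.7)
The plan is to prove the two implications separately, in each case translating between finite syntactic contexts of $L(\Cl X)$ and bi-infinite points of $\Cl X$.

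For the forward implication, fix $x\in A^{\mathbb Z^-}$ and $y\in C_{\Cl X}(xu)$, so that $(xu).y\in\Cl X$. To see that $(xv).y\in \Cl X$, it suffices, by the standard criterion $z\in\Cl X\iff L(\{z\})\subseteq L(\Cl X)$, to check that every finite block of $(xv).y$ lies in $L(\Cl X)$. Viewing $v$ as occupying positions $-|v|,\ldots,-1$ in $(xv).y$, I would classify the blocks according to how they meet these positions. A block disjoint from those positions lies entirely in $x$ or in $y$, and so is also a block of $(xu).y\in \Cl X$. A block covering all of $v$ has the shape $pvq$ where $p$ is a suffix of $x$ and $q$ a prefix of $y$; the corresponding block $puq$ of $(xu).y$ certifies $(p,q)\in[u]\subseteq[v]$, and hence $pvq\in L(\Cl X)$. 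Any other block overlaps $v$ partially, but since $x$ and $y$ are infinite on their respective sides one can pad it on both sides until it covers all of $v$, producing a block of the previous $pvq$ form; the original is a factor of this extended block, and so is itself in $L(\Cl X)$.

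For the converse, assume $C_{\Cl X}(xu)\subseteq C_{\Cl X}(xv)$ for every $x\in A^{\mathbb Z^-}$, and take $(p,q)\in[u]$, so that $puq\in L(\Cl X)$. Pick $z\in\Cl X$ in which $puq$ appears, and after applying a suitable power of $\sigma$ arrange that $u$ occupies the positions $-|u|,\ldots,-1$; then $p$ ends at position $-|u|-1$ and $q$ starts at position $0$. Defining $x\in A^{\mathbb Z^-}$ as the (reindexed) restriction of the shifted $z$ to indices at most $-|u|-1$, and $y\in A^{\mathbb N}$ as its restriction to non-negative indices, I get $z=(xu).y$, so $y\in C_{\Cl X}(xu)\subseteq C_{\Cl X}(xv)$ by hypothesis. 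Thus $(xv).y\in\Cl X$, and since $x$ still ends with $p$ and $y$ still begins with $q$, the word $pvq$ appears as a block of $(xv).y$, yielding $pvq\in L(\Cl X)$ and $(p,q)\in[v]$.

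The only delicate point is the overlap case in the forward direction: one must verify that any block meeting $v$ only partially is a factor of some block of the form $pvq$ covering $v$ entirely. This is where the infinitude of $x$ on the left and $y$ on the right is used, since padding the block on each side by enough letters always extends it to straddle $v$. The argument also gracefully handles the degenerate cases in which $p$ or $q$ is empty, or in which $u\notin L(\Cl X)$ (both sides are then vacuously true).
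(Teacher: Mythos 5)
Your proof is correct and follows essentially the same route as the paper's: the forward direction checks that every block of $(xv).y$ straddling $v$ has the form $pvq$ with $puq\in L(\Cl X)$ (the paper does this by taking the blocks $x_{[-n,-1]}vy_{[0,n]}$ for all $n$), and the converse realizes a context pair $(p,q)\in[u]$ inside a bi-infinite point of $\Cl X$ and reads off $pvq$ from the modified point. The only cosmetic difference is that the paper argues the converse contrapositively, invoking prolongability of $L(\Cl X)$ to build the point, whereas you argue it directly from the definition of $L(\Cl X)$ as the set of blocks of points of $\Cl X$.
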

\begin{proof}
Assume that $[u]\subseteq [v]$ and suppose that $y\in C_{\Cl X}(xu)$.
Then we have $x_{[-n,-1]}uy_{[0,n]}\in L(\Cl X)$ for every $n\geq 1$.
As $[u]\subseteq  [v]$,
we obtain
$x_{[-n,-1]}vy_{[0,n]}\in L(\Cl X)$ for every $n\geq 1$,
which means that $y\in C_{\Cl X}(xv)$.

Next suppose that $[u]\nsubseteq [v]$ and suppose $wuz\in L(\Cl X)$
and $wvz\notin L(\mathcal X)$.  Then,
as $L(\Cl X)$ is prolongable, we can find left infinite $x$ and right infinite $y$ with $xwu.zy\in \mathcal X$. Then $zy\in C_{\Cl X}(xwu)$ and $zy\notin C_{\Cl X}(xwv)$.
\end{proof}

Hence, by Lemma~\ref{l:definition-of-action-on-Krieger-cover},
the action of $S(\Cl X)$
on the set of vertices of $\mathfrak K^0(\Cl X)$
given
by $C_{\Cl X}(x)\cdot [u]=C_{\Cl X}(xu)$
is well defined and faithful.

In a graph $G$, a subgraph $H$
is \emph{terminal} if every edge of $G$ starting in a vertex of $H$
belongs to~$H$.
A \emph{strongly connected component} of $G$ is a maximal strongly connected
subgraph of~$G$.
 As a reference for the next definition,
 we give  Definition 0.12 in~\cite{Fiebig&Fiebig:1991},
 and the lines following it in~\cite{Fiebig&Fiebig:1991},
 for justification.

\begin{Def}[Fischer cover]
  If $\Cl X$ is a synchronizing shift,
  then  $\mathfrak K(\Cl X)$
  has a sole strongly connected terminal component. This
  component presents $\Cl X$. It is called the
\emph{right Fischer cover of $\Cl X$}, and we denote it by
$\mathfrak F(\Cl X)$.

We denote by $\mathfrak F^0(\Cl X)$ the terminal
complete labeled subgraph of $\mathfrak K^0(\Cl X)$
obtained from $\mathfrak F(\Cl X)$
by adjoining the sink state $\{\emptyset\}$,
and by $Q_{\mathfrak F}(\Cl X)$ the vertex set of
$\mathfrak F^0(\Cl X)$.
\end{Def}

  See~\cite[Theorem 2.16]{Fiebig&Fiebig:1991}
  and~\cite[Corollary 3.3.19]{MarcusandLind}
  for characterizations of the Fischer cover.

The next result was
shown in~\cite{Beauquier:1985} for irreducible sofic
shifts.
The generalization for synchronizing shifts offers no additional
difficulty.

\begin{Prop}\label{p:fischer-cover}
  Let $\Cl X$ be a synchronizing shift.
  Then the labeled graph
obtained from
 $\mathfrak M(\Cl X)$ by eliminating the sink vertex $\emptyset$
 has a unique terminal strongly connected component,
  which is isomorphic with $\mathfrak F(\Cl X)$.
  Its vertices are the right-contexts of synchronizing words.
\end{Prop}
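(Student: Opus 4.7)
The plan has two main steps. First, I would identify the unique terminal strongly connected component of $\mathfrak M(\Cl X)\setminus\{\emptyset\}$ concretely as $\Sigma=\{R_{\Cl X}(v) : v \text{ is a synchronizing word of } L(\Cl X)\}$. The set $\Sigma$ is closed under the non-sink action of letters in $A$, because if $v$ is synchronizing and $va\in L(\Cl X)$ then $va$ is again synchronizing by Lemma~\ref{l:synchwords}(1), so $R_{\Cl X}(v)\cdot a=R_{\Cl X}(va)\in\Sigma$ (otherwise the edge leads to the sink, which has been deleted from the graph). Irreducibility of $\Cl X$ together with Lemma~\ref{l:synchwords}(3) shows strong connectivity: given synchronizing $v_1,v_2$, pick $w$ with $v_1wv_2\in L(\Cl X)$; then $R_{\Cl X}(v_1)\cdot(wv_2)=R_{\Cl X}(v_1wv_2)=R_{\Cl X}(v_2)$. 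Finally, each non-sink state $R_{\Cl X}(u)$ reaches $\Sigma$: fixing any synchronizing word $v_0$, irreducibility yields $w$ with $uwv_0\in L(\Cl X)$, and $uwv_0$ is synchronizing by Lemma~\ref{l:synchwords}(1). These three properties force $\Sigma$ to be the unique terminal strongly connected component of $\mathfrak M(\Cl X)\setminus\{\emptyset\}$.

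Next, I would construct a labeled graph isomorphism $\Phi$ from $\Sigma$ onto the vertex set of $\mathfrak F(\Cl X)$ by setting $\Phi(R_{\Cl X}(v))=C_{\Cl X}(yv)$, where $y\in A^{\mathbb Z^-}$ is any left-infinite sequence with $C_{\Cl X}(yv)\neq\emptyset$ (such a $y$ exists because $v\in L(\Cl X)$ is a block of some point of $\Cl X$). The key computation is that the synchronizing property of $v$, together with an analysis of the blocks of $yv.z$ crossing position $0$, yields the formula
\[
C_{\Cl X}(yv)=\{z\in A^{\mathbb N}: z_{[0,n-1]}\in R_{\Cl X}(v)\text{ for every }n\geq 1\}.
\]
This simultaneously shows that $C_{\Cl X}(yv)$ is independent of the choice of $y$, depends only on $R_{\Cl X}(v)$ and not on the representing synchronizing word, and makes $\Phi$ injective. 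Label-preservation is immediate from $R_{\Cl X}(v)\cdot a=R_{\Cl X}(va)$ and $C_{\Cl X}(yv)\cdot a=C_{\Cl X}(yva)$. For surjectivity onto $\mathfrak F(\Cl X)$, I would rerun the arguments of the first step inside the Krieger cover: the image of $\Phi$ is a nonempty, strongly connected, terminal subgraph of $\mathfrak K(\Cl X)$, and since $\mathfrak F(\Cl X)$ is by definition the unique such subgraph, the image coincides with the vertex set of $\mathfrak F(\Cl X)$.

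The main obstacle, I expect, is verifying that $C_{\Cl X}(yv)$ depends only on the synchronizing word $v$ and not on the particular extension $y$. This requires carefully splitting the blocks of $yv.z$ into those lying entirely in $y$, those crossing into the window $v$, and those continuing into $z$, and then invoking Lemma~\ref{l:synchwords} (both the ideal property of item~(1) and the right-context invariance of item~(3)) to replace any reference to the unknown past $y$ by the synchronizing block $v$. Everything else in the argument follows directly from the already established facts about synchronizing words and irreducibility.
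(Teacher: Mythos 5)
Your proposal is correct. Note that the paper does not actually prove Proposition~\ref{p:fischer-cover}: it cites Beauquier's result for irreducible sofic shifts and asserts that the generalization to synchronizing shifts ``offers no additional difficulty,'' so there is no in-paper argument to compare against. Your two-step argument supplies exactly the missing details: the identification of $\Sigma=\{R_{\Cl X}(v): v\ \text{synchronizing}\}$ as the unique terminal strongly connected component follows correctly from the ideal property and right-context invariance of Lemma~\ref{l:synchwords} together with irreducibility, and the formula $C_{\Cl X}(yv)=\{z: z_{[0,n-1]}\in R_{\Cl X}(v)\ \text{for all}\ n\}$ is verified precisely by applying the synchronizing property of $v$ to each block $y_{[-m,-1]}vz_{[0,n-1]}$ (the hypothesis $C_{\Cl X}(yv)\neq\emptyset$ giving $y_{[-m,-1]}v\in L(\Cl X)$ for all $m$), which yields well-definedness, independence of $y$, and injectivity of $\Phi$ at once. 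The surjectivity step, rerunning the terminal-SCC argument inside $\mathfrak K(\Cl X)$ and invoking the definition of $\mathfrak F(\Cl X)$ as the sole terminal strongly connected component, is also sound, and nothing in the argument uses finiteness of $S(\Cl X)$, so it covers non-sofic synchronizing shifts as required.
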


We have defined the \emph{right} Krieger and Fischer covers.
The \emph{left} Krieger and Fischer covers are defined analogously,
changing directions when needed.

If $\mathcal A=(G,\lambda)$ is a right-resolving labeled graph with
state set $Q$ and alphabet $A$, then recall we write $qa=q'$ if there
is an edge from $q$ to $q'$ labeled by $a$.  The \emph{transition
  semigroup} $S(\mathcal A)$ is the semigroup of partial mappings of
$Q$ generated by the maps $a\mapsto qa$.  We write $Q^0$ for $Q\cup
\{0\}$ with $0$ an adjoined sink state and define $qs=0$ if $qs$ was
undefined and $0s=0$ for all $s\in S(\mathcal A)$.
For every shift $\Cl X$,
and because the action of $S(\Cl X)$ on its Krieger
cover $\mathfrak {K}(\Cl X)$ is faithful
(Lemma~\ref{l:definition-of-action-on-Krieger-cover}),
the transition semigroup of $\mathfrak {K}(\Cl X)$
is isomorphic to $S(\Cl X)$. One can also easily check that if $\Cl X$ is
synchronizing then the action of
$S(\Cl X)$ on the Fischer cover of $\Cl X$ is faithful: the proof
given in~\cite[Proposition 4.8]{Beal&Berstel&Eilers&Perrin:2010arxiv}
for irreducible sofic shifts holds for every synchronizing shift.
Therefore, if $\Cl X$ is synchronizing then $S(\Cl X)$ is isomorphic
to the transition semigroup of the Fischer cover $\mathfrak {F}(\Cl X)$.

\subsection{Conjugacy and Nasu's theorem}

 A right-resolving labeled graph $\mathcal A=(G,\lambda)$ over an alphabet $A$ is \emph{bipartite} if there are partitions $Q = Q_1\uplus Q_2$ of the states and
$A = A_1\uplus  A_2$ of the alphabet such that all edges labeled by $A_1$ go from $Q_1$ to $Q_2$ and
all edges labeled by $A_2$ go from $Q_2$ to $Q_1$.

Let $\mathcal A_1=(G_1,\lambda_1)$ be the right-resolving labeled graph over $A_1A_2$ with state set $Q_1$ obtained by turning each path of length $2$ from $Q_1$ to itself into an edge (labeled by the product of the labels of the two edges) and define $\mathcal A_2=(G_2,\lambda_2)$ with alphabet $A_2A_1$ and state set $Q_2$, analogously. We call $\mathcal A_1,\mathcal A_2$ the \emph{components} of $\mathcal A$.

Write $\mathcal A\sim \mathcal B$ if there is a bipartite right-resolving labeled graph with components $\mathcal A,\mathcal B$ and let $\simeq$ be the equivalence relation on right-resolving labeled graphs generated by $\sim$.

The following fundamental result is in~\cite{Nasu:1986}, where it is stated for sofic shifts but is well known to apply to any shift.

\begin{Thm}[Nasu~1986]\label{t:nasu}
Let $\Cl X_1,\Cl X_2$ be shifts with Krieger covers $\mathfrak K(\Cl
X_1)$ and $\mathfrak K(\Cl X_2)$, respectively.  Then
$\Cl X_1$ and $\Cl X_2$ are conjugate
if and only if $\mathfrak K(\Cl X_1)\simeq \mathfrak K(\Cl X_2)$.
\end{Thm}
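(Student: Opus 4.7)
The plan is to prove the two implications by reducing to the single generating move $\sim$. Since $\simeq$ is generated by $\sim$ and conjugacy of shifts is itself an equivalence relation, it suffices to establish: (a) $\mathfrak K(\Cl X_1)\sim \mathfrak K(\Cl X_2)$ implies $\Cl X_1$ and $\Cl X_2$ are conjugate; and (b) a conjugacy $\Phi \colon \Cl X_1 \to \Cl X_2$ produces a finite $\simeq$-chain between $\mathfrak K(\Cl X_1)$ and $\mathfrak K(\Cl X_2)$.

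For (a), suppose $\mathcal A$ is a bipartite right-resolving labeled graph over $A_1 \uplus A_2$, with vertex partition $Q_1 \uplus Q_2$, whose components $\mathcal A_1, \mathcal A_2$ are $\mathfrak K(\Cl X_1)$ and $\mathfrak K(\Cl X_2)$. The shifts presented by these components are $\Cl X_1$ (over $A_1 A_2$) and $\Cl X_2$ (over $A_2 A_1$). The bipartite alternation in $\mathcal A$ gives a canonical bijection between bi-infinite paths in $\mathcal A_1$ and in $\mathcal A_2$: any bi-infinite path in $\mathcal A$ can be parsed either starting at a $Q_1$-vertex (as a sequence of $A_1 A_2$-pairs, giving an element of $\Cl X_1$) or starting at a $Q_2$-vertex (as a sequence of $A_2 A_1$-pairs, giving an element of $\Cl X_2$), and the two parsings differ by a shift of one position. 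This yields a sliding block code $(a_1^{(i)} a_2^{(i)})_i \mapsto (a_2^{(i)} a_1^{(i+1)})_i$ whose inverse is also a sliding block code, hence a conjugacy $\Cl X_1 \cong \Cl X_2$.

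For (b), the approach is classical: decompose $\Phi$ into elementary recodings. Express $\Phi$ as a sliding block code with memory $m$ and anticipation $n$; then $\Phi$ factors through higher block recoding $\Cl X_1 \to \Cl X_1^{[N]}$ (for $N=m+n+1$), followed by a $1$-block conjugacy $\Cl X_1^{[N]} \to \Cl X_2$, possibly composed with a symmetric recoding on the codomain. The key step is to verify that each higher block move $\Cl X \to \Cl X^{[2]}$ corresponds to a $\sim$-move on Krieger covers: the bipartite graph is built with $Q_1$ equal to the states of $\mathfrak K(\Cl X)$, $Q_2$ equal to pairs (state, incoming letter), alphabets $A_1 = A$ and $A_2$ the $2$-block alphabet, and edges reflecting the natural factorization of the transition function through the intermediate vertex. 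One checks, from the definition of the Krieger cover via follower sets, that $\mathcal A_1 = \mathfrak K(\Cl X)$ and $\mathcal A_2 = \mathfrak K(\Cl X^{[2]})$. A $1$-block conjugacy is treated by an analogous but simpler bipartite construction where the two components are directly related by the relabeling.

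The principal obstacle will be handling the non-sofic case in (b), where $\mathfrak K(\Cl X_i)$ may be infinite or even uncountable. The original Williams--Nasu arguments for sofic shifts use finite-state minimization to identify the components of the bipartite graph as genuine Krieger covers rather than arbitrary right-resolving presentations. In the general setting one must instead verify intrinsically, using Lemma~\ref{l:definition-of-action-on-Krieger-cover} and the definition $Q(\Cl X) = \{C_{\Cl X}(x)\}$, that the bipartite construction produces exactly the covers of the relevant higher block shifts. The presence of the sink state $\emptyset$ and the zero symbol in $\mathfrak K^0(\Cl X)$ supplies the rigidity needed for this identification to go through uniformly across the sofic and non-sofic cases.
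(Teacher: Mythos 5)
First, a point of reference: the paper offers no proof of this statement --- it is quoted from Nasu's 1986 paper with the remark that it extends from sofic shifts to all shifts --- so your proposal can only be judged on its own merits, not against an argument in the text.

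Your direction (a) is essentially correct: a bipartite right-resolving labeled graph whose components are the two Krieger covers presents $\Cl X_1$ and $\Cl X_2$ on its two classes of length-two paths, and the induced symbolic map $(a_1^{(i)}a_2^{(i)})_i\mapsto(a_2^{(i)}a_1^{(i+1)})_i$ is a conjugacy; this is the standard bipartite-code argument. The genuine gap is in (b). Factoring $\Phi$ as a higher block code followed by a $1$-block conjugacy is legitimate as a factorization of maps, but it concentrates all of the difficulty of the theorem into the $1$-block conjugacy, which you then dispose of in one sentence by calling it ``the relabeling.'' A $1$-block conjugacy is not a relabeling: its inverse generally has large memory and anticipation (e.g.\ the $1$-block map $\Cl X^{[2]}\to\Cl X$, $x_ix_{i+1}\mapsto x_i$, whose inverse is the $2$-block code), the symbol map need not be injective on letters, and there is no single bipartite right-resolving graph whose two components are the two Krieger covers of the domain and codomain of an arbitrary $1$-block conjugacy. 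Indeed, since each $\sim$-move is witnessed by direction (a) to be a conjugacy of a very restricted form, a general conjugacy requires a chain of $\sim$-moves whose length grows with the memory and anticipation of $\Phi^{-1}$; your scheme produces only a bounded number of moves. What is actually needed here is Nasu's decomposition theorem --- every conjugacy of shifts is a composition of bipartite codes --- together with the nontrivial verification that a bipartite code between $\Cl X$ and $\Cl Y$ induces a bipartite expression whose components are exactly $\mathfrak K(\Cl X)$ and $\mathfrak K(\Cl Y)$ (i.e.\ that the follower-set construction is compatible with the alphabet bipartition). Your sketch for the special case $\Cl X\to\Cl X^{[2]}$ gestures at the latter verification (and even there the identification of $Q_2$ with pairs (state, incoming letter) only holds after collapsing pairs with equal follower sets in $\Cl X^{[2]}$), but the former decomposition, which is the real content of the theorem, is absent. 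The concerns you raise about the non-sofic case are secondary to this structural gap.
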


\subsection{Flow equivalence}\label{sec:flow-equivalence}
Fix an alphabet $A$ and a letter $\alpha$ of $A$.
Let $\dia$ be a letter not in $A$.
Denote by $B$ the alphabet $A\cup\{\dia\}$.
The \emph{symbol expansion of $A^\ast$ associated to $\alpha$}
is the unique monoid homomorphism $\ci E\colon A^\ast\to B^\ast$
such that $\ci E(\alpha)=\alpha\dia$ and $\ci E(a)=a$ for all
$a\in A\setminus\{\alpha\}$.
Note that $\ci E$ is injective.

The \emph{symbol expansion of a subshift \Cl X of \z A relatively to $\alpha$}
is the least subshift $\Cl X'$ of \z B such that $L(\Cl X')$
contains $\ci E(L(\Cl X))$.
A \emph{symbol expansion of $\Cl X$} is a symbol expansion of $\Cl X$
relatively to some
letter.

The mapping $\ci E$
admits the following natural extension of its domain and range.
If $x\in A^{\mathbb Z^-}$
and $y\in A^{\mathbb N}$,
then $\ci E(x)$ and $\ci E(y)$
are respectively the elements of $B^{\mathbb Z^-}$ and $B^{\mathbb N}$
given by
$\ci E(x)=\ldots \ci E(x_{-3})\,\ci E(x_{-2})\,\ci E(x_{-1})$
and
$\ci E(y)=\ci E(y_{0})\,\ci E(y_{1})\,\ci E(y_{2})\ldots$.
Moreover,  $\ci E(x.y)$ denotes $\ci E(x).\ci E(y)\in B^{\mathbb Z}$.
Note that $\Cl X'$ is the least subshift of
$B^{\mathbb Z}$ containing $\ci E(\Cl X)$.

\emph{Flow equivalence} is the least equivalence relation
between shifts containing the conjugacy and symbol expansion
relations.
The classes of finite type shifts, of
sofic shifts, and of irreducible shifts are all easily seen to be closed under flow equivalence.
See~\cite[Section 13.6]{MarcusandLind} for motivation for studying
flow equivalence. Here, we just remark
that the original definition of flow equivalence (that
two shifts are flow equivalent if their suspension flows are
topologically equivalent)
was proved in~\cite{Parry&Sullivan:1975} to be equivalent to the one
we use, explicitly for finite type shifts,
but as pointed out in~\cite[Lemma 2.1]{Matsumoto:2001},
implicitly for all shifts. See also~\cite[page 87]{Parry&Tuncel:1982}.

\section{Statement of the main results}\label{sec:stat-main-results}

In this section we state the main results of this paper, deferring
proofs to the final sections.

\begin{Def}[Karoubi envelope of a shift]
Let $\Cl X$ be a shift. We define the \emph{Karoubi envelope $\KK(\Cl X)$ of $\Cl X$} to be the Karoubi envelope $\KK(S(\Cl X))$ of its syntactic semigroup.
\end{Def}

Note that the category $\KK(\Cl X)$
is a category with zero, since $S(\Cl X)$ is a semigroup with zero.
Our principal result is that the natural equivalence class of $\mathbb
K(\Cl X)$ is a flow equivalence invariant of $\Cl X$.

\begin{Thm}\label{splittingisinvflow}
If $\Cl X$ and $\Cl Y$ are flow equivalent
shifts, then the categories $\KK(\Cl X)$ and $\KK(\Cl Y)$ are equivalent, i.e., $S(\Cl X)$ and $S(\Cl Y)$ are Morita equivalent up to local units. Hence every Karoubi invariant of $S(\Cl X)$ is a flow equivalence invariant of $\Cl X$.
\end{Thm}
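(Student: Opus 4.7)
The plan is to reduce the theorem to the two generators of flow equivalence, namely conjugacy and symbol expansion, and to verify Karoubi invariance in each case separately. Since every Karoubi invariant of $S(\Cl X)$ is by definition preserved under equivalence of Karoubi envelopes, the last assertion of the theorem is automatic once one establishes the equivalence $\KK(\Cl X)\simeq \KK(\Cl Y)$.

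For the conjugacy case, I would invoke Nasu's theorem (Theorem~\ref{t:nasu}), which tells us that conjugacy of $\Cl X_1$ and $\Cl X_2$ is encoded by a chain of bipartite right-resolving labeled graphs connecting $\mathfrak K(\Cl X_1)$ and $\mathfrak K(\Cl X_2)$. It therefore suffices to consider a single bipartite $\mathcal A$ with components $\mathcal A_1,\mathcal A_2$ and to show $\KK(S(\mathcal A_1))$ and $\KK(S(\mathcal A_2))$ are equivalent. The natural candidate is the transition semigroup $S(\mathcal A)$ of the full bipartite graph: the partial identities on the two halves $Q_1$ and $Q_2$ give canonical idempotents $e_1,e_2\in S(\mathcal A)$, and the local monoids $e_iS(\mathcal A)e_i$ are, by the construction of the components, exactly the transition monoids of $\mathcal A_i$ (with a zero adjoined, which is harmless). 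One then checks that the full subcategory of $\KK(S(\mathcal A))$ on the objects in the $\D$-class containing $e_1$ and $e_2$ is equivalent, via Lemma~\ref{l:isomorphism-criterion}, to both $\KK(S(\mathcal A_1))$ and $\KK(S(\mathcal A_2))$; because the Krieger cover of each of $\Cl X_1,\Cl X_2$ has transition semigroup isomorphic to $S(\Cl X_i)$, this yields the equivalence $\KK(\Cl X_1)\simeq \KK(\Cl X_2)$.

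For the symbol-expansion case, suppose $\Cl Y\subseteq B^{\ZZ}$ is obtained from $\Cl X\subseteq A^{\ZZ}$ by expanding the letter $\alpha$ to $\alpha\dia$, and let $\ci E\colon A^+_0\to B^+_0$ be the induced homomorphism. The plan is to produce an explicit functor $\Phi\colon \KK(\Cl X)\to \KK(\Cl Y)$ and show it is fully faithful and essentially surjective. On objects, an idempotent $[u]_{\Cl X}$ is sent to $[\ci E(u)]_{\Cl Y}$ (and one verifies this is still idempotent in $S(\Cl Y)$ using the observation that $\dia$ always appears immediately after an $\alpha$ in blocks of $\Cl Y$). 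On morphisms, the triple $(e,[s]_{\Cl X},f)$ should go to $(\Phi(e),[\ci E(s)]_{\Cl Y},\Phi(f))$. Essential surjectivity will require showing that every non-zero idempotent of $S(\Cl Y)$ is $\D$-equivalent to one of the form $[\ci E(u)]_{\Cl Y}$, using the fact that an idempotent context containing a $\dia$ can be conjugated (in the sense of Lemma~\ref{l:a-lemma-on-conjugate-words}) so that it starts right after an $\alpha$, whence it is captured by some expansion of a word of $L(\Cl X)$; applying Lemma~\ref{l:a-lemma-on-conjugate-words} and Lemma~\ref{l:isomorphism-criterion} gives the desired isomorphism of objects.

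The principal obstacle is the symbol-expansion step: while the conjugacy case has a clean conceptual route through Nasu's theorem and local monoids, the symbol-expansion case requires a combinatorial analysis to see how the contexts of words in $L(\Cl Y)$ relate to those of $L(\Cl X)$ once letters $\dia$ are inserted, particularly when a word straddles an inserted $\dia$. In particular, one must show fully faithfulness of $\Phi$ on hom-sets, which amounts to a careful syntactic congruence argument relating $\equiv_{L(\Cl X)}$ and $\equiv_{L(\Cl Y)}$ restricted to images of $\ci E$, and one must handle words beginning or ending with $\dia$ separately by reducing them, via Lemma~\ref{l:a-lemma-on-conjugate-words}, to images of $\ci E$ up to $\D$-equivalence. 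These technicalities are what make the proof of the theorem worth deferring to its own dedicated section.
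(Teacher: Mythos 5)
Your overall architecture coincides with the paper's: reduce to the two generators of flow equivalence, handle conjugacy through Nasu's theorem and a lemma about bipartite right-resolving labeled graphs, and handle symbol expansion by an explicit functor $[u]_{\Cl X}\mapsto[\ci E(u)]_{\Cl Y}$. Your symbol-expansion half is essentially the paper's argument: the paper packages the map $\ci E'$ as a \emph{local isomorphism} (Proposition~\ref{p:symbol-expansion-embed}), whose three defining conditions are exactly your ``fully faithful on hom-sets'' and ``essentially surjective'' checks, and the essential-surjectivity step is carried out precisely as you describe, by writing a non-zero idempotent $[w]_{\Cl X'}$ with $w\notin\ci E(A^+)$ in the form $\dia\,\ci E(u)\,\alpha$ and invoking Lemma~\ref{l:a-lemma-on-conjugate-words} to find a $\D$-equivalent idempotent in the image.

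The conjugacy half, however, has a genuine gap in the mechanism you propose for the bipartite lemma. The partial identities on $Q_1$ and $Q_2$ are in general \emph{not} elements of the transition semigroup $S(\mathcal A)$: that semigroup is generated by the letter actions, and since $A_1^2=0=A_2^2$ there, no word acts as the identity on $Q_1$ unless the graph happens to admit one. So the idempotents $e_1,e_2$ you want to localize at need not exist, and even if they did they would typically have different ranks and hence not lie in a common $\D$-class; moreover, the full subcategory of $\KK(S(\mathcal A))$ on a single $\D$-class of idempotents has all its objects isomorphic, so it is equivalent to a one-object category and cannot recover $\KK(S(\mathcal A_1))$, which has one object per idempotent of $S(\mathcal A_1)$. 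The paper's Lemma~\ref{l:biglemma} repairs this as follows: identify $S_1,S_2$ with the subsemigroups of $S=S(\mathcal A)$ generated by $A_1A_2$ and $A_2A_1$; show $E(S)=E(S_1)\cup E(S_2)$ by looking at alternating words; use $S_1S_2=0=S_2S_1$ to see that $\KK(S_1)$ sits inside $\KK(S)$ as a \emph{full} subcategory; and then use Lemma~\ref{l:a-lemma-on-conjugate-words} to show every idempotent of $S_2$ is $\D$-equivalent to one of $S_1$, giving essential surjectivity of the inclusion. (The same lemma also tracks the action on $Q^0$ for Theorem~\ref{t:main-D-cate-AYF}, which your outline does not address but which is not needed for the present statement.) Your plan for this half would need to be rewritten along these lines.
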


We remark that if $\Cl X$ and $\Cl Y$ are sofic shifts given by
presentations, then one can effectively determine whether $\KK(\Cl X)$
is equivalent to $\KK(\Cl Y)$.  This is because these categories are
finite and effectively constructible and so one can
in principle check all functors between them and see if there is one which is an equivalence.

For a shift $\Cl X$, denote by $\mathbb A_{\Cl X}$
the action $\mathbb A_{Q(\Cl X)}$ arising from the action of
$S(\Cl X)$ on~$Q(\Cl X)$. The reader is referred back to Definition~\ref{def:action-of-K(S)} for the notation.

   \begin{Thm}\label{t:main-D-cate-AYF}
   If $\Cl X$ and $\Cl Y$ are flow equivalent shifts, then the actions
   $(\mathbb A_{\Cl X}, \KK(\Cl X))$ and $(\mathbb A_{\Cl Y},\KK(\Cl Y))$ are equivalent. Hence any Karoubi invariant of the action of $S(\Cl X)$ on $Q(\Cl X)$ is an invariant of flow equivalence.
   \end{Thm}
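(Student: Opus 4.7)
The plan is to reduce to the two atomic operations generating flow equivalence --- conjugacy and symbol expansion --- and, for each, to promote the equivalence of Karoubi envelopes provided by Theorem~\ref{splittingisinvflow} to an equivalence in $\mathbf{Act}_0$. Writing such an equivalence as $(\eta, F)$, I need to produce, alongside the functor $F\colon \KK(\Cl X)\to \KK(\Cl Y)$, a natural transformation $\eta\colon \mathbb A_{\Cl X}\Rightarrow \mathbb A_{\Cl Y}\circ F$ whose components are bijections $Q(\Cl X)\cdot e\longrightarrow Q(\Cl Y)\cdot F(e)$ compatible with the semigroup actions. Composing the per-step equivalences along a chain of conjugacies and symbol expansions then yields the theorem.

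First I would handle conjugacy. By Nasu's Theorem~\ref{t:nasu} it suffices to treat one bipartite step, so suppose $\mathfrak K(\Cl X)$ and $\mathfrak K(\Cl Y)$ arise as the components $\mathcal A_1,\mathcal A_2$ of a single bipartite right-resolving labeled graph $\mathcal A$ over $A_1\uplus A_2$. Letting $S$ denote the transition semigroup of $\mathcal A$, the bipartition produces idempotents $e_1,e_2\in S$ (each acting as the identity on the corresponding component) such that $e_i S e_i$, acting on the vertex set of $\mathcal A_i$, is isomorphic to $S(\Cl X_i)$ on the states of $\mathfrak K(\Cl X_i)$. The equivalence $(\eta,F)$ is then implemented by a ``cross the bipartition'' construction: a path in one component followed by one in the other realizes a $\D$-equivalence between $e_1$ and $e_2$, and translation along these paths intertwines the two state-set actions up to canonical bijections $Q(\Cl X)\cdot e\to Q(\Cl Y)\cdot F(e)$.

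The harder second step is symbol expansion $\Cl X\mapsto \Cl X'$ with $\ci E(\alpha)=\alpha\dia$. The key vehicle is the map induced by $\ci E$ on syntactic semigroups, sending each non-zero idempotent $[u]\in S(\Cl X)$ to $[\ci E(u)]\in S(\Cl X')$. I would show that this yields a fully faithful functor $F\colon \KK(\Cl X)\to \KK(\Cl X')$ which is also essentially surjective. Surjectivity would use Remark~\ref{rmk:idempotents-implies-periodic-points}: each non-zero idempotent $[w]\in S(\Cl X')$ comes with a periodic point $w^\infty\in \Cl X'$, and every periodic point of $\Cl X'$ is a shift of $\ci E(x)$ for a periodic point $x\in \Cl X$. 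Lemma~\ref{l:a-lemma-on-conjugate-words} then lets one conjugate $[w]$ to a $\D$-equivalent idempotent of the form $[\ci E(v)]$, and Lemma~\ref{l:isomorphism-criterion} identifies this with isomorphism in $\KK(\Cl X')$. For the action side, I would establish a decomposition $Q(\Cl X')=\ci E(Q(\Cl X))\,\sqcup\, Q^{\alpha}$, where $Q^{\alpha}$ collects the contexts $C_{\Cl X'}(y)$ with $y$ ending in an $\alpha$ still awaiting its $\dia$; these ``mid-expansion'' states are annihilated by every non-zero idempotent in the image of $F$ (any such idempotent is represented by a word $\ci E(u)$, which cannot be appended after a dangling $\alpha$). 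Consequently $\mathbb A_{\Cl X'}(F(e))$ coincides with the $\ci E$-translate of $\mathbb A_{\Cl X}(e)$, and the bijection $C_{\Cl X}(x)\leftrightarrow C_{\Cl X'}(\ci E(x))$ supplies the required natural transformation $\eta$.

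The main obstacle will be the symbol expansion case. The delicate point is to control how idempotents of $S(\Cl X')$ interact with the boundary letters: representatives may begin with $\dia$ or end with $\alpha$, and must be replaced by conjugate representatives lying in the image of $\ci E$. Lemma~\ref{l:a-lemma-on-conjugate-words}, together with a careful bookkeeping of which vertices of $\mathfrak K^{0}(\Cl X')$ sit halfway inside an expanded $\alpha\dia$-symbol, should be the technical heart of the argument; once those normal forms are pinned down, the naturality of $\eta$ is essentially forced by the definition of the Krieger cover.
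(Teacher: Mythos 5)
Your overall strategy coincides with the paper's: reduce to the two generators of flow equivalence, handle conjugacy through Nasu's Theorem~\ref{t:nasu} by analysing a single bipartite right-resolving labeled graph, and handle symbol expansion by showing that $[u]_{\Cl X}\mapsto[\ci E(u)]_{\Cl X'}$ induces an equivalence of Karoubi envelopes intertwined with the bijection $C_{\Cl X}(x)\mapsto C_{\Cl X'}(\ci E(x))$ on Krieger-cover states. Your treatment of symbol expansion is essentially the paper's proof of Proposition~\ref{p:symbol-expansion-embed} together with Lemmas~\ref{l:symbol-expansion-preserves-order-contexts}--\ref{l:how-h-restricts}: in particular, your observation that the ``mid-expansion'' states (left rays ending in a dangling $\alpha$) are annihilated by every idempotent of the form $[\ci E(u)]$ is exactly what makes $h(Q(\Cl X)\cdot e)=Q(\Cl X')\cdot\ci E'(e)$ work, and your route to essential surjectivity via periodic points is a harmless variant of the paper's combinatorial normal form $w=\dia\,\ci E(u)\,\alpha$; both hinge on Lemma~\ref{l:a-lemma-on-conjugate-words}. (You would still need to verify fullness, i.e.\ that every element of $\ci E'(e)S(\Cl X')\ci E'(f)$ lies in the image of $\ci E'$; this uses Remark~\ref{r:image-of-E} but is routine.)

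The one genuine flaw is in the conjugacy step. You posit idempotents $e_1,e_2\in S$ ``each acting as the identity on the corresponding component'' with $e_iSe_i$ isomorphic to the syntactic semigroup of the corresponding shift. Such idempotents generally do not exist: an idempotent of the transition semigroup acting as the identity on \emph{all} of $Q_i$ would be (the class of) a word inducing the total identity map on $Q_i$, which happens only when that syntactic semigroup is a monoid --- and most syntactic semigroups of sofic shifts are not monoids; moreover, a single local monoid $e_iSe_i$ is in general far smaller than the full transition semigroup of $\mathcal A_i$. The correct replacement, which is the paper's Lemma~\ref{l:biglemma}, is to take $S_i$ to be the subsemigroup of $S$ generated by $A_iA_j$ (this \emph{is} the transition semigroup of $\mathcal A_i$), observe that $S_1S_2=0=S_2S_1$ forces $E(S)=E(S_1)\cup E(S_2)$ and makes $\KK(S_1)$ a \emph{full} subcategory of $\KK(S)$, and then use Lemma~\ref{l:a-lemma-on-conjugate-words} to show that every idempotent of $S_2$ is $\D$-equivalent to one of $S_1$, so that the inclusion $\KK(S_1)\to\KK(S)$ is an equivalence; the natural transformation is then simply the inclusion $Q_1^0e\hookrightarrow Q^0e$, which is surjective because $Q_2^0e=\{0\}$ for $e\in E(S_1)$. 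With that repair your argument goes through.
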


The proofs of Theorems~\ref{splittingisinvflow} and~\ref{t:main-D-cate-AYF} are carried out in
Section~\ref{sec:proof-theor-reft:m}.

\begin{Rmk}\label{r:dead-end}
Recall from Subsection~\ref{sec:krieg-fisch-covers}
the definition of right Fischer cover when $\Cl X$ is a synchronizing shift. In particular,  $Q_{\mathfrak F}(\Cl X)$ is the unique minimal $S(\Cl X)$-invariant subset of $Q(\Cl X)$ strictly containing the sink state.

 Therefore, the action of $S(\Cl X)$
 on $Q(\Cl X)$ restricts in a natural way
 to an action of $S(\Cl X)$ on $Q_{\mathfrak F}(\Cl X)$,
 denoted $\mathbb A_{\Cl X}^{\mathfrak F}$.
\end{Rmk}

  \begin{Thm}\label{t:main-D-cate-AYF-Fischer-version}
   If $\Cl X$ and $\Cl Y$ are flow equivalent synchronizing shifts,
   then the actions $(\mathbb A_{\Cl X}^{\mathfrak F}, \KK(\Cl X))$ and $(\mathbb A_{\Cl Y}^{\mathfrak F},\KK(\Cl Y))$ are equivalent. Hence each Karoubi invariant of the action of $S(\Cl X)$ on $Q_{\mathfrak F}(\Cl X)$ is an invariant of flow equivalence.
  \end{Thm}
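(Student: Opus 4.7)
The plan is to derive Theorem~\ref{t:main-D-cate-AYF-Fischer-version} as a corollary of Theorem~\ref{t:main-D-cate-AYF} by restricting the equivalence of Krieger-cover actions to the Fischer-cover subactions. By Theorem~\ref{t:main-D-cate-AYF}, flow equivalence of $\Cl X$ and $\Cl Y$ yields a pair $(F,\eta)$, where $F\colon\KK(\Cl X)\to\KK(\Cl Y)$ is a zero-preserving equivalence of categories and $\eta\colon\mathbb A_{\Cl X}\Rightarrow\mathbb A_{\Cl Y}\circ F$ is a natural isomorphism; in particular, for each idempotent $e\in E(S(\Cl X))$ one has a pointed bijection $\eta_e\colon Q(\Cl X)\cdot e\to Q(\Cl Y)\cdot F(e)$ compatible with morphisms of $\KK(\Cl X)$. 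It therefore suffices to show that each $\eta_e$ restricts to a bijection $Q_{\mathfrak F}(\Cl X)\cdot e\to Q_{\mathfrak F}(\Cl Y)\cdot F(e)$, since then the restriction of $(F,\eta)$ supplies the desired equivalence between $\mathbb A_{\Cl X}^{\mathfrak F}$ and $\mathbb A_{\Cl Y}^{\mathfrak F}$.

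The central tool is Remark~\ref{r:dead-end}, which characterizes $Q_{\mathfrak F}(\Cl X)$ as the unique minimal $S(\Cl X)$-invariant subset of $Q(\Cl X)$ strictly containing the sink. I would first observe that $Q_{\mathfrak F}(\Cl X)$ is fully detected by its fiber intersections $Q_{\mathfrak F}(\Cl X)\cdot e$: by Proposition~\ref{p:fischer-cover} every non-sink vertex of $\mathfrak F(\Cl X)$ has the form $R_{\Cl X}(u)$ for a synchronizing word $u$, and the proof of Proposition~\ref{p:sync-stabilizing-idemp} shows this vertex equals $R_{\Cl X}(1)\cdot e$ for the non-zero idempotent $e=[vu]$ (where $v$ is chosen so that $uvu\in L(\Cl X)$), whence $R_{\Cl X}(u)\in Q_{\mathfrak F}(\Cl X)\cdot e$. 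Hence $Q_{\mathfrak F}(\Cl X)=\{\emptyset\}\cup\bigcup_e Q_{\mathfrak F}(\Cl X)\cdot e$, and symmetrically for $\Cl Y$. Now define $T_{\Cl Y}\subseteq Q(\Cl Y)$ to be the smallest $S(\Cl Y)$-invariant subset containing $\bigcup_e\eta_e(Q_{\mathfrak F}(\Cl X)\cdot e)$; since the $\eta_e$ are bijections of pointed sets and some $Q_{\mathfrak F}(\Cl X)\cdot e$ properly contains the sink, $T_{\Cl Y}$ does too, so the minimality in Remark~\ref{r:dead-end} forces $Q_{\mathfrak F}(\Cl Y)\subseteq T_{\Cl Y}$. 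The symmetric argument with a quasi-inverse of $F$ then yields the reverse inclusion, and each $\eta_e$ restricts as required.

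The main technical obstacle lies in the symmetric step, namely verifying that $T_{\Cl Y}\subseteq Q_{\mathfrak F}(\Cl Y)$. The subtle point is that $(F,\eta)$ records only the restricted action of $\KK(\Cl Y)$, whose morphisms correspond to elements of $S(\Cl Y)$ that admit local units, whereas $S(\Cl Y)$-invariance refers to the full action of $S(\Cl Y)$ on $Q(\Cl Y)$. However, the same fiber-detectability argument that works for $\Cl X$ also shows that every non-sink vertex of $\mathfrak F(\Cl Y)$ has a non-zero idempotent stabilizer, so any iterated orbit of an element of $\eta_e(Q_{\mathfrak F}(\Cl X)\cdot e)$ can be analyzed fiber-by-fiber; naturality of $\eta$ combined with the $S(\Cl X)$-invariance of $Q_{\mathfrak F}(\Cl X)$ then forces the orbit to stay inside $Q_{\mathfrak F}(\Cl Y)$. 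This reduction from full $S$-invariance to fiber-by-fiber action is where the crux of the argument resides.
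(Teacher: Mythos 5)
Your setup and reduction coincide with the paper's: both arguments invoke Theorem~\ref{t:main-D-cate-AYF} to obtain $(F,\eta)$, note that $\mathbb A_{\Cl X}^{\mathfrak F}$ and $\mathbb A_{\Cl Y}^{\mathfrak F}$ are subfunctors of the Krieger-cover actions, and reduce everything to showing that each $\eta_e$ carries $Q_{\mathfrak F}(\Cl X)e$ onto $Q_{\mathfrak F}(\Cl Y)\cap Q(\Cl Y)F(e)$. The gap lies in how you try to establish this. Your minimality argument only yields the inclusion $Q_{\mathfrak F}(\Cl Y)\subseteq T_{\Cl Y}$: by Remark~\ref{r:dead-end} every $S(\Cl Y)$-invariant subset strictly containing the sink contains $Q_{\mathfrak F}(\Cl Y)$, and $T_{\Cl Y}$ is such a subset. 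What is actually needed is the opposite containment $\eta_e(Q_{\mathfrak F}(\Cl X)e)\subseteq Q_{\mathfrak F}(\Cl Y)$, and minimality gives no purchase on it: a priori $\eta_e$ could send a non-sink Fischer state of $\Cl X$ to a state of $Q(\Cl Y)F(e)$ outside the terminal component, whose invariant closure would still contain $Q_{\mathfrak F}(\Cl Y)$ without being contained in it. The ``symmetric argument with a quasi-inverse'' does not repair this: it produces the analogous wrong-direction inclusion $Q_{\mathfrak F}(\Cl X)\subseteq T_{\Cl X}$ on the other side, not $T_{\Cl Y}\subseteq Q_{\mathfrak F}(\Cl Y)$. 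Finally, the closing ``fiber-by-fiber'' paragraph is circular: arguing that the orbit of $\eta_e(q)$ ``stays inside $Q_{\mathfrak F}(\Cl Y)$'' presupposes that $\eta_e(q)$ already lies in $Q_{\mathfrak F}(\Cl Y)$, which is precisely the point at issue.

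The paper closes exactly this step with Theorem~\ref{t:actionposet} and Remark~\ref{rmk:what-psi-does-to-fischer-cover}: the non-sink Fischer states form a single class in the reachability poset $P(Q(\Cl X))$, characterized order-theoretically as the extremal element of $P(\Cl X)\setminus\{\emptyset\}$, and Theorem~\ref{t:actionposet} guarantees that $q\mapsto\eta_e(q)LU(\Cl Y)$ is a well-defined isomorphism of these posets, hence carries that extremal class to the corresponding class for $\Cl Y$; this gives both inclusions at once (and also handles the subtlety that a state may lie in $Qe\cap Qf$ for distinct idempotents with a priori different images under $\eta_e$ and $\eta_f$). Some such two-sided, order-preserving statement is indispensable here; your proposal would be repaired by invoking it, but as written it does not prove the containment it needs.
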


  The proof of
  Theorem~\ref{t:main-D-cate-AYF-Fischer-version}
  is deferred to Section~\ref{sec:proof-theor-reft:m-1}.

If $\Cl X$ does not contain periodic points, for instance,  if $\Cl X$ is minimal non-periodic (cf.~\cite[Section
 13.7]{MarcusandLind}),
 then $0$ is the unique
 idempotent of $S(\Cl X)$
 (cf.~Remark~\ref{rmk:idempotents-implies-periodic-points}),
 and so in such cases
 our main results have
 no applications.
 On the other hand, they do have meaningful consequences
for sofic, synchronizing, and other
classes of shifts. In the next few sections we examine some of these
consequences.

Before doing that, we show that the Karoubi envelope of a sofic shift
is a universal syntactic invariant of flow equivalence of sofic shifts.
But first, we need
a suitable formalization of what this means. 

An equivalence relation $\vartheta$ on the class of sofic shifts is an
\emph{invariant of flow equivalence} if whenever $\Cl X$ and $\Cl Y$
are flow equivalent sofic shifts, one has $\Cl X\mathrel{\vartheta}
\Cl Y$.  For instance, our main theorem says that having naturally
equivalent Karoubi envelopes is an invariant of flow equivalence.

Intuitively, a syntactic invariant of a sofic shift $\Cl X$ should be something which depends only on the syntactic semigroup $\Cl X$, or equivalently, on the action of $S(\Cl X)$ on the states of the Krieger cover.  Formally, we say that an equivalence relation $\vartheta$ on sofic shifts is a \emph{syntactic invariant} if $S(\Cl X)\cong S(\Cl Y)$ implies that $\Cl X\mathrel{\vartheta} \Cl Y$.  Our final main result, proved in Section~\ref{sec:master}, states that the Karoubi envelope is the universal syntactic invariant of flow equivalence of sofic shifts in the sense that any other invariant factors through it.

\begin{Thm}\label{t:master-syntactic-invariant}
Suppose that $\vartheta$ is a syntactic invariant of flow equivalence
of sofic shifts and that $\Cl X$ and $\Cl Y$ are sofic
shifts such that $\KK(\Cl X)$ is equivalent to $\KK(\Cl Y)$. Then
$\Cl X\mathrel{\vartheta}\Cl Y$.
\end{Thm}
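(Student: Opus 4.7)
The strategy is to show that the equivalence relation on sofic shifts generated by flow equivalence together with syntactic isomorphism already identifies every pair of sofic shifts with equivalent Karoubi envelopes. Since $\vartheta$ is by hypothesis preserved under both generating relations, and since Theorem~\ref{splittingisinvflow} combined with the functoriality of $\KK$ shows that each generating relation is contained in Karoubi-envelope equivalence, the two equivalence relations will coincide on sofic shifts, and the conclusion will follow by transitivity.

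The main step is a ``canonical form'' reduction: for every sofic shift $\Cl X$, produce by a finite sequence of symbol expansions a sofic shift $\Cl X^{\ast}$ (automatically flow equivalent to $\Cl X$) whose syntactic semigroup depends, up to isomorphism, only on the equivalence class of $\KK(\Cl X)$. A natural plan is to iteratively expand each letter $\alpha\mapsto\alpha\dia_\alpha$ until the syntactic semigroup stabilizes; the resulting $S(\Cl X^{\ast})$ should coincide with a categorical ``consolidation'' of $\KK(\Cl X)$ built out of its hom-sets, equipped with composition inherited from the category and a zero for all non-composable pairs. Because equivalent categories with zero yield isomorphic consolidations, one obtains $S(\Cl X^{\ast})\cong S(\Cl Y^{\ast})$ whenever $\KK(\Cl X)\simeq\KK(\Cl Y)$. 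The proof then assembles a three-step zig-zag: $\Cl X\mathrel{\vartheta}\Cl X^{\ast}$ and $\Cl Y\mathrel{\vartheta}\Cl Y^{\ast}$ by flow invariance of $\vartheta$, $\Cl X^{\ast}\mathrel{\vartheta}\Cl Y^{\ast}$ by syntactic invariance, and hence $\Cl X\mathrel{\vartheta}\Cl Y$ by transitivity.

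The principal obstacle is the precise semigroup-theoretic analysis of symbol expansion underlying the reduction. One must show that a single expansion $\alpha\mapsto\alpha\dia$ modifies $S(\Cl X)$ in a very controlled way, typically adjoining a new idempotent (in the $\D$-class of $[\dia\alpha]$ or similar) that splits elements into products of local units with arrows of $\KK(\Cl X)$, and that after finitely many expansions every non-zero element of the syntactic semigroup has local units and the semigroup takes the chosen canonical form. A secondary subtlety is to verify that this canonical form is indeed realizable as a syntactic semigroup (i.e., is prolongable and $0$-disjunctive, in view of Lemma~\ref{l:0-disjunctive-are-syntactic-semigroups}) and that it is a bona fide invariant of the equivalence class of $\KK(\Cl X)$ rather than of $\KK(\Cl X)$ itself. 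These bookkeeping tasks are expected to constitute the bulk of the technical work.
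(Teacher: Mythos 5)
Your overall zig-zag skeleton (move each shift by flow equivalence to a ``normalized'' companion, link the companions by syntactic isomorphism, and conclude by transitivity of $\vartheta$) matches the paper's strategy. But the middle link is where your argument has a genuine gap, and the two claims you lean on to close it do not hold as stated. First, the ``consolidation'' of a category with zero (hom-sets glued into one semigroup, with zero for non-composable pairs) is \emph{not} an invariant of the equivalence class of the category: equivalent categories can have different numbers of isomorphic objects, and their consolidations then have different cardinalities. (You would have to pass to a skeleton first, and even then nothing guarantees the result is realizable as the syntactic semigroup of a shift flow equivalent to $\Cl X$.) Second, the claim that iterating $\alpha\mapsto\alpha\dia$ makes the syntactic semigroup ``stabilize'' is unsupported and almost certainly false: by Proposition~\ref{p:symbol-expansion-embed} each expansion is an injective local isomorphism that typically adjoins new idempotents (e.g.\ in the $\D$-class of $[\ci E(\alpha u)^2]$), and each expansion introduces a fresh letter $\dia$ whose image need not acquire local units without further expansion, so the process has no termination mechanism. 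More fundamentally, no canonical form of the kind you describe can exist within the reach of symbol expansion alone: Morita equivalent semigroups with local units need not be isomorphic (all Brandt semigroups $B_n$ are Morita equivalent but pairwise non-isomorphic, and each is the syntactic semigroup of an irreducible edge shift), so the bridge between two shifts with merely equivalent Karoubi envelopes cannot be built by normalizing each one separately and comparing syntactic semigroups of expansions.

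The paper closes this gap with two constructions you would need analogues of. (i) Proposition~\ref{p:reduction-to-shift-with-local-units}: every sofic shift is flow equivalent to one whose syntactic semigroup has local units --- proved not by symbol expansion but by a conjugacy onto a recoding built from minimal $\delta$-idempotent words followed by symbol \emph{contractions}. (ii) Proposition~\ref{p:conjugacy-induced-by-morita-equivalence}: given two Morita equivalent finite semigroups with zero and local units, Lawson's enlargement theorem provides a common semigroup $R$ with $SRS=S$, $RSR=R$, $TRT=T$, $RTR=R$, from which one builds two \emph{induced} shifts (in the sense of Lemma~\ref{l:0-disjunctive-are-syntactic-semigroups}) over the alphabets $C\times D$ and $D\times C$ that are conjugate via a bipartite code and whose syntactic semigroups are exactly $S$ and $T$. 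It is this passage through abstractly constructed shifts --- not expansions of the originals --- together with $0$-disjunctivity, that lets syntactic invariance of $\vartheta$ take over. Without something playing the role of (ii), your argument cannot connect $\Cl X^{\ast}$ to $\Cl Y^{\ast}$.
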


We remark that the sofic hypothesis is essential
in Theorem~\ref{t:master-syntactic-invariant}.
The expected generalization to all shifts does not hold.
For example, if $\Cl X$ is the union of a periodic shift
$\Cl Y$ with a minimal non-periodic shift,
then $\KK(\Cl X)$ and $\KK(\Cl Y)$ are isomorphic,
but $S(\Cl X)$ is infinite and $S(\Cl Y)$ is finite.
And of course, finiteness of the syntactic semigroup
is a flow invariant.

\section{The proper comunication graph}\label{sec:prop-comun-graph}

For a graph $G$, let $PC(G)$ be the set of non-trivial
strongly connected components of $G$. Here we consider a strongly connected graph to be trivial if it consists of one vertex and no edges (a single vertex with some loop edges is deemed non-trivial.)  Consider in $PC(G)$ the partial order
given by $C_1\leq C_2$ if and only if there is in $G$
a path from an element of $C_1$ to an element of $C_2$.
Following the terminology of~\cite{Bates&Eilers&Pask:2011}, the
\emph{proper communication graph of $G$} is the acyclic directed graph with vertex set $PC(G)$ and edge set given by the irreflexive relation $<$.  It is proved
in~\cite{Bates&Eilers&Pask:2011} that the proper communication graph
of the right (left)
Krieger cover of a sofic shift is a flow equivalence invariant.
We shall see in this section that this result can naturally be seen
as a consequence of Theorem~\ref{t:main-D-cate-AYF}, and in the
process of doing this, we generalize it
to arbitrary shifts.

Let $S$ be a semigroup with zero acting on a pointed set $Q$.  Let $I=Q\cdot E(S)$ and define a preorder $\preceq$ on $I$ by $q\preceq q'$ if $q'LU(S)\subseteq qLU(S)$, that is, $q'=qs$ for some $s\in LU(S)$.  As usual, define $q\sim q'$ if $q\preceq q'$ and $q'\preceq q$.  Then $P(Q)=I/{\sim}$ is a poset, which can be identified with the poset of cyclic $LU(S)$-subsets $\{q\cdot LU(S)\mid q\in I\}$) ordered by reverse inclusion.  In~\cite{newpaper}, we show that $P(Q)$ is a Karoubi invariant of the action of $S$ on $Q$.  More precisely, we have the following result that will appear in~\cite{newpaper}.

\begin{Thm}\label{t:actionposet}
Suppose $S,T$ are semigroups with zero acting on pointed sets $Q,Q'$, respectively.  Suppose that $F\colon \KK(S)\to \KK(T)$ is an equivalence and that $\eta\colon \mathbb A_Q\Rightarrow A_{Q'}\circ F$ is an isomorphism.  Then there is a well defined isomorphism $f\colon P(Q)\to P(Q')$ of posets given by $f(q) = \eta_e(q)LU(T)$ for $q\in Qe$.
\end{Thm}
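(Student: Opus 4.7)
The plan is to turn the categorical hypotheses on $F$ and $\eta$ (fully faithful, essentially surjective, natural isomorphism) into the bijectivity and order preservation of $f$, by exploiting the naturality square attached to carefully chosen $\KK(S)$-morphisms. I would organize the argument into three steps: well definedness, monotonicity together with its converse, and surjectivity.

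\textbf{Well definedness.} Every $q\in I$ lies in $Qe$ for some $e\in E(S)$, but the choice of $e$ is not unique. If $q\in Qe\cap Qe'$, the element $ee'\in eSe'$ defines a $\KK(S)$-arrow $(e,ee',e')\colon e'\to e$, and $q\cdot(e,ee',e')=qee'=qe'=q$. Naturality of $\eta$ applied to this arrow and to its counterpart $(e',e'e,e)$ produces elements of $LU(T)$ relating $\eta_e(q)$ and $\eta_{e'}(q)$ in both directions, so they represent the same class in $P(Q')$. Hence $f$ is well defined on $I$.

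\textbf{Monotonicity and its converse.} Suppose $q\in Qe$ and $q'=qs$ with $s\in LU(S)$. Using well definedness one may choose $e'$ with $q'\in Qe'$ and $s\in Se'$; then $es\in eSe'$ yields a $\KK(S)$-arrow $(e,es,e')\colon e'\to e$, and naturality gives $\eta_{e'}(q')=\eta_e(q)\cdot F(e,es,e')$, where $F(e,es,e')$ is represented by an element of $F(e)TF(e')\subseteq LU(T)$. Thus $f(q)\preceq f(q')$, and in particular $f$ descends to a monotone map on $P(Q)$. Conversely, if $\eta_{e'}(q')=\eta_e(q)\cdot t$ with $t\in LU(T)$, absorbing the idempotents on both sides lets one take $t\in F(e)TF(e')$; since $F$ is fully faithful, $t$ is the image of a unique $\KK(S)$-arrow $(e,s,e')$, and a second application of naturality, combined with the injectivity of $\eta_{e'}$, forces $q'=qs$. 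Hence $q\preceq q'$, which simultaneously establishes the injectivity of $f$ and the monotonicity of its set-theoretic inverse.

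\textbf{Surjectivity and the main obstacle.} For $q''\in Q'e''$ with $e''\in E(T)$, essential surjectivity of $F$ furnishes an isomorphism $t\colon F(e)\to e''$ in $\KK(T)$ for some $e\in E(S)$; both $t$ and $t^{-1}$ automatically belong to $LU(T)$. Since $\eta_e\colon Qe\to Q'F(e)$ is a bijection, pick $q\in Qe$ with $\eta_e(q)=q''\cdot t^{-1}$; then $q''=\eta_e(q)\cdot t\in\eta_e(q)\cdot LU(T)$, so $q''\sim\eta_e(q)$ and $f(q)=[q'']$ in $P(Q')$. The main, though not deep, technical point running through the whole argument is the bookkeeping of local units: at each invocation of naturality one must replace an abstract element of $LU(S)$ or $LU(T)$ by a triple $(e,\cdot,e')$ whose source and target are precisely the idempotents hosting the given elements of $I$, which is what makes well definedness the decisive preliminary and what justifies the repeated ``absorption of idempotents'' used in the other two steps.
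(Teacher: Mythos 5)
The paper does not actually contain a proof of Theorem~\ref{t:actionposet}: it is quoted as a result that ``will appear in~\cite{newpaper}'' (the companion paper on the Sch\"utzenberger category, listed as in preparation), so there is no in-paper argument to compare yours against. Your proof is correct and is the natural argument one would expect: you correctly identify and dispose of the one genuinely delicate point (independence of $\eta_e(q)LU(T)$ from the choice of idempotent $e$ with $q\in Qe$, via the arrows $(e,ee',e')$ and $(e',e'e,e)$ and naturality), you get the order-embedding property by converting elements of $LU(S)$, respectively $LU(T)$, into $\KK$-morphisms with the correct source and target and invoking naturality together with full faithfulness and the bijectivity of the components of $\eta$, and you get surjectivity from essential surjectivity of $F$ plus the fact that an isomorphism $F(e)\cong e''$ in $\KK(T)$ is implemented by mutually inverse elements of $LU(T)$. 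I see no gaps.
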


Define for a shift $\Cl X$, the poset $P(\Cl X) = P(Q(\Cl X))$.  Note that $q\preceq q'$ if and only if there is a path from $q$ to $q'$ in $\mathfrak K^0(\Cl X)$.

Theorems~\ref{t:main-D-cate-AYF} and~\ref{t:actionposet} immediately
imply the following.
\begin{Cor}\label{c:ISX-is-flow-invariant}
  The poset $P(\Cl X)$ is a flow equivalence invariant.
\end{Cor}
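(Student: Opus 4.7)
The plan is almost automatic: this corollary is a direct combination of the two theorems cited just before it, so I would only need to check that the hypotheses line up and that ``flow equivalence invariant'' is the right conclusion.

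First, suppose $\Cl X$ and $\Cl Y$ are flow equivalent shifts. By Theorem~\ref{t:main-D-cate-AYF}, the actions $(\mathbb A_{\Cl X},\KK(\Cl X))$ and $(\mathbb A_{\Cl Y},\KK(\Cl Y))$ are equivalent in $\mathbf{Act}_0$. Unpacking the definition of equivalence in $\mathbf{Act}_0$, this gives an equivalence of categories $F\colon \KK(\Cl X)\to \KK(\Cl Y)$ together with a natural isomorphism $\eta\colon \mathbb A_{\Cl X}\Rightarrow \mathbb A_{\Cl Y}\circ F$ (here using that $\mathbb A_{\Cl X}=\mathbb A_{Q(\Cl X)}$ and $\mathbb A_{\Cl Y}=\mathbb A_{Q(\Cl Y)}$ as set up in Definition~\ref{def:action-of-K(S)}).

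Next, I would apply Theorem~\ref{t:actionposet} to $S=S(\Cl X)$, $T=S(\Cl Y)$, $Q=Q(\Cl X)$, $Q'=Q(\Cl Y)$, and the pair $(F,\eta)$ just obtained. That theorem immediately provides an isomorphism of posets $P(Q(\Cl X))\to P(Q(\Cl Y))$, which by the definition $P(\Cl X)=P(Q(\Cl X))$ is precisely an isomorphism $P(\Cl X)\cong P(\Cl Y)$. Hence the isomorphism class of the poset $P(\Cl X)$ depends only on the flow equivalence class of $\Cl X$, as required.

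There is no real obstacle; the only thing worth spelling out is the unpacking of ``equivalent actions'' into the pair $(F,\eta)$ demanded by Theorem~\ref{t:actionposet}, but this is immediate from the definition of the category $\mathbf{Act}_0$ given in Subsection~\ref{sec:actions}. I would therefore write the proof as a single short paragraph, essentially: \emph{By Theorem~\ref{t:main-D-cate-AYF}, flow equivalent shifts $\Cl X,\Cl Y$ yield equivalent actions $(\mathbb A_{\Cl X},\KK(\Cl X))\sim(\mathbb A_{\Cl Y},\KK(\Cl Y))$, and then Theorem~\ref{t:actionposet} produces the desired poset isomorphism $P(\Cl X)\cong P(\Cl Y)$.}
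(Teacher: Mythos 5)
Your proposal is correct and matches the paper exactly: the paper derives this corollary as an immediate consequence of Theorems~\ref{t:main-D-cate-AYF} and~\ref{t:actionposet}, which is precisely the two-step combination you describe. The only difference is that you spell out the unpacking of ``equivalent actions'' into the pair $(F,\eta)$, which the paper leaves implicit.
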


\begin{Rmk}\label{rmk:what-psi-does-to-fischer-cover}
    If $\Cl X$ is a synchronizing shift then,
    by Proposition~\ref{p:fischer-cover},
    the elements of
    $Q_{\mathfrak F}(\Cl X)$ are the sink vertex $\emptyset$
    and the vertices of the form $R_{\Cl X}(u)$,
    with $u$ a synchronizing word.
    Therefore,
    Proposition~\ref{p:sync-stabilizing-idemp} implies that $qLU(\Cl X)$  belongs to $P(\Cl X)$ for
  every $q\in Q_{\mathfrak F}(\Cl X)$.  Moreover, by Remark~\ref{r:dead-end}, one has $qLU(\Cl X)=q'LU(\Cl X)$ for all $q,q'\in Q_{\mathfrak F}(\Cl X)\setminus \{\emptyset\}$ and this common element is the minimum element of $P(\Cl X)\setminus \{\emptyset\}$.
  Therefore, it follows from Theorem~\ref{t:actionposet}
  that $q\in Q_{\mathfrak F}(\Cl X)$
  if and only if $\eta_e(q)\in Q_{\mathfrak F}(\Cl Y)$,
  for every $e$ with $q\in Qe$.
\end{Rmk}

The following simple fact will be useful to specialize to sofic shifts.

\begin{Lemma}\label{l:trivial-remark-about-loops}
  Suppose that $\Cl X$ is a sofic shift.  Then $q$ belongs to
  a non-trivial strongly connected component
  of $\mathfrak K^0(\Cl X)$ if and only if
  there is an idempotent $e\in S(\Cl X)$ such that $q=q\cdot e$.
\end{Lemma}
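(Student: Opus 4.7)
The plan is to exploit the basic correspondence between cycles in $\mathfrak K^0(\Cl X)$ at a vertex $q$ and elements $s\in S(\Cl X)$ fixing $q$, together with finiteness of $S(\Cl X)$ in the sofic case to extract an idempotent.

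For the forward direction, assume $q$ lies in a non-trivial strongly connected component, so there exists a path of length $\geq 1$ from $q$ to $q$. I would split into two cases. If $q=\emptyset$, then $q\cdot 0 = \emptyset = q$ and the class $0\in S(\Cl X)$ is idempotent, so we are done. If $q\neq \emptyset$, then since $\emptyset$ is a sink (absorbing), the cycle at $q$ cannot be routed through $\emptyset$, so its edge labels form a word $u\in A^+$ with $q\cdot u=q$. Because $\Cl X$ is sofic, $S(\Cl X)$ is finite, so the sequence $[u],[u]^2,[u]^3,\ldots$ is eventually periodic and hence some power $e=[u]^n$ is idempotent in $S(\Cl X)$. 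Iterating $q\cdot u=q$ gives $q\cdot e=q$.

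For the backward direction, assume $q\cdot e=q$ for some idempotent $e\in S(\Cl X)$. If $e=0$, then $q=q\cdot 0=\emptyset$, and $\emptyset$ is its own non-trivial strongly connected component, because every letter of $A\cup\{0\}$ labels a loop at $\emptyset$. Otherwise $e\neq 0$, so $e=[u]$ for some $u\in A^+$ (the syntactic semigroup is generated by the classes of letters together with $0$). Writing $q=C_{\Cl X}(x)$, the equality $q\cdot u = q$ translates to $C_{\Cl X}(xu) = C_{\Cl X}(x)\neq \emptyset$ (the case $q=\emptyset$ having been treated), so each intermediate vertex $C_{\Cl X}(xu_{[1,i]})$ is non-empty and the path labeled $u$ is an honest cycle at $q$ of length $|u|\geq 1$ in $\mathfrak K^0(\Cl X)$. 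Hence $q$ sits in a non-trivial strongly connected component.

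The only subtlety to watch is the behaviour at the sink, which is a \emph{non-trivial} strongly connected component only thanks to the convention that a single vertex with a loop is non-trivial; this makes the $e=0$ case go through cleanly. Everything else is a straightforward pigeonhole argument using finiteness of $S(\Cl X)$, which is the one place where the sofic hypothesis is genuinely used.
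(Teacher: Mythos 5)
Your proof is correct and follows essentially the same route as the paper's: a word representing an idempotent labels a loop at $q$, and conversely a loop label $u$ at $q$ has some power $[u^m]$ idempotent by finiteness of $S(\Cl X)$, still fixing $q$. The extra case analysis around the sink vertex $\emptyset$ and the zero idempotent is harmless additional care that the paper leaves implicit.
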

\begin{proof}
If $qe=q$ with $e\in E(S(\Cl X))$, then any word representing $e$ labels a loop at $q$ and so $q$ belongs to a non-trivial strongly connected component.  Conversely, if $q$ belongs to a non-trivial strongly connected component, then there is a word $u$ labeling a non-empty loop rooted at $q$.
  Then $q=q\cdot [u^n]$ for all $n\geq 1$.
  On the other hand, since $S(\Cl X)$ is finite, there is $m\geq 1$ such that
  $[u^m]$ is idempotent (cf.~\cite[Proposition 1.6]{PinBook}.)
  This completes the proof.
\end{proof}

It is clear from Lemma~\ref{l:trivial-remark-about-loops} that $P(\Cl X)\cong PC(\mathfrak K^0(\Cl X))$ for a sofic shift $\Cl X$.

\begin{Cor}\label{c:proper-c-graph}
  The proper communication graph of the right (left) Krieger cover of a
  sofic shift is a flow equivalence invariant.
\end{Cor}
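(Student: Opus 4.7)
The plan is to derive the corollary directly from Corollary~\ref{c:ISX-is-flow-invariant} by identifying, for any sofic shift $\Cl X$, the flow-invariant poset $P(\Cl X)$ with the reachability poset $PC(\mathfrak K^0(\Cl X))$, and then passing from $\mathfrak K^0(\Cl X)$ to $\mathfrak K(\Cl X)$ by removing the canonical sink vertex.

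First I would verify the poset identification. By Lemma~\ref{l:trivial-remark-about-loops}, the set $I = Q(\Cl X)\cdot E(S(\Cl X))$ is precisely the set of vertices of $\mathfrak K^0(\Cl X)$ lying on a non-trivial strongly connected component, using that $q = q'\cdot e$ with $e$ idempotent forces $q = q\cdot e$. Next I would check that the preorder $\preceq$ on $I$ coincides with reachability in $\mathfrak K^0(\Cl X)$. The forward implication is immediate since elements of $LU(S(\Cl X))$ act on the Krieger cover. For the converse, given a directed path from $q$ to $q'$ labeled by some word $u$ together with idempotents $e, f$ such that $q = q\cdot e$ and $q' = q'\cdot f$, the equality $q' = q\cdot (e[u]f)$ holds and $e[u]f \in E(S)\cdot S\cdot E(S) = LU(S(\Cl X))$. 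Hence $\sim$ on $I$ agrees with the relation of lying in the same non-trivial strongly connected component, so $P(\Cl X)$ is isomorphic as a poset to $PC(\mathfrak K^0(\Cl X))$ with its reachability order. The proper communication graph of $\mathfrak K^0(\Cl X)$, being the graph of the irreflexive part of this poset, is therefore entirely determined by $P(\Cl X)$.

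Combining this identification with Corollary~\ref{c:ISX-is-flow-invariant} yields that the proper communication graph of $\mathfrak K^0(\Cl X)$ is a flow equivalence invariant. To conclude for $\mathfrak K(\Cl X)$, I would observe that its proper communication graph is obtained from that of $\mathfrak K^0(\Cl X)$ by deleting the sink class $\{\emptyset\}$, which is canonically characterized as the unique maximum of $PC(\mathfrak K^0(\Cl X))$ (the vertex $\emptyset$ is absorbing and reachable from every vertex). Any poset isomorphism therefore carries this maximum to the maximum, so the deletion commutes with the flow-equivalence-induced isomorphism furnished by Theorem~\ref{t:actionposet}. The left Krieger case is entirely symmetric, using the left versions of $Q(\Cl X)$, $\mathfrak K^0(\Cl X)$, and $P(\Cl X)$ together with the left-hand counterpart of Corollary~\ref{c:ISX-is-flow-invariant}. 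I do not anticipate any substantive obstacle beyond verifying that $e[u]f \in LU(S(\Cl X))$ in the preorder step, which is immediate from the definition $LU(S) = E(S)\cdot S\cdot E(S)$.
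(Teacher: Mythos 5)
Your proposal is correct and follows essentially the same route as the paper, which derives the corollary from Corollary~\ref{c:ISX-is-flow-invariant} via the identification $P(\Cl X)\cong PC(\mathfrak K^0(\Cl X))$ furnished by Lemma~\ref{l:trivial-remark-about-loops}. You simply make explicit two details the paper leaves implicit: the verification that $\preceq$ agrees with reachability (via $e[u]f\in LU(S(\Cl X))$) and the canonical removal of the sink class when passing from $\mathfrak K^0(\Cl X)$ to $\mathfrak K(\Cl X)$.
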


\section{The labeled preordered set of the $\D$-classes of
  $S(\Cl X)$}\label{sec:label-preord-set}

In this section we use Theorem~\ref{t:main-D-cate-AYF}
and a result from~\cite{newpaper}
to obtain a flow equivalence invariant
(Theorem~\ref{t:isomorphism-of-preordered-sets-with-ranks-first-version})
which, as we shall observe, improves some related results
from~\cite{Beal&Fiorenzi&Perrin:2005a,Costa:2006}.
We close the section with some examples.

\subsection{Abstract semigroup setting}

Given a semigroup $S$, let $\mathfrak {D}(S)$ be the set
of \D-classes of $S$.
Endow $\mathfrak {D}(S)$ with the preorder
$\preceq$ such that, if $D_1$ and $D_2$ belong to $\mathfrak {D}(S)$,
then $D_1\preceq D_2$ if and only if
there are $d_1\in D_1$ and $d_2\in D_2$ such that $d_2$
is a factor of $d_1$.
Note that if $\ci D=\ci J$ (for example, if $S$ is finite), then
$\preceq$ is a partial order.

If we assign to each element $x$ of a preordered set $P$
a label $\lambda_P(x)$ from some set, we obtain a new structure,
called \emph{labeled preordered set}.
A morphism in the category of
labeled preordered sets
is a morphism $\varphi\colon P\to Q$ of preordered sets
such $\lambda_Q\circ\varphi=\lambda_P$.

Suppose that the semigroup $S$ acts on a set $Q$.
Then each element of $S$ can be viewed as a transformation
on $Q$.
The \emph{rank} of a transformation is the cardinality of its
image. Elements of $S$ which are $\D$-equivalent have
the same rank, as transformations of $Q$ (cf.~\cite[Proposition 4.1]{Lallement:1979}.)
We define a labeled preordered set
$\mathfrak {D}_\ell\bigl(S)$ as follows. The underlying preordered set is the set of $\D$-classes of $LU(S)$.  The label of a $\D$-class $D$ is $(\varepsilon, H,r)$ where $\varepsilon\in\{0,1\}$, with $\varepsilon=1$ if and only if $D$ is regular, $H$ is the Sch\"utzenberger group of $D$ and $r$ is the rank
of an element of $D$, viewed as a mapping on $Q$ via the action of $S$.
Denote by $\mathfrak {D}_Q\bigl(S)$ the resulting labeled poset.  The following result will be proved in~\cite{newpaper}.

\begin{Thm}\label{t:preordered-sets-with-ranks}
  If $S$ is a semigroup with zero acting on a pointed set $Q$, then $\mathfrak {D}_Q\bigl(S)$ is a Karoubi invariant of $(Q,S)$.
\end{Thm}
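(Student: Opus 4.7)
The plan is to recover every datum of $\mathfrak{D}_Q(S)$ --- the set of $\D$-classes of $LU(S)$, the preorder $\preceq$, and the labels $(\varepsilon,H,r)$ --- from purely categorical features of the action $(\mathbb{A}_Q,\KK(S))$, so that any equivalence $F\colon \KK(S)\to \KK(T)$ together with a compatible natural isomorphism $\eta\colon \mathbb{A}_Q\Rightarrow \mathbb{A}_{Q'}\circ F$ automatically transports this data, yielding the sought isomorphism of labeled preordered sets.

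The first step is to install Green's relations inside $\KK(S)$. Each $s\in LU(S)$ decomposes as $s=esf$ with $e,f\in E(S)$ and so produces the non-zero morphism $(e,s,f)\colon f\to e$. I would declare two morphisms of $\KK(S)$ to be $\R_C$-related when they share a target and each is obtained from the other by precomposition with a morphism of $\KK(S)$ (identities allowed), dually for $\L_C$, and set $\D_C=\R_C\circ\L_C$. A direct computation on triples, combined with the standard observation that absorbing the available local idempotents converts any $\D$-witness in $S$ into a $\D$-witness inside $LU(S)$, shows that $(e,s,f)\mathrel{\D_C}(e',s',f')$ if and only if $s$ and $s'$ belong to the same $\D$-class of $LU(S)$, and that the resulting $\D_C$-class of $(e,s,f)$ does not depend on the chosen decomposition. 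This produces a canonical bijection between $\mathfrak{D}(LU(S))$ and the set of $\D_C$-classes of non-zero morphisms of $\KK(S)$; since $F$ is fully faithful and respects composition, it preserves $\R_C,\L_C,\D_C$, and transport along $F$ then yields a bijection $\Phi\colon \mathfrak{D}(LU(S))\to \mathfrak{D}(LU(T))$.

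The preorder and the label $\varepsilon$ then come out directly. The relation $D_1\preceq D_2$ unpacks, at the level of $\KK(S)$, to the existence of representative morphisms $\alpha_i\in D_i$ and morphisms $\gamma,\delta$ of $\KK(S)$ with $\alpha_1=\gamma\alpha_2\delta$, a property preserved by any functor (and by a quasi-inverse of $F$). A $\D$-class of $LU(S)$ is regular exactly when its $\D_C$-class contains an identity morphism $1_e$, a property preserved by $F$ since it sends identities to identities.

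The remaining two labels call for slightly more work. For the Schützenberger group $H$, in the regular case Lemma~\ref{l:isomorphism-criterion} identifies $H$ with the automorphism group of any idempotent in the class viewed as an object of $\KK(S)$, and functors carry automorphism groups to automorphism groups; in the non-regular case, I would describe $H$ intrinsically as the group acting freely and transitively on any $\H$-class inside a $\D_C$-class by composition with units of the appropriate local monoid of $\KK(S)$, a description manifestly invariant under equivalences of categories. For the rank $r$, the equality $Qs=\mathbb{A}_Q(e)\cdot(e,s,f)$ realizes the rank as the cardinality of the image of the map $\mathbb{A}_Q(e)\to \mathbb{A}_Q(f)$ induced by $(e,s,f)$; the naturality square of $\eta$ transports this image bijectively, via $\eta_f$, to the image of the map $\mathbb{A}_{Q'}(F(e))\to \mathbb{A}_{Q'}(F(f))$ induced by $F((e,s,f))$, so the ranks on the two sides of $\Phi$ agree. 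The subtlest point of the plan is the treatment of the Schützenberger group in the non-regular case, where Lemma~\ref{l:isomorphism-criterion} does not apply directly and one must extract the group from hom-set data alone.
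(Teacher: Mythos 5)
First, a point of comparison that cannot be made: the paper does not prove Theorem~\ref{t:preordered-sets-with-ranks} at all --- it is stated with the explicit remark that the proof will appear in the companion paper~\cite{newpaper} --- so there is no in-paper argument to measure yours against. Judged on its own terms, your architecture is the natural one and most of it is sound: the identification of your $\R_C$, $\L_C$, $\D_C$ on morphisms of $\KK(S)$ with Green's relations on $LU(S)$ is correct (including the localization of $\D$-witnesses from $S$ into $LU(S)$, and the independence of the $\D_C$-class from the chosen pair of local units), the preorder and the regularity bit $\varepsilon$ are visibly categorical and reflected by a quasi-inverse, and the rank argument via the naturality square for $\eta$ is exactly right, since the image of $\mathbb A_Q(e)\to\mathbb A_Q(f)$ induced by $(e,s,f)$ is $Qs$ and $\eta_f$ carries it bijectively onto the corresponding image on the other side.

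The genuine gap is your treatment of the Sch\"utzenberger group of a non-regular class, which you yourself flag as the subtle point but then describe incorrectly. You propose that $H$ ``acts freely and transitively on any $\H$-class inside a $\D_C$-class by composition with units of the appropriate local monoid of $\KK(S)$.'' This fails on two counts. If $u$ is a unit of $eSe$ and $h\in eSf$, then $uh\mathrel{\L}h$ but in general $uh$ is not $\R$-equivalent to $h$, so composition with units of $\KK(S)(e,e)$ does not even preserve the $\H_C$-class of $(e,h,f)$; and even when it does, the Sch\"utzenberger group of a non-regular $\H$-class is typically not realized by invertible elements of any local monoid (it is a quotient of a submonoid of $eSe$ by the kernel of an action, and such a quotient can be a nontrivial group even when the unit groups of all local monoids are trivial). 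The correct hom-set-level surrogate is the \emph{stabilizer}, not the unit group: for $H_C$ the $\H_C$-class of $(e,h,f)$ inside $\KK(S)(f,e)$, take $T=\{\gamma\in\KK(S)(e,e)\mid \gamma H_C\subseteq H_C\}$ and quotient $T$ by the kernel of its action on $H_C$; since every element of the left stabilizer of the $\H$-class of $h$ in $LU(S)^1$ acts on it as an element of $eSe$ does (absorb $e$ on both sides), this quotient is the Sch\"utzenberger group, and being built purely from hom-sets and composition it is transported by any equivalence together with its free and transitive action. With that replacement your plan closes; as written, the step would fail.
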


\subsection{Application to shifts}

Consider a shift $\Cl X$.
 Recall that $S(\Cl X)$ acts faithfully as a transformation semigroup
 on the set $Q(\Cl X)$,
 and that if $\Cl X$ is synchronizing then the restriction
 of this action to $Q_{\mathfrak F}(\Cl X)$ is also
 faithful.

 The sink state $\emptyset$ is not a vertex of the Krieger cover of
$\Cl X$. Therefore, following analogous conventions found
in~\cite{Beal&Fiorenzi&Perrin:2005a,Costa:2006},
we let $\mathfrak {KD}(\Cl X)$  be the labeled preordered set obtained from $\mathfrak D_{Q(\Cl X)}(LU(\Cl X))$,
by replacing  the label $(\varepsilon,H,r)$
of each $\D$-class $D$ by the label $(\varepsilon,H,r-1)$.

Similarly, if $\Cl X$ is synchronizing,
then we replace
in $\mathfrak D_{Q_{\mathfrak F}(\Cl X)}(LU(\Cl X))$ the label $(\varepsilon,H,r)$
of each $\D$-class $D$ by $(\varepsilon,H,r-1)$, and
denote  the resulting labeled preordered
set by $\mathfrak {FD}(\Cl X)$. An immediate consequence of
  Theorems~\ref{t:main-D-cate-AYF} and~\ref{t:preordered-sets-with-ranks} is then the following.

\begin{Thm}\label{t:isomorphism-of-preordered-sets-with-ranks-first-version}
  For every shift $\Cl X$,
  the labeled preordered  set
   $\mathfrak {KD}(\Cl X)$
   is a flow equivalence invariant.
   If $\Cl X$ is synchronizing then
   $\mathfrak {FD}(\Cl X)$
   is also a flow equivalence invariant.
\end{Thm}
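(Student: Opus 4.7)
The plan is to deduce the theorem as a direct combination of Theorem~\ref{t:main-D-cate-AYF} (respectively Theorem~\ref{t:main-D-cate-AYF-Fischer-version} in the synchronizing case) with the Karoubi invariance statement Theorem~\ref{t:preordered-sets-with-ranks}. Everything is already set up so that the construction of $\mathfrak{KD}(\Cl X)$ factors through the action of the Karoubi envelope on the Krieger cover.

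First, suppose $\Cl X$ and $\Cl Y$ are flow equivalent. By Theorem~\ref{t:main-D-cate-AYF}, the actions $(\mathbb A_{\Cl X},\KK(\Cl X))$ and $(\mathbb A_{\Cl Y},\KK(\Cl Y))$ are equivalent in the category $\mathbf{Act}_0$. Theorem~\ref{t:preordered-sets-with-ranks} then yields an isomorphism of labeled preordered sets
\[
\mathfrak{D}_{Q(\Cl X)}\bigl(S(\Cl X)\bigr)\cong \mathfrak{D}_{Q(\Cl Y)}\bigl(S(\Cl Y)\bigr),
\]
where the underlying posets are the $\D$-classes of $LU(S(\Cl X))$ and $LU(S(\Cl Y))$, and where corresponding $\D$-classes carry the same label $(\varepsilon,H,r)$ (regularity, Sch\"utzenberger group, and rank as a transformation of the respective pointed sets of states). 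Now the passage from $\mathfrak{D}_{Q(\Cl X)}(S(\Cl X))$ to $\mathfrak{KD}(\Cl X)$ replaces each rank $r$ uniformly by $r-1$; this uniform relabeling is performed on both sides, so the isomorphism above descends to an isomorphism $\mathfrak{KD}(\Cl X)\cong \mathfrak{KD}(\Cl Y)$ of labeled preordered sets.

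For the synchronizing case, the argument is identical except that I would invoke Theorem~\ref{t:main-D-cate-AYF-Fischer-version} in place of Theorem~\ref{t:main-D-cate-AYF}: flow equivalent synchronizing shifts have equivalent actions $(\mathbb A^{\mathfrak F}_{\Cl X},\KK(\Cl X))$ and $(\mathbb A^{\mathfrak F}_{\Cl Y},\KK(\Cl Y))$, and Theorem~\ref{t:preordered-sets-with-ranks} applied to these restricted actions produces an isomorphism $\mathfrak{D}_{Q_{\mathfrak F}(\Cl X)}(S(\Cl X))\cong \mathfrak{D}_{Q_{\mathfrak F}(\Cl Y)}(S(\Cl Y))$, which descends to the desired $\mathfrak{FD}(\Cl X)\cong\mathfrak{FD}(\Cl Y)$ after the same uniform rank shift.

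The only mildly delicate point, and the one I would double-check rather than label an obstacle, is that the equivalence of actions genuinely preserves the rank. This is actually built into Theorem~\ref{t:preordered-sets-with-ranks}, but conceptually it is because a morphism $(e,s,f)$ of $\KK(S(\Cl X))$ acts as a map $Q(\Cl X)f\to Q(\Cl X)e$, and an equivalence of actions supplies bijections from these sets to $Q(\Cl Y)F(f)$ and $Q(\Cl Y)F(e)$ intertwining with the action of $F(e,s,f)$; cardinalities of images thus match. Given the black-box availability of Theorem~\ref{t:preordered-sets-with-ranks}, the proof really is little more than an assembly, and no further work beyond invoking the two theorems and noting that the $-1$ adjustment is performed symmetrically is required.
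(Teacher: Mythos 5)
Your proof is correct and follows exactly the route the paper takes: the paper derives this theorem as "an immediate consequence of Theorems~\ref{t:main-D-cate-AYF} and~\ref{t:preordered-sets-with-ranks}" (with Theorem~\ref{t:main-D-cate-AYF-Fischer-version} implicitly supplying the synchronizing case). Your additional remarks on the uniform rank shift and on why equivalences of actions preserve rank are sound elaborations of what the paper leaves implicit.
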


Theorem~\ref{t:isomorphism-of-preordered-sets-with-ranks-first-version} improves on a weaker conjugacy invariant introduced
in the doctoral thesis of the first author~\cite{Costa:2007}.
The invariance
under conjugacy of
$\mathfrak {KD}(\Cl X)$
when $\Cl X$ is sofic
(and of
$\mathfrak {FD}(\Cl X)$ if $\Cl X$ is moreover irreducible)
was first proved in~\cite{Costa:2006}. Forgetting the non-regular $\D$-classes,
we extract
  the weaker conjugacy invariants of sofic shifts, first obtained in~\cite{Beal&Fiorenzi&Perrin:2005a}.

\subsection{Examples}

\begin{Example}\label{eg:complicated-example}
Let \Cl X and \Cl Y be the
irreducible sofic
shifts on
 the $14$-letter alphabet $\{a_1,\ldots,a_{14}\}$
  whose right Fischer covers
  are respectively represented in Figure~\ref{fig:difficult-example}.
  The dashed edges are those whose label appears only in one edge.
    \begin{figure}[h]
    \includegraphics[scale=0.85,angle=0]{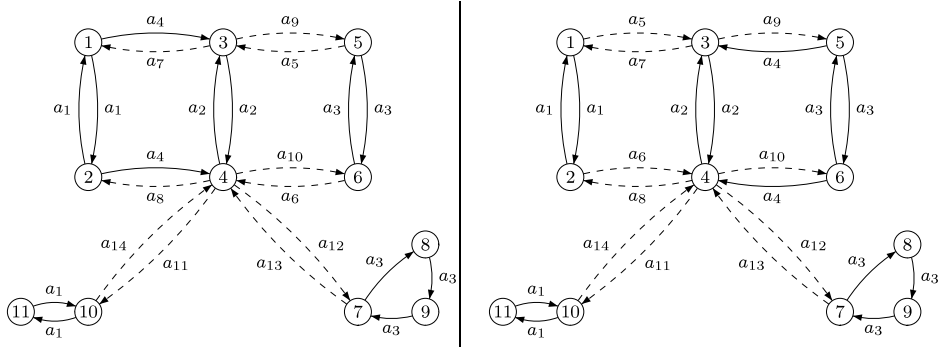}
  \caption{The irreducible sofic shifts \Cl X and \Cl Y
    in Example~\ref{eg:complicated-example}.}
    \label{fig:difficult-example}
  \end{figure}
  The labeled ordered sets
  $\mathfrak {KD}(\Cl X)$
  and
  $\mathfrak {KD}(\Cl Y)$
  are represented in Figure~\ref{fig:lpijsi-x-y}
  by their Hasse diagrams,
  where within each node of the diagram we find information
  identifying the $\D$-class and its label.
  For example, in the first diagram, the notation
  $a_2|(1,C_2,3)$ means the node represents
  the $\D$-class of $[a_2]_\Cl X$ and
  that its label is the triple $(1,C_2,3)$, where,
  as usual, $C_n$ denotes the cyclic group of order $n$.
  The computations were carried out using \pv{GAP}~\cite{GAP4:2006,Delgado&Linton&Morais:automata,Delgado&Morais:sgpviz}.
  The labeled ordered sets
  $\mathfrak {KD}(\Cl X)$
  and
  $\mathfrak {KD}(\Cl Y)$
  are not isomorphic, hence \Cl X and \Cl Y are not flow
  equivalent.

\begin{figure}[h]
  \centering
    \includegraphics[scale=0.85,angle=0]{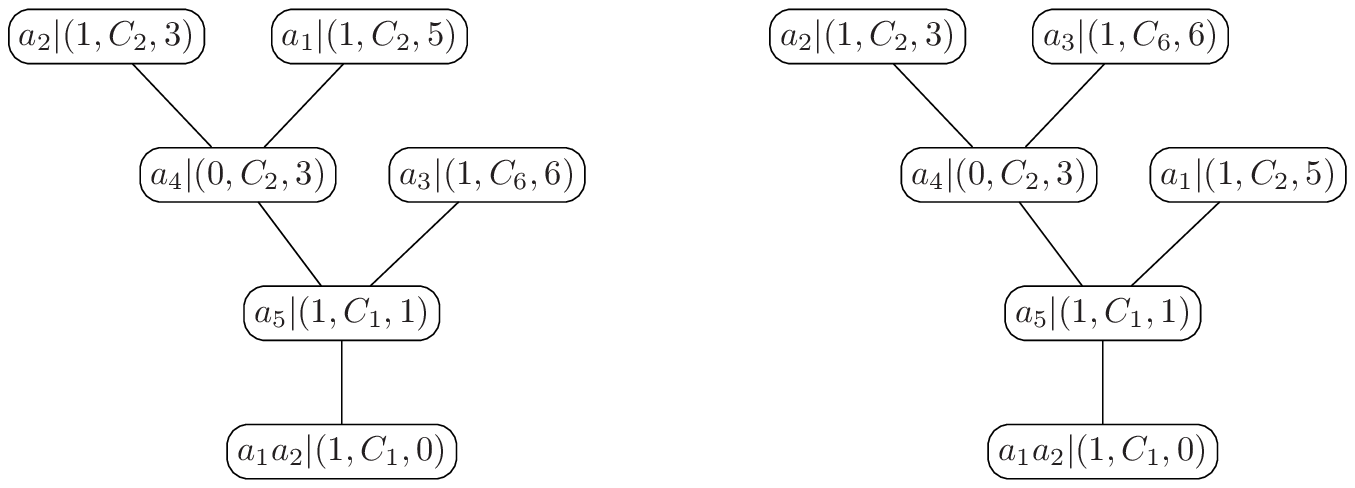}
  \caption{
  $\mathfrak {KD}(\Cl X)$
  and $\mathfrak {KD}(\Cl Y)$.}
  \label{fig:lpijsi-x-y}
\end{figure}
\end{Example}

\begin{Rmk}
  In Example~\ref{eg:complicated-example},
  the Fischer covers of $\Cl X$
  and $\Cl Y$ have the same  underlying labeled
  graph, as seen in Figure~\ref{fig:difficult-example}.
   The
  left Fischer covers, and the right and left Krieger covers
  of $\Cl X$ and $\Cl Y$,
  also have respectively the same  underlying labeled
  graph. The interest in this remark is
  that Rune~Johansen announced in his doctoral thesis~\cite{Johansen:2011b}
  a result of
  Boyle, Carlsen and Eilers~\cite{Boyle&Carlsen&Eilers} stating that if two sofic shifts are flow
  equivalent then the edge shifts defined by the underlying
  unlabeled
  graphs of their right (left) Krieger covers are also flow equivalent, in a
  canonical way~(cf.~\cite[Theorem 2.11 and Proposition
  2.12]{Johansen:2011b}), and
  in the case of irreducible sofic shifts, the same happens
  with the edge shifts defined by the right (left) Fischer covers.
\end{Rmk}

  \begin{Example}\label{eg:two-monoids}
    Let $\Cl X$ and $\Cl Y$ be the irreducible sofic shifts
    whose Fischer covers are respectively
    presented in Figure~\ref{fig:monoidal-shifts}.
            \begin{figure}[h]
          \includegraphics[scale=0.85,angle=0]{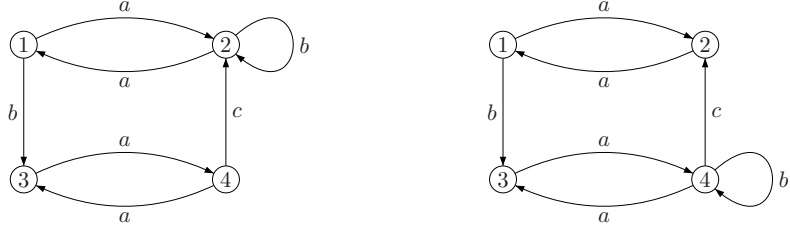}
  \caption{Two irreducible shifts $\Cl X$ and  $\Cl Y$.}
    \label{fig:monoidal-shifts}
  \end{figure}
    Note that $a^2$ fixes all vertices, whence $S(\Cl X)$ and $S(\Cl Y)$ are monoids.
    The $\D$-class of the identity is at the top of the Hasse diagram
    of the $\D$-classes, and so,
    by~Theorem~\ref{t:isomorphism-of-preordered-sets-with-ranks-first-version},
    flow equivalent shifts whose syntactic semigroup is a
    monoid have the same number of vertices in the Krieger cover,
    and also in the Fischer cover if they are synchronizing.
 Using
 \pv{GAP}~\cite{GAP4:2006,Delgado&Linton&Morais:automata},
 one can check that the right Krieger covers of $\Cl X$ and $\Cl Y$ have the same
 proper communication graph, but distinct number of
    vertices ($7$ and $6$, respectively.) Hence,
  $\mathfrak {KD}\bigl(S(\Cl X)\bigr)\ncong \mathfrak
  {KD}\bigl(S(\Cl Y)\bigr)$, and so
  $\Cl X$ and $\Cl Y$ are not flow equivalent by
  Theorem~\ref{t:isomorphism-of-preordered-sets-with-ranks-first-version}.
  \end{Example}

 Clearly, two equivalent categories have the same local monoids, up to
 isomorphism.
 Therefore, from Theorem~\ref{splittingisinvflow},
 we immediately extract the following criterion for sofic shifts
 whose syntactic semigroup is a monoid.

 \begin{Prop}\label{p:flow-criterion-for-monoidal-case}
   Let $\Cl X$ and $\Cl Y$ be flow equivalent shifts.
   If $S(\Cl X)$ is a monoid then $S(\Cl X)$ embeds into $S(\Cl Y)$.
   In particular, if $S(\Cl X)$ and $S(\Cl Y)$ are both finite monoids, then
   $S(\Cl X)$ and $S(\Cl Y)$ are isomorphic.\qed
 \end{Prop}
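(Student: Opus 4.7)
The plan is to apply Theorem~\ref{splittingisinvflow} directly and then translate the resulting equivalence of Karoubi envelopes into a statement about local monoids, exploiting the remark in the preamble that equivalent categories share their endomorphism monoids up to isomorphism, together with the identification $eSe \cong \KK(S)(e,e)$ recorded earlier in the paper.

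First I would observe that since $S(\Cl X)$ is a monoid, its identity $1$ is an idempotent and $1\cdot S(\Cl X)\cdot 1 = S(\Cl X)$; in particular $LU(S(\Cl X)) = S(\Cl X)$, so no information is lost when passing to the Karoubi envelope. By Theorem~\ref{splittingisinvflow} there exists an equivalence of categories $F\colon \KK(\Cl X)\to \KK(\Cl Y)$. Set $e = F(1)$, which is an object of $\KK(\Cl Y)$, i.e., an idempotent of $S(\Cl Y)$. Because $F$ is fully faithful, it induces a monoid isomorphism between the endomorphism monoids $\KK(\Cl X)(1,1)$ and $\KK(\Cl Y)(e,e)$. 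Combining this with the canonical identifications $\KK(\Cl X)(1,1)\cong 1\cdot S(\Cl X)\cdot 1 = S(\Cl X)$ and $\KK(\Cl Y)(e,e)\cong e\,S(\Cl Y)\,e$ yields
$$S(\Cl X) \;\cong\; e\,S(\Cl Y)\,e.$$
Since $e\,S(\Cl Y)\,e$ is a subsemigroup of $S(\Cl Y)$, this provides the required embedding.

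For the second sentence, assume $S(\Cl X)$ and $S(\Cl Y)$ are both finite monoids. Applying the previous paragraph to $(\Cl X,\Cl Y)$ and to $(\Cl Y,\Cl X)$ produces embeddings $S(\Cl X)\hookrightarrow S(\Cl Y)$ and $S(\Cl Y)\hookrightarrow S(\Cl X)$. Finiteness then forces $|S(\Cl X)|=|S(\Cl Y)|$, so each injective homomorphism is bijective, hence an isomorphism.

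I do not foresee any substantial obstacle: the result is essentially a bookkeeping exercise once Theorem~\ref{splittingisinvflow} is in hand. The only mildly delicate point is to verify that the monoid identity $1$ really is an object of $\KK(\Cl X)$ and that its image under an equivalence is an idempotent of $S(\Cl Y)$, which is exactly what makes the symmetric application of the argument, and hence the finite case, work.
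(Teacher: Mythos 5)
Your proposal is correct and follows exactly the route the paper intends: the proposition is stated with \qed precisely because it is the immediate consequence of Theorem~\ref{splittingisinvflow} together with the remark that equivalent categories have the same local monoids up to isomorphism and the identification $eSe\cong\KK(S)(e,e)$. Your bookkeeping (taking $e=F(1)$, using full faithfulness on the hom-set $\KK(\Cl X)(1,1)$, and running the argument symmetrically in the finite case) is the same argument spelled out.
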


 \begin{Example}
   We return to the shifts from Example~\ref{eg:two-monoids}.
   Simple computations with \pv{GAP} show that the monoids $S(\Cl X)$ and $S(\Cl Y)$
   are not isomorphic and so
   Proposition~\ref{p:flow-criterion-for-monoidal-case}
  again shows that $\Cl X$ and $\Cl Y$ are not flow equivalent.
 \end{Example}

 Next we give an example concerning non-sofic synchronizing shifts.

 \begin{Example}
   In Figure~\ref{fig:synchronizing-example}
   one sees the
   Fischer covers of two sofic shifts, respectively
   $\Cl X$ and $\Cl Y$, both having
   $bc$ as a synchronizing word (for checking that these are Fischer covers,
   one may use~\cite[Theorem 2.16]{Fiebig&Fiebig:1991}.)
                  \begin{figure}[h]
          \includegraphics[scale=0.85,angle=0]{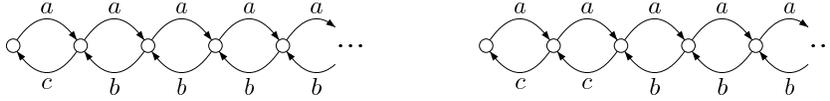}
  \caption{Fischer covers of two synchronizing shifts.}
    \label{fig:synchronizing-example}
  \end{figure}
   If $u\in L(\Cl X)$, then the
   rank of the action of $[u]_\Cl X$ in $\mathfrak F(\Cl X)$
   is one or infinite,
   depending on whether $c$ is a factor of $u$ or not.
   On the other hand,
   the idempotent $[av]_\Cl Y$ has rank two. Therefore
   $\Cl X$ and $\Cl Y$ are not flow equivalent, by
   Theorem~\ref{t:isomorphism-of-preordered-sets-with-ranks-first-version}.
 \end{Example}

\section{Classes of sofic shifts defined by classes of
  semigroups}\label{sec:classes-sofic-shifts}

Given a class $\pv V$ of semigroups with zero closed under isomorphism, we denote by
$\ci S(\pv V)$ the class of
shifts~$\Cl X$ such that $S(\Cl X)$ belongs to
$\pv V$.
The \emph{localization of\/ $\pv V$}, denoted by~$\L\pv V$, is the
class of semigroups $S$ whose local monoids $eSe$ (with
$e\in E(S)$) belong to~$\pv V$.
We mention that it follows easily from known
results that an irreducible shift $\Cl X$ is
of finite type if and only if
$S(\Cl X)\in \L\pv {Sl}$ (cf.~\cite[Proposition 4.2]{Costa:2006b}) where $\pv {Sl}$ is the class of finite idempotent and commutative monoids.

Since the local monoids of $S$ are precisely the endomorphism monoids of the category $\KK (S)$, the following is immediate.

\begin{Thm}\label{t:localV-invariant-under-flow-equivalence}
  If $\pv V$ is a class of monoids with zero closed under isomorphism,
  then the class $\ci S(\L\pv V)$ is closed under
  flow equivalence.
\end{Thm}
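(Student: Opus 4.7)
The plan is to reduce the statement immediately to Theorem~\ref{splittingisinvflow} and the correspondence between local monoids of a semigroup and endomorphism monoids in its Karoubi envelope. Indeed, the excerpt already records that $eSe$ and $\KK(S)(e,e)$ are isomorphic for every $e\in E(S)$, so the class $\L\pv V$ is cut out by a condition on the hom-sets $\KK(S)(e,e)$. Since these hom-sets are the endomorphism monoids of the objects of $\KK(S)$, membership of $S$ in $\L\pv V$ depends only on the equivalence class of $\KK(S)$, and this is exactly what Theorem~\ref{splittingisinvflow} transports across flow equivalence.

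Concretely, I would argue as follows. Assume $\Cl X\in\ci S(\L\pv V)$ and $\Cl Y$ is flow equivalent to $\Cl X$. By Theorem~\ref{splittingisinvflow} there is an equivalence of categories $F\colon\KK(\Cl X)\to\KK(\Cl Y)$. Pick an arbitrary $f\in E(S(\Cl Y))$; we must show that $fS(\Cl Y)f\in\pv V$. Since $F$ is essentially surjective, there exists $e\in E(S(\Cl X))$ with $F(e)\cong f$ in $\KK(\Cl Y)$. Any isomorphism between two objects of a category induces, by conjugation, an isomorphism of their endomorphism monoids, so $\KK(\Cl Y)(F(e),F(e))\cong\KK(\Cl Y)(f,f)$. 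On the other hand, $F$ is fully faithful, hence restricts to a monoid isomorphism $\KK(\Cl X)(e,e)\cong\KK(\Cl Y)(F(e),F(e))$. Combining these with the standing isomorphisms $eS(\Cl X)e\cong\KK(\Cl X)(e,e)$ and $\KK(\Cl Y)(f,f)\cong fS(\Cl Y)f$, we obtain $fS(\Cl Y)f\cong eS(\Cl X)e$. By hypothesis $eS(\Cl X)e\in\pv V$, and because $\pv V$ is closed under isomorphism we conclude $fS(\Cl Y)f\in\pv V$. Thus $S(\Cl Y)\in\L\pv V$, i.e., $\Cl Y\in\ci S(\L\pv V)$.

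There is no real obstacle here: once one accepts Theorem~\ref{splittingisinvflow} and the identification of local monoids with endomorphism monoids in $\KK(S)$, the result is just a reformulation of the fact that equivalences of categories preserve the isomorphism types of endomorphism monoids at corresponding objects. The only place where the hypothesis on $\pv V$ is used is the closure under isomorphism, needed in the last step to pass from $fS(\Cl Y)f\cong eS(\Cl X)e$ to $fS(\Cl Y)f\in\pv V$.
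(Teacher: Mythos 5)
Your proof is correct and follows exactly the paper's route: the paper simply observes that local monoids of $S$ are the endomorphism monoids of $\KK(S)$ and declares the result "immediate" from Theorem~\ref{splittingisinvflow}, and your argument is precisely the spelled-out version of that one-line deduction.
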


More generally,
one can use Theorem~\ref{splittingisinvflow}
to show that the classes proved in~\cite{Costa:2006b}
to be closed under conjugacy are actually closed under flow equivalence.

Theorem~\ref{t:localV-invariant-under-flow-equivalence}
provides a method to show that some
natural classes of sofic shifts
are closed under flow equivalence.
As an example, let us consider
the class of \emph{almost finite type} shifts.
For background and motivation
see~\cite[Section 13.1]{MarcusandLind}.
The shifts in Examples~\ref{eg:complicated-example}
and~\ref{eg:two-monoids} are almost finite type shifts.
Denote by $\ci S_I(\pv V)$
the intersection of $\ci S(\pv V)$
with the class of irreducible shifts.
It turns out that
the class of almost finite type shifts is
$S_I(\L\, \pv {ECom})$,
where $\pv {ECom}$ is the class of finite monoids with commuting
idempotents~\cite{Costa:2006b}. Hence,
from Theorem~\ref{t:localV-invariant-under-flow-equivalence} we
deduce the next result, first proved in~\cite{Fujiwara&Osikawa:1987}.

\begin{Thm}
The class of almost finite type shifts is closed under flow
equivalence.\qed
\end{Thm}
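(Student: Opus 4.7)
The plan is to derive this theorem as a direct consequence of Theorem~\ref{t:localV-invariant-under-flow-equivalence} combined with the characterization of almost finite type shifts as $\ci S_I(\L\, \pv{ECom})$ cited just above from~\cite{Costa:2006b}. All the substantive work has already been done; what remains is an intersection-of-invariant-classes argument.

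First I would verify that $\pv{ECom}$ is closed under isomorphism: this is immediate, since being a finite monoid in which any two idempotents commute is preserved by isomorphism. Theorem~\ref{t:localV-invariant-under-flow-equivalence} then yields that $\ci S(\L\, \pv{ECom})$ is closed under flow equivalence. Next I would invoke the fact, recorded at the end of Section~\ref{sec:flow-equivalence}, that the class of irreducible shifts is closed under flow equivalence. Intersecting this with $\ci S(\L\, \pv{ECom})$, I obtain that $\ci S_I(\L\, \pv{ECom})$ is also closed under flow equivalence. By the cited characterization, this intersection is exactly the class of almost finite type shifts, so the theorem follows.

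I expect no substantive obstacle: the real content lies already in Theorem~\ref{splittingisinvflow}, which underlies Theorem~\ref{t:localV-invariant-under-flow-equivalence}, and in the semigroup-theoretic characterization from~\cite{Costa:2006b}. The only technical point worth checking is that $\pv{ECom}$ can be treated as a class of monoids with zero as required by Theorem~\ref{t:localV-invariant-under-flow-equivalence}; this is a formality, since the commuting-idempotents property is preserved under adjoining a zero, and the local monoids $eS(\Cl X)e$ of the syntactic semigroup carry a natural zero whenever it lies in them.
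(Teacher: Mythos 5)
Your proposal is correct and follows exactly the route the paper takes: the theorem is stated there as an immediate consequence of Theorem~\ref{t:localV-invariant-under-flow-equivalence}, the characterization of almost finite type shifts as $\ci S_I(\L\,\pv{ECom})$ from~\cite{Costa:2006b}, and the closure of the class of irreducible shifts under flow equivalence. Your extra remarks (isomorphism-closure of $\pv{ECom}$ and the zero in the local monoids $eS(\Cl X)e$) are routine points the paper leaves implicit.
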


In~\cite{Beal:1993,Beal&Fiorenzi&Perrin:2005a} a class of sofic shifts called \emph{aperiodic shifts} is studied.
In~\cite{Beal&Fiorenzi&Perrin:2005a}
it is proved that the class of irreducible aperiodic
shifts is~$S_I(\pv A)$,
where $\pv A$ is the class of finite \emph{aperiodic semigroups} (a semigroup
is \emph{aperiodic} if the group of units of each of its local monoids is trivial),
and that this class is closed under conjugacy.
Since $\pv A=\L\pv A$
we obtain the following improvement.

\begin{Thm}
The class of irreducible aperiodic shifts is closed under flow equivalence.\qed
\end{Thm}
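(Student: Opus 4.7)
The plan is to deduce this theorem as an immediate corollary of Theorem~\ref{t:localV-invariant-under-flow-equivalence}, by taking for $\pv V$ the class of finite aperiodic monoids with zero, exactly as done just above for almost finite type shifts and their class $\pv{ECom}$. This class $\pv V$ is trivially closed under isomorphism, so the hypothesis of that theorem is met. The single point that needs to be checked before invoking it is the stated identity $\pv A = \L\pv A$.

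The verification of $\pv A = \L\pv A$ is where the definition of aperiodicity quoted in the statement does all the work. A finite semigroup $S$ lies in $\pv A$ precisely when the group of units of each local monoid $eSe$ (for $e \in E(S)$) is trivial; but the group of units of $eSe$ is a monoid (with identity $e$), and triviality of its group of units is exactly the condition that $eSe \in \pv V$. Thus $S \in \pv A$ if and only if $eSe \in \pv V$ for every idempotent $e$, which is the definition of $S \in \L\pv V$. Hence $\L\pv V = \pv A$.

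With this identification, Theorem~\ref{t:localV-invariant-under-flow-equivalence} applied to $\pv V$ gives that $\ci S(\pv A) = \ci S(\L\pv V)$ is closed under flow equivalence. To obtain the statement for irreducible shifts, I would intersect this class with the class of irreducible shifts and invoke the observation recorded in Subsection~\ref{sec:flow-equivalence} that irreducibility is itself preserved by flow equivalence; the intersection of two flow-invariant classes is flow-invariant, yielding closure of $\ci S_I(\pv A)$ under flow equivalence. There is no real obstacle here: the substantive content resides in Theorem~\ref{splittingisinvflow} and its consequence Theorem~\ref{t:localV-invariant-under-flow-equivalence}, while the present statement reduces to the elementary remark that aperiodicity of a finite semigroup is already a local property in the sense of Tilson's localization operator.
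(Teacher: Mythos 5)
Your proof is correct and follows the paper's own route exactly: identify the class of irreducible aperiodic shifts with $\ci S_I(\pv A)$, observe that $\pv A=\L\pv A$, and apply Theorem~\ref{t:localV-invariant-under-flow-equivalence} together with the flow-invariance of irreducibility. One small wording slip: a finite monoid with trivial group of units need not be aperiodic, so the clause ``triviality of its group of units is exactly the condition that $eSe\in\pv V$'' should instead appeal to the fact that the local monoids of $eSe$ are again local monoids of $S$ (of the form $fSf$ with $f\leq e$), which is what actually gives $\L\pv V=\pv A$ in both directions.
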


\section{Markov-Dyck and Markov-Motzkin shifts}\label{sec:markov-dyck-shifts}

The examples we have given so far were mainly of sofic or synchronizing
shifts.
In this section
we apply Theorem~\ref{splittingisinvflow} to classify with respect to flow
equivalence two classes of
shifts,  introduced and studied by W.~Krieger and his collaborators~\cite{Krieger&Matsumoto:2011,Kriger:2006,Krieger:2000,Krieger:1974,Hamachi&Inoue&Krieger:2009}, that in general are non-sofic and
non-synchronizing.
These shifts, called Markov-Dyck and Markov-Motzkin shifts,
are the main subject of the article~\cite{Krieger&Matsumoto:2011}.
Previously, they had appeared as special cases of more general
constructions in~\cite{Kriger:2006,Krieger:2000,Krieger:1974,Hamachi&Inoue&Krieger:2009}.  These shifts are defined in terms of graph inverse semigroups.

An inverse semigroup $S$ is a semigroup such that for all $s\in S$,
there is a unique element $s^*$ such that both $ss^*s=s$ and
$s^*ss^*=s^*$ hold.  Note that $ss^*,s^*s$ are then both idempotents.
Idempotents of an inverse semigroup commute and hence form a
subsemigroup which is a semilattice.
In an inverse semigroup, one has that $s\mathrel{\L} t$ (respectively, $s\mathrel{\R} t$) if and only if $ss^*=tt^*$ (respectively, $s^*s=t^*t$.) Consequently, one has $ss^*\mathrel{\D} s^*s$.
See~\cite{Petrich:1984,Paterson} for more on inverse semigroups.

Graph inverse semigroups, in the special case of graphs with no
multiple-edges, were first considered by Ash and Hall~\cite{AshHall}.
They were considered in greater generality
in~\cite{Paterson,graphinverse}.
The recent paper~\cite{Jones&Lawson:2014} is a useful reference.

Let $G$ be a (directed) graph and let $G^\ast$ be the free category generated
by $G$.  The objects of $G^\ast$ are the vertices of $G$ and the morphisms are (directed) paths, including an empty path at each vertex.  Since
we are adopting the Category Theory convention for the composition of
morphisms, in this context a path of $G$ should be understood
as follows:
a non-empty finite sequence of edges $x_1\cdots
x_n$ is a path of $G$ if the domain of $x_i$ is the range of
$x_{i+1}$, for $1\leq i<n$.
Of course, the empty path at a vertex $q$, denoted $1_q$, is the identity of
$G^\ast$ at $q$.
The domain and range of a path $u$ will be denoted respectively
by $\alpha u$ and $\omega u$.

Associated to $G$ is an inverse semigroup $P_G$, called the \emph{graph inverse semigroup
of $G$}. The underlying set of $P_G$ is the set of all pairs $(x,y)$
of morphisms $x,y\in G^\ast$
such that $x$ and $y$ have a common domain,
together with an extra element $0$,
which is the zero element of $P_G$.
The pair $(x,y)$ is usually denoted $xy^{-1}$.
We think on $y^{-1}$ as a formal inverse of the path~$y$,
obtained by reversing the directions of the edges in $y$.
We make the identifications
$y=(y,1_{\alpha y})$ and $y^{-1}=(1_{\alpha y},y)$.
Moreover, we have $1_q^{-1}=1_q$, for every vertex $q$ of $G$.
The multiplication between elements of $P_G\setminus \{0\}$ is given by the following rules:
\begin{equation*}
  xy^{-1}\cdot uv^{-1}=
  \begin{cases}
    xzv^{-1}&\text{if $u=yz$ for some path $z$},\\
    x(vz)^{-1}&\text{if $y=uz$ for some path $z$},\\
    0&\text{otherwise}.
  \end{cases}
\end{equation*}

The semigroup $P_G$ is an inverse semigroup
in which $(xy^{-1})^* = yx^{-1}$ and $0^*=0$.
Its non-zero idempotents are
the pairs of the form $xx^{-1}$, with $x\in G^\ast$. Note that $1_q$
is idempotent, for every vertex $q$ of $G$, and
$x^{-1}x=1_{\alpha x}$ for every $x\in G^\ast$.

Let $S$ be an inverse semigroup with zero and let $\Sigma$ be a set of
(semigroup) generators for $S$.  Let $\pi\colon \Sigma^*\to S$ be the
canonical projection.  Then $L=\pi\inv(S\setminus \{0\})$ is a
factorial and prolongable language and hence we have an associated
shift (cf.~\cite[Proposition 1.3.4]{MarcusandLind}.)

From now on, we assume that the graph $G$ is finite.
Let $\Sigma_G$ be the set of elements of $P_G$ of the form $x$ or $x^{-1}$, with $x$ an edge
of $G$. In general, $P_G$ is generated
by the union $\Lambda_G$ of $\Sigma_G$ and
the set of empty paths $1_q$.  Let
$\rho\colon (\Lambda_G)^+_0\to P_G$ be the canonical projection.  The \emph{Markov-Motzkin shift} $M_G$ is the shift with language $\rho\inv (P_G\setminus \{0\})$.

If the out-degree of each vertex is at least $1$, then $P_G$ is
generated by $\Sigma_G$ because if $q$ is a vertex and $x$ is an
out-going edge from $q$, then $1_q=x^{-1}x$.  We may therefore
consider the unique onto homomorphism $\pi\colon (\Sigma_G)_0^+\to P_G$
extending the identity map.
The \emph{Markov-Dyck shift} $D_G$
is the shift with language $L(D_G)=\pi^{-1}(P_G\setminus\{0\})$.

Note that the syntactic semigroups of $D_G$ and $M_G$
are homomorphic images of~$P_G$, as $P_G$ recognizes $L(D_G)$ and $L(M_G)$.
Let us now characterize when $P_G$ is the syntactic semigroup of $M_G$ and
$D_G$.
An inverse semigroup~$S$ is called
\emph{fundamental}, or an \emph{antigroup}, if the only congruence
contained in the $\H$-relation is the trivial one.

 \begin{Lemma}\label{l:inversesyntactic}
 Let $S$ be an inverse semigroup with zero.
 Then $S$ is $0$-disjunctive
 if and only if $S$ is fundamental and $E(S)$ is
 $0$-disjunctive.
 \end{Lemma}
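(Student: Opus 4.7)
The plan is to characterize $0$-disjunctivity via the maximum congruence $\rho$ on $S$ saturating $\{0\}$, which exists because such congruences are closed under joins and admits the explicit description $s\mathrel{\rho}t$ if and only if $xsy=0 \Leftrightarrow xty=0$ for all $x,y\in S^1$; thus $S$ is $0$-disjunctive precisely when $\rho$ is trivial. The computational heart of the argument will be the well-known identity, valid in any inverse semigroup with zero, that $ab=0$ iff $a^{-1}a\cdot bb^{-1}=0$ (a one-line consequence of the commutativity of idempotents). Applied with $a=xe$ and $b=ey$, and using $e^{-1}=e$ together with the commutativity of idempotents, this yields for $e\in E(S)$ and $x,y\in S^1$ the reduction
\[
xey=0 \iff e\cdot x^{-1}x\cdot yy^{-1}=0.
\]
Since $\{x^{-1}x\cdot yy^{-1}\mid x,y\in S^1\}=E(S)\cup\{1\}$, this translates the $0$-context of an idempotent of $S$ into its $0$-context inside $E(S)$.

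For the forward direction, suppose $S$ is $0$-disjunctive. I will invoke the standard fact that for inverse semigroups the maximum idempotent-separating congruence $\mu$ is the largest congruence contained in $\H$; since $\mu$ separates idempotents, $\{0\}$ is a $\mu$-class, so $\mu\subseteq\rho$. Triviality of $\rho$ then forces $\mu$ to be trivial, i.e., $S$ is fundamental. To see that $E(S)$ is $0$-disjunctive I will take $e,f\in E(S)$ with the same $0$-context inside $E(S)$ and apply the displayed reduction to obtain $e\mathrel{\rho}f$, whence $e=f$.

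For the converse, assume $S$ is fundamental and $E(S)$ is $0$-disjunctive. Specializing $x=g\in E(S)$ and $y=1$ (respectively $x=y=1$) in the defining condition of $\rho$, any $\rho$-related idempotents share the same $0$-context inside $E(S)$ and therefore coincide; so $\rho$ is idempotent-separating. Because any idempotent-separating congruence on a regular semigroup (in particular, on the inverse semigroup $S$) is contained in $\H$, we have $\rho\subseteq\mu$, and the fundamental hypothesis makes $\mu$, hence $\rho$, trivial. The only genuinely non-formal step will be the reduction lemma for $xey=0$; everything else is standard inverse-semigroup bookkeeping combined with the characterization of $0$-disjunctivity through the syntactic congruence of $\{0\}$.
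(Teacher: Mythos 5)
Your argument is correct, and it is worth noting that the paper does not actually prove this lemma: it simply cites it as Lemma IV.3.10 of Petrich's \emph{Inverse Semigroups}, so your proposal supplies a self-contained proof where the paper offers only a reference. The skeleton is sound: $0$-disjunctivity is indeed equivalent to triviality of the syntactic congruence $\rho$ of $\{0\}$, and your reduction is the right computational core. Concretely, from $ab=0\iff a^{-1}a\,bb^{-1}=0$ (which follows from $ab=a(a^{-1}a\,bb^{-1})b$ and $a^{-1}(ab)b^{-1}=a^{-1}a\,bb^{-1}$, without even needing commutativity of idempotents) one gets, writing $xey=(xe)(ey)$ and commuting idempotents, that $xey=0\iff e\,x^{-1}x\,yy^{-1}=0$; since $E(S)$ is a commutative subsemigroup, $\{x^{-1}x\,yy^{-1}\mid x,y\in S^1\}=E(S)\cup\{1\}$ and two-sided contexts in the semilattice $E(S)$ collapse to one-sided ones, so $\rho$ restricted to $E(S)$ is exactly the syntactic congruence of $\{0\}$ in $E(S)$. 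The remaining steps all rest on standard inverse-semigroup facts that you correctly identify: congruences on inverse semigroups respect inversion, so an idempotent-separating congruence is contained in $\mathcal{H}$ (if $s\mathrel{\tau}t$ then $ss^{-1}\mathrel{\tau}tt^{-1}$ and $s^{-1}s\mathrel{\tau}t^{-1}t$ force equality of these idempotents); the maximum idempotent-separating congruence $\mu$ is the largest congruence contained in $\mathcal{H}$; and any congruence contained in $\mathcal{H}$ saturates $\{0\}$ because the $\mathcal{H}$-class of $0$ is $\{0\}$, giving $\mu\subseteq\rho$ for the forward direction and $\rho\subseteq\mu$ for the converse once $\rho$ is shown idempotent-separating. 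I see no gap; the only thing you might make explicit in a final write-up is the one-line verification of the reduction identity, since everything else hangs on it.
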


 Lemma~\ref{l:inversesyntactic} is Lemma IV.3.10 from \cite{Petrich:1984}.
 It is instrumental in the following proof of
the desired characterization of $M_G$ and $P_G$.

\begin{Lemma}\label{l:PG-is-syntactic}
  Let $G$ be a finite graph.  Then $P_G$ is the syntactic semigroup of
  $M_G$ if and only if $G$ has no vertex of in-degree exactly one.
  If, in addition,
  each vertex has out-degree at least one, then $P_G$ is the syntactic semigroup of $D_G$ if and only if $G$ has no vertex of in-degree exactly one.
\end{Lemma}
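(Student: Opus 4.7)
The plan is to reduce the question to a $0$-disjunctivity check on $P_G$ via Lemma~\ref{l:0-disjunctive-are-syntactic-semigroups}. Since $P_G$ is prolongable ($s=ss^*s$ gives $s\in P_GsP_G$ for $s\ne 0$) and recognizes $L(M_G)$ via $\rho$, the converse direction of that lemma implies that $P_G$ is the syntactic semigroup of $M_G$ exactly when $P_G$ is $0$-disjunctive. By Lemma~\ref{l:inversesyntactic}, this splits into two tasks: proving that $P_G$ is always fundamental, and characterizing when the semilattice $E(P_G)$ is $0$-disjunctive.

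For fundamentality, a direct computation using the given multiplication rules yields $ss^*=xx^{-1}$ and $s^*s=yy^{-1}$ for $s=xy^{-1}$. Since two elements of an inverse semigroup are $\H$-related precisely when they share both these idempotents, $\H$ is trivial in $P_G$, and hence every congruence contained in $\H$ is trivial.

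The heart of the argument is characterizing $0$-disjunctivity of $E(P_G)$. Using the multiplication, the non-zero idempotents $xx^{-1}$ with $\omega x=v$ form a rooted tree under the natural order, with root $1_v$ whose children are precisely the $ee^{-1}$ for edges $e$ with $\omega e=v$, and recursively. If some vertex $v$ has a unique incoming edge $e$, then $1_v$ has the single child $ee^{-1}$; the relation on $E(P_G)$ identifying $1_v$ with $ee^{-1}$ (and leaving every other element alone) is a non-trivial semilattice congruence saturating $\{0\}$, since the only non-trivial check is that $1_v\cdot f$ and $ee^{-1}\cdot f$ lie in the same class for every $f$, which is automatic because the only idempotent at $v$ strictly above $ee^{-1}$ is $1_v$ itself. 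Thus $E(P_G)$ fails to be $0$-disjunctive.

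Conversely, suppose no vertex has in-degree exactly one. Given distinct non-zero idempotents $x_1x_1^{-1}$ and $x_2x_2^{-1}$, the easy cases (different base vertex, or incomparable in the tree) are separated by taking $f=x_1x_1^{-1}$; the remaining case, after possibly swapping, is $x_1=x_2z$ with $z$ non-empty, in which case the edge $\hat e$ of $z$ with $\omega\hat e=\alpha x_2$ is an incoming edge of $\alpha x_2$, and by hypothesis there exists another incoming edge $e'\ne\hat e$ with $\omega e'=\alpha x_2$, so that $f=(x_2e')(x_2e')^{-1}$ separates the two idempotents via $x_2x_2^{-1}\cdot f=f\ne 0$ and $x_1x_1^{-1}\cdot f=0$. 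This finishes the $M_G$ case. For $D_G$, the out-degree hypothesis is exactly what allows the identity $1_q=x^{-1}x$ (for $x$ an outgoing edge at $q$) to generate all empty paths, so that $P_G$ is generated by $\Sigma_G$ and $D_G$ is a shift induced by $\pi$; the $0$-disjunctivity analysis is unchanged. The main obstacle is to pin down cleanly the tree structure of $E(P_G)$ while keeping the path-composition conventions straight.
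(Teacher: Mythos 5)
Your proposal is correct and follows the same skeleton as the paper's proof: reduce to $0$-disjunctivity of $P_G$ via Lemma~\ref{l:0-disjunctive-are-syntactic-semigroups}, then split via Lemma~\ref{l:inversesyntactic} into fundamentality of $P_G$ plus $0$-disjunctivity of $E(P_G)$. The only difference is that where the paper simply cites \cite{Jones&Lawson:2014} for the triviality of $\H$ in $P_G$ and for the characterization of when $E(P_G)$ is $0$-disjunctive, you prove both facts directly; your computations ($ss^*=xx^{-1}$, $s^*s=yy^{-1}$ for $s=xy^{-1}$; the tree structure of the idempotents over each vertex; collapsing $1_v$ onto $ee^{-1}$ when $v$ has a unique incoming edge $e$; and separating comparable idempotents using a second incoming edge at $\alpha x_2$) are all sound, so your write-up is self-contained where the paper's is not.
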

\begin{proof}
  In view of Lemma~\ref{l:0-disjunctive-are-syntactic-semigroups},
  it suffices to check when $P_G$ is $0$-disjunctive.
By Lemma~\ref{l:inversesyntactic}, this occurs if and
only if $P_G$ is fundamental and $E(P_G)$ is $0$-disjunctive.
It turns out that the $\H$-relation in $P_G$ is trivial~\cite[Theorem
2.1]{Jones&Lawson:2014}, and so $P_G$ is fundamental. On the other
hand, it is shown in~\cite[Lemma~2.20]{Jones&Lawson:2014} that $E(P_G)$
is $0$-disjunctive\footnote{The definition of $0$-disjunctive
  semilattice given in \cite[Lemma~2.20]{Jones&Lawson:2014}
  appears in a different form, which is however equivalent to the one
  we are using, see \cite[Exercise IV.3.13(ii)]{Petrich:1984}.} if and only if $G$ has no vertex of in-degree
exactly one.
\end{proof}

It is easy to see that $P_G$ is finite if and only if $G$ is
acyclic. Therefore, the above lemma implies that the shifts $M_G$ and
$D_G$ are not sofic in general.

In view of our mains results and of Lemma~\ref{l:PG-is-syntactic},
we are naturally interested in investigating the Karoubi envelope
of $P_G$. It is for that purpose, that we introduce the next notions.
Recall that a morphism $f\colon d\to c$
of a category $C$ is said to be a \emph{split monomorphism}
if it has a left inverse, that is, if there is a morphism
$g\colon c\to d$ such that $gf=1_d$.
The composition of two split monomorphisms is a split monomorphism,
and so we can consider the subcategory $L_C$ of $C$ formed by the
split monomorphisms of $C$. Note that
an equivalence $C\to D$
restricts to an equivalence $L_C\to L_D$.

    For a semigroup~$S$,
    a morphism
    $(e,s,f)$ of $\KK(S)$ is a split monomorphism if and only if
    $s\mathrel{\mathcal L}f$~\cite[Proposition 3.1]{Margolis&Steinberg:2012}.
    In particular, if $S$ has a zero, then the only split monomorphisms $(e,s,f)$ with $s=0$ are of the form $(e,0,0)$.  In other words, if we consider the
    full subcategory $\mathbb L(S)$ of $L_{\KK(S)}$
    whose objects are the non-zero idempotents, then there are no morphisms of the form $(e,0,f)$ in this subcategory. Note that an equivalence
    $L_{\KK(S)}\to L_{\KK(T)}$
    restricts to an equivalence
    $\mathbb L(S)\to \mathbb L(T)$ because $0$ is the unique initial object of $L_{\KK(S)}$ and similarly for $L_{\KK(T)}$.

The argument for the following lemma is essentially contained
in~\cite{strongmorita}.
    \begin{Lemma}\label{l:equivalence-to-free-category}
      For every graph $G$, the category $\mathbb L(P_G)$ is equivalent
      to the free category $G^\ast$.
    \end{Lemma}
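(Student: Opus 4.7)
The plan is to exhibit an explicit equivalence $F\colon G^\ast \to \mathbb L(P_G)$ sending a vertex $q$ of $G$ to the non-zero idempotent $1_q$ and a morphism $z\colon q\to r$ of $G^\ast$ to the triple $(1_r, z, 1_q)$. First I would check that $F$ is a well-defined functor into $\mathbb L(P_G)$: the identity $z = 1_r\cdot z\cdot 1_q$ follows at once from $\alpha z = q$ and $\omega z = r$, while $z^\ast z = z^{-1}z = 1_{\alpha z} = 1_q$ yields $z\mathrel{\L} 1_q$, so $(1_r, z, 1_q)$ is a non-zero split monomorphism in $\KK(P_G)$. Functoriality reduces to the observation that, for paths $z_1\colon q\to r$ and $z_2\colon r\to s$, the product $z_2\cdot z_1$ in $P_G$ coincides with the path concatenation in $G^\ast$, which is a direct application of the graph inverse semigroup multiplication rule.

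For essential surjectivity, every object of $\mathbb L(P_G)$ is a non-zero idempotent of $P_G$ and hence has the form $xx^{-1}$ for some $x\in G^\ast$. Taking $x$ and $x^{-1}$ as the witnesses in Lemma~\ref{l:isomorphism-criterion}, the inverse semigroup identities $xx^{-1}x = x$, $x^{-1}xx^{-1} = x^{-1}$, $x^{-1}x = 1_{\alpha x}$, and $xx^{-1} = xx^{-1}$ certify that $xx^{-1}$ and $1_{\alpha x}=F(\alpha x)$ are isomorphic in $\KK(P_G)$. Since any invertible morphism in a category is automatically a split monomorphism whose inverse is also a split monomorphism, this isomorphism lies in $L_{\KK(P_G)}$, and as both endpoints are non-zero idempotents it already belongs to the full subcategory $\mathbb L(P_G)$.

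The main obstacle is fullness, which demands an explicit description of the hom-sets $\mathbb L(P_G)(1_q, 1_r)$. A morphism $1_q\to 1_r$ there is a triple $(1_r, s, 1_q)$ with $s\in 1_r P_G 1_q\setminus\{0\}$ and $s\mathrel{\L} 1_q$; since $P_G$ is inverse, the latter condition is equivalent to $s^\ast s = 1_q$. Writing $s = uv^{-1}$ with $\alpha u = \alpha v$ and using $u^{-1}u = 1_{\alpha u}$, one computes
\[ s^\ast s \;=\; vu^{-1}uv^{-1} \;=\; v\cdot 1_{\alpha u}\cdot v^{-1} \;=\; vv^{-1}, \]
so $vv^{-1} = 1_q$ forces $v = 1_q$ and hence $s = u$ is a plain path in $G^\ast$. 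The remaining constraint $u = 1_r\cdot u\cdot 1_q$ then pins down $\alpha u = q$ and $\omega u = r$, so $u\in G^\ast(q,r)$. Faithfulness of $F$ is immediate from the triple form of the morphisms, and the display above provides fullness, completing the proof.
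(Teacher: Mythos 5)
Your proof is correct and follows essentially the same route as the paper's: both arguments rest on the observation that every non-zero idempotent $xx^{-1}$ is $\D$-equivalent (hence isomorphic in $\KK(P_G)$) to $1_{\alpha x}$, and on the computation $(uv^{-1})^*(uv^{-1})=vv^{-1}$, which forces $v$ to be an empty path and so identifies the split monomorphisms between the $1_q$'s with plain paths. The paper merely packages this as ``$\mathbb L(P_G)$ is equivalent to its full subcategory on the objects $1_q$, which is isomorphic to $G^*$,'' whereas you exhibit the functor explicitly and verify fully faithful plus essentially surjective; the content is identical.
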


    \begin{proof}
      An object of $\mathbb L(P_G)$
      is an idempotent of the form
      $uu^{-1}$, with $u\in G^\ast$. Since
      $uu^{-1}\mathrel{\mathcal D} 1_{\alpha u}$,
      the category $\mathbb L(P_G)$ is equivalent
      to the full subcategory $\mathbb L(P_G)'$
      whose objects are the idempotents of the form
      $1_q$.
      If $u,v\in G^\ast$ are such that
      $(1_q,uv^{-1},1_r)$ is a split monomorphism of $\KK(P_G)$,
      then $1_r=vu\inv uv\inv=vv\inv$ and so $v=1_r$.
      Therefore,
      $\mathbb L(P_G)'$ is isomorphic to~$G^\ast$.
    \end{proof}

It is well known that two free categories $G^*$ and $H^*$ on graphs $G,H$ are equivalent if and only if $G$ and $H$ are isomorphic.  Since we don't know a precise reference, we sketch a proof.

\begin{Lemma}\label{l:freecatequiv}
Suppose that $G,H$ are graphs with $G^*$ equivalent to $H^*$.
Then $G$ and~$H$ are isomorphic.
\end{Lemma}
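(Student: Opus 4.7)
My plan is to first observe that $G^*$ is \emph{rigid}, meaning the only isomorphisms in $G^*$ are the identity morphisms. This follows because the length (edge count) of a path is additive under composition, so every non-identity morphism has strictly positive length and cannot admit a two-sided inverse.

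Given this rigidity, I would promote the equivalence to an isomorphism of categories. If $F\colon G^*\to H^*$ is an equivalence with quasi-inverse $K\colon H^*\to G^*$ and natural isomorphism $\eta\colon 1_{G^*}\Rightarrow KF$, then each component $\eta_v\colon v\to KF(v)$ is an isomorphism in $G^*$, hence an identity by rigidity, which forces $v=KF(v)$. Naturality of $\eta$ then gives $KF(f)=\eta_w f\eta_v^{-1}=f$ for every morphism $f\colon v\to w$, so $KF=1_{G^*}$ on the nose. Symmetrically $FK=1_{H^*}$, and thus $F$ is an isomorphism of categories: a bijection on objects together with bijections on all hom-sets, compatible with composition and identities.

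With $F$ an honest isomorphism of categories, I would reconstruct the graph categorically. Call a non-identity morphism of $G^*$ \emph{indecomposable} if it is not the composition of two non-identity morphisms. By the length grading, these are exactly the paths of length $1$, that is, the edges of $G$, with source and target agreeing with domain and codomain in $G^*$. Since $F$ preserves identities and composition, it carries indecomposable morphisms to indecomposable morphisms, and its inverse does the same. The induced bijection on indecomposables preserves domains and codomains, and combined with the bijection $F$ gives on vertices it yields a graph isomorphism $G\to H$.

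The one step requiring care is the rigidity claim, and then the promotion of an equivalence to a strict isomorphism; once those are in hand the identification of edges with atomic morphisms is straightforward.
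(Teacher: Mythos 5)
Your proof is correct and follows essentially the same route as the paper: rigidity of free categories forces the equivalence to be a strict isomorphism, and edges are recognized as the morphisms admitting no factorization into non-identity (non-empty) morphisms, so the isomorphism restricts to a graph isomorphism. Your write-up just spells out in more detail the steps the paper's proof sketches.
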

\begin{proof}
In a free category, there are no isomorphisms other than the
identities. Hence, two functors with codomain a free category are isomorphic if and only if they are equal.  It follows that any equivalence $F\colon G^*\to H^*$ is actually an isomorphism.  A morphism $u$ of a free category is an edge if and only if it cannot be factored $u=vw$ with $v,w$ non-empty paths.  Thus $F$ must restrict to a graph isomorphism $G\to H$.
\end{proof}

As a corollary of the preceding two lemmas, we deduce that Morita equivalent graph inverse semigroups have isomorphic underlying graphs.

\begin{Cor}\label{c:Moritaequivforgraphsemigroups}
Let $G,H$ be graphs.  Then $P_G$ is Morita equivalent to $P_H$ if and only if $G$ and $H$ are isomorphic.
\end{Cor}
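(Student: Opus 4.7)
The reverse implication is immediate: a graph isomorphism $G\to H$ induces a semigroup isomorphism $P_G\to P_H$, hence an isomorphism (in particular an equivalence) of Karoubi envelopes. So the plan is to prove the forward implication, namely that an equivalence $\KK(P_G)\to \KK(P_H)$ forces $G\cong H$.

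First I would pass to the subcategory of split monomorphisms. The notion of split monomorphism is purely categorical, so any equivalence $F\colon\KK(P_G)\to\KK(P_H)$ restricts to an equivalence $L_{\KK(P_G)}\to L_{\KK(P_H)}$. Next I would check that the zero object $0$ of $\KK(P_G)$ is the unique initial object of $L_{\KK(P_G)}$: for any object $e$ of $\KK(P_G)$ the only morphism $0\to e$ in $\KK(P_G)$ is $(e,0,0)$, which is trivially a split monomorphism (it is split by $(0,0,e)$); conversely, if $e$ is initial in $L_{\KK(P_G)}$ then the unique morphism $e\to 0$ must be a split monomorphism, and a direct computation in $\KK(P_G)$ shows this forces $e=0$. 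The same analysis applies to $L_{\KK(P_H)}$. Since equivalences preserve unique initial objects, $F$ further restricts to an equivalence between the full subcategories obtained by removing $0$, which by definition are exactly $\mathbb L(P_G)$ and $\mathbb L(P_H)$.

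At this point I would invoke the two lemmas already established: Lemma~\ref{l:equivalence-to-free-category} gives equivalences $\mathbb L(P_G)\simeq G^\ast$ and $\mathbb L(P_H)\simeq H^\ast$, so composing we obtain an equivalence $G^\ast\simeq H^\ast$ of free categories. Then Lemma~\ref{l:freecatequiv} yields an isomorphism $G\cong H$, completing the argument.

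The proof is essentially an assembly of results already in hand, so there is no substantial obstacle. The only point requiring a small verification is the characterization of $0$ as the unique initial object of $L_{\KK(P_G)}$; in particular one must confirm that the split monomorphism $(e,0,0)\colon 0\to e$ lies in the subcategory and is the unique such morphism, and that no non-zero idempotent can be initial. Once this categorical description of the zero is in place, everything reduces to the previously established equivalences and the rigidity of free categories.
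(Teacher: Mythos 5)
Your proposal is correct and follows exactly the route the paper intends: the paper derives this corollary directly from Lemma~\ref{l:equivalence-to-free-category} and Lemma~\ref{l:freecatequiv}, with the restriction steps (equivalences preserve $L_C$, and $0$ is the unique initial object of $L_{\KK(P_G)}$, so one may pass to $\mathbb L(P_G)$) already recorded in the surrounding discussion. Your verification that $(e,0,0)$ is the unique split monomorphism out of $0$ and that no non-zero idempotent can be initial matches the paper's cited characterization of split monomorphisms in $\KK(S)$.
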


The case where $G$ is a finite one-vertex graph with at least $2$
loops has received special attention in the literature and motivates the
general case. If $N\ge 2$ is the number of loops of the one-vertex
graph $G$, then
the corresponding Markov-Dyck shift is
denoted $D_N$, and is called the \emph{Dyck shift of rank $N$}.
In~\cite{Matsumoto:2011} it is proved, by means of the computation of
certain flow invariant abelian groups, that if $D_N$ and $D_M$
are flow equivalent then $N=M$.
This is proved here again, as a special case
of the next result.

\begin{Thm}\label{t:Markov-Dyck-shift}
  Let $G$ and $H$ be finite graphs such that the in-degree of none of their vertices is exactly one. Then the Markov-Motzkin shifts $M_G$ and $M_H$ are flow equivalent if and only if $G$ and $H$ are isomorphic.  If in addition, the out-degree of each vertex of $G$ and $H$ is at least one, then the Markov-Dyck shifts
  $D_G$ and $D_H$ are flow equivalent if and only if $G$ and $H$
  are isomorphic.
\end{Thm}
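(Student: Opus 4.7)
The plan is to reduce the theorem essentially to Corollary~\ref{c:Moritaequivforgraphsemigroups}, using the in-degree hypothesis to identify the syntactic semigroups of the shifts with the graph inverse semigroups themselves.

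For the easy direction, suppose $G$ and $H$ are isomorphic via $\varphi\colon G\to H$. I would argue that $\varphi$ induces an isomorphism $P_G\to P_H$ of inverse semigroups by acting edgewise on paths, and that this isomorphism sends $\Sigma_G$ onto $\Sigma_H$ and $\Lambda_G$ onto $\Lambda_H$ via a bijection of alphabets. Consequently the induced alphabet bijection carries $L(M_G)$ to $L(M_H)$ and (under the extra hypothesis) $L(D_G)$ to $L(D_H)$, so it extends to a conjugacy of shifts, which in particular is a flow equivalence.

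For the converse, suppose $M_G$ and $M_H$ are flow equivalent. By Theorem~\ref{splittingisinvflow}, the categories $\KK(S(M_G))$ and $\KK(S(M_H))$ are equivalent. The in-degree hypothesis on $G$ and $H$ together with Lemma~\ref{l:PG-is-syntactic} gives $S(M_G)\cong P_G$ and $S(M_H)\cong P_H$, so $\KK(P_G)$ and $\KK(P_H)$ are equivalent categories. Since $P_G$ and $P_H$ have local units (each non-zero element $xy^{-1}$ satisfies $xy^{-1}=xx^{-1}\cdot xy^{-1}\cdot yy^{-1}$), this is precisely the statement that $P_G$ and $P_H$ are Morita equivalent. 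Applying Corollary~\ref{c:Moritaequivforgraphsemigroups} then yields $G\cong H$. The same chain of deductions handles $D_G,D_H$ under the additional out-degree hypothesis, since then Lemma~\ref{l:PG-is-syntactic} also gives $S(D_G)\cong P_G$ and $S(D_H)\cong P_H$.

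I do not expect a genuine obstacle here: all the heavy lifting has already been done in the lemmas and corollary leading up to the statement. The one point to verify carefully is that the flow invariance supplied by Theorem~\ref{splittingisinvflow} (namely, Morita equivalence up to local units) is the hypothesis actually needed in Corollary~\ref{c:Moritaequivforgraphsemigroups}; this is fine because $P_G$ and $P_H$ already have local units, so $LU(P_G)=P_G$ and $LU(P_H)=P_H$, and equivalence of Karoubi envelopes is literally Morita equivalence in this setting.
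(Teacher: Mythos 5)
Your proposal is correct and follows exactly the paper's own argument: flow equivalence gives equivalent Karoubi envelopes via Theorem~\ref{splittingisinvflow}, the in-degree hypothesis and Lemma~\ref{l:PG-is-syntactic} identify the syntactic semigroups with $P_G$ and $P_H$, which have local units as inverse semigroups, and Corollary~\ref{c:Moritaequivforgraphsemigroups} finishes. The paper simply dismisses the forward direction as obvious, whereas you spell it out; otherwise the two proofs coincide.
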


\begin{proof}
  Sufficiency is obvious. Conversely,  suppose that $M_G$ and $M_H$ are
  flow equivalent.  By Lemma~\ref{l:PG-is-syntactic}, $P_G$ and $P_H$ are the syntactic semigroups of $M_G$ and $M_H$, respectively.  They are inverse semigroups and hence have local units.  Theorem~\ref{splittingisinvflow} implies that $P_G$ and $P_H$ are Morita equivalent. Corollary~\ref{c:Moritaequivforgraphsemigroups} then yields $G\cong H$. The proof for Markov-Dyck shifts is identical.
\end{proof}

Theorem~\ref{t:Markov-Dyck-shift} generalizes results of
Krieger~\cite{Krieger:2000,Kriger:2006}, who showed that if $G$
and
$H$ are
strongly connected and each vertex of $G$ and $H$ has in-degree at
least $2$, then $D_G$ is conjugate to $D_H$ if and only if $G$ and $H$
are isomorphic. Krieger's proof relies
on the invariance of a semigroup that is studied in the next section.

\section{Shifts with Krieger's Property ($\mathscr A$)}
\label{sec:shifts-with-kriegers}

  Wolfgang Krieger introduced in~\cite{Krieger:2000} (see also~\cite{Krieger:2012,Hamachi&Krieger:2013,Hamachi&Krieger:2013b}) the class of
  shifts with  property ($\mathscr A$) whose definition
  will be recalled in this section.
  To each shift with property ($\mathscr A$), Krieger associated a semigroup which he
  showed to be a conjugacy invariant.
  Recently,
  Krieger announced that property ($\mathscr A$) is a flow
   equivalence invariant as is the corresponding semigroup.
  In this section, we give an alternative characterization
  of the Krieger semigroup in terms of the Karoubi envelope,
  from which we deduce a new proof of its flow invariance,
  under a condition usually assumed when studying property~($\mathscr A$).

\subsection{Non-zero divisors}

  Let $C$ be a small category. Two morphisms $f$ and $g$ of $C$ are
  \emph{isomorphic}, denoted $f\cong g$, when there are
  isomorphisms $\varphi$ and $\psi$ of $C$ such that
  $f=\varphi g\psi$.
   The relation $\cong$ is an equivalence relation
   on the set of morphisms of $C$
   satisfying the property that
   if $f\cong g$ and  $f'\cong g'$, with
   the compositions $ff'$ and $gg'$ defined, then $ff'\cong gg'$.

   Suppose moreover that $C$ is a category with zero.
   We may then consider the equivalence relation
   $\cong_0$ refining $\cong$ by identifying all zero morphisms into a
   single partition class, leaving intact the $\cong$-classes
   of non-zero morphisms.  Denote by $C/{\cong_0}$ the set of equivalence classes
   defined by $\cong_0$.
   We use the notation $\langle f\rangle$ for the
   $\cong_0$-equivalence class of $f$.
   In the absence of ambiguity,
   the $\cong_0$-class of the zero morphisms is denoted~$0$.

   A category in which each
of its hom-sets has at most one element is essentially the same thing as a preorder~\cite{MacLane:1998} and so we call them preorders.

  \begin{Def}\label{def:semigroup-CT}
    Consider a small category $C$ with zero.
    Let $T$ be a subcategory of $C$
    which is a preorder and contains the non-zero isomorphisms of $C$.
  We define a binary  operation
  $\circ_T$
  on $C/{\cong_0}$
  as follows (note that
  the operation is well defined
  because $T$ contains the non-zero isomorphisms):
  \begin{enumerate}
  \item  For every $x$ in
  $C/{\cong_0}$ we have $0\circ_T x=x\circ_T 0=0$.
  \item Suppose $f_1$ and $f_2$ are non-zero morphisms of $C$.
      If there is a (necessarily unique) morphism $h\colon r(f_2)\to
      d(f_1)$ in $T$, then $\langle f_1\rangle \circ_T \langle f_2\rangle= \langle f_1hf_2\rangle$, otherwise $\langle f_1\rangle \circ_T \langle f_2\rangle=0$.
  \end{enumerate}
  The operation $\circ_T$ endows $C/{\cong_0}$ with
  a structure of semigroup with zero.
  We denote by $C_{\circ_T}$ this semigroup.
  \end{Def}

For a small category $C$ with zero, a non-zero morphism $f$ of $C$ is
  called a \emph{non-zero divisor} if $gf\neq 0$ and $fh\neq 0$ whenever
  $g,h$ are non-zero morphisms with $d(g)=r(f)$
  and $d(f)=r(h)$. The non-zero divisors form a
  subcategory $C_{\pv{nzd}}$ of $C$
  containing all non-zero isomorphisms of $C$.
  If $T=C_{\pv{nzd}}$ is a preorder, then we denote the operation
    $\circ_T$ by $\bullet$, and the semigroup $C_{\circ_T}$
    by~$C_{\bullet}$.
The proof of the following proposition is a routine exercise.

\begin{Prop}\label{p:invariance-of-the-krieger-semigroup}
 Let $C$ and $D$ be equivalent small categories with zero.
 Then $C_{\pv {nzd}}$ is
 equivalent to $D_{\pv {nzd}}$.
 In particular, $C_{\pv {nzd}}$
 is a preorder if and only if
 $D_{\pv {nzd}}$ is a preorder.
 If $C_{\pv {nzd}}$ and $D_{\pv {nzd}}$
 are preorders, then the semigroups $C_{\bullet}$
 and $D_{\bullet}$ are isomorphic.
\end{Prop}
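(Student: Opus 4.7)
The plan is to fix an equivalence $F\colon C\to D$ with quasi-inverse $G\colon D\to C$ and natural isomorphisms $\alpha\colon FG\Rightarrow 1_D$, $\beta\colon GF\Rightarrow 1_C$. Being fully faithful, $F$ and $G$ preserve and reflect zero morphisms by Remark~\ref{r:fully-faithful-are-strict}; in particular, they preserve and reflect trivial objects (objects $c$ with $1_c=0$).

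The crucial technical step is to show that $F$ sends non-zero divisors to non-zero divisors. Given $f\colon c\to c'$ in $C_{\pv{nzd}}$, the morphism $F(f)$ is non-zero. Suppose for contradiction that some non-zero $g\colon F(c')\to d$ satisfies $gF(f)=0$. Using essential surjectivity of $F$ I choose an isomorphism $\varphi\colon F(c'')\to d$, then by fullness write $\varphi^{-1}g=F(h)$ for some $h\colon c'\to c''$. Then $F(hf)=\varphi^{-1}gF(f)=0$, so $hf=0$, and since $f$ is a non-zero divisor, $h=0$, whence $g=\varphi F(h)=0$, a contradiction. The symmetric argument handles pre-composition, and the same reasoning applied to $G$ shows that $G$ restricts to $G'\colon D_{\pv{nzd}}\to C_{\pv{nzd}}$. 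The components of $\alpha$ and $\beta$ are isomorphisms between non-trivial objects, hence non-zero divisors, so $\alpha$ and $\beta$ restrict to natural isomorphisms between the restricted functors $F'G'$ and $G'F'$ and the corresponding identities. This yields the equivalence $C_{\pv{nzd}}\simeq D_{\pv{nzd}}$, and the preorder property is obviously preserved by equivalence of categories.

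For the semigroup statement, assuming $C_{\pv{nzd}}$ and $D_{\pv{nzd}}$ are preorders, I propose to define $F_\bullet\colon C_\bullet\to D_\bullet$ by $F_\bullet(0)=0$ and $F_\bullet(\langle f\rangle)=\langle F(f)\rangle$ for non-zero $f$. Well-definedness and injectivity will follow from the general principle that a fully faithful functor reflects and preserves $\cong$: the forward direction is immediate, while the reverse direction lifts an isomorphism $F(f)=\Phi F(g)\Psi$ in $D$ through fullness and faithfulness (using that $F$ reflects isomorphisms, which is standard for fully faithful functors) to obtain $f\cong g$ in $C$. Surjectivity will combine essential surjectivity with fullness: any non-zero $h\colon d\to d'$ is isomorphic (via components of $\alpha$) to a morphism of the form $F(f)$.

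The remaining point is multiplicativity. If $\langle f_1\rangle\bullet\langle f_2\rangle=\langle f_1 h f_2\rangle$ with $h$ the unique non-zero divisor $r(f_2)\to d(f_1)$ in $C$, then by the key step $F(h)$ is a non-zero divisor in $D$, necessarily the unique one between the corresponding objects (since $D_{\pv{nzd}}$ is a preorder), so $\langle F(f_1)\rangle\bullet\langle F(f_2)\rangle=\langle F(f_1)F(h)F(f_2)\rangle=\langle F(f_1 h f_2)\rangle$. If instead $\langle f_1\rangle\bullet\langle f_2\rangle=0$, I must show no non-zero divisor $k\colon r(F(f_2))\to d(F(f_1))$ exists in $D$; from such a $k$, applying $G'$ and composing with the appropriate components of $\beta$ (which are non-zero divisors since they are isomorphisms between non-trivial objects) would yield a non-zero divisor $r(f_2)\to d(f_1)$ in $C$, contradicting the hypothesis. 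I expect the main obstacle to be precisely this final bookkeeping — ensuring that the transport of non-zero divisors across the natural isomorphisms is correctly tracked — though no new idea beyond the observation that composition with an isomorphism preserves being a non-zero divisor is required.
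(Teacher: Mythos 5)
The paper does not actually supply a proof here---it dismisses the proposition as ``a routine exercise''---so there is nothing to compare against except the expected routine argument, and your proposal is a correct and complete version of exactly that argument. The one step that genuinely needs care, namely that a fully faithful and essentially surjective functor carries non-zero divisors to non-zero divisors (by lifting a hypothetical annihilating morphism $g$ through an isomorphism $F(c'')\cong d$ and applying faithfulness together with Remark~\ref{r:fully-faithful-are-strict}), is handled correctly, and the remaining points (restriction of the natural isomorphisms to the non-trivial objects, reflection of $\cong_0$-classes, and the two cases in the verification that $F_\bullet$ is multiplicative, including transporting a non-zero divisor of $D$ back to $C$ along $G$ and the components of $\beta$) are all sound.
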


Let $S$ be a semigroup with zero. Let us say that
       a non-zero morphism $(e,s,f)$ of $\KK(S)$
       is a \emph{strong non-zero divisor} if
       we have $rs\neq 0$ and $st\neq 0$
       whenever $r$ and $t$ are elements of $S$
       such that $re\neq 0$
       and $ft\neq 0$.
      The strong non-zero divisors
  of $\KK(S)$ form a subcategory $T=\KK(S)_{\pv {snzd}}$.
   If $T$ is a preorder, then we denote $\circ_T$ by $\circ$,
   and $\KK(S)_{\circ_T}$ by~$\KK(S)_{\circ}$.
   We shall see in this section that
the
semigroup associated by Krieger to a shift $\Cl X$ with property ($\mathscr A$) is $\KK(\Cl X)_\circ$
(Theorem~\ref{t:characterization-of-the-Krieger-associated-semigroup}.)

   Note that a strong-non zero divisor morphism is a non-zero divisor.
   In the next proposition, whose easy proof we omit, a sufficient
   condition for the converse is given.

  \begin{Prop}\label{p:sufficient-conditions-for-catrobust-equal-robust}
    Suppose $S$ is a semigroup with zero
   $S$ satisfying the following condition:
   for every $s\in S\setminus\{0\}$, there are
    $r,t\in S$ and idempotents $e,f\in S$ such that $erstf\neq 0$.
    Then we have $\KK(S)_{\pv{snzd}}=\KK(S)_{\pv{nzd}}$.
  \end{Prop}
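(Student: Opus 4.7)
The inclusion $\KK(S)_{\pv{snzd}}\subseteq\KK(S)_{\pv{nzd}}$ is already recorded in the text, so the plan is to prove the reverse inclusion under the stated hypothesis. Fix a non-zero divisor $(e,s,f)$ of $\KK(S)$ and suppose $r\in S$ satisfies $re\neq 0$; I aim to show $rs\neq 0$ (the proof that $ft\neq 0$ implies $st\neq 0$ will be entirely symmetric, so I would only write one side).

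The key idea is to manufacture from $r$ a non-zero morphism of $\KK(S)$ ending at $e$, so that the non-zero divisor property of $(e,s,f)$ can be applied. To do this, I would invoke the hypothesis on the non-zero element $re$: pick $r',t'\in S$ and idempotents $e',f'\in E(S)$ with $e'r'(re)t'f'\neq 0$. Set $x:=e'r're$. Since $e$ is idempotent, $x=xe\in e'Se$, so $g:=(e',x,e)$ is a morphism $e\to e'$ in $\KK(S)$, and from $xt'f'\neq 0$ we get $x\neq 0$, hence $g\neq 0$.

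Now I would compose: since $(e,s,f)$ is a non-zero divisor and $g\neq 0$ has $d(g)=e=r(e,s,f)$, we have $g\cdot(e,s,f)\neq 0$. Unfolding the composition, $g\cdot(e,s,f)=(e',xs,f)$, and using $s=esf$ (so $es=s$), we compute $xs=e'r'res=e'r'rs$. Thus $e'r'rs\neq 0$ in $S$, which forces $rs\neq 0$, as desired. The dual argument, applied to $ft\neq 0$ with the hypothesis used on $ft$ to produce a non-zero morphism out of $f$, yields $st\neq 0$.

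There is no genuine obstacle here; the proposition is really a bookkeeping statement. The only subtlety is choosing which element the hypothesis should be applied to, and recognising that the element produced naturally sits in $e'Se$ (respectively $fSf'$) because $e$ and $f$ are idempotents, so it genuinely represents a morphism in $\KK(S)$ whose non-vanishing can then be fed into the non-zero divisor assumption on $(e,s,f)$.
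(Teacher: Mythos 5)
Your proof is correct, and since the paper explicitly omits the proof of this proposition as an easy exercise, your argument is exactly the routine verification intended: apply the hypothesis to $re$ (resp.\ $ft$) to package it as a non-zero morphism of $\KK(S)$ composable with $(e,s,f)$, then use $es=s=sf$ to conclude. Nothing to correct.
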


  \begin{Rmk}\label{rmk:sufficient-conditions-for-catrobust-equal-robust}
       Suppose that $\Cl X$ is a
       sofic subshift of $A^{\mathbb Z}$.
       All sufficiently long words of $A^+$
       have a factor that maps to an idempotent in
       $S(\Cl X)$ (cf.~\cite[Theorem 1.11]{PinBook}),
       from which it follows that $S(\Cl X)$
       satisfies the condition in
       Proposition~\ref{p:sufficient-conditions-for-catrobust-equal-robust},
       thus $\KK(\Cl X)_{\pv{snzd}}=\KK(\Cl Y)_{\pv{nzd}}$.
  \end{Rmk}

\subsection{The set \protect{$\mathcal A(\Cl X)$}}

In this section, $\Cl X$ is always a subshift of $\z A$. For each $u\in A^+$, let
\begin{align*}
  \omega^+(u)
  &=\{v\in L(\Cl X)\mid wu\in L(\Cl X)\Rightarrow wuv\in L(\Cl X)\},\\
  \omega^-(u)
  &=\{v\in L(\Cl X)\mid uw\in L(\Cl X)\Rightarrow vuw\in L(\Cl X)\}.
\end{align*}
Consider also the following subsets of $\Cl X$:
\begin{align*}
   \mathcal A_n^+(\Cl X)
   &=\{x\in\Cl X\mid \forall i\in\mathbb Z,\,
   x_i\in\omega^+(x_{[i-n,i-1]})\},\\
   \mathcal A_n^-(\Cl X)
   &=\{x\in\Cl X\mid \forall i\in\mathbb Z,\,
   x_i\in\omega^-(x_{{[}i+1,i+n{]}})\}.
\end{align*}

\begin{Rmk}\label{r:glueing-ahead}
  Suppose that $uv\in L(\mathcal A_n^+(\Cl X))$
  and that $u$ has length at least~$n$. Then $wu\in L(\Cl X)$ implies
  $wuv\in L(\Cl X)$. A dual remark holds for
  $\mathcal A_n^-(\Cl X)$.
\end{Rmk}

If non-empty, the intersection $\mathcal A_n(\Cl X)=\mathcal A_n^+(\Cl
X)\cap\mathcal A_n^-(\Cl X)$ is an $n$-step finite type subshift of $\z A$.
Note that $n\leq m$ implies $\mathcal A_n(\Cl X)\subseteq \mathcal A_m(\Cl X)$.
Finally, denote by $\mathcal A(\Cl X)$ the union
$\bigcup_{n\geq 1}\mathcal A_n(\Cl X)$.

\begin{Rmk}\label{r:for-finite-type-shifts-AisX}
If $\Cl X$ is an $n$-step finite shift, then
$\Cl X=\mathcal A_n(\Cl X)=\mathcal A(\Cl X)$.
\end{Rmk}

From hereon, we denote by $\Cl E(\Cl X)$ the set of words
of $A^+$ such that $[u]$ is a non-zero idempotent.
The length of an element $w\in A^*$ is denoted~$|w|$. W.~Krieger, in his talk at the workshop \emph{Flow equivalence of graphs, shifts and $C^\ast$-algebras}, held at the University of
Copenhagen in November 2013 indicated that the following lemma should hold.

\begin{Lemma}\label{l:idempotents-define-nice-periodic-points}
  If $u\in\Cl E(\Cl X)$ then the periodic
  point $u^\infty$ belongs to $\mathcal A_{2|u|}(\Cl X)$.
\end{Lemma}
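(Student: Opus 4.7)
Plan: Let $n=|u|$ and set $x=u^\infty$; by Remark~\ref{rmk:idempotents-implies-periodic-points}, $x\in\Cl X$, so it suffices to verify the two conditions defining $\mathcal A^+_{2n}(\Cl X)$ and $\mathcal A^-_{2n}(\Cl X)$ at every index $i\in\mathbb Z$. The first thing I would do is identify the shape of the relevant $2n$-blocks. Writing $u=pq$ with $|p|=i\bmod n$ and setting $v=qp$, a direct verification shows $x_{[i-2n,i-1]}=v^2$, and that $x_i$ equals the first letter of $v$. Applying Lemma~\ref{l:a-lemma-on-conjugate-words} in $S(\Cl X)$ to $[u]=[p][q]$ and $[v]=[q][p]$ yields that $[v^2]=[v]^2$ is idempotent; it is non-zero because $v^2$ is a factor of $u^\infty\in\Cl X$ and therefore belongs to $L(\Cl X)$.

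To verify $x_i\in\omega^+(v^2)$, I would take $r\in A^*$ with $rv^2\in L(\Cl X)$. Idempotence of $[v^2]$ gives $[rv^2]=[rv^2]\cdot[v^2]=[rv^4]$, so $rv^4\in L(\Cl X)$. Since $x_i$ is the first letter of $v$, the word $v^2x_i$ is a prefix of $v^3$, so $rv^2x_i$ is a prefix of $rv^4$; factoriality of $L(\Cl X)$ then yields $rv^2x_i\in L(\Cl X)$, as needed.

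The condition $x_i\in\omega^-(x_{[i+1,i+2n]})$ is proved dually: writing $x_{[i+1,i+2n]}=w^2$ for the cyclic conjugate $w$ of $u$ corresponding to the splitting at position $(i+1)\bmod n$, one checks that $x_i$ is the last letter of $w$, and hence $x_iw^2$ is a suffix of $w^3$. Given $s$ with $w^2s\in L(\Cl X)$, idempotence of $[w^2]$ (again via Lemma~\ref{l:a-lemma-on-conjugate-words}) gives $w^4s\in L(\Cl X)$, and factoriality delivers $x_iw^2s\in L(\Cl X)$. There is no real obstacle here; the proof amounts to combinatorial book-keeping of cyclic conjugates combined with a single application of Lemma~\ref{l:a-lemma-on-conjugate-words} and the factoriality of $L(\Cl X)$.
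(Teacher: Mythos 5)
Your proposal is correct and follows essentially the same route as the paper: identify the relevant $2|u|$-block as the square of a cyclic conjugate of $u$, invoke Lemma~\ref{l:a-lemma-on-conjugate-words} to get idempotency of its syntactic image, and then combine idempotency with factoriality of $L(\Cl X)$ to extend by one letter. The only cosmetic difference is that you name the conjugate $v$ and work with $v^2$ where the paper calls the whole square $v$; the dual case is handled the same way in both.
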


\begin{proof}
  The fact that $[u]$ is a non-zero idempotent
  guarantees $u^\infty\in\Cl X$.
  Let $a\in A$ and $v\in A^+$ be such that
  $|v|=2|u|$ and $va$ is a factor of $u^\infty$.
  We want to show that $a\in\omega^+(v)$.
  Let $p\in A^+$ be such that $pv\in L(\Cl X)$.
  Since $|v|=2|u|$
  and $va$ is a factor of
  $u^\infty$,
  there is a word $z$ conjugate to $u$
  for which $v=z^2$ and the letter $a$ is a prefix of $z$, whence of $v$.
  Lemma~\ref{l:a-lemma-on-conjugate-words} implies that
  $[v]$ is idempotent, and so
  from $pv\in L(\Cl X)$
  we get $pv^2\in L(\Cl X)$,
  thus $pva\in L(\Cl X)$.
  This shows that $a\in\omega^+(v)$,
  establishing $u^\infty\in\mathcal A_{2|u|}^+(\Cl X)$.
  Dually, we have $u^\infty\in\mathcal A_{2|u|}^-(\Cl X)$.
\end{proof}

Two elements $x$ and $y$ of $\z A$
are \emph{left asymptotic} if there is $n\in\ZZ$
such that $x_i=y_i$ for all $i\leq n$.
 Of course, there is the dual
notion of \emph{right asymptotic}.
If $x$ is left  asymptotic
to $p^{\infty}$
and right asymptotic to $q^{\infty}$, then there is $u\in A^+$ such that $x$ is
in the orbit of
the bi-infinite sequence $\ldots ppp.uqqq\ldots$.
We represent by $p^{-\infty}uq^{+\infty}$ this sequence. In absence of
confusion, we may represent also by $p^{-\infty}uq^{+\infty}$ any
sequence in its orbit.

For a subset $Z$ of $\Cl X$, let $P(Z)$ be the set of periodic
points of $\Cl X$ belonging to $Z$.
We define a relation $\preceq $
in $P(\mathcal A(\Cl X))$
by letting $q\preceq r$ if there is $z\in \mathcal A(\Cl X)$
such that $z$ is left asymptotic to $q$
and $z$ is right asymptotic to~$r$.
Since $\mathcal A_n(\Cl X)$ is a finite type shift for every $n$, the relation $\preceq$ is transitive, thus a preorder.
The resulting equivalence
is denoted~$\sim$.

\begin{Prop}\label{p:characterization-of-robust-morphisms}
  Let $p,q\in\Cl E(\Cl X)$ and $u\in A^+$.
  Then $p^{-\infty}uq^{+\infty}$ belongs to $\mathcal A(\Cl X)$
  if and only if
  the morphism
  $([p],[puq],[q])$
  of $\KK(\Cl X)$ is a strong non-zero divisor.
\end{Prop}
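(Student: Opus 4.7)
The plan is to verify both implications by combining the idempotency of $[p]$ and $[q]$ with either Remark~\ref{r:glueing-ahead} or the strong non-zero divisor hypothesis. Throughout I index $x = p^{-\infty}uq^{+\infty}$ so that $u$ occupies positions $0,\ldots,|u|-1$, and I rely repeatedly on the identities $[wp^K]=[wp]$, $[q^Kw]=[qw]$, and $[p^Kuq^K]=[puq]$ in $S(\Cl X)$ for $K\geq 1$, all consequences of the idempotency of $[p]$ and $[q]$.

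For the forward direction, I assume $x\in\mathcal A_n(\Cl X)$ and pick $K$ with $K|p|,K|q|\geq n$. Then $p^Kuq^K$ is a block of $x$ whose prefix $p^K$ and suffix $q^K$ each have length at least $n$, so Remark~\ref{r:glueing-ahead} applied to $\mathcal A_n^+$ and then to its dual for $\mathcal A_n^-$ yields
\[ wp^K\in L(\Cl X)\Rightarrow wp^Kuq^K\in L(\Cl X), \quad q^Kw\in L(\Cl X)\Rightarrow p^Kuq^Kw\in L(\Cl X). \]
Rewriting these via the idempotency identities gives $wp\in L(\Cl X)\Rightarrow wpuq\in L(\Cl X)$ and $qw\in L(\Cl X)\Rightarrow puqw\in L(\Cl X)$, which is precisely the strong non-zero divisor condition for $([p],[puq],[q])$; the required non-zeroness $[puq]\neq 0$ is immediate since $puq$ is a block of $x\in\Cl X$.

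For the converse, I assume $([p],[puq],[q])$ is a strong non-zero divisor. The point $x$ belongs to $\Cl X$ because every one of its blocks is a factor of a word of the form $p^K$, $q^K$, or $p^Kuq^K$, each with non-zero syntactic class by idempotency. Setting $n=2(|p|+|u|+|q|)$, I then show $x\in\mathcal A_n(\Cl X)$ by a case analysis on $i\in\ZZ$, checking $x_i\in\omega^+(x_{[i-n,i-1]})$ (the $\omega^-$ case being symmetric). When $x_{[i-n,i]}$ lies entirely in $p^\infty$ or in $q^\infty$, Lemma~\ref{l:idempotents-define-nice-periodic-points} gives $p^\infty\in\mathcal A_{2|p|}^+(\Cl X)$ or $q^\infty\in\mathcal A_{2|q|}^+(\Cl X)$, and Remark~\ref{r:glueing-ahead} supplies the required gluing. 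Otherwise $i$ lies in a finite transition range and I split into two subcases. If $x_{[i-n,i-1]}$ still reaches into $p^\infty$, I write it as $yp^K\alpha$ with $\alpha$ a prefix of $uq^\infty$, extract $wp\in L(\Cl X)$ from $wx_{[i-n,i-1]}\in L(\Cl X)$ by factoriality, apply the strong non-zero divisor hypothesis to obtain $wpuq\in L(\Cl X)$, and conclude via idempotency of $[q]$ and factoriality. If $x_{[i-n,i-1]}$ lies inside $u\cdot q^\infty$, then for $n$ this large it must contain a full copy of $q$, and idempotency of $[q]$ alone suffices: the implication $wq^L\in L(\Cl X)\Rightarrow wq^{L+1}\in L(\Cl X)$ together with factoriality already delivers the gluing.

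The main obstacle is the bookkeeping in the transition range: one must identify at each position the exact decomposition of $x_{[i-n,i]}$ as a concatenation of a suffix of $p^\infty$, (a factor of) $u$, and a prefix of $q^\infty$, and verify that idempotency collapses the resulting syntactic computation to one in which the strong non-zero divisor hypothesis applies directly. Once the decompositions are laid out, each subcase becomes a routine application of factoriality of $L(\Cl X)$ together with the hypothesis.
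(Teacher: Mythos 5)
Your proposal is correct and follows essentially the same route as the paper: the forward direction is Remark~\ref{r:glueing-ahead} applied to the block $p^Kuq^K$ plus idempotency, and the converse is the same three-way window analysis (periodic parts via Lemma~\ref{l:idempotents-define-nice-periodic-points}, windows containing an aligned copy of $p$ at the junction via the strong non-zero divisor hypothesis, and windows containing an aligned copy of $q$ via idempotency of $[q]$ alone). The only imprecision is in your case split: a window that overlaps $p^{-\infty}$ by a positive amount shorter than $|p|$ cannot be written as $yp^K\alpha$ with $K\geq 1$, so it must be routed to your second subcase (it does contain a full aligned copy of $q$, so the $[q]$-idempotency argument applies verbatim) --- this is exactly how the paper's case (2), with $z=\alpha q\beta$ and $\alpha$ a suffix of $pu$, handles it.
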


\begin{proof}
  Suppose $n\geq 1$ is such that $p^{-\infty}uq^{+\infty}\in\mathcal A_n(\Cl X)$.
  Let $w\in A^+$ with $[wp]\neq 0$.
  As $[p]$ is idempotent,
  we have $wp^n\in L(\Cl X)$.
  Therefore, since $p^{-\infty}uq^{+\infty}\in\mathcal A_n(\Cl X)$,
  we have $wp^nu\in L(\Cl X)$ by Remark~\ref{r:glueing-ahead}.
  Again because $[p]$ is idempotent, we conclude
  that $[wpuq]\neq 0$. Dually,
  if $[qw]\neq 0$
  then $[puqw]\neq 0$. Hence,
  $([p],[puq],[q])$
  is a strong non-zero divisor.

  Conversely, suppose that $([p],[puq],[q])$
  is a strong non-zero divisor.
  By Lemma~\ref{l:idempotents-define-nice-periodic-points},
  there is $n\geq 1$ with $p^\infty,q^\infty\in \mathcal A_n(\Cl X)$.
  Let $m=\max\{n,|puq|\}$.
  Consider a factor of
  $p^{-\infty}uq^{+\infty}$ of the form $za$, with $a\in A$,
  and $|z|=m$.
  We claim that $a\in\omega^+(z)$.
  That holds if $za$ is a factor of $p^\infty$ or $q^\infty$,
  as $|z|\geq n$ and $p^\infty,q^\infty\in \mathcal A_n(\Cl X)$.
  So, suppose that $za$ is neither a factor of $p^\infty$, nor of $q^\infty$. Then, since $|z|\geq |puq|$, there are
  only two possibilities (see Figure~\ref{fig:two-case-for-za}):
  \begin{enumerate}
  \item there are $k\geq 1$ and $\alpha,\beta\in A^\ast$ such that
    $z=\alpha p\beta$ and $\alpha puq^k\in zaA^\ast$;\label{item:za-1}
  \item there are $k\geq 2$ and $\alpha,\beta\in A^\ast$
    such that $z=\alpha q\beta$ and $\alpha q^k\in zaA^\ast$.\label{item:za-2}
  \end{enumerate}
  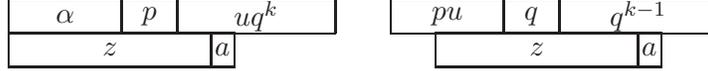
\begin{figure}[h]
    \centering
     \unitlength=0.85pt
     \begin{picture}(300,30)(0,0)
       \put(0,15){\framebox(50,15){$\alpha$}}
       \put(50,15){\framebox(25,15){$p$}}
       \put(75,15){\framebox(70,15){$uq^k$}}
       \put(0,0){\framebox(90,15){$z$}}
       \put(90,0){\framebox(10,15){$a$}}
       \put(170,15){\framebox(50,15){$pu$}}
       \put(220,15){\framebox(25,15){$q$}}
       \put(245,15){\framebox(70,15){$q^{k-1}$}}
       \put(190,0){\framebox(90,15){$z$}}
       \put(280,0){\framebox(10,15){$a$}}
     \end{picture}
         \caption{The two cases where $za$ is not a factor of $p^\infty$ or
    of $q^\infty$.}
  \label{fig:two-case-for-za}
  \end{figure}

  Assume we are in case \eqref{item:za-1},
  and let $x\in A^\ast$ be such that $xz\in L(\Cl X)$.
  Then, as $z=\alpha p\beta$,
  we have $[x\alpha p]\neq 0$. Therefore,
  $[x\alpha puq^k]\neq 0$ because
    $([p],[puq],[q])$ is a strong non-zero divisor morphism of $\KK(\Cl X)$.
    And since $\alpha puq^k\in zaA^\ast$, we
    conclude that $xza\in L(\Cl X)$, thus $a\in\omega^+(z)$.

  Suppose we are in case \eqref{item:za-2},
  and let $x\in A^\ast$ with $xz\in L(\Cl X)$.
  As $z=\alpha q\beta $ and $[q]$ is idempotent,
  we have $x\alpha q^k\beta\in L(\Cl X)$.
  Since $\alpha q^k\in zaA^\ast$,
  it follows that $xza\in L(\Cl X)$, thus $a\in\omega^+(z)$.

  All cases exhausted, we have shown that
  $p^{-\infty} uq^{+\infty}\in\mathcal A_m^+(\Cl X)$.
  Symmetrically, $p^{-\infty} uq^{+\infty}\in\mathcal A_m^-(\Cl X)$ holds.
  Hence, we have
  $p^{-\infty} uq^{+\infty}\in\mathcal A_m(\Cl X)$.
\end{proof}

\begin{Cor}\label{c:a-syntactic-condition-for-p-equivalent-to-q}
  Let $p,q\in\Cl E(\Cl X)$ be such that
  $[p]\mathrel{\D}[q]$.
  Then $p^\infty\sim q^\infty$.
  More precisely, if
  $u,v\in A^+$ are such that
  $[p]=[uv]$
  and
  $[q]=[vu]$
  then
  $p^{-\infty} uq^{+\infty}\in\mathcal A(\Cl X)$
  and
 $q^{-\infty} vp^{+\infty}\in\mathcal A(\Cl X)$.
\end{Cor}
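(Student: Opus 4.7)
The approach is a direct application of Proposition~\ref{p:characterization-of-robust-morphisms}. To show that $p^{-\infty}uq^{+\infty} \in \mathcal A(\Cl X)$, it suffices to verify that the morphism $([p],[puq],[q])$ of $\KK(\Cl X)$ is a strong non-zero divisor. First, since $puq = uv \cdot u \cdot vu$ and $[p] = [uv]$ is idempotent, we compute
\[
[puq] = [(uv)^2 u] = [uv \cdot u] = [uvu].
\]

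To check the two strong non-zero divisor conditions, suppose first that $r \in S(\Cl X)$ satisfies $r[p] \neq 0$, and realize $r$ by a word in $A^+$. Then $ruv \in L(\Cl X)$, and idempotency of $[uv]$ yields $r(uv)^2 = ruvuv \in L(\Cl X)$; since $L(\Cl X)$ is factorial, the prefix $ruvu$ lies in $L(\Cl X)$, so $r \cdot [puq] = [ruvu] \neq 0$. Dually, if $[q] \cdot t \neq 0$, represent $t$ by a word so that $vut \in L(\Cl X)$; by idempotency of $[q] = [vu]$ one has $(vu)^2 t = vuvut \in L(\Cl X)$, and the suffix $uvut$ of this word is also in $L(\Cl X)$, giving $[puq] \cdot t = [uvut] \neq 0$. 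Proposition~\ref{p:characterization-of-robust-morphisms} now yields $p^{-\infty}uq^{+\infty} \in \mathcal A(\Cl X)$, hence $p^\infty \preceq q^\infty$.

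The same argument, with the roles of $(p,u)$ and $(q,v)$ interchanged, computes $[qvp] = [vuv]$ and establishes that $([q],[qvp],[p])$ is a strong non-zero divisor, producing $q^{-\infty}vp^{+\infty} \in \mathcal A(\Cl X)$ and so $q^\infty \preceq p^\infty$. Combining the two relations yields $p^\infty \sim q^\infty$.

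I do not anticipate any real obstacle: the whole proof is a short direct verification using Proposition~\ref{p:characterization-of-robust-morphisms}. The only bookkeeping needed is to apply the idempotency of $[uv]$ and $[vu]$ at the right moment to lengthen a word already in $L(\Cl X)$ and then to extract the relevant factor as a prefix or suffix, which is legitimate because $L(\Cl X)$ is factorial.
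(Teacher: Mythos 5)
Your proof is correct: the computations $[puq]=[uvu]$ and $[qvp]=[vuv]$ are right, and the two strong non-zero divisor conditions are verified correctly by using idempotency of $[uv]$ (resp.\ $[vu]$) to lengthen a word of $L(\Cl X)$ and then extracting the relevant prefix or suffix by factoriality. The skeleton is the same as the paper's --- both reduce the statement to showing that $([p],[puq],[q])$ and $([q],[qvp],[p])$ are strong non-zero divisors and then invoke Proposition~\ref{p:characterization-of-robust-morphisms} --- but the paper dispatches the strong non-zero divisor claim in one line by observing that these two morphisms are mutually inverse isomorphisms in $\KK(\Cl X)$ (their composites are $([p],[(uv)^3],[p])=1_{[p]}$ and $([q],[(vu)^3],[q])=1_{[q]}$), and any isomorphism $(e,s,f)$ with inverse $(f,s',e)$ is trivially a strong non-zero divisor since $re\neq 0$ gives $rss'=re\neq 0$, hence $rs\neq 0$, and dually. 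Your word-level verification buys nothing extra here but is perfectly sound; the paper's observation is worth internalizing because it is exactly the content of Lemma~\ref{l:isomorphism-criterion} (isomorphism of objects in $\KK(S)$ equals $\D$-equivalence of idempotents), which is also what guarantees, for the first sentence of the corollary, that a decomposition $[p]=[uv]$, $[q]=[vu]$ exists at all --- a point you use implicitly when you pass from the ``more precisely'' clause back to $p^\infty\sim q^\infty$.
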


\begin{proof}
  By~Lemma~\ref{l:isomorphism-criterion},
  as $[p]$
  and $[q]$ are idempotents,
 $[p]\mathrel{\D}[q]$
  means
  there are
  $u,v\in A^+$
  such that $[p]=[uv]$ and $[q]=[vu]$. Moreover, $([p],[puq],[q])$
  and $([q],[qvp],[p])$ are isomorphisms,
  whence strong non-zero divisors.
\end{proof}

\subsection{Property ($\mathscr A$)}

\begin{Def}[Property ($\mathscr A$)]\label{def:propertyA}
  Let $n\geq 1$ and $H\geq 0$ be
  integers.  The shift $\Cl X$
  is said to have \emph{property $(\mathscr A,n,H)$}
  when the following conditions hold:
  if $p,u,v,q$ are elements of $A^\ast$
  such that $puq,pvq\in L(\mathcal A_n(\Cl X))$
  and $|p|=|q|=H$, then
  $[puq]_{\Cl X}=[pvq]_{\Cl X}$.
  The shift $\Cl X$ has \emph{property ($\mathscr A$)} if, for every $n\geq 1$, there
  is $H\geq 0$ such that $\Cl X$ has property $(\mathscr A,n,H)$.
\end{Def}

For shifts with property ($\mathscr A$), we have a sort of converse
of Lemma~\ref{l:idempotents-define-nice-periodic-points}, also suggested by W.~Krieger.

\begin{Lemma}\label{l:with-property-A-periodic-points-define-idempotents}
  Suppose that $\Cl X$ has property ($\mathscr A$).
  If $u^\infty\in \mathcal A(\Cl X)$,
  then there is $h_0\geq 1$ such that $u^h\in\Cl E(\Cl X)$ for every $h\geq h_0$.
\end{Lemma}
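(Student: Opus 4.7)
The plan is to apply property $(\mathscr A)$ directly, using $u^h$ and $u^{2h}$ as the two words whose syntactic classes I want to identify. Since $u^\infty \in \mathcal A(\Cl X)$, I can fix $n \geq 1$ with $u^\infty \in \mathcal A_n(\Cl X)$, and property $(\mathscr A)$ then supplies a corresponding $H \geq 0$ such that $\Cl X$ has property $(\mathscr A, n, H)$. I will then take $h_0 := \lceil 2H/|u|\rceil$ and argue that $[u^h] = [u^{2h}]$ in $S(\Cl X)$ for every $h \geq h_0$, which forces $[u^h]$ to be idempotent. Non-zeroness is automatic because $u^\infty \in \Cl X$ implies $u^h \in L(\Cl X)$, so $[u^h]\neq 0$.

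For the equality of syntactic classes, I will factor both $u^h$ and $u^{2h}$ as $pwq$ with $|p|=|q|=H$, taking $p$ to be the length-$H$ prefix and $q$ to be the length-$H$ suffix. The key observation is that these buffers are independent of $h$: the letter $p$ is just the length-$H$ prefix of $u^\infty$, and because every $u^h$ ends in complete blocks of $u$, the length-$H$ suffix is also common to all $u^h$ with $h|u| \geq H$. Hence $u^h = pw_hq$ and $u^{2h} = pw_{2h}q$ involve the same buffers $p$ and $q$. Since $u^\infty \in \mathcal A_n(\Cl X)$, both $u^h$ and $u^{2h}$ belong to $L(\mathcal A_n(\Cl X))$ as factors of $u^\infty$. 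Property $(\mathscr A, n, H)$ applied to the pair $(pw_hq, pw_{2h}q)$ then yields $[u^h] = [u^{2h}]$, as required.

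The main obstacle, which turns out to be mostly bookkeeping, is verifying that the buffers $p$ and $q$ really are common to both factorizations. This comes down to the simple remark that the first and last $H$ letters of $u^h$ depend only on $u$ and $H$ once $h|u|\geq H$, but it is exactly what lets property $(\mathscr A)$ be invoked simultaneously for $u^h$ and $u^{2h}$ with identical buffers.
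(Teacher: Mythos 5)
Your proof is correct and follows essentially the same route as the paper: both apply property $(\mathscr A,n,H)$ to the pair $u^h$, $u^{2h}$ viewed as words of $L(\mathcal A_n(\Cl X))$ with common buffers, concluding $[u^h]=[u^{2h}]$ and hence idempotency (the paper uses the buffers $p=q=u^H$, of length $H|u|\geq H$, while you use buffers of length exactly $H$, which matches the letter of Definition~\ref{def:propertyA} even more closely). The only cosmetic point is that $h_0=\lceil 2H/|u|\rceil$ could be $0$ when $H=0$, so one should take $h_0=\max\{1,\lceil 2H/|u|\rceil\}$.
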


\begin{proof}
  Let $n\geq 1$ be such that $u^\infty\in \mathcal A_n(\Cl X)$,
  and let $H$ be
  such that $\Cl X$ has property $(\mathscr A,n,H)$.
  Take $h_0=2H+1$.
  Let $h=h_0+d$, where $d$ is a nonnegative integer.
  Then, by the definition of property~($\mathscr A$),
  the words $u^h=u^{H}\cdot u^{d+1}\cdot u^{H}$ and
  $u^{2h}=u^{H}\cdot u^{2H+2d+2}\cdot u^{H}$
  of $L(\mathcal A_n(\Cl X))$
  satisfy $[u^h]=[u^{2h}]$, that is,
  $[u^h]$ is idempotent.
\end{proof}

From
Lemmas~\ref{l:idempotents-define-nice-periodic-points}
and~\ref{l:with-property-A-periodic-points-define-idempotents},
we immediately deduce the following characterization of periodic points of
$\mathcal A(\Cl X)$.

\begin{Cor}\label{c:with-property-A-periodic-points-define-idempotents}
  If $\Cl X$ has property ($\mathscr A$), then
  the periodic points of $\mathcal A(\Cl X)$ are the elements
  of $\Cl X$ of the form $p^{\infty}$ with $p\in\Cl E(\Cl X)$.
\end{Cor}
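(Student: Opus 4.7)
The plan is to deduce the two inclusions as direct consequences of Lemmas~\ref{l:idempotents-define-nice-periodic-points} and~\ref{l:with-property-A-periodic-points-define-idempotents} respectively, with property ($\mathscr A$) entering only for the non-trivial direction.

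First I would prove the easy containment: every bi-infinite sequence of the form $p^\infty$ with $p \in \Cl E(\Cl X)$ is a periodic point of $\mathcal A(\Cl X)$. This is immediate from Lemma~\ref{l:idempotents-define-nice-periodic-points}, which gives $p^\infty \in \mathcal A_{2|p|}(\Cl X) \subseteq \mathcal A(\Cl X)$, and note that periodicity is built into the notation $p^\infty$. No use of property ($\mathscr A$) is needed here, since Lemma~\ref{l:idempotents-define-nice-periodic-points} is stated for all subshifts.

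For the reverse containment, I would take an arbitrary periodic point $x$ of $\mathcal A(\Cl X)$ and write $x = u^\infty$ for some word $u \in A^+$. Since $\Cl X$ has property ($\mathscr A$) and $u^\infty \in \mathcal A(\Cl X)$, Lemma~\ref{l:with-property-A-periodic-points-define-idempotents} provides an integer $h_0 \geq 1$ such that $u^h \in \Cl E(\Cl X)$ for every $h \geq h_0$. Setting $p = u^{h_0}$ gives $p \in \Cl E(\Cl X)$ with $x = u^\infty = p^\infty$ as bi-infinite sequences, which completes this direction.

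There is no substantive obstacle here: the statement is genuinely a corollary, since the two lemmas cited have already done all the work. The only conceptual point worth highlighting in the write-up is that property ($\mathscr A$) is invoked only to pass from a periodic point in $\mathcal A(\Cl X)$ to a syntactic-idempotent representative of some power of its periodic pattern.
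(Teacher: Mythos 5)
Your proof is correct and is exactly the argument the paper intends: the paper states the corollary as an immediate consequence of Lemmas~\ref{l:idempotents-define-nice-periodic-points} and~\ref{l:with-property-A-periodic-points-define-idempotents}, and your two containments (the first from the former lemma without property ($\mathscr A$), the second from the latter lemma by passing to the power $p=u^{h_0}$) are the obvious way to spell that out. Nothing is missing.
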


It follows from
Corollary~\ref{c:with-property-A-periodic-points-define-idempotents}
that if a sofic shift $\Cl X$ has property ($\mathscr A$), then
$P(\mathcal A(\Cl X))$ is dense in $\Cl X$ so long as $\Cl X$ is non-wandering (meaning that for $u\in L(\Cl X)$, there exists $v\in A^*$ such that $uvu\in L(\Cl X)$.)
When studying shifts with property ($\mathscr A$), it is
sometimes assumed in the first place that
$\mathcal A(\Cl X)$ is dense in $\Cl X$~\cite{Hamachi&Krieger:2013,Hamachi&Krieger:2013b}.

For a shift $\Cl X$ with property ($\mathscr A$), isomorphism
between objects of $\KK(\Cl X)$ has a transparent dynamical
description in terms of the equivalence relation $\sim$ between
periodic points of $\mathcal A(\Cl X)$.

\begin{Prop}\label{p:the-orbit-classes-and-the-D-classes}
  Suppose that $\Cl X$ has property ($\mathscr A$).  There is a well-defined bijection
  $\Lambda\colon P(\mathcal A(\Cl X))/{\sim}
  \to (E(S(\Cl X))\setminus\{0\})/{\D}$ given by
  $\Lambda(p^\infty/{\sim})=[p]/{\D}$
for $p\in\Cl E(\Cl X)$.
\end{Prop}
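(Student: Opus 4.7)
By Corollary~\ref{c:with-property-A-periodic-points-define-idempotents}, every periodic point of $\mathcal A(\Cl X)$ is of the form $p^\infty$ with $p\in\Cl E(\Cl X)$, so each $\sim$-class in the domain has at least one such representative. Given any $p\in\Cl E(\Cl X)$, Lemma~\ref{l:idempotents-define-nice-periodic-points} yields $p^\infty\in\mathcal A(\Cl X)$, so the formula defines a surjection from $P(\mathcal A(\Cl X))/{\sim}$ onto the set of $\D$-classes of non-zero idempotents as soon as it is well defined. Injectivity of $\Lambda$ is an immediate restatement of Corollary~\ref{c:a-syntactic-condition-for-p-equivalent-to-q}: if $[p]\mathrel{\D}[q]$ then $p^\infty\sim q^\infty$. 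The real content of the proposition is therefore well-definedness: for $p,q\in\Cl E(\Cl X)$, the condition $p^\infty\sim q^\infty$ must force $[p]\mathrel{\D}[q]$.

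My strategy is as follows. From $p^\infty\sim q^\infty$, pick $u,v\in A^+$ with both $p^{-\infty}uq^{+\infty}$ and $q^{-\infty}vp^{+\infty}$ in $\mathcal A(\Cl X)$, and fix $n$ so large that both sequences (and also $p^\infty$, $q^\infty$) lie in $\mathcal A_n(\Cl X)$. I aim to prove the two syntactic identities $[puqvp]=[p]$ and $[qvpuq]=[q]$. Granting these: $[puq]=[p][uq]$ gives $[puq]\leq_{\R}[p]$; conversely, $[puq]\cdot[qvp]=[puq^2vp]=[puqvp]=[p]$ (using idempotency of $[q]$) gives $[puq]\geq_{\R}[p]$, so $[puq]\mathrel{\R}[p]$. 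A symmetric argument shows $[puq]\mathrel{\L}[q]$, and hence $[p]\mathrel{\R}[puq]\mathrel{\L}[q]$, yielding $[p]\mathrel{\D}[q]$.

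To establish $[puqvp]=[p]$ (the dual identity is entirely parallel), I combine a gluing argument in $\mathcal A_n(\Cl X)$ with property~($\mathscr A$). Choose $H$ so that $\Cl X$ has property $(\mathscr A,n,H)$; a short factoring argument (splitting off length-$H$ sub-blocks inside longer prefixes and suffixes) shows that property $(\mathscr A,n,H)$ entails $(\mathscr A,n,H')$ for every $H'\geq H$. Now pick $K$ with $K|p|\geq H$, $K|p|>n$, and $2K|q|>n$. Since $\mathcal A_n(\Cl X)$ is an $n$-step shift, membership in its language is controlled by windows of length $n+1$, and the size constraints on $K$ guarantee that every such window in $p^Kuq^{2K}vp^K$ is a factor of either $p^{-\infty}uq^{+\infty}$ or $q^{-\infty}vp^{+\infty}$, so $p^Kuq^{2K}vp^K\in L(\mathcal A_n(\Cl X))$. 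Likewise $p^{2K}\in L(\mathcal A_n(\Cl X))$ since $p^\infty\in\mathcal A_n(\Cl X)$. Both words share the prefix and suffix $p^K$ of length $K|p|\geq H$, so property $(\mathscr A,n,K|p|)$, applied with middle words $uq^{2K}v$ and the empty word, yields $[p^Kuq^{2K}vp^K]=[p^{2K}]$. Simplifying each side using $[p]^2=[p]$ and $[q]^2=[q]$ gives $[puqvp]=[p]$, as required. The main obstacle is arranging both requirements --- admissibility in $\mathcal A_n(\Cl X)$ and the equal-boundary hypothesis of property~($\mathscr A$) --- simultaneously, which is precisely what the choice of $K$ achieves.
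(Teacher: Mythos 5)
Your proposal is correct and follows essentially the same route as the paper: glue the two connecting orbits inside $\mathcal A_n(\Cl X)$ to produce a word of the form $p^{\ast}uq^{\ast}vp^{\ast}$ in $L(\mathcal A_n(\Cl X))$, compare it with a power of $p$ via property $(\mathscr A)$ to obtain $[p]=[puqvp]$ (and dually $[q]=[qvpuq]$), and conclude $[p]\mathrel{\R}[puq]\mathrel{\L}[q]$, with injectivity and surjectivity cited from the same corollary and lemma. The only difference is cosmetic: you make explicit the monotonicity of property $(\mathscr A,n,H)$ in $H$ before applying it with boundary words $p^K$ of length $K|p|$, a point the paper's proof passes over silently when it uses boundary words $p^H$.
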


\begin{proof}
  To show that $\Lambda$ is well defined, recall first
  from Lemma~\ref{l:with-property-A-periodic-points-define-idempotents}
  that every element of $P(\mathcal A(\Cl X))$ is of
  the form $p^\infty$ for some $p\in\Cl E(\Cl X)$.

  Now, let $p,q\in\Cl E(\Cl X)$
  with $p^\infty/{\sim}=q^\infty/{\sim}$.
  Then there are $n\geq 1$ and $u,v\in A^+$ such that
  $p^{-\infty}uq^{+\infty}$
  and
  $q^{-\infty}vp^{+\infty}$ belong to $\mathcal A_n(\Cl X)$.
  Let $H$ be such that $\Cl X$ has property $(\mathscr A,n,H)$.
  Since $\mathcal A_n(\Cl X)$ is an $n$-step finite type shift, we have
  $p^{H}uq^nvp^{H}\in L(\mathcal A_n(\Cl X))$.
  Therefore,
  by the definition of property ($\mathscr A$), we have
      $[p^{2H+1}]=[p^{H}uq^nvp^{H}]$.
 Since $[p]$  and
  $[q]$ are idempotents,
  we get $[p]=[puqvp]$,
  and so $[p]\mathrel{\R} [puq]$.
  Similarly, we have
  $[q]=[qvpuq]$
  and $[q]\mathrel{\L} [puq]$.
  We deduce $[p]\mathrel{\D}[q]$,
  showing that $\Lambda$ is well defined.

  That $\Lambda$ is injective follows immediately
  from Corollary~\ref{c:a-syntactic-condition-for-p-equivalent-to-q}.
  Surjectivity is a direct consequence of
  Lemma~\ref{l:idempotents-define-nice-periodic-points}.
\end{proof}

We proceed in the task of relating property ($\mathscr A$) with properties of $\KK(\Cl X)$.

\begin{Prop}\label{p:characterization}
  If a shift $\Cl X$ has property ($\mathscr A$)
  then $\KK(\Cl X)_{\pv{snzd}}$ is a preorder.
  Conversely, if $\Cl X$ is sofic and
  $\KK(\Cl X)_{\pv{snzd}}$ is a preorder, then $\Cl X$ has
  property ($\mathscr A$).
\end{Prop}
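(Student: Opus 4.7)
The plan is to use Proposition~\ref{p:characterization-of-robust-morphisms} as a bridge: it translates the strong non-zero divisor property of $([p],[puq],[q])$ in $\KK(\Cl X)$ into membership of $p^{-\infty}uq^{+\infty}$ in $\mathcal A(\Cl X)$. Morphisms involving $0$ are never strong non-zero divisors, so only non-zero idempotents $[p],[q]$ (with $p,q\in\Cl E(\Cl X)$) are relevant, and any non-zero element of $[p]S(\Cl X)[q]$ can be written as $[puq]$ with $u\in A^+$.

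For the forward direction, let $([p],[puq],[q])$ and $([p],[pu'q],[q])$ be two parallel strong non-zero divisors. By Proposition~\ref{p:characterization-of-robust-morphisms}, both $p^{-\infty}uq^{+\infty}$ and $p^{-\infty}u'q^{+\infty}$ lie in some common $\mathcal A_n(\Cl X)$. I would pick $H$ witnessing property $(\mathscr A,n,H)$ and a power $N$ with $|p^N|,|q^N|\geq H$, and let $\alpha$ be the length-$H$ suffix of $p^N$ and $\beta$ the length-$H$ prefix of $q^N$. Then $\alpha u\beta$ and $\alpha u'\beta$ are factors of $p^Nuq^N$ and $p^Nu'q^N$, respectively, so they lie in the factorial language $L(\mathcal A_n(\Cl X))$, and property $(\mathscr A,n,H)$ gives $[\alpha u\beta]=[\alpha u'\beta]$. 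Multiplying back by the remaining prefix and suffix of $p^N$ and $q^N$, and using idempotency to collapse $[p^N]=[p]$ and $[q^N]=[q]$, yields $[puq]=[pu'q]$.

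For the backward direction, assume $\Cl X$ is sofic (so $S(\Cl X)$ is finite) and fix $n\geq 1$. A standard Ramsey-style pigeonhole argument in the finite semigroup $S(\Cl X)$ (color each pair $i<j$ by the image of the corresponding factor, extract many monochromatic points, observe that the factor between any two of them maps to the same idempotent, and concatenate consecutive such blocks to stretch the length) furnishes a constant $H$ such that every word of $L(\Cl X)$ of length at least $H$ contains a factor of length at least $n$ mapping to an idempotent of $S(\Cl X)$. Given $p,u,v,q$ with $puq,pvq\in L(\mathcal A_n(\Cl X))$ and $|p|=|q|=H$, I would pick such idempotent factors $p_0$ of $p$ and $q_0$ of $q$, write $p=\alpha p_0\beta$ and $q=\gamma q_0\delta$, and set $\tilde u=\beta u\gamma$ and $\tilde v=\beta v\gamma$ (prepending $p_0$ if necessary so these land in $A^+$).

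The crux is then the claim that $p_0^{-\infty}\tilde uq_0^{+\infty}\in\mathcal A(\Cl X)$, and symmetrically for $\tilde v$. I would verify, position by position, that this sequence belongs to $\mathcal A_{n''}(\Cl X)$ for some $n''$ depending on $n$, $|p_0|$, $|q_0|$, and $|\tilde u|$. Windows lying in the left (resp.\ right) periodic portion are handled by Lemma~\ref{l:idempotents-define-nice-periodic-points} applied to $p_0^\infty$ (resp.\ $q_0^\infty$) together with the monotonicity $\mathcal A_k\subseteq\mathcal A_m$ for $k\leq m$. Transition windows crossing the middle block are handled by Remark~\ref{r:glueing-ahead}: taking $n''$ large enough forces every such window to contain a full copy of $p_0$ on its left or a full copy of $q_0$ on its right, and then the fact that $p_0\tilde uq_0\in L(\mathcal A_n(\Cl X))$ with $|p_0|,|q_0|\geq n$ lets us extend any admissible left context by the next letter of the middle block, and dually on the right. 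Granting the claim, Proposition~\ref{p:characterization-of-robust-morphisms} turns $([p_0],[p_0\tilde uq_0],[q_0])$ and $([p_0],[p_0\tilde vq_0],[q_0])$ into strong non-zero divisors, so the preorder hypothesis forces $[p_0\tilde uq_0]=[p_0\tilde vq_0]$; multiplying by $[\alpha]$ on the left and $[\delta]$ on the right produces $[puq]=[pvq]$, establishing property $(\mathscr A,n,H)$. The main technical obstacle is the transition-window case analysis needed to pin down a uniform $n''$; the Ramsey step is standard and the forward direction is essentially a syntactic computation using idempotency.
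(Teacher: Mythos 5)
Your proof is correct, and your forward direction is essentially the paper's: both apply property $(\mathscr A,n,H)$ to the words obtained by padding $u$ and $u'$ with powers of $p$ and $q$ and then collapse the padding by idempotency (the paper uses $p^Huq^H$ directly, you trim to exact length $H$; either works). The backward direction is where you diverge. The paper never constructs the bi\mbox{-}infinite point: having extracted the idempotent factors $\alpha,\beta$ of $p,q$ with $|\alpha|,|\beta|\geq n$, it verifies the strong non-zero divisor condition for $([\alpha],[\alpha p''uq'\beta],[\beta])$ \emph{directly} from the definition, in two lines, via Remark~\ref{r:glueing-ahead} applied to $\alpha\cdot(p''uq'\beta)\in L(\mathcal A_n(\Cl X))$. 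You instead prove the stronger claim that $p_0^{-\infty}\tilde u q_0^{+\infty}\in\mathcal A(\Cl X)$ and then invoke Proposition~\ref{p:characterization-of-robust-morphisms}. This is valid --- I checked that the transition-window analysis goes through with $n''$ of the order of $2|p_0|+2|q_0|+|\tilde u|$ --- but it is a detour that does the same computation twice, since the ``only if'' half of Proposition~\ref{p:characterization-of-robust-morphisms} that you then apply is itself proved by exactly the gluing argument the paper uses directly. Two small cautions if you write your version out in full. First, Remark~\ref{r:glueing-ahead} alone does not dispose of all transition windows for the $\omega^+$ condition: when the extending letter lies beyond the first full aligned copy of $q_0$ (but the window still reaches back past position $0$), what you need is the idempotency of $[q_0]$, exactly as in case~(2) of the proof of Proposition~\ref{p:characterization-of-robust-morphisms}; gluing-ahead would require knowing $p_0\tilde uq_0q_0\in L(\mathcal A_n^+(\Cl X))$, which you do not have a priori. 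Second, be careful not to try to deduce $p_0^j\tilde uq_0^k\in L(\mathcal A_n(\Cl X))$ by gluing inside the $n$-step SFT $\mathcal A_n(\Cl X)$: you only know $p_0^\infty\in\mathcal A_{2|p_0|}(\Cl X)$, and $\mathcal A_{2|p_0|}\supseteq\mathcal A_n$ goes the wrong way, so the window-by-window verification (not an SFT gluing) is really the only route to your intermediate claim.
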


\begin{proof}
  Suppose that $([p],[u],[q])$
  and $([p],[v],[q])$ are morphisms
  of $\KK(\Cl X)_{\pv{snzd}}$. By
  Proposition~\ref{p:characterization-of-robust-morphisms},
  there is $n\geq 1$ such that
  $p^{-\infty}uq^{+\infty}$
  and $p^{-\infty}vq^{+\infty}$ belong to $\mathcal A_n(\Cl X)$.
  Let $H$ be such that $\Cl X$ has property $(\mathscr A,n,H)$.
  Since $p^{H}uq^{H}$ and $p^{H}vq^{H}$
  belong to $L(\mathcal A_n(\Cl X))$,
  we have $[p^{H}uq^{H}]=[p^{H}vq^{H}]$
  by the definition of property $(\mathscr A,n,H)$,
  thus $[u]=[v]$, as $[u]=[puq]$ and $[v]=[pvq]$.
  Therefore, $\KK(\Cl X)_{\pv{snzd}}$ is a preorder.

  Conversely, suppose that $\Cl X$ is a sofic shift
  for which $\KK(\Cl X)_{\pv{snzd}}$ is a preorder.
  Fix $n\geq 1$.
  It is well known that if
  $\varphi$ is a homomorphism from $A^+$ onto a finite semigroup $S$,
  then there is an integer $N_{\varphi}$ such that
  every word $v$ with length at least $N_{\varphi}$
  has a factor $w$ with $|w|\geq n$ and $\varphi(w)$
  idempotent (cf.~\cite[Theorem 1.11]{PinBook}.)
  Take $H=N_{\delta_{\Cl X}}$.
  Let $puq\in L(\mathcal A_n(\Cl X))$
  be such that $|p|,|q|\geq H$.
  Then there are factorizations $p=p'\alpha p''$
  and $q=q'\beta q''$,
  with $|\alpha|,|\beta|\geq n$
  and $[\alpha],[\beta]$ idempotents.
  We claim that the morphism $([\alpha],[\alpha p''uq'\beta],[\beta])$
  of $\KK(\Cl X)$ is a strong non-zero divisor. Let $z\in A^+$
  be such that $z\alpha\in L(\Cl X)$.
  Note that $\alpha p''uq'\beta$ is an element of
  $\mathcal A_n(\Cl X)$, as it is a factor of
  $puq$. Hence, as $|\alpha|\geq n$,
  we have $z\alpha p''uq'\beta\in L(\Cl X)$
  by Remark~\ref{r:glueing-ahead}. Therefore, $[z\alpha]\neq 0$ implies
  $[z\alpha p''uq'\beta]\neq 0$, and, dually,
  $[\beta z]\neq 0$ implies
  $[\alpha p''uq'\beta z]\neq 0$, proving the claim that
  $([\alpha],[\alpha p''uq'\beta],[\beta])$ is a strong non-zero divisor.
  Moreover, if $pvq\in L(\mathcal A_n(\Cl X))$,
  then $([\alpha],[\alpha p''vq'\beta ],[\beta])$ is also a strong non-zero divisor.
  By the hypothesis that $\KK(\Cl X)_{\pv{snzd}}$ is a preorder, it follows
  that $[\alpha p''uq'\beta]=[\alpha p''vq'\beta]$,
  whence $[puq]=[pvq]$. This shows
  that $\Cl X$ has property $(\mathscr A,n,H)$.
\end{proof}

Proposition~\ref{p:characterization}
gives an effective decision process for property~($\mathscr A$) for sofic shifts,
solving a problem posed by W.~Krieger at the workshop \emph{Flow equivalence of
graphs, shifts and $C^\ast$-algebras}, held at the University of
Copenhagen in November 2013.
The related problem raised by W.~Krieger at that workshop
as to whether there exist sofic shifts without property ($\mathscr A$) is solved in
the positive, as follows from the next corollary.

\begin{Cor}\label{c:a-necessary-condition}
  If $\Cl X$ is a shift with property ($\mathscr A$),
  then $S(\Cl X)$ is aperiodic.
\end{Cor}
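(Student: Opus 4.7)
The plan is to combine Proposition~\ref{p:characterization} with the observation that non-zero isomorphisms in $\KK(\Cl X)$ are strong non-zero divisors. Since property ($\mathscr A$) forces $\KK(\Cl X)_{\pv{snzd}}$ to be a preorder, each of its endomorphism monoids has at most one element, and this will kill the group of units of every local monoid of $S(\Cl X)$.

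First I would fix a non-zero idempotent $e$ of $S(\Cl X)$ and let $g$ be an element of the group of units $G$ of the local monoid $eS(\Cl X)e$. Then $g$ has an inverse $g'\in G$ with $gg'=g'g=e$, and in particular $g\neq 0$. This means $(e,g,e)$ is a non-zero automorphism of $e$ in $\KK(\Cl X)$, with inverse $(e,g',e)$.

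Next, I would check that every non-zero isomorphism $(e,s,f)$ of $\KK(\Cl X)$ is a strong non-zero divisor: if $(f,s',e)$ is its inverse (so $ss'=e$, $s's=f$) and $r\in S(\Cl X)$ satisfies $re\neq 0$, then $(rs)s'=re\neq 0$ forces $rs\neq 0$; symmetrically, $ft\neq 0$ forces $st\neq 0$. Hence the automorphism $(e,g,e)$ belongs to $\KK(\Cl X)_{\pv{snzd}}(e,e)$, as does the identity $(e,e,e)$. By Proposition~\ref{p:characterization}, $\KK(\Cl X)_{\pv{snzd}}$ is a preorder, so this hom-set has at most one element, giving $(e,g,e)=(e,e,e)$, i.e.\ $g=e$. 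Thus $G=\{e\}$.

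Finally, for the idempotent $0$ the local monoid is $\{0\}$, whose group of units is trivial. Hence every local monoid of $S(\Cl X)$ has trivial group of units, so $S(\Cl X)$ is aperiodic. There is really no serious obstacle here: the main step is the routine verification that non-zero isomorphisms in a Karoubi envelope with zero are strong non-zero divisors, so that the preorder hypothesis on $\KK(\Cl X)_{\pv{snzd}}$ supplied by Proposition~\ref{p:characterization} applies to automorphism groups.
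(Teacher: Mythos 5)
Your proof is correct and follows essentially the same route as the paper's: both rest on the observation that non-zero isomorphisms of $\KK(\Cl X)$ are strong non-zero divisors, so that the preorder property of $\KK(\Cl X)_{\pv{snzd}}$ supplied by Proposition~\ref{p:characterization} forces the automorphism groups of $\KK(\Cl X)$ to be trivial. The only cosmetic difference is that the paper passes through Lemma~\ref{l:isomorphism-criterion} to identify these automorphism groups with the maximal subgroups of $S(\Cl X)$, whereas you identify them directly with the groups of units of the local monoids $eS(\Cl X)e$; these amount to the same thing.
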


In particular, the even shift~\cite{MarcusandLind} consisting of all bi-infinite words in $\{0,1\}^{\mathbb Z}$ containing no factor of the form $10^{2n+1}1$ with $n\geq 0$ is a sofic shift without property ($\mathscr A$).

\begin{proof}[Proof of Corollary~\ref{c:a-necessary-condition}]
  Every automorphism of $\KK(\Cl X)$
  is a strong non-zero divisor.
  Since $\KK(\Cl X)_{\pv{snzd}}$ is a preorder,
  it follows from Proposition~\ref{p:characterization}
  that the automorphism groups of
  $\KK(\Cl X)$ are trivial.
  By Lemma~\ref{l:isomorphism-criterion},
  this means that the maximal subgroups of $S(\Cl X)$
  are trivial, i.e., $S(\Cl X)$ is aperiodic.
\end{proof}

\subsection{The Krieger semigroup}
\label{sec:krieg-assoc-semigr}

Let $\Cl X$ be a shift with property~($\mathscr A$).
Denote  by $Y(\Cl X)$ the set of elements of $\Cl X$
of the form $p^{-\infty}uq^{+\infty}$, with
$p,u,q\in A^+$ and $p^\infty,q^\infty\in\mathcal A(\Cl X)$.
By
Corollary~\ref{c:with-property-A-periodic-points-define-idempotents},
we can assume that $p,q\in\Cl E(\Cl X)$.
The following definition is from~\cite{Krieger:2000},
and appears also
in~\cite{Krieger:2012,Hamachi&Krieger:2013,Hamachi&Krieger:2013b}.

\begin{Def}\label{def:the-equivalence-on-YX}
For each integer $n\geq 1$,
suppose that $\Cl X$ has property $(\mathscr A,n,H)$.
Consider on $Y(\Cl X)$ the relation $\approx_{n,H}$ defined as follows.
Two elements $x_1,x_2\in Y(\Cl X)$ satisfy $x_1\approx_{n,H} x_2$
when there are
words $p_i,q_i,u_i$
such that $x_i=p_i^{-\infty}u_iq_i^{+\infty}$, with
$p_i^\infty,q_i^\infty\in\mathcal A_n(\Cl X)$, for $i=1,2$, and
the following conditions are satisfied:
\begin{enumerate}
\item $p_1^\infty\sim p_2^\infty$, $q_1^\infty\sim q_2^\infty$;
  \label{item:the-equivalence-on-YX-1}
\item $[z_1]=[z_2]$ whenever $z_1$ and $z_2$ are
  words for which there are factorizations\label{item:the-equivalence-on-YX-2}
  \begin{equation}\label{eq:z1z2}
  z_1=\pi \zeta_1v_1u_1w_1\xi_1\rho,\qquad
  z_2=\pi \zeta_2v_2u_2w_2\xi_2\rho,
  \end{equation}
    with:
    \begin{enumerate}
    \item $|\pi|=|\rho|=|v_i|=|w_i|=H$;\label{item:the-equivalence-on-YX-a}
    \item $p_i^{-\infty}$ ends with $v_i$\label{item:the-equivalence-on-YX-b}
    and $q_i^{+\infty}$ begins with $w_i$;
    \item the words $\pi \zeta_iv_i$\label{item:the-equivalence-on-YX-c}
      and $w_i\xi_i\rho$ belong to $L(\mathcal A_n(\Cl
      X))$.
    \end{enumerate}
  \end{enumerate}
We write $x_1\approx x_2$ if and only if
$x_1\approx_{n,H} x_2$ for some $n,H$ such that $\Cl X$
 has property $(\mathscr A,n,H)$.
\end{Def}

Krieger noted that the relation $\approx$ is a well-defined
equivalence relation.

\begin{Prop}\label{p:translation-of-elements-of-YX}
  Let $\Cl X$ be a shift with property ($\mathscr A$).
  Let $p_i,u_i,q_i$ be words
  such that $p_i^{-\infty}u_iq_i^{+\infty}\in Y(\Cl X)$
  and $p_i,q_i\in\Cl E(\Cl X)$, for $i=1,2$.
  Then
  $p_1^{-\infty}u_1q_1^{+\infty}\approx p_2^{-\infty}u_2q_2^{+\infty}$
  if and only if
  the morphisms
  $([p_1],[p_1u_1q_1],[q_1])$
  and
  $([p_2],[p_2u_2q_2],[q_2])$
  of $\KK(\Cl X)$ are isomorphic.
\end{Prop}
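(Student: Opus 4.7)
The plan is to identify both sides of the asserted equivalence with semigroup-theoretic statements in $\KK(\Cl X)$ and then show they match. Write $f_i=([p_i],[p_iu_iq_i],[q_i])$. The object-level information is handled uniformly: condition~(1) in Definition~\ref{def:the-equivalence-on-YX}, asserting $p_1^\infty\sim p_2^\infty$ and $q_1^\infty\sim q_2^\infty$, is equivalent by Proposition~\ref{p:the-orbit-classes-and-the-D-classes} to $[p_1]\mathrel{\D}[p_2]$ and $[q_1]\mathrel{\D}[q_2]$, which in turn is equivalent by Lemma~\ref{l:isomorphism-criterion} to the existence of isomorphisms $\beta\colon[p_1]\to[p_2]$ and $\alpha\colon[q_2]\to[q_1]$ in $\KK(\Cl X)$. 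The real content of the proposition is therefore that condition~(2) is equivalent to a choice of such $\beta,\alpha$ making $f_2=\beta f_1\alpha$; representing $\beta,\alpha$ by words $b_w,c_w$, this amounts to the ``middle equation''
\[
[p_2u_2q_2]=[p_2\,b_w\,p_1u_1q_1\,c_w\,q_2],
\]
together with the $\mathrel{\R}$- and $\mathrel{\L}$-compatibilities encoding that $\beta,\alpha$ are isomorphisms.

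For the direction $(\Leftarrow)$, once $\beta,\alpha$ and the middle equation are fixed, I choose $n$ large enough that the asymptotic witnesses produced by Corollary~\ref{c:a-syntactic-condition-for-p-equivalent-to-q} for $p_1^\infty\sim p_2^\infty$ and $q_1^\infty\sim q_2^\infty$ lie in $\mathcal A_n(\Cl X)$, and pick $H$ witnessing property~$(\mathscr A,n,H)$, enlarged so as to dominate $|p_i|,|q_i|,|b_w|,|c_w|$. For an arbitrary pair of factorizations $z_i=\pi\zeta_iv_iu_iw_i\xi_i\rho$ satisfying conditions~(a), (b), (c), the idempotency of $[p_i]$ and $[q_i]$ combined with Lemma~\ref{l:a-lemma-on-conjugate-words}---applied to the cyclic decomposition of the buffer $v_i$ as a suffix of a power of $p_i$, and similarly of $w_i$---lets me ``absorb'' $v_i$ into $p_i$ and $w_i$ into $q_i$ at the syntactic level. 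The middle equation then propagates through the shared outer contexts $\pi,\rho$ to yield $[z_1]=[z_2]$.

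For the direction $(\Rightarrow)$, the task is to extract suitable $b_w,c_w$ and verify the middle equation from condition~(2). I apply condition~(2) to a strategically chosen pair of factorizations: take $\pi$ of length $H$ as a suffix of some periodic point of $\mathcal A_n(\Cl X)$ linking in to $p_1$ (and $p_2$); let $\zeta_1$ be an initial segment of the periodic $p_1$-orbit while $\zeta_2$ incorporates the bridge word furnished by Corollary~\ref{c:a-syntactic-condition-for-p-equivalent-to-q} connecting $p_1^\infty$ to $p_2^\infty$ inside $\mathcal A_n(\Cl X)$; symmetrically for $\xi_i,\rho$ with respect to the $q_i$'s; and let $v_i,w_i$ be the required length-$H$ buffer suffixes and prefixes. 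Reading $[z_1]=[z_2]$ off condition~(2) and simplifying via the idempotency of $[p_i]$ and $[q_i]$ yields precisely the middle equation, with $b_w$ and $c_w$ encoded in the bridging words.

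The principal technical obstacle is the joint calibration of $n$ and $H$: the constant $H$ must witness property~$(\mathscr A,n,H)$, dominate the periods of all relevant witnesses, and satisfy the length/alignment constraints imposed by (a), (b), (c), while $n$ must be large enough for the various asymptotic bridges to lie in $\mathcal A_n(\Cl X)$. The rigidity provided by property~$(\mathscr A)$---formalized in Proposition~\ref{p:characterization} as $\KK(\Cl X)_{\pv{snzd}}$ being a preorder---is exactly what allows a single syntactic class, computed along the two different routes $z_1$ and $z_2$, to be unambiguously identified once enough buffer space has been inserted.
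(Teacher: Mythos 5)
Your proposal follows essentially the same route as the paper's proof: condition~(1) is converted to $\D$-equivalence of the idempotents via Proposition~\ref{p:the-orbit-classes-and-the-D-classes}, the isomorphism of morphisms is reduced to the single "middle equation", the forward implication is checked on arbitrary factorizations after calibrating $n$ and $H$, and the converse is extracted by testing condition~(2) on one strategically bridged pair $z_1,z_2$. The one spot needing repair is your choice of $H$: taking $H$ merely to \emph{dominate} $|p_i|,|q_i|$ leaves the buffers $v_i,w_i$ equal to a proper suffix of $p_i$ followed by a power of $p_i$ (and dually for $w_i$), and Lemma~\ref{l:a-lemma-on-conjugate-words} does not dispose of the dangling fragments; the paper instead takes $H=|p_1||q_1||p_2||q_2|$ (after first padding the $p_i,q_i$ to length at least $nh$), so that $v_i=p_i^{H/|p_i|}$ and $w_i=q_i^{H/|q_i|}$ exactly and $[v_iu_iw_i]=[p_iu_iq_i]$ by idempotency. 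With that adjustment, the "propagation through the outer contexts" is exactly the paper's double application of property $(\mathscr A,n,h)$ to the left halves (shared prefix $\pi$, shared suffix $p_2$) and the right halves (shared prefix $q_2$, shared suffix $\rho$), which requires the gluing argument that $\pi\zeta_1p_1\alpha p_2\in L(\mathcal A_n(\Cl X))$ coming from the $n$-step finite type property of $\mathcal A_n(\Cl X)$.
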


\begin{proof}
  Suppose the morphisms
    $([p_1],[p_1u_1q_1],[q_1])$
  and
  $([p_2],[p_2u_2q_2],[q_2])$ are isomorphic.
  Then, there are words $\alpha,\beta,\gamma,\delta$ such that
  \begin{equation}\label{eq:conjugates-piqi}
 [p_1]=[\alpha\beta],\quad [p_2]=[\beta\alpha],\qquad\text{and}\qquad
  [q_1]=[\gamma\delta],\quad [q_2]=[\delta\gamma],
  \end{equation}
  and for which the following diagram in $\KK(\Cl
  X)$  commutes:
      \begin{equation}\label{eq:isomorphism-of-morphisms}
        \begin{split}
              \xymatrix@C=100pt@R=30pt{
      [p_1]
      &
      [q_1]\ar[l]_{([p_1],[p_1u_1q_1],[q_1])}
      \ar[d]^{([q_2],[q_2\delta q_1],[q_1])}
      \\
      [p_2]\ar[u]^{([p_1],[p_1\alpha p_2],[p_2])}
      &[q_2].\ar[l]_{([p_2],[p_2u_2q_2],[q_2])}
    }
        \end{split}
  \end{equation}
  Hence, we have
  \begin{equation}
    \label{eq:eq-p1-p2}
    [p_1u_1q_1]=[p_1\alpha p_2u_2q_2 \delta q_1].
  \end{equation}
  By Lemma~\ref{l:idempotents-define-nice-periodic-points}
  and Corollary~\ref{c:a-syntactic-condition-for-p-equivalent-to-q},
  there is $n\geq 1$ such that
  the sequences
  $p_i^\infty$, $q_i^\infty$ (for $i=1,2$),
  $p_1^{-\infty}\alpha p_2^{+\infty}$
  and
  $q_2^{-\infty}\delta q_1^{+\infty}$
  belong to $\mathcal A_n(\Cl X)$.
  Let $h$ be such that $\Cl X$ has property $(\mathscr A,n,h)$.
  As we can replace $p_i$ by $p_i^{nh}$,
  and $q_i$ by $q_i^{nh}$,
  without changing the elements
  of~$\Cl X$ which have appeared so far,
  we may as well assume that
  the words $p_i,q_i$
  have length at least~$nh$.

  Take $H=|p_1||q_1||p_2||q_2|$.
  We claim that
  $p_1^{-\infty}u_1q_1^{+\infty}\approx_{n,H}
  p_2^{-\infty}u_2q_2^{+\infty}$.
  Let $z_1,z_2$ be words with factorizations as in~\eqref{eq:z1z2}, satisfying conditions
  \eqref{item:the-equivalence-on-YX-a}-\eqref{item:the-equivalence-on-YX-c}
  in Definition~\ref{def:the-equivalence-on-YX}.
  Since $p_1^\infty\sim p_2^\infty$ and $q_1^\infty\sim q_2^\infty$
  by Corollary~\ref{c:a-syntactic-condition-for-p-equivalent-to-q},
  to prove the claim it remains to show that $[z_1]=[z_2]$.
  As $H=|v_i|$ is a multiple of $|p_i|$ and
  $p_i^{-\infty}$ ends with $v_i$, we have
  $v_i=p_i^{\frac{H}{|p_i|}}$, for $i=1,2$.
  Similarly, $w_i=q_i^{\frac{H}{|q_i|}}$.
  Hence, $[v_iu_iw_i]=[p_iu_iq_i]$.
  Taking into account~\eqref{eq:z1z2} and~\eqref{eq:eq-p1-p2},
  we conclude that
  \begin{equation}\label{eq:trasforming-z}
    [z_1]=[\pi \zeta_1p_1\alpha p_2u_2q_2\delta q_1\xi_1 \rho].
  \end{equation}
  Recall that $\pi\zeta_i v_i$ belongs to
  $L(\mathcal A_n(\Cl X))$, whence so does its
  prefix $\pi\zeta_i p_i$.
  Then, since $p_1^{-\infty}\alpha p_2^{+\infty}\in\mathcal
  A_n(\Cl X)$, $|p_1|\geq n$ and
  $\mathcal A_n(\Cl X)$ is an $n$-step finite type shift,
  we deduce that $\pi\zeta_1 p_1\alpha p_2\in L(\mathcal A_n(\Cl X))$.
  As $|\pi|=|p_2|=H\geq h$,
  and $\Cl X$ has property $(\mathscr A,n,h)$, we then obtain
   $[\pi\zeta_1 p_1\alpha p_2]=[\pi\zeta_2p_2]$.
   Similarly, we have $[q_2\delta q_1\xi_1\rho]=[q_2\xi_2\rho]$.
   Hence, in~\eqref{eq:trasforming-z}
   we can replace $\pi\zeta_1 p_1\alpha p_2$
   and $q_2\delta q_1\xi_1\rho$
   respectively by $\pi\zeta_2p_2$
   and $q_2\xi_2\rho$, deducing $[z_1]=[z_2]$
   via~\eqref{eq:z1z2}.

   Conversely, suppose that
   $p_1^{-\infty}u_1q_1^{+\infty}\approx
  p_2^{-\infty}u_2q_2^{+\infty}$ holds.
  Then we have $p_1^\infty\sim p_2^\infty$ and $q_1^\infty\sim
  q_2^\infty$,
  thus $[p_1]\mathrel{\D}[p_2]$
  and $[q_1]\mathrel{\D}[q_2]$, by
  Proposition~\ref{p:the-orbit-classes-and-the-D-classes}.
  We may therefore consider words $\alpha,\beta,\gamma,\delta$
  as in \eqref{eq:conjugates-piqi}.
  By Corollary~\ref{c:a-syntactic-condition-for-p-equivalent-to-q},
  there is $n$ such that
  $p_1^{-\infty}\alpha p_2^{+\infty}$
  and
  $q_2^{-\infty}\delta q_1^{+\infty}$
  belong to~$\mathcal A_n(\Cl X)$.
  Suppose $\Cl X$ has property $(\mathscr A,n,H)$.
  Let $z_1=p_1^{2H}u_1q_1^{2H}$
  and $z_2=p_1^{H}\alpha p_2^H u_2 q_2^{H}\delta q_1^H$.
  Then clearly $z_1$ and $z_2$ admit factorizations of the
  form~\eqref{eq:z1z2}, satisfying conditions
  \eqref{item:the-equivalence-on-YX-a}-\eqref{item:the-equivalence-on-YX-c}.
  Therefore, applying the hypothesis
  $p_1^{-\infty}u_1q_1^{+\infty}\approx
  p_2^{-\infty}u_2q_2^{+\infty}$,
  we obtain $[z_1]=[z_2]$.
  This means that~\eqref{eq:eq-p1-p2} holds, as $[p_i],[q_i]$
  are idempotents.
  That is, \eqref{eq:isomorphism-of-morphisms} commutes, which
  shows that
  $([p_1],[p_1u_1q_1],[q_1])$
  and
  $([p_2],[p_2u_2q_2],[q_2])$ are isomorphic.
\end{proof}

We next describe an operation on $Y(\Cl X)/{\approx}$,
first introduced by
Krieger in~\cite{Krieger:2000}, and later reprised
in~\cite{Krieger:2012,Hamachi&Krieger:2013,Hamachi&Krieger:2013b},
its presentation showing small variations between these papers.

\begin{Def}\label{def:the-multiplication-in-YX}
  Consider a shift $\Cl X$ with property ($\mathscr A$).
  Let $x,y\in Y(\Cl X)$.
Suppose the following sequence of conditions holds:
\begin{enumerate}
\item $x=p^{-\infty}u\alpha^{+\infty}$\label{item:the-multiplication-in-YX-1}
  and $y=\beta^{-\infty}v q^{+\infty}$
  for some words $u,v\in A^+$ and
  $p,q,\alpha,\beta\in\Cl E(\Cl X)$ (cf.~Corollary~\ref{c:with-property-A-periodic-points-define-idempotents});
\item $\Cl X$ has property $(\mathscr A,n,H)$ for some $n,H$;\label{item:the-multiplication-in-YX-2}
\item $\alpha^{-\infty} v\beta^{+\infty}\in\mathcal A_n(\Cl X)$ for
  some word $v$;\label{item:the-multiplication-in-YX-3}
\item there is $z\in\Cl X$ such that
  $z=p^{-\infty}u\alpha' w \beta'vq^{+\infty}$, for some
  words $w$,
  $\alpha',\beta'$ such that
  $|\alpha'|,|\beta'|\geq H$,
  the sequence $\alpha^{+\infty}$ begins with
  $\alpha'$, the sequence
  $\beta^{-\infty}$ ends with $\beta'$,
  and $\alpha'w \beta'\in L(\mathcal A_n(\Cl X))$.\label{item:the-multiplication-in-YX-4}
\end{enumerate}
Then one defines $[x]_{\approx}\cdot [y]_{\approx}=[z]_{\approx}$.
If it is not possible to establish such a sequence of conditions, then
one defines $[x]_{\approx}\cdot [y]_{\approx}=0$, where $0$ is an extra element.
\end{Def}

Krieger observed that the binary operation appearing in
Definition~\ref{def:the-multiplication-in-YX}
defines the structure of a semigroup $(Y(\Cl X)/{\approx})^0$ with zero,
whose underlying set is $Y(\Cl X)/{\approx}$ together with the extra
element $0$.
We call this semigroup the
\emph{Krieger semigroup of $\Cl X$}.
If the shift $\Cl X$ has property ($\mathscr A$), then, thanks to
Proposition~\ref{p:characterization}, we may consider
the semigroup $\KK(\Cl X)_\circ$.
The following is the main result of this section.

\begin{Thm}\label{t:characterization-of-the-Krieger-associated-semigroup}
  Let $\Cl X$ be a shift with property ($\mathscr A$). The Krieger semigroup of $\Cl X$
  is isomorphic to $\KK(\Cl X)_\circ$.
\end{Thm}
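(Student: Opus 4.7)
The plan is to construct an explicit map $\Phi\colon (Y(\Cl X)/{\approx})^0\to \KK(\Cl X)_\circ$ and verify it is a semigroup isomorphism. Send the adjoined zero of the Krieger semigroup to the zero $\cong_0$-class, and for $x\in Y(\Cl X)$ write $x = p^{-\infty}uq^{+\infty}$ with $p,q\in\Cl E(\Cl X)$ (possible by Corollary~\ref{c:with-property-A-periodic-points-define-idempotents} together with Lemma~\ref{l:with-property-A-periodic-points-define-idempotents}), setting
\[\Phi([x]_\approx) = \langle([p],[puq],[q])\rangle.\]
Well-definedness and injectivity on non-zero classes follow directly from Proposition~\ref{p:translation-of-elements-of-YX}, which identifies $\approx$ with isomorphism of the associated morphisms in $\KK(\Cl X)$. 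For surjectivity onto non-zero classes, given any non-zero morphism $(e,s,f)$, pick $p,q\in\Cl E(\Cl X)$ with $e=[p]$, $f=[q]$ and a word $t$ with $s=[ptq]$; idempotency of $[p]$ and $[q]$ forces $p^{N}tq^{M}\in L(\Cl X)$ for all $N,M\geq 1$, so $p^{-\infty}tq^{+\infty}\in Y(\Cl X)$ maps to $\langle(e,s,f)\rangle$ (and $p^\infty,q^\infty\in\mathcal A(\Cl X)$ by Lemma~\ref{l:idempotents-define-nice-periodic-points}).

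The bulk of the work is checking that $\Phi$ respects multiplications. Fix representatives $x = p^{-\infty}u\alpha^{+\infty}$ and $y = \beta^{-\infty}vq^{+\infty}$ with $p,\alpha,\beta,q\in\Cl E(\Cl X)$. If $\Phi([x]_\approx)\circ\Phi([y]_\approx)=0$, there is no strong non-zero divisor morphism $[\beta]\to[\alpha]$ in $\KK(\Cl X)$, so by Proposition~\ref{p:characterization-of-robust-morphisms} no word $w'$ yields $\alpha^{-\infty}w'\beta^{+\infty}\in\mathcal A(\Cl X)$; condition~(3) of Definition~\ref{def:the-multiplication-in-YX} then fails and $[x]_\approx\cdot[y]_\approx=0$. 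Suppose instead that $\Phi([x]_\approx)\circ\Phi([y]_\approx)\neq 0$. By Proposition~\ref{p:characterization}, property~($\mathscr A$) makes $\KK(\Cl X)_{\pv{snzd}}$ a preorder, so the unique strong non-zero divisor $h\colon [\beta]\to[\alpha]$ is represented by some $([\alpha],[\alpha w\beta],[\beta])$, and Proposition~\ref{p:characterization-of-robust-morphisms} supplies $n\geq 1$ with $\alpha^{-\infty}w\beta^{+\infty}\in\mathcal A_n(\Cl X)$. Enlarging $n$, Lemma~\ref{l:idempotents-define-nice-periodic-points} also places $p^\infty,q^\infty,\alpha^\infty,\beta^\infty$ in $\mathcal A_n(\Cl X)$; choose $H$ witnessing property $(\mathscr A,n,H)$ and integers $k,m\geq 1$ with $k|\alpha|,m|\beta|\geq H$, and set $z = p^{-\infty}u\alpha^k w\beta^m v q^{+\infty}$. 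Taking $\alpha'=\alpha^k$, $\beta'=\beta^m$, conditions~(1)--(4) of Definition~\ref{def:the-multiplication-in-YX} reduce to verifying $z\in\Cl X$.

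The main obstacle is this last verification, requiring the gluing of the three pieces $p^{-\infty}u\alpha^{+\infty}$, $\alpha^{-\infty}w\beta^{+\infty}$, and $\beta^{-\infty}vq^{+\infty}$ into a single element of $\Cl X$. The approach is iterated use of Remark~\ref{r:glueing-ahead} and its dual: since $\alpha^k w\beta^m\in L(\mathcal A_n^+(\Cl X))\cap L(\mathcal A_n^-(\Cl X))$ with $k|\alpha|,m|\beta|\geq n$, a first application extends $p^Nu\alpha^k\in L(\Cl X)$ forward to $p^Nu\alpha^k w\beta^m\in L(\Cl X)$, while the dual extends $\beta^l v q^M\in L(\Cl X)$ backward to $\alpha^j w\beta^l v q^M\in L(\Cl X)$. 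The subtle point is to fuse these into $p^Nu\alpha^k w\beta^m v q^M\in L(\Cl X)$; the strategy is to show that every finite factor of $z$ already lies inside a longer window of the form $\alpha^j w\beta^l v q^M$ or $p^N u\alpha^{k} w\beta^l$ certified by the gluing lemmas, exploiting the freedom to lengthen the idempotent blocks $\alpha^k,\beta^m$ without changing syntactic classes. Once $z\in\Cl X$ is established, $[x]_\approx\cdot[y]_\approx=[z]_\approx$ and
\[\Phi([z]_\approx) = \langle([p],[pu\alpha^k w\beta^m vq],[q])\rangle = \langle([p],[pu\alpha w\beta vq],[q])\rangle = \Phi([x]_\approx)\circ\Phi([y]_\approx),\]
using that $[\alpha]$ and $[\beta]$ are idempotents.
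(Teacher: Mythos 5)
Your overall architecture is the same as the paper's: the map $\Phi$ (the paper's $\tau$), well-definedness and injectivity via Proposition~\ref{p:translation-of-elements-of-YX}, surjectivity via Lemma~\ref{l:idempotents-define-nice-periodic-points}, and then a case analysis to check multiplicativity. The gap is in the multiplicativity check, in both cases. In the non-zero case, the verification that $z=p^{-\infty}u\alpha^{k}w\beta^{m}vq^{+\infty}\in\Cl X$ is never actually completed: the strategy of covering every finite factor of $z$ by a window of the form $\alpha^{j}w\beta^{l}vq^{M}$ or $p^{N}u\alpha^{k}w\beta^{l}$ cannot work, because a factor containing the whole segment $u\alpha^{k}w\beta^{m}v$ together with letters of $p$ on the left and of $q$ on the right lies in neither kind of window, and Remark~\ref{r:glueing-ahead} only lets you cross one junction at a time --- it yields $p^{N}u\alpha^{k}w\beta^{m}\in L(\Cl X)$ and $\alpha^{j}w\beta^{l}vq^{M}\in L(\Cl X)$ separately but gives no way to fuse them. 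What you are missing is that the hypothesis $\Phi([x]_{\approx})\circ\Phi([y]_{\approx})\neq 0$ says strictly more than ``the connecting strong non-zero divisor $h$ exists'': by Definition~\ref{def:semigroup-CT}, it also says that the composite morphism $([p],[pu\alpha],[\alpha])\,h\,([\beta],[\beta vq],[q])=([p],[pu\alpha w\beta vq],[q])$ is non-zero, i.e.\ $[pu\alpha w\beta vq]\neq 0$. Idempotency of $[p],[\alpha],[\beta],[q]$ then gives $[p^{N}u\alpha^{k}w\beta^{m}vq^{M}]=[pu\alpha w\beta vq]\neq 0$ for all exponents, so all these words lie in $L(\Cl X)$ and $z\in\Cl X$ follows at once; no gluing lemma is needed. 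This is exactly how the paper argues.

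The zero case has a genuine logical error for the same reason: from $\Phi([x]_{\approx})\circ\Phi([y]_{\approx})=0$ you infer that no strong non-zero divisor $[\beta]\to[\alpha]$ exists, but Definition~\ref{def:semigroup-CT} also produces $0$ when such an $h$ exists and the composite $f_1hf_2$ is the zero morphism. In that subcase condition~(3) of Definition~\ref{def:the-multiplication-in-YX} \emph{does} hold, and you must instead rule out condition~(4); this needs an argument (for instance: if a $z$ as in condition~(4) existed, then $pu\alpha'w'\beta'vq\in L(\Cl X)$, the block $\alpha'w'\beta'$ extends to $\alpha^{-\infty}w'\beta^{+\infty}\in\mathcal A(\Cl X)$ because $\mathcal A_n(\Cl X)$ is an $n$-step shift of finite type, so $([\alpha],[\alpha w'\beta],[\beta])$ is a strong non-zero divisor by Proposition~\ref{p:characterization-of-robust-morphisms}; since $\KK(\Cl X)_{\pv{snzd}}$ is a preorder it equals $h$, forcing $f_1hf_2\neq 0$, a contradiction). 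The paper avoids this entire case split by proving the two implications ``$\tau([x]_{\approx})\circ\tau([y]_{\approx})\neq 0$ implies the formula'' and ``$[x]_{\approx}\cdot[y]_{\approx}\neq 0$ implies the formula,'' which together show the two products vanish simultaneously; restructuring your argument that way would repair both gaps simultaneously.
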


\begin{proof}
For a shift $\Cl X$ with property ($\mathscr A$), let us denote by $\tau$ the
bijection from $(Y(\Cl X)/{\approx})^0$ to $\KK(\Cl X)_\circ$ defined
by
$\tau(0)=0$ and $\tau(p^{-\infty}uq^{+\infty})=\big\langle([p],[u],[q])\big\rangle$,
when $p^{-\infty}uq^{+\infty}\in Y(\Cl X)$
with $p,q\in\Cl E(\Cl X)$. That $\tau$
is well defined and injective, follows from
Proposition~\ref{p:translation-of-elements-of-YX},
and that it is surjective, follows from
Lemma~\ref{l:idempotents-define-nice-periodic-points}.

Let $x,y\in Y(\Cl X)$.
The proof of the theorem is concluded once we establish the
equality
  \begin{equation}
    \label{eq:translation-of-operation-1}
\tau([x]_{\approx}\cdot[y]_{\approx})=\tau([x]_{\approx})\circ\tau([y]_{\approx}).
\end{equation}
Suppose that $\tau([x]_{\approx})\circ\tau([y]_{\approx})\neq 0$.
We may take words $p,\alpha,\beta\in\Cl E(\Cl X)$
and $u,v\in A^+$
such that $x=p^{-\infty}u\alpha^{+\infty}$,
$y=\beta^{-\infty}v q^{+\infty}$
and $[u]=[pu\alpha ]$, $[v]=[\beta vq]$.
Then $\tau([x]_{\approx})=\big\langle([p],[u],[\alpha])\big\rangle$
and $\tau([y]_{\approx})=\big\langle([\beta],[v],[q])\big\rangle$.
The hypothesis $\tau([x]_{\approx})\circ\tau([y]_{\approx})\neq 0$
implies the existence of
a strong non-zero divisor of the form $([\alpha],[w],[\beta])$
 such that
  \begin{equation}\label{eq:computation-of-bullet}
\tau([x]_{\approx})\circ\tau([y]_{\approx})=\big\langle([p],[uwv],[q])\big\rangle
 \end{equation}
 and $[uwv]\neq 0$.
 In particular, for every $k\geq 1$, we have
 \begin{equation}
   \label{eq:glue-them}
 p^{-\infty} u\alpha^kw\beta^kvq^{+\infty}\in\Cl X
 \end{equation}
 and
 \begin{equation}
   \label{eq:tau-glue-them}
 \tau(p^{-\infty} u\alpha^kw\beta^kvq^{+\infty})
 =\big\langle([p],[uwv],[q])\big\rangle.
 \end{equation}
 Moreover,
 as $([\alpha],[w],[\beta])$ is a
 strong non-zero divisor morphism,
 we know by Proposition~\ref{p:characterization-of-robust-morphisms}
 that there is $n\geq 1$ such that
 \begin{equation}
   \label{eq:themiddle}
   \alpha^kw\beta^k\in L(\mathcal A_n(\Cl X)),
 \end{equation}
 for every $k\geq 1$, thus
 $\alpha^{-\infty}w\beta^{+\infty}\in \mathcal A_n(\Cl X)$.
 If we choose $k\geq 1$ such that~$\Cl X$ has property $(\mathscr A,n,k)$,
 we deduce from~\eqref{eq:glue-them}
 and~\eqref{eq:themiddle}
 that we are in the conditions of Definition~\ref{def:the-multiplication-in-YX}
 (by taking $\alpha'=\alpha^k$, $\beta'=\beta^k$) in such a way that the following equality holds:
 \begin{equation*}
[x]_{\approx}\cdot [y]_{\approx}=
[p^{-\infty} u\alpha^kw\beta^kvq^{+\infty}]_{\approx}.
 \end{equation*}
This, together with~\eqref{eq:tau-glue-them}
and~\eqref{eq:computation-of-bullet},
establishes~\eqref{eq:translation-of-operation-1}.

Conversely, suppose that
$[x]_{\approx}\cdot[y]_{\approx}\neq 0$.
Take the same notational setting as in
Definition~\ref{def:the-multiplication-in-YX}.
   Note that $ \tau([z]_{\approx})=
   \big\langle([p],[pu\alpha' w\beta' vq],[q])\big\rangle$.
   Since the words $\alpha^{H}v\beta^{H}$ and $\alpha'w\beta'$
   belong to $L(\mathcal A_n(\Cl X))$,
   have the same prefix of length $H$ and the same suffix of length
   $H$,
   and since $\Cl X$ has property $(\mathscr A,n,H)$,
   we have $[\alpha'w\beta']=[\alpha^Hv\beta^H]$.
   As $[\alpha]$ and $[\beta]$
   are idempotents, we actually
   have $[\alpha'w\beta']=[\alpha v\beta]$, and
   so the following holds:
\begin{equation}\label{eq:translation-of-operation-2}
\tau([z]_{\approx})=\big\langle([p],[pu\alpha],[\alpha])
([\alpha],[\alpha v\beta],[\beta])
([\beta],[\beta vq],[q])\big\rangle.
\end{equation}
By Proposition~\ref{p:characterization-of-robust-morphisms},
the morphism $([\alpha],[\alpha v\beta],[\beta])$ is
a strong non-zero divisor,
and so we get~\eqref{eq:translation-of-operation-1}.

Therefore, we proved that
$[x]_{\approx}\cdot[y]_{\approx}
\neq 0$ if and only~if
$\tau([x]_{\approx})\circ\tau([y]_{\approx})\neq 0$,
 and that~\eqref{eq:translation-of-operation-1}
always holds.
\end{proof}

Krieger announced in the aforementioned workshop in Copenhagen
that the Krieger semigroup is flow invariant.
We next give a proof of this fact via Theorem~\ref{t:characterization-of-the-Krieger-associated-semigroup},
under the usual assumption of density of $\mathcal A(\Cl X)$.

  \begin{Prop}\label{p:strong-non-zero-are-non-zero}
    Let $\Cl X$ be a shift with property ($\mathscr A$).
    Suppose that $\mathcal A(\Cl X)$ is dense in $\Cl X$ or $\Cl X$ is sofic.
    Then $\KK(\Cl X)_{\pv{snzd}}=\KK(\Cl X)_{\pv{nzd}}$, and so
    $\KK(\Cl X)_\circ$ is invariant under flow equivalence.
  \end{Prop}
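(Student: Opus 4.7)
The plan is to apply Proposition~\ref{p:sufficient-conditions-for-catrobust-equal-robust} in each of the two cases to deduce $\KK(\Cl X)_{\pv{snzd}} = \KK(\Cl X)_{\pv{nzd}}$; once that equality is secured, the flow invariance of $\KK(\Cl X)_\circ$ will follow by combining Proposition~\ref{p:invariance-of-the-krieger-semigroup} with Theorem~\ref{splittingisinvflow}. For the sofic case, the equality $\KK(\Cl X)_{\pv{snzd}} = \KK(\Cl X)_{\pv{nzd}}$ is already the content of Remark~\ref{rmk:sufficient-conditions-for-catrobust-equal-robust}, so the real work lies in the density case.

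For the density case, I would verify the hypothesis of Proposition~\ref{p:sufficient-conditions-for-catrobust-equal-robust}: given a non-zero $s = [w] \in S(\Cl X)$, produce $r,t \in S(\Cl X)$ and non-zero idempotents $e,f$ with $erstf \neq 0$. Starting from $w \in L(\Cl X)$, density of $\mathcal A(\Cl X)$ in $\Cl X$ yields a point $x \in \mathcal A_n(\Cl X)$ (for some $n$) having $w$ as a factor. Since $\mathcal A_n(\Cl X)$ is an $n$-step shift of finite type, its non-wandering set coincides with the closure of its periodic points; hence the forward and backward orbit-limit sets of $x$ inside $\mathcal A_n(\Cl X)$ each contain a periodic point, say $q^{\infty}$ and $p^{\infty}$ respectively. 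By Corollary~\ref{c:with-property-A-periodic-points-define-idempotents}, property ($\mathscr A$) lets me arrange $p, q \in \Cl E(\Cl X)$. Using the $n$-step splicing property of $\mathcal A_n(\Cl X)$, I would glue $x$ to long suffixes of $p^{-\infty}$ on the left and long prefixes of $q^{+\infty}$ on the right, producing an element $y = p^{-\infty} \pi w \rho q^{+\infty} \in \mathcal A_n(\Cl X)$ for suitable words $\pi, \rho \in A^{*}$. Then $p^{k} \pi w \rho q^{k} \in L(\Cl X)$ for every $k \ge 1$, so $[p][\pi][w][\rho][q] \neq 0$, yielding the required condition with $e = [p]$, $r = [\pi]$, $t = [\rho]$, $f = [q]$.

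With the equality $\KK(\Cl X)_{\pv{snzd}} = \KK(\Cl X)_{\pv{nzd}}$ in place, Proposition~\ref{p:characterization} guarantees that $\KK(\Cl X)_{\pv{nzd}}$ is a preorder, so the semigroup operations $\circ$ and $\bullet$ coincide and $\KK(\Cl X)_{\circ} = \KK(\Cl X)_{\bullet}$. Now, if $\Cl Y$ is any shift flow equivalent to $\Cl X$, Theorem~\ref{splittingisinvflow} furnishes an equivalence between $\KK(\Cl X)$ and $\KK(\Cl Y)$, and Proposition~\ref{p:invariance-of-the-krieger-semigroup} then transports the preorder property to $\KK(\Cl Y)_{\pv{nzd}}$ and yields an isomorphism $\KK(\Cl X)_{\bullet} \cong \KK(\Cl Y)_{\bullet}$. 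This delivers the advertised flow invariance of $\KK(\Cl X)_{\circ}$.

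The main obstacle is the splicing step in the density case. It requires two things to be pinned down carefully: first, the existence of periodic points in the forward and backward limit sets of $x$ inside $\mathcal A_n(\Cl X)$, which relies on the standard fact that in an SFT the non-wandering set equals the closure of the set of periodic points; and second, the fact that the cut-and-paste with $p^{-\infty}$ and $q^{+\infty}$ can be carried out on overlap windows of length at least $n$ so that the resulting bi-infinite sequence still belongs to $\mathcal A_n(\Cl X)$ while preserving $w$ as an interior factor. Both points are natural given the $n$-step property, but they require some care in choosing the splicing positions (and, if needed, enlarging $n$) before the conclusion can be read off.
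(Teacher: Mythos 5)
Your proposal is correct and follows essentially the same route as the paper: reduce to the sufficient condition of Proposition~\ref{p:sufficient-conditions-for-catrobust-equal-robust}, dispose of the sofic case via Remark~\ref{rmk:sufficient-conditions-for-catrobust-equal-robust}, and in the dense case embed the given word in a point of some $\mathcal A_n(\Cl X)$ and sandwich it between words representing non-zero idempotents of $S(\Cl X)$. The only (harmless) variation is how those idempotents are produced: the paper takes words idempotent in the finite transition structure of the SFT $\mathcal A_n(\Cl X)$ and applies property $(\mathscr A,n,H)$ directly to their powers, whereas you extract periodic points from the limit sets of $x$ and invoke Corollary~\ref{c:with-property-A-periodic-points-define-idempotents} — the same splicing in the $n$-step SFT underlies both.
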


  \begin{proof}
    By Propositions~\ref{p:invariance-of-the-krieger-semigroup}
    and~\ref{p:sufficient-conditions-for-catrobust-equal-robust},
    it suffices to show that $S(\Cl X)$
    satisfies the condition in the statement of
    Proposition~\ref{p:sufficient-conditions-for-catrobust-equal-robust}.
    This is the case if $\Cl X$ is sofic,
    as seen
    in Remark~\ref{rmk:sufficient-conditions-for-catrobust-equal-robust}.
    Assume that $\mathcal A(\Cl X)$ is dense.
    Take $u\in A^+$ with $[u]_{\Cl X}\neq 0$.
    As $\mathcal A(\Cl X)$ is dense in~$\Cl X$, there is
    $n\geq 1$ and $x\in\mathcal A_n(\Cl X)$ such
    that $u$ is a factor of $x$.
    Let $n,H$ be such that
    $\Cl X$ has property $(\mathscr A,n,H)$.
    Since
    $\mathcal A_n(\Cl X)$ is a finite type shift, there
    are words $w_1,w_2,v_1,v_2$
    such that $[w_1v_1uv_2w_2]_{\mathcal A_n(\Cl X)}\neq 0$
    with $[w_1]_{\mathcal A_n(\Cl X)}$
    and $[w_2]_{\mathcal A_n(\Cl X)}$ idempotents.
    As $[w_i]_{\mathcal A_n(\Cl X)}=[w_i^H]_{\mathcal A_n(\Cl
      X)}=[w_i^{2H}]_{\mathcal A_n(\Cl X)}\neq 0$
    and $\Cl X$ has property $(\mathscr A,n,H)$,
    we deduce that $e_i=[w_i^H]_{\Cl X}$
    is an idempotent of $S(\Cl X)$, for $i=1,2$.
    But $[w_1^Hv_1uv_2w_2^H]_{\mathcal A_n(\Cl X)}\neq 0$,
    which implies $e_1[v_1uv_2]_{\Cl X}e_2\neq 0$,
    showing that $S(\Cl X)$ satisfies the hypothesis in
    Proposition~\ref{p:sufficient-conditions-for-catrobust-equal-robust}.
  \end{proof}

 \section{Subsynchronizing subshifts of  a sofic shift}
\label{sec:subsynchr-subsh-sofi}

As a further application of our main results,
we apply them to the poset of subsynchronizing
subshifts of a sofic shift considered in~\cite{Jonoska:1998}. This poset,
whose definition is recalled in this section,
provides information about the structure of a reducible sofic shift.

Let $\Cl X$ be a sofic subshift of $\z A$.
We recall some definitions and remarks from~\cite{Jonoska:1998}.
If $m$ is a synchronizing word for $\Cl X$, then $m$ is \emph{magic}
for $\Cl X$ if $mum\in L(\Cl X)$ for some $u\in A^\ast$.
If $m$ is magic for $\Cl X$, then the set
\begin{equation*}
\{v\in A^+\mid \exists x\in A^\ast:mxv\in L(\Cl X)\}
\end{equation*}
is the set of finite blocks of a sofic subshift of $\Cl X$.
This shift is denoted~$S(m)$. If $M$ is a set of magic words for
$\Cl X$, then $S(M)$ denotes the sofic shift $\bigcup_{m\in M} S(m)$.
A subshift of $\Cl X$ of the form $S(M)$ is called a \emph{subsynchronizing
subshift of $\Cl X$}. The set $\mathsf{Subs}(\Cl X)$ of subsynchronizing
subshifts of $\Cl X$ is finite; see Lemma~\ref{l-magic-word-from-idempotents} below. It may be empty. If $\Cl X$ is
irreducible, then $\mathsf{Subs}(\Cl X)=\{\Cl X\}$.

Let $\Cl X$ be a sofic shift. Say that $s\in S(\Cl X)$ is \emph{synchronizing}
if $s=[u]$ with $u$ synchronizing. Note that $s$ is
synchronizing if and only if $rs,st\neq 0$ implies $rst\neq 0$, for
all $r,t\in S(\Cl X)$. It follows from Lemma~\ref{l:synchwords} that
the synchronizing elements of $S(\Cl X)$, together with $0$, form an
ideal and that a synchronizing element $s$ is idempotent if and only
if $s^2\neq 0$.
A synchronizing idempotent of $S(\Cl X)\setminus\{0\}$ will be called
a \emph{magic idempotent} for $\Cl X$.

\begin{Rmk}\label{rmk:magic-idempotent}
  If $\Cl X$ is a sofic shift, and $s\neq 0$,
  then there are $r,t\in S(\Cl X)$ and idempotents $g,h\in S(\Cl X)$
  such that $grsth\neq 0$. Therefore, an idempotent $e$ of $S(\Cl X)$ is magic if and only if
  the following happens: whenever $(e,x,f)$ and $(g,y,e)$ are non-zero
  morphisms of $\KK(\Cl X)$, the composition $(g,y,e)(e,x,f)$ is a
  non-zero morphism of $\KK(\Cl X)$.
\end{Rmk}

Let $e$ be a magic idempotent for $\Cl X$, and let $u$ be a word such that
$e=[u]$. Then $u$ is a magic word for $\Cl X$. Clearly,
if $[u]=[v]$, then $S(u)=S(v)$. We may
then define $S(e)$ as being $S(u)$.
If $M$ is a set of magic idempotents for~$\Cl X$, then $S(M)$ denotes
the sofic shift $\bigcup_{e\in M} S(e)$.

In the proof of the following lemma, $\mathrm{Fac}(X)$ denotes the set of
non-empty words which are factors of a language $X$.

\begin{Lemma}\label{l-magic-word-from-idempotents}
  If $m$ is a magic word for $\Cl X$, then $S(m)=S(e)$ for some
  magic idempotent $e$ for $\Cl X$.
\end{Lemma}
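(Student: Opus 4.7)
The plan is to exhibit an explicit magic idempotent $e$ whose associated shift coincides with $S(m)$. Since $m$ is magic, fix $u \in A^\ast$ with $mum \in L(\Cl X)$, and set $e = [mum]$. I will verify in turn that $e$ is a magic idempotent and that $S(e) = S(m)$.

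First, I would check that $e$ is synchronizing and non-zero. Non-zeroness is immediate since $mum \in L(\Cl X)$. For synchronicity, since $m$ is synchronizing, Lemma~\ref{l:synchwords}(1) shows that the union of $A^+\setminus L(\Cl X)$ with the synchronizing words is an ideal, so $mum$ is synchronizing. Thus $e$ is a synchronizing element of $S(\Cl X)\setminus\{0\}$.

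Next, I would show $e$ is idempotent via Lemma~\ref{l:synchwords}(2), which reduces the task to verifying $(mum)^2 \in L(\Cl X)$. This follows by applying the synchronizing property of $m$ to the pair $mum, mum \in L(\Cl X)$ (viewing the first as $(mu)\cdot m$ and the second as $m\cdot (um)$), yielding $mumum \in L(\Cl X)$; one more application gives $(mum)^2 = mumumum \in L(\Cl X)$. Thus $e$ is a magic idempotent for $\Cl X$.

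Finally, I would prove the equality $S(m) = S(e)$, which amounts to showing that the defining languages agree, i.e., for every $v \in A^+$, the condition $\exists x \in A^\ast : mxv \in L(\Cl X)$ is equivalent to $\exists x' \in A^\ast : mumx'v \in L(\Cl X)$. The $\supseteq$ direction is immediate by taking $x = umx'$. For $\subseteq$, given $mxv \in L(\Cl X)$, apply the synchronizing property of $m$ to the pair $mum \in L(\Cl X)$ and $mxv \in L(\Cl X)$, yielding $mumxv \in L(\Cl X)$, so $x' = x$ works. There is no serious obstacle here; the only point that requires care is repeatedly invoking that $m$ is synchronizing to glue blocks, but this is routine given Lemma~\ref{l:synchwords}.
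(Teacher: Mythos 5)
There is a genuine error at the heart of your argument: your candidate $e=[mum]$ is not in general idempotent, and the computation you use to show it is algebraically wrong. You claim $(mum)^2=mumumum$, but in fact $(mum)^2=mum\cdot mum=mummum$, which contains the factor $mm$. Nothing in the hypotheses forces $mm\in L(\Cl X)$, so $[mum]^2$ can be zero. Concretely, take $\Cl X$ to be the golden mean shift (no two consecutive $1$'s), $m=1$ and $u=0$: then $m$ is synchronizing, $mum=101\in L(\Cl X)$ so $m$ is magic, but $(mum)^2=101101\notin L(\Cl X)$ because of the factor $11$; hence $[mum]$ is a non-zero element whose square is $0$, and it is not a magic idempotent. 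Your repeated applications of the synchronizing property only ever produce words of the form $(mu)^km$, never $(mum)^2$, so the gluing argument cannot rescue this choice of $e$.

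The fix, which is what the paper does, is to take $e=[mu]$ instead. Then $(mu)^2=mumu$, and from $mum,mu\in L(\Cl X)$ the synchronizing property of $m$ (with $v=mu$ and $w=u$) gives $mumu\in L(\Cl X)$; since $mu$ is itself synchronizing by Lemma~\ref{l:synchwords}(1), part (2) of that lemma shows $[mu]$ is a non-zero idempotent, hence magic. Your final step comparing the languages of $S(m)$ and $S(e)$ is essentially sound and transfers to this corrected $e$ (the paper phrases it via $\mathrm{Fac}(R_{\Cl X}(mum))\subseteq\mathrm{Fac}(R_{\Cl X}(mu))\subseteq\mathrm{Fac}(R_{\Cl X}(m))$ together with $R_{\Cl X}(mum)=R_{\Cl X}(m)$ from Lemma~\ref{l:synchwords}(3), which is the same gluing you perform by hand), but as written your proof does not establish the lemma.
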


\begin{proof}
  As $m$ is a magic word for $\Cl X$, there is $u\in L(\Cl X)$
  with $mum\in L(\Cl X)$. We claim that $e=[mu]$ is
  a magic idempotent for $\Cl X$ such that $S(m)=S(e)$.

  We first show that $e$ is an idempotent. It follows from Lemma~\ref{l:synchwords} that we must show $(mu)^2\in L(\Cl X)$.  But $mum,mu\in L(\Cl X)$ implies $mumu\in L(\Cl X)$.  Also $mu$ is synchronizing by Lemma~\ref{l:synchwords}.  Thus $e$ is a magic idempotent.

  We have $L(S(m))=\mathrm{Fac}(R_{\Cl X}(m))$
  and $L(S(e))=\mathrm{Fac}(R_{\Cl  X}(mu))$ by definition.
  Clearly,
  $\mathrm{Fac}(R_{\Cl  X}(mum))\subseteq \mathrm{Fac}(R_{\Cl  X}(mu))\subseteq
  \mathrm{Fac}(R_{\Cl  X}(m))$.
  But $R_{\Cl  X}(mum)=R_{\Cl  X}(m)$ by Lemma~\ref{l:synchwords}.
  This shows that $L(S(m))=L(S(e))$, thus $S(m)=S(e)$.
\end{proof}

In the following lemma, we see how the inclusion relation
between elements of $\mathsf{Subs}(\Cl X)$
is codified in the Karoubi envelope.

\begin{Lemma}\label{l:inclusion-is-described-by-non-zero-morphisms}
  Let $e$ be a magic idempotent for $\Cl X$,
  and let $M$ be a set of magic idempotents for $\Cl X$.
  The following conditions are equivalent:
  \begin{enumerate}
  \item $S(e)\subseteq S(f)$ for some $f\in M$;\label{item:non-zero-morphisms-1}
  \item $S(e)\subseteq S(M)$;\label{item:non-zero-morphisms-2}
  \item for some $f\in M$,
    there is a non-zero morphism $e\to f$ in $\KK(\Cl X)$.\label{item:non-zero-morphisms-3}
  \end{enumerate}
\end{Lemma}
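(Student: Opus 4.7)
My plan is to reduce the lemma to a ``single-idempotent'' statement and then handle the union $S(M)$ by picking off one summand using a periodic point. First I would reformulate (3) concretely. Writing $e = [u]$ and $f = [v]$ with $u, v$ magic words in $L(\Cl X)$, a morphism $e \to f$ in $\KK(\Cl X)$ is a triple $(f, s, e)$ with $s \in fS(\Cl X)e$; such a morphism is non-zero precisely when there exists $w \in A^\ast$ with $[vwu] \neq 0$, i.e., $vwu \in L(\Cl X)$. Hence (3) is equivalent to the existence of some $f = [v] \in M$ and some $w \in A^\ast$ with $vwu \in L(\Cl X)$.

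The heart of the argument is the following ``single-$f$'' equivalence: for magic idempotents $e = [u]$ and $f = [v]$ of $S(\Cl X)$,
\[
S(e) \subseteq S(f) \quad\Longleftrightarrow\quad \exists\, w \in A^\ast \text{ with } vwu \in L(\Cl X). \qquad(\ast)
\]
For the forward direction, since $e$ is idempotent Lemma~\ref{l:synchwords}(2) gives $u^2 \in L(\Cl X)$, so $u \in L(S(e))$. Under $S(e) \subseteq S(f)$, this yields $u \in L(S(f))$, i.e., $vxu \in L(\Cl X)$ for some $x \in A^\ast$. For the reverse direction, assume $vwu \in L(\Cl X)$ and take any $y \in L(S(e))$, so $uzy \in L(\Cl X)$ for some $z \in A^\ast$; since $u$ is synchronizing, the pair $vwu,\, uzy \in L(\Cl X)$ forces $vwuzy \in L(\Cl X)$, so $y \in L(S(f))$. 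Thus $L(S(e)) \subseteq L(S(f))$. With $(\ast)$ in hand, (1)$\Leftrightarrow$(3) is immediate.

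The implication (1)$\Rightarrow$(2) is trivial from $S(f) \subseteq S(M)$. For (2)$\Rightarrow$(1) I would use the periodic point $u^\infty$: by Remark~\ref{rmk:idempotents-implies-periodic-points}, $u^\infty \in \Cl X$, and every block of $u^\infty$ is a factor of some $u^k$ which lies in $L(S(e))$ because $u^{k+1} \in L(\Cl X)$. Hence $u^\infty \in S(e) \subseteq S(M) = \bigcup_{f \in M} S(f)$, so $u^\infty \in S(f)$ for some $f \in M$; in particular $u \in L(S(f))$, and $(\ast)$ yields $S(e) \subseteq S(f)$. The only subtlety is the ``synchronizing concatenation'' step in $(\ast)$, but this is entirely routine once one exploits that a magic idempotent has a synchronizing representative, so I do not anticipate a real obstacle.
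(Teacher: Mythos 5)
Your proof is correct and follows essentially the same route as the paper's: the key step in both is to translate a non-zero morphism $e\to f$ into the existence of a word $vwu\in L(\Cl X)$ and then use the synchronizing property of the magic word $u$ to propagate blocks of $S(e)$ into $S(f)$. The only cosmetic difference is that you handle the union $S(M)$ via the periodic point $u^\infty$, whereas the paper picks off a single $f\in M$ directly at the level of blocks; both are routine.
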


\begin{proof}
  The implication
  (\ref{item:non-zero-morphisms-1})$\Rightarrow$(\ref{item:non-zero-morphisms-2})
  is trivial.
  
  Let us show
  (\ref{item:non-zero-morphisms-2})
  $\Rightarrow $(\ref{item:non-zero-morphisms-3}).
  Assuming $S(e)\subseteq S(M)$, let $v\in L(\Cl X)$ be such that
  $e=[v]$. As $v\in S(e)$, by hypothesis there is
  $f\in M$ with $v\in S(f)$.
  Let $m\in L(\Cl X)$ be such that $f=[m]$.
  Then $mxv\in L(\Cl X)$ for some $x\in A^\ast$.
  Hence, $(f,[mxv],e)$ is a non-zero morphism $e\to f$ in $\KK(\Cl X)$.

  Finally, suppose that $(f,[u],e)$ is a non-zero morphism $e\to f$
  in $\KK(\Cl X)$ such that $f\in M$.
  Let $w\in S(e)$.
  If $m,v\in L(\Cl X)$ are synchronizing words
  such that $f=[m]$ and $e=[v]$, then
  $muv\in L(\Cl X)$ and $vxw\in L(\Cl X)$ for some $x\in A^\ast$.
  But then $muvxw\in L(\Cl X)$ by synchronization, and so
  $w\in S(f)$, thus showing that $S(e)\subseteq S(f)$
  and establishing
  (\ref{item:non-zero-morphisms-1})$\Rightarrow$
  (\ref{item:non-zero-morphisms-2}).
\end{proof}

\begin{Cor}\label{c:magic-D-equivalent}
  If $e,f$ are magic idempotents for $\Cl X$ such that
  $e\mathrel{\D}f$, then $S(e)=S(f)$.
\end{Cor}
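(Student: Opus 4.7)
The plan is a direct application of the two lemmas immediately preceding the corollary. First, since $e$ and $f$ are magic idempotents they are in particular non-zero idempotents of $S(\Cl X)$, and so Lemma~\ref{l:isomorphism-criterion} would give that $e\mathrel{\D}f$ is equivalent to $e$ and $f$ being isomorphic as objects of $\KK(\Cl X)$.

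I would then pick mutually inverse isomorphisms $\alpha\colon e\to f$ and $\beta\colon f\to e$ in $\KK(\Cl X)$. Because $e$ is non-zero, the identity morphism $1_e=(e,e,e)$ is not the zero morphism of $\KK(\Cl X)(e,e)$; since $\beta\alpha=1_e\neq 0$, neither $\alpha$ nor $\beta$ can be a zero morphism.

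With non-zero morphisms $e\to f$ and $f\to e$ in hand, the plan is to invoke Lemma~\ref{l:inclusion-is-described-by-non-zero-morphisms} twice. Applied with the singleton $M=\{f\}$, the implication (\ref{item:non-zero-morphisms-3})$\Rightarrow$(\ref{item:non-zero-morphisms-1}) yields $S(e)\subseteq S(f)$; applied with $M=\{e\}$ it yields $S(f)\subseteq S(e)$. Combining the two inclusions gives $S(e)=S(f)$.

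There is essentially no obstacle, since all of the combinatorial/language-theoretic content has already been packaged into Lemma~\ref{l:inclusion-is-described-by-non-zero-morphisms}; the only subtlety worth checking is that the isomorphisms $\alpha$ and $\beta$ really are non-zero in $\KK(\Cl X)$, which is where the magic (hence non-zero) assumption on $e$ and $f$ is used.
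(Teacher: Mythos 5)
Your proposal is correct and is essentially the paper's own proof: the paper likewise converts $e\mathrel{\D}f$ into non-zero isomorphisms $e\to f$ and $f\to e$ (non-zero because $e,f\neq 0$) and then applies Lemma~\ref{l:inclusion-is-described-by-non-zero-morphisms} in both directions. Your extra remark that $\beta\alpha=1_e\neq 0$ forces $\alpha,\beta\neq 0$ is just a slightly more explicit version of the same observation.
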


\begin{proof}
  Since $e\mathrel{\D}f$, there are isomorphisms
  $e\to f$ and $f\to e$. As $e,f\neq 0$, these isomorphisms are
  non-zero, thus $S(e)=S(f)$
  by Lemma~\ref{l:inclusion-is-described-by-non-zero-morphisms}.
\end{proof}

Remark~\ref{rmk:magic-idempotent} ensures that
being a magic idempotent is a property preserved by equivalence of Karoubi
envelopes, and so in the next result the set $S(F(M))$ is well defined.

\begin{Prop}\label{p:first-cor-on-order-structure-SubsX}
    Let $F\colon\KK(\Cl X)\to \KK(\Cl Y)$ be an equivalence,
  where $\Cl X$ and $\Cl Y$ are sofic shifts.
  Then the
  mapping $\Psi_F\colon \mathsf{Subs}(\Cl X)\to \mathsf{Subs}(\Cl Y)$
  defined by $\Psi_F(S(M))=S(F(M))$, where $M$ runs over the sets of
  magic idempotents, is a well-defined isomorphism of posets.
  If $G$ is a quasi-inverse of $F$, then
  $\Psi_G$ is the inverse of $\Psi_F$.
\end{Prop}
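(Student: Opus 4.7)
The plan is to reduce everything to three principles about the equivalence $F$: it sends magic idempotents to magic idempotents, it preserves and reflects the existence of non-zero morphisms between objects, and via its quasi-inverse $G$ it preserves the $\D$-class of every idempotent.

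First I would verify that $F(M)$ is a set of magic idempotents of $\KK(\Cl Y)$, so that $S(F(M))$ makes sense. By Remark~\ref{rmk:magic-idempotent}, an idempotent $e$ is magic exactly when for all non-zero morphisms $e\longleftarrow f$ and $g\longleftarrow e$ the composition is non-zero---a condition formulated purely in the language of a category with zero. Since $F$ is fully faithful, hence both sends and reflects non-zero morphisms (Remark~\ref{r:fully-faithful-are-strict}), and is essentially surjective, this condition transfers from $e$ to $F(e)$: any pair of non-zero morphisms meeting at $F(e)$ can, after composing with the relevant object-isomorphisms provided by essential surjectivity and invoking full faithfulness, be pulled back to a pair of non-zero morphisms meeting at $e$, whence their composition is non-zero as well.

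Next, I would apply Lemma~\ref{l:inclusion-is-described-by-non-zero-morphisms}, which translates inclusions of subsynchronizing subshifts into the existence of non-zero morphisms. Explicitly,
\[ S(e) \subseteq S(M) \iff \exists\, f\in M \text{ with a non-zero morphism } e\longleftarrow f, \]
and a parallel statement holds for $F(e)$ and $F(M)$ in $\KK(\Cl Y)$. Full faithfulness of $F$ yields a bijection between non-zero morphisms $e\longleftarrow f$ in $\KK(\Cl X)$ and non-zero morphisms $F(e)\longleftarrow F(f)$ in $\KK(\Cl Y)$, so the two right-hand sides match. Taking unions over $e\in M$ gives
\[ S(M)\subseteq S(M') \iff S(F(M))\subseteq S(F(M')), \]
which simultaneously establishes that $\Psi_F$ is well-defined (take $S(M)=S(M')$) and that it is both order-preserving and order-reflecting.

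Finally, to identify $\Psi_G$ as the two-sided inverse of $\Psi_F$, I would use the natural isomorphism $GF\cong 1_{\KK(\Cl X)}$. For each magic idempotent $e$ of $\KK(\Cl X)$ this gives an isomorphism $GF(e)\cong e$ in $\KK(\Cl X)$; by Lemma~\ref{l:isomorphism-criterion} the idempotents $GF(e)$ and $e$ are $\D$-equivalent in $S(\Cl X)$, and Corollary~\ref{c:magic-D-equivalent} then yields $S(GF(e))=S(e)$. Therefore
\[ \Psi_G\Psi_F(S(M)) = S(GF(M)) = \bigcup_{e\in M} S(GF(e)) = \bigcup_{e\in M} S(e) = S(M), \]
and the symmetric argument using $FG\cong 1_{\KK(\Cl Y)}$ gives $\Psi_F\Psi_G=\mathrm{id}$. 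The only real subtlety is the first step---checking that magicity is robust under the equivalence---but this is transparent once one exploits the categorical reformulation of magicity in Remark~\ref{rmk:magic-idempotent} together with Remark~\ref{r:fully-faithful-are-strict}.
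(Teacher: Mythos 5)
Your proposal is correct and follows essentially the same route as the paper: Remark~\ref{rmk:magic-idempotent} to transport magicity along the equivalence, Lemma~\ref{l:inclusion-is-described-by-non-zero-morphisms} plus preservation of non-zero morphisms for well-definedness and monotonicity, and $GF(e)\mathrel{\D}e$ with Corollary~\ref{c:magic-D-equivalent} for mutual inversion. The only difference is cosmetic: you additionally spell out order-reflection via full faithfulness, whereas the paper obtains it as a consequence of $\Psi_F$ and $\Psi_G$ being mutually inverse order-preserving maps.
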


\begin{proof}
  Let $M$ and $N$ be sets of magic idempotents with $S(M)\subseteq S(N)$. Since equivalences preserve non-zero morphisms,
  we have $S(F(M))\subseteq S(F(N))$
  by Lemma~\ref{l:inclusion-is-described-by-non-zero-morphisms}.
  This shows that $\Psi_F$ is a well-defined and order-preserving function.
  Moreover, since $GF(e)\mathrel{\D}e$ for every idempotent $e$,
  we conclude from Corollary~\ref{c:magic-D-equivalent}
  that $\Psi_F$ and $\Psi_G$ are mutually inverse.
\end{proof}

As a direct consequence of
  Theorem~\ref{splittingisinvflow}
  and Proposition~\ref{p:first-cor-on-order-structure-SubsX},
  we deduce the following.

\begin{Cor}\label{c:order-structure-SubsX}
  The order structure of the
  poset of subsynchronizing subshifts of a sofic shift is
  invariant under flow equivalence.\qed
\end{Cor}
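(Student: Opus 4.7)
The plan is to read off Corollary~\ref{c:order-structure-SubsX} as an immediate consequence of the two preceding results. Starting from a flow equivalence between sofic shifts $\Cl X$ and $\Cl Y$, I would first apply Theorem~\ref{splittingisinvflow} to produce an equivalence of categories $F\colon\KK(\Cl X)\to\KK(\Cl Y)$ together with a chosen quasi-inverse $G\colon\KK(\Cl Y)\to\KK(\Cl X)$. I would then feed $F$ and $G$ into Proposition~\ref{p:first-cor-on-order-structure-SubsX}, which converts them into order-preserving maps $\Psi_F\colon\mathsf{Subs}(\Cl X)\to\mathsf{Subs}(\Cl Y)$ and $\Psi_G\colon\mathsf{Subs}(\Cl Y)\to\mathsf{Subs}(\Cl X)$ that are inverse to one another, yielding the desired isomorphism of posets.

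There is no genuine obstacle here, as all the substantive content has been front-loaded into Proposition~\ref{p:first-cor-on-order-structure-SubsX}. The reason the argument goes through so cleanly is that, by Remark~\ref{rmk:magic-idempotent}, being a magic idempotent is characterized purely in terms of the non-vanishing of composites of non-zero morphisms in $\KK(\Cl X)$; this is a property preserved by every equivalence of categories with zero, so $F$ automatically carries magic idempotents of $\Cl X$ to magic idempotents of $\Cl Y$, and the assignment $S(M)\mapsto S(F(M))$ makes sense. The potential ambiguity arising from the fact that $F$ is only determined up to natural isomorphism is absorbed by Corollary~\ref{c:magic-D-equivalent}, according to which $\D$-equivalent magic idempotents produce the same subsynchronizing subshift.
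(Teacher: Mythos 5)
Your proposal is correct and is exactly the paper's argument: the corollary is stated there as a direct consequence of Theorem~\ref{splittingisinvflow} combined with Proposition~\ref{p:first-cor-on-order-structure-SubsX}, with the well-definedness issues already absorbed into that proposition via Remark~\ref{rmk:magic-idempotent} and Corollary~\ref{c:magic-D-equivalent}, just as you observe.
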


The invariance of the order structure of $\mathsf{Subs}(\Cl X)$ under conjugacy
of sofic shifts was proved in~\cite{Jonoska:1998}. Concrete examples
were examined in that paper. Actually a more general result was obtained in~\cite{Jonoska:1998}:
viewing $\mathsf{Subs}(\Cl X)$ as a labeled poset, where the
label of each element is its conjugacy class, one obtains a conjugacy
invariant. We shall not give a new proof of this fact using our
methods, since we would not obtain a significative
simplification. However, we do generalize it to flow equivalence, in
the next theorem.
For a sofic  shift~$\Cl X$,
its \emph{labeled flow poset of subsynchronizing subshifts}
is the labeled poset obtained from
   $\mathsf{Subs}(\Cl X)$ in which the
   label of each element of $\mathsf{Subs}(\Cl X)$
   is its flow equivalence class.

\begin{Thm}\label{t:flow-inv-sub-sync}
  The labeled flow poset of subsynchronizing subshifts of a sofic shift is
  a flow equivalence invariant.
\end{Thm}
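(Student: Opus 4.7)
The plan is to strengthen Corollary~\ref{c:order-structure-SubsX} by showing that the order isomorphism it produces actually respects the flow-equivalence labels. Fix a flow equivalence between the sofic shifts $\Cl X$ and $\Cl Y$. Theorem~\ref{splittingisinvflow} supplies an equivalence $F\colon \KK(\Cl X)\to \KK(\Cl Y)$, and Proposition~\ref{p:first-cor-on-order-structure-SubsX} says that the induced map $\Psi_F\colon \mathsf{Subs}(\Cl X)\to \mathsf{Subs}(\Cl Y)$ is an order isomorphism. The entire task is therefore to prove that $S(M)$ is flow equivalent to $S(F(M))$ for every set $M$ of magic idempotents.

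Since flow equivalence is generated by conjugacy and symbol expansion, it suffices to establish the claim in these two special cases and then invoke transitivity. When $\Cl X$ and $\Cl Y$ are conjugate, a block-map description of the conjugacy yields $S(\Cl X)\cong S(\Cl Y)$ and, for the $F$ coming from this isomorphism, a compatible family of conjugacies $S(M)\to S(F(M))$. This recovers the labeled-by-conjugacy variant already in~\cite{Jonoska:1998}, so the labels $[S(M)]$ and $[S(F(M))]$ even coincide under conjugacy.

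The genuinely new content is the case of a symbol expansion $\Cl Y=\Cl X'$ of $\Cl X$ at a letter $\alpha$. I would use the explicit equivalence $F\colon \KK(\Cl X)\to \KK(\Cl X')$ constructed in Section~\ref{sec:proof-theor-reft:m} and, for each magic idempotent $e=[u]_{\Cl X}$ with $u$ synchronizing, argue that $F(e)$ can be represented by $[\ci E(u)]_{\Cl X'}$ (up to $\mathcal D$-equivalence, which by Corollary~\ref{c:magic-D-equivalent} does not change the subsynchronizing subshift). Magicness of $F(e)$ is automatic by the categorical criterion in Remark~\ref{rmk:magic-idempotent} since equivalences preserve non-zero morphisms and their compositions. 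The key assertion to verify is that $S(F(e))$ is itself the symbol expansion of $S(e)$ at $\alpha$ (or is simply equal to $S(e)$ in the degenerate case that $\alpha\notin L(S(e))$), so $S(e)$ and $S(F(e))$ are flow equivalent. The passage from single magic idempotents to general sets $M$ then follows from the description $S(M)=\bigcup_{e\in M}S(e)$, since symbol expansion commutes with finite unions of sofic shifts sharing the expanded letter.

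The principal obstacle is precisely this last matching step: to rigorously align the abstract equivalence $F$ produced in Section~\ref{sec:proof-theor-reft:m} with the concrete realisation of $S(F(e))$ as the symbol expansion of $S(e)$, one must track, at the level of the Krieger cover, how the extension homomorphism $\ci E$ interacts with synchronizing witnesses and with the right-contexts used to define $S(e)$. This is the technical point deferred to the appendix, which completes the argument using the machinery developed in Section~\ref{sec:proof-theor-reft:m}.
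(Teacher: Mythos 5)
Your proposal is correct and follows essentially the same route as the paper: reduce to the two generators of flow equivalence, invoke the results of~\cite{Jonoska:1998} together with Corollary~\ref{c:order-structure-SubsX} for the conjugacy case, and for symbol expansion show that $S(F_{\ci E}(e))$ is the symbol expansion of $S(e)$, which is exactly the content of Proposition~\ref{p:symbol-expa-of-subsync} in the appendix. The technical verification you flag as the remaining obstacle is precisely the step the paper also defers to that appendix.
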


\begin{proof}
  In view of the proof of
  Corollary~\ref{c:order-structure-SubsX} and the results
  from~\cite{Jonoska:1998}, it suffices to show that if $\Cl X'$ is the
  symbol expansion of $\Cl X$ relatively to a letter $\alpha$, then there is
  an equivalence
  $F\colon\KK(\Cl X)\to \KK(\Cl X')$
  such that $S(F(e))$ is the symbol expansion of
  $S(e)$ relatively to $\alpha$, whenever $e$ is a
  magic idempotent for $\Cl X$.
  This is done in an appendix, in
  Proposition~\ref{p:symbol-expa-of-subsync}, using some tools
  introduced in Section~\ref{sec:proof-theor-reft:m}.
\end{proof} 

\section{Eventual conjugacy}\label{sec:eventual-conjugacy}

It remains a major open problem to determine whether conjugacy is decidable for shifts of finite type.
To attack this problem,  a relation called \emph{eventual conjugacy}, also known
as \emph{shift equivalence},
was introduced,
which may be defined as follows
(see~\cite[Chapter 7]{MarcusandLind} for historical background and
other details.) Let $n$ be a positive integer.
Recall that $A^n$ denotes the subset
of $A^+$ of words with length~$n$.
Considering the natural embedding of $(A^n)^+$
into~$A^+$,
one defines the \emph{$n^{th}$ higher power} of a
 subshift $\Cl X$ of $A^{\mathbb Z}$
as the subshift $\Cl X^n$ of $(A^n)^{\mathbb Z}$
such that $L(\Cl X^n)=L(\Cl X)\cap (A^n)^+$.
Two shifts \Cl X and \Cl Y are \emph{eventually conjugate} if and only if
 $\Cl X^n$ and $\Cl Y^n$ are conjugate
for all sufficiently large $n$ (cf.~\cite[Definition
1.4.4]{MarcusandLind}.)
Kim and Roush proved that
eventual conjugacy for sofic shifts is
decidable~\cite{Kim&Roush:1990},
but the algorithm which is available is quite intricate.
Another deep result by Kim and
Roush~\cite{Kim&Roush:1992,Kim&Roush:1999}
is that, for shifts of finite type, eventual conjugacy is not the same as
conjugacy.

It is easy to check that,
for every shift $\Cl X\subseteq A^{\mathbb Z}$ and $u,v\in (A^n)^+$,
we have $[u]_{\Cl X^n}=[v]_{\Cl X^n}$
if and only if
$[u]_{\Cl X}=[v]_{\Cl X}$,
and so $S(\Cl X^n)$
embeds naturally in $S(\Cl X)$.
For sofic shifts, we have the following sort of converse,
taken from \cite[Lemma 5.1]{Costa:2006} following arguments
from~\cite{Beal&Fiorenzi&Perrin:2005a}.

\begin{Lemma}
  \label{l:effect-on-LU-of-a-power}
  Let $\Cl X$ be a sofic shift.
  For each idempotent $e\in S(\Cl X)$,
  choose  $u_e\in A^+$ such that
  $[u_e]_{\Cl X}=e$. Let $A_{\Cl X}=\prod_{e\in E(S(\Cl X))}|u_e|$.
  Then, for every $n\geq 1$, we have ${LU}(\Cl X)={LU}(\Cl X^{nA_{\Cl
      X}+1})$.
\end{Lemma}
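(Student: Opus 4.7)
My plan is to prove the two inclusions separately, exploiting the natural injective homomorphism $S(\Cl X^m) \hookrightarrow S(\Cl X)$ (valid for any $m \geq 1$) recalled just before the lemma, which carries idempotents to idempotents. With this, the inclusion $LU(\Cl X^{nA_\Cl X+1}) \subseteq LU(\Cl X)$ is immediate: transporting any factorization $s = e' s f'$ witnessing $s \in LU(\Cl X^{nA_\Cl X+1})$ along the embedding yields a factorization in $S(\Cl X)$ with idempotent flankers.

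The reverse inclusion is the substantive direction. Writing $m = nA_\Cl X+1$, I will exploit the arithmetic fact that $\gcd(|u_e|, m) = 1$ for every $e \in E(S(\Cl X))$, which follows from $|u_e|$ dividing $A_\Cl X$ and $m \equiv 1 \pmod{A_\Cl X}$. A first consequence is that every $e \in E(S(\Cl X))$ already sits in the image of $E(S(\Cl X^m))$: since $e$ is idempotent, the word $u_e^m$ still represents $e$ and now has length $m|u_e|$, a multiple of $m$.

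Next, given $s \in LU(\Cl X)$ written as $s = esf$ with $e, f \in E(S(\Cl X))$, I pick any $w \in A^+$ with $[w]_\Cl X = s$ (the case $s = 0$ being trivial). Since $e^{k_1} = e$ and $f^{k_2} = f$ for all positive $k_1, k_2$, we have $s = [u_e^{k_1} w u_f^{k_2}]_\Cl X$. Fixing $k_2 = 1$ and using that $|u_e|$ is a unit modulo $m$, I can select $k_1 \in \{1, \ldots, m\}$ with $k_1|u_e| + |w| + |u_f| \equiv 0 \pmod m$. The resulting representative lies in $(A^m)^+$, so $s$ lies in $S(\Cl X^m)$; combined with $e, f \in E(S(\Cl X^m))$ from the previous step, the equation $s = esf$ then already holds inside $S(\Cl X^m)$ and yields $s \in LU(\Cl X^m)$.

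The only non-routine step is the length-adjustment via the B\'ezout-style choice of $k_1$, and this is exactly what motivates both the definition of $A_\Cl X$ as the product of the $|u_e|$ over \emph{all} idempotents and the specific form $nA_\Cl X+1$ of the exponent: these choices guarantee that $m$ is coprime to every $|u_e|$, so that each residue class modulo $m$ can be reached by varying $k_1$.
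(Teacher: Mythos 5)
Your proof is correct: both inclusions go through, and the B\'ezout-style choice of $k_1$ using $\gcd(|u_e|, nA_{\Cl X}+1)=1$ is exactly the mechanism that the definitions of $A_{\Cl X}$ and of the exponent $nA_{\Cl X}+1$ are designed to enable. The paper itself gives no proof, citing \cite[Lemma 5.1]{Costa:2006} (following \cite{Beal&Fiorenzi&Perrin:2005a}), and your argument is the intended one: represent the flanking idempotents by $u_e^m$, $u_f^m$ and pad the middle word so its total length is divisible by $m$, then pull the equation $s=esf$ back through the injective homomorphism $S(\Cl X^m)\hookrightarrow S(\Cl X)$.
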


    Let $N$ be such that $\Cl X^n$
    and $\Cl Y^n$ are conjugate
    for all $n\geq N$.
    Consider the integer $k=NA_\Cl XA_\Cl Y+1$.
    Then $LU(\Cl X^k)=LU(\Cl X)$
    and $LU(\Cl Y^k)=LU(\Cl Y)$
    by Lemma~\ref{l:effect-on-LU-of-a-power}.
    Since $\Cl X^k$ and $\Cl Y^k$ are conjugate,
    this justifies the following result, implicitly used
    in~\cite{Costa:2006}.

\begin{Cor}\label{c:effect-on-LU-of-a-power}
    If $\Cl X$ and $\Cl Y$
  are eventually  conjugate sofic shifts,
  then, for every integer $N$, there is $k\geq N$, with
  $\gcd(k,N)=1$, and $\Cl X^k$ and $\Cl Y^k$ conjugate,
  such that $LU(\Cl X^k)=LU(\Cl X)$  and $LU(\Cl Y^k)=LU(\Cl Y)$.
  \qed
\end{Cor}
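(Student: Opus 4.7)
The plan is to exhibit $k$ of the shape $k = mA_\Cl X A_\Cl Y + 1$ for a carefully chosen positive integer $m$, so that the same exponent witnesses all four requirements simultaneously. The arithmetic key is that (a) any exponent of the form $nA_\Cl X + 1$ triggers Lemma~\ref{l:effect-on-LU-of-a-power} for $\Cl X$, and symmetrically for $\Cl Y$, and (b) taking $m$ to be a multiple of $N$ automatically forces $k \equiv 1 \pmod N$, giving $\gcd(k,N) = 1$ for free.

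First I would fix a threshold $N_0$ supplied by eventual conjugacy, so that $\Cl X^n$ and $\Cl Y^n$ are conjugate for every $n \geq N_0$. Given the input integer $N \geq 1$, I would set $m := \ell N$ where $\ell$ is a positive integer chosen large enough that $k := mA_\Cl X A_\Cl Y + 1 \geq \max(N, N_0)$. Then $k \geq N$, and since $k - 1 = \ell N A_\Cl X A_\Cl Y$ is divisible by $N$, we have $\gcd(k,N) = 1$. Because $k \geq N_0$, the shifts $\Cl X^k$ and $\Cl Y^k$ are conjugate by the defining property of $N_0$.

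It remains to verify the equalities on local units. Writing $k = (mA_\Cl Y)A_\Cl X + 1$ and applying Lemma~\ref{l:effect-on-LU-of-a-power} to $\Cl X$ with $n = mA_\Cl Y$ gives $LU(\Cl X^k) = LU(\Cl X)$; writing $k = (mA_\Cl X)A_\Cl Y + 1$ and applying the same lemma to $\Cl Y$ with $n = mA_\Cl X$ yields $LU(\Cl Y^k) = LU(\Cl Y)$. There is no real obstacle: once one notices that the shape $mA_\Cl X A_\Cl Y + 1$ is simultaneously of both forms $n_1 A_\Cl X + 1$ and $n_2 A_\Cl Y + 1$, the coprimality condition costs nothing beyond restricting $m$ to multiples of $N$, and a sufficiently large such multiple handles the size constraints coming from $N$ and $N_0$ at once.
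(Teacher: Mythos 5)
Your proof is correct and follows essentially the same route as the paper: the paper also sets $k = NA_{\Cl X}A_{\Cl Y}+1$ (with $N$ taken to be the eventual-conjugacy threshold) and invokes Lemma~\ref{l:effect-on-LU-of-a-power} twice by reading $k$ as both $n_1A_{\Cl X}+1$ and $n_2A_{\Cl Y}+1$, with $k\equiv 1\pmod N$ giving coprimality. Your version with $m=\ell N$ is merely a slightly more careful bookkeeping of the distinction between the given integer $N$ and the conjugacy threshold, which the paper elides.
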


\begin{Rmk}\label{rmk:power-of-a-labeled-graph}
  For a labeled graph $\mathfrak G$, let
  $\mathfrak G^n$ be the labeled graph over $A^n$
  defined as follows:
  the vertices are those of $\mathfrak G$,
  and an edge from a vertex $p$ to a vertex $q$, with label $u\in A^n$,
  is a path in $\mathfrak G$ from $p$ to $q$ with label $u$.
  It is easy to see that, up to isomorphism, the Krieger cover
  of $\Cl X^n$ is the labeled graph
  $\mathfrak K(\Cl X)^n$, and, if $\Cl X$ is synchronizing,
  the Fischer cover
  of $\Cl X^n$ is $\mathfrak F(\Cl X)^n$.
  Moreover, the action of $S(\Cl X^n)$
  on $Q(\Cl X^n)$
  is the restriction to $S(\Cl X^n)$ of the action of
  $S(\Cl X)$ on $Q(\Cl X)$.
\end{Rmk}

\begin{Thm}\label{t:eventual-conjugacy-invariance-theorem}
      Suppose that $\Cl X$ and $\Cl Y$ are eventually conjugate
    sofic shifts.
    Then the actions
    $\mathbb A_{\Cl X}$ and $\mathbb A_{\Cl Y}$ are equivalent.
    The same happens with the actions
    $\mathbb A_{\Cl X}^{\mathfrak F}$ and $\mathbb A_{\Cl
      Y}^{\mathfrak F}$ if $\Cl X$  and $\Cl Y$
    are irreducible.
  In particular, the equivalence class of
  $\KK(\Cl X)$ is an eventual conjugacy invariant of sofic shifts.
\end{Thm}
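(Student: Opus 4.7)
The plan is to reduce the theorem to Theorem~\ref{t:main-D-cate-AYF} and Theorem~\ref{t:main-D-cate-AYF-Fischer-version} by passing to a suitable higher power. First, I would fix an integer $N$ large enough that $\Cl X^n$ is conjugate to $\Cl Y^n$ for all $n \geq N$; for the irreducible statement, further require $N$ to be a common multiple of the periods of $\Cl X$ and $\Cl Y$. Applying Corollary~\ref{c:effect-on-LU-of-a-power} to this $N$ produces an integer $k \geq N$ with $\gcd(k, N) = 1$, with $\Cl X^k$ conjugate to $\Cl Y^k$, and with ${LU}(\Cl X^k) = {LU}(\Cl X)$ and ${LU}(\Cl Y^k) = {LU}(\Cl Y)$. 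In the irreducible case, coprimality of $k$ with the periods guarantees that $\Cl X^k$ and $\Cl Y^k$ remain irreducible and hence, being sofic, synchronizing.

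Since conjugacy is a particular case of flow equivalence, Theorem~\ref{t:main-D-cate-AYF} yields an equivalence $(\mathbb A_{\Cl X^k}, \KK(\Cl X^k)) \sim (\mathbb A_{\Cl Y^k}, \KK(\Cl Y^k))$, while Theorem~\ref{t:main-D-cate-AYF-Fischer-version} yields the analogous equivalence for the Fischer-cover actions in the irreducible case. The next step would be to identify these power-shift actions with the original ones. Because $\KK(S) = \KK({LU}(S))$ for every semigroup with zero, ${LU}(\Cl X) = {LU}(\Cl X^k)$ forces $\KK(\Cl X) = \KK(\Cl X^k)$ as categories, and similarly $\KK(\Cl Y) = \KK(\Cl Y^k)$. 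By Remark~\ref{rmk:power-of-a-labeled-graph}, the Krieger cover of $\Cl X^k$ is $\mathfrak K(\Cl X)^k$, providing a canonical identification $Q(\Cl X^k) = Q(\Cl X)$ under which the $S(\Cl X^k)$-action is just the restriction of the $S(\Cl X)$-action. Since every morphism of $\KK(\Cl X) = \KK(\Cl X^k)$ is represented by an element of the shared semigroup ${LU}(\Cl X)$, the functors $\mathbb A_{\Cl X^k}$ and $\mathbb A_{\Cl X}$ agree under this identification. The same reasoning, using $\mathfrak F(\Cl X^k) = \mathfrak F(\Cl X)^k$ in place of the Krieger covers, handles the Fischer case.

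Combining the two previous steps gives $(\mathbb A_{\Cl X}, \KK(\Cl X)) \sim (\mathbb A_{\Cl Y}, \KK(\Cl Y))$ and, in the irreducible case, $(\mathbb A_{\Cl X}^{\mathfrak F}, \KK(\Cl X)) \sim (\mathbb A_{\Cl Y}^{\mathfrak F}, \KK(\Cl Y))$. The ``in particular'' clause is then immediate, since forgetting the action part of an equivalence in $\mathbf{Act}_0$ leaves an equivalence of the underlying categories with zero. The only delicate point in the whole argument is the coordinated choice of $k$: it must be in the eventual conjugacy range, stabilize local units in the sense of Lemma~\ref{l:effect-on-LU-of-a-power}, and, for the Fischer statement, be coprime to the periods of $\Cl X$ and $\Cl Y$. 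All three demands are met simultaneously by choosing $N$ sufficiently divisible and then invoking Corollary~\ref{c:effect-on-LU-of-a-power}.
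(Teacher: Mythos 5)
Your proposal is correct and follows essentially the same route as the paper's proof: invoke Corollary~\ref{c:effect-on-LU-of-a-power} to find $k$ with $\Cl X^k$ conjugate to $\Cl Y^k$ and $\KK(\Cl X^k)=\KK(\Cl X)$, $\KK(\Cl Y^k)=\KK(\Cl Y)$, then apply Theorems~\ref{t:main-D-cate-AYF} and~\ref{t:main-D-cate-AYF-Fischer-version} together with Remark~\ref{rmk:power-of-a-labeled-graph}. Your additional care in choosing $k$ coprime to the periods so that $\Cl X^k$ and $\Cl Y^k$ remain irreducible (hence synchronizing, so that the Fischer-cover statement applies) is a legitimate refinement of a point the paper's two-line proof leaves implicit.
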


  \begin{proof}
    By Corollary~\ref{c:effect-on-LU-of-a-power},
    there is $k$
    such that
   $\Cl X^k$ is conjugate to $\Cl Y^k$,
    $\KK(\Cl X^k)=\KK(\Cl X)$
    and $\KK(\Cl Y^k)=\KK(\Cl Y)$.
    The result follows then from
    Theorems~\ref{t:main-D-cate-AYF} and
    \ref{t:main-D-cate-AYF-Fischer-version}
    in view of Remark~\ref{rmk:power-of-a-labeled-graph}.
  \end{proof}

\section{Proofs of
  Theorems~\ref{splittingisinvflow},~\ref{t:main-D-cate-AYF}
  and~\ref{t:main-D-cate-AYF-Fischer-version}}\label{sec:proof-theor-reft:m}
This section is divided in three parts. First we establish
the versions of Theorem~\ref{splittingisinvflow} and Theorem~\ref{t:main-D-cate-AYF}
obtained by replacing ``flow equivalence'' by ``conjugacy'' and
then the corresponding version for symbol expansion.  In the 
third part we deduce
Theorem~\ref{t:main-D-cate-AYF-Fischer-version}.

\subsection{Invariance under conjugacy}
\label{sec:invar-under-conj}

The proofs we present here of the conjugacy invariance part of
Theorems~\ref{splittingisinvflow} and~\ref{t:main-D-cate-AYF} are via the results of Nasu~\cite{Nasu:1986}.  A direct, but technical, proof
appeared in the original version of this paper~\cite{ACosta&Steinberg:2013}.
 
The conjugacy invariance part of Theorems~\ref{splittingisinvflow}
and~\ref{t:main-D-cate-AYF} are immediate consequences of
the ``only if'' part of Theorem~\ref{t:nasu}
and of the following lemma
(which should be compared
to~\cite[Proposition 7.5]{Beal&Berstel&Eilers&Perrin:2010arxiv}.)

\begin{Lemma}\label{l:biglemma}
Suppose that $\mathcal A$ is a bipartite right-resolving labeled graph
with components $\mathcal A_1,\mathcal A_2$ (with corresponding state
sets $Q_1,Q_2$.)
Let $S_i$ be the transition semigroup of $\mathcal A_i$, for $i=1,2$, and let $S$ be the transition semigroup of $\mathcal A$.  Let $S,S_1,S_2$ act on $Q^0$, $Q_1^0$ and $Q_2^0$, respectively.  Then the Karoubi envelopes of $S$, $S_1$ and $S_2$ are all equivalent and the actions $(\mathbb A_{Q^0},\KK(S))$, $(\mathbb A_{Q_1^0},\KK(S_1))$  and $(\mathbb A_{Q_2^0},\KK(S_2))$ are all equivalent.
\end{Lemma}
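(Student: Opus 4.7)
The plan is to exhibit natural injective zero-preserving semigroup homomorphisms $\iota_i : S_i \hookrightarrow S$ for $i = 1, 2$, and then verify that the induced zero-preserving functors $F_i : \KK(S_i) \to \KK(S)$ are equivalences compatible with the prescribed actions on $Q^0, Q_1^0, Q_2^0$. The alphabet $A_1 A_2$ of $\mathcal A_1$ identifies with the set of length-$2$ alternating words in $A_1 A_2 \subseteq A^+$, giving an inclusion $(A_1 A_2)^+ \hookrightarrow A^+$. A word $w \in (A_1 A_2)^+$ starts with an $A_1$-letter, so it sends every $q \in Q_2$ to $0$ in $\mathcal A$, while on $Q_1$ it acts exactly as the corresponding product of edges of $\mathcal A_1$. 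Thus two words in $(A_1 A_2)^+$ have the same image in $S_1$ if and only if they have the same image in $S$, defining the desired $\iota_1 : S_1 \to S$; the construction of $\iota_2$ is symmetric.

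To prove that $F_1$ is an equivalence, faithfulness follows from injectivity of $\iota_1$. For fullness, let $(\iota_1(e), s, \iota_1(f))$ be a non-zero morphism of $\KK(S)$ and let $w \in A^+$ represent $s$. Then $\iota_1(e) w \iota_1(f)$ labels a non-vanishing path starting from some $q \in Q_1$ in $\mathcal A$; since $\iota_1(e)$ ends with an $A_2$-letter landing in $Q_1$ and $\iota_1(f)$ begins with an $A_1$-letter read from $Q_1$, the bipartite structure forces $w \in (A_1 A_2)^+$, whence $s \in \iota_1(S_1)$. For essential surjectivity, a non-zero idempotent $x = [w]_S \in E(S)$ satisfies $[w^2]_S = x \neq 0$; the same bipartite analysis forces $w \in (A_1 A_2)^+$ (in which case $x = \iota_1(e)$ for an idempotent $e \in S_1$) or $w \in (A_2 A_1)^+$. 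In the latter case, write $w = ab$ with $a$ the leading $A_2$-letter; then $ba \in (A_1 A_2)^+$, and Lemma~\ref{l:a-lemma-on-conjugate-words} applied to $u = x = [ab]_S$ and $v = [ba]_S$ yields an idempotent $v^2 = [baba]_S = \iota_1([baba]_{S_1})$ with $x \mathrel{\D} v^2$ in $S$. Lemma~\ref{l:isomorphism-criterion} then gives $x \cong v^2$ in $\KK(S)$. Hence $F_1$ is an equivalence, and so is $F_2$ by symmetry; composing quasi-inverses, all three Karoubi envelopes are mutually equivalent.

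For the equivalence of actions, take $\eta^{(i)} : \mathbb A_{Q_i^0} \Rightarrow \mathbb A_{Q^0} \circ F_i$ to be, at each idempotent $e$, the identity map under the inclusion $Q_i^0 \hookrightarrow Q^0$. This is a pointed bijection because $\iota_i(e)$ annihilates $Q_{3-i}$ (its representative starts with a letter of $A_i$) while agreeing with $e$ on $Q_i$, whence $Q^0 \cdot \iota_i(e) = Q_i^0 \cdot e$ as pointed subsets of $Q^0$; naturality is immediate from the fact that $\iota_i$ is a semigroup homomorphism whose action on $Q_i$ coincides with that of $S_i$. This produces the three equivalences $(\mathbb A_{Q^0}, \KK(S))$, $(\mathbb A_{Q_1^0}, \KK(S_1))$ and $(\mathbb A_{Q_2^0}, \KK(S_2))$ in $\mathbf{Act}_0$. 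The main obstacle is essential surjectivity: one must identify idempotents of $S$ whose representing words lie in $(A_2 A_1)^+$ with those coming from $S_1$, up to $\mathrel{\D}$-equivalence. Lemma~\ref{l:a-lemma-on-conjugate-words} is the decisive tool here, converting the cyclic conjugation of such a representative into an idempotent in $\iota_1(S_1)$ within the same $\mathrel{\D}$-class.
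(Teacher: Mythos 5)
Your proposal is correct and follows essentially the same route as the paper's proof: identify $S_1,S_2$ with the subsemigroups of $S$ generated by $A_1A_2$ and $A_2A_1$, show the induced functor $\KK(S_1)\to\KK(S)$ is fully faithful (you argue at the level of alternating words and paths, the paper via $S_1S_2=0=S_2S_1$), obtain essential surjectivity from Lemma~\ref{l:a-lemma-on-conjugate-words} applied to a cyclic conjugate of a representing word, and realize the equivalence of actions by the inclusions $Q_i^0e\hookrightarrow Q^0e$, which are onto because $\iota_i(e)$ annihilates $Q_{3-i}$. No substantive differences to report.
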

\begin{proof}
Note that $Q^0=Q_1^0\cup Q_2^{0}$, where the base points $0$ of $Q_1$ and $Q_2$ are identified.  From $Q_1^0\cdot A_2A_1=\{0\}=Q_2^0\cdot A_1A_2$, it follows that $S_1,S_2$ are the subsemigroups of $S$ generated by $A_1A_2$ and $A_2A_1$, respectively.

Next observe that $E(S)=E(S_1)\cup E(S_2)$.  Indeed, since $A_1^2=0=A_2^2$ in $S$, each non-zero idempotent $e\in S$ is represented by an alternating word $u$ in $A_1$ and $A_2$.  From $0\neq e=e^2=u^2$, we conclude that the first and last letters of $u$ cannot both be in $A_i$, for either $i=1,2$. Thus $e\in S_1\cup S_2$.

Since $S_1S_2=0=S_2S_1$, it is now immediate that if $0\neq s\in S_1$
and $e,f\in E(S)$, then $esf=s$ implies $e,f\in S_1$.  Thus $\KK
(S_1)$ is a full subcategory of $S$ (that is, the inclusion functor is
full.)  To obtain that the inclusion is an equivalence of categories,
we need to show that each $e\in E(S)$ is isomorphic,
or equivalently $\mathscr D$-equivalent, to an idempotent of $S_1$.
By the previous paragraph, we may assume that $e\in E(S_2)\setminus
\{0\}$ and that $e=a_2va_1$ where $a_2\in A_2$, $a_1\in A_1$ and $v\in
S_1$. Lemma~\ref{l:a-lemma-on-conjugate-words} implies $f=(va_1a_2)^2\in S_1$ is an idempotent with $e\mathrel{\D} f$, that is $e\cong f$ in $\KK(S)$.
This proves that the inclusion functor $F\colon \KK (S_1)\to \KK (S)$ is an equivalence.

Next let $\eta_e\colon Q_1^0e\to Q^0e$ be the inclusion for each $e\in
E(S_1)$. Then it is immediate that $\eta\colon \mathbb
A_{Q_1^0}\Rightarrow \mathbb A_{Q^0}\circ F$ is a natural transformation.  Clearly, each $\eta_e$ is injective.  For surjectivity, we use that $Q_2^0e=\{0\}$ for $e\in E(S_1)$ and hence $Q^0e=Q_1^0e$. A symmetric argument for $S_2$ completes the proof.
\end{proof}

\subsection{Invariance under  symbol expansion}\label{sec:invar-under-flow}

The reader should review the definitions and notation from
Subsection~\ref{sec:flow-equivalence}.

\begin{Rmk}\label{r:image-of-E}
Using induction on the length of
words, one verifies that
\begin{equation*}
  \ci E(A^\ast)=B^\ast
  \setminus
  \Bigl(\dia B^\ast
  \cup
  B^\ast \alpha
  \cup
  \bigcup_{x\in A\setminus \{\dia\}}\!\!\!\!{B^\ast \alpha xB^\ast}
  \cup
  \bigcup_{x\in A\setminus \{\alpha\}}\!\!\!\!{B^\ast x\dia B^\ast}
  \Bigr).
\end{equation*}
\end{Rmk}

Remark~\ref{r:image-of-E} justifies several simple and useful facts,
like the following.

\begin{Lemma}\label{l:symbol-exp-3}
Let $v\in A^+$.
For $x,y,u\in B^\ast$, if
$x\ci E(v)y=\ci E(u)$ then
$x,y\in \ci E(A^*)$ and
$u=\ci E^{-1}(x)v\ci E^{-1}(y)$.
Consequently,
if $\ci E(v)\in L(\Cl X')$ then $v\in L(\Cl X)$.
\end{Lemma}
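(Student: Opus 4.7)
The plan is to exploit the explicit description of $\ci E(A^\ast)$ from Remark~\ref{r:image-of-E}: a word $w \in B^\ast$ lies in $\ci E(A^\ast)$ if and only if it has no prefix $\dia$, no suffix $\alpha$, no internal factor $\alpha b$ with $b \in A\setminus\{\dia\}$, and no internal factor $b\dia$ with $b \in A\setminus\{\alpha\}$. Equivalently, in $w$ every $\alpha$ is immediately followed by $\dia$ and every $\dia$ is immediately preceded by $\alpha$. The strategy is to show that these four forbidden-pattern conditions, which hold for $x\ci E(v)y = \ci E(u)$, descend separately to $x$ and to $y$.

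The key observation is that $\ci E(v)$ itself belongs to $\ci E(A^\ast)$, so, since $v$ is nonempty, the first letter of $\ci E(v)$ lies in $A$ (it is never $\dia$) and the last letter lies in $B\setminus\{\alpha\}$ (it is never $\alpha$). For $x$, three of the four conditions pass immediately because any violation inside $x$ is also a violation inside $\ci E(u)$; the only delicate point is ruling out that $x$ ends with $\alpha$, which would force the first letter of $\ci E(v)$ to equal $\dia$, a contradiction. For $y$, the symmetric delicate point is that if $y$ began with $\dia$ then the last letter of $\ci E(v)$ would have to be $\alpha$, again a contradiction. Consequently $x,y \in \ci E(A^\ast)$; writing $x = \ci E(x')$ and $y = \ci E(y')$, the homomorphism property yields $\ci E(u) = \ci E(x'vy')$ and injectivity of $\ci E$ (noted in Subsection~\ref{sec:flow-equivalence}) gives $u = x'vy' = \ci E^{-1}(x)\,v\,\ci E^{-1}(y)$.

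For the final consequence, the fact that $\Cl X'$ is the least subshift of $\z B$ containing $\ci E(\Cl X)$ implies that every word of $L(\Cl X')$ is a factor of $\ci E(w)$ for some $w \in L(\Cl X)$; writing $\ci E(w) = x\,\ci E(v)\,y$ and applying the first part gives $w = \ci E^{-1}(x)\,v\,\ci E^{-1}(y)$, so $v$ is a factor of $w$ and hence belongs to the factorial language $L(\Cl X)$. The main obstacle is nothing deep, merely the careful enumeration of boundary cases at the two seams $x \mid \ci E(v)$ and $\ci E(v) \mid y$; the constraint that $\ci E(v)$ never starts with $\dia$ and never ends with $\alpha$ is precisely what eliminates every would-be forbidden pattern straddling these boundaries.
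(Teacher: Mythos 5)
Your proof is correct and follows exactly the route the paper takes (which it leaves almost entirely implicit): deduce $x,y\in\ci E(A^\ast)$ from the forbidden-factor description of $\ci E(A^\ast)$ in Remark~\ref{r:image-of-E}, noting that $\ci E(v)$ neither starts with $\dia$ nor ends with $\alpha$ to handle the two seams, and then conclude with injectivity of $\ci E$ and factoriality of $L(\Cl X)$. Your write-up simply supplies the boundary-case details that the paper's one-line proof omits.
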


\begin{proof}
  The first part of the lemma follows from
  Remark~\ref{r:image-of-E}
  and the fact that $\ci E$ is injective.
  If $\ci E(v)\in L(\Cl X')$
  then there is $u\in L(\Cl X)$ and $x,y\in B^\ast$
  with $\ci E(u)=x\,\ci E(v)\,y$.
  From $u=\ci E^{-1}(x)\,v\,\ci E^{-1}(y)$,
  we deduce $v\in L(\Cl X)$.
\end{proof}

A direct consequence of Lemma~\ref{l:symbol-exp-3} is the following
analog.

\begin{Lemma}\label{l:symbol-exp-4}
  Let $x\in A^{\mathbb Z}$.
  If $\ci E(x)\in \Cl X'$ then $x\in \Cl X$.
\end{Lemma}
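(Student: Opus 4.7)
The plan is to reduce this to the finitary statement already established in Lemma~\ref{l:symbol-exp-3}, via the standard characterization of shift spaces by their languages (namely, $x\in \Cl X$ if and only if $L(\{x\})\subseteq L(\Cl X)$, recalled in Subsection~3.1).

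First I would fix $x\in A^{\mathbb Z}$ with $\ci E(x)\in \Cl X'$, and take an arbitrary block $v=x_{[i,j]}$ of $x$. The key observation is purely combinatorial: because $\ci E$ is defined letter-by-letter on bi-infinite sequences by $\ci E(x)=\ldots \ci E(x_{-1}).\ci E(x_{0})\ci E(x_{1})\ldots$, the finite word $\ci E(v)=\ci E(x_i)\cdots \ci E(x_j)$ appears as a block of the bi-infinite sequence $\ci E(x)$. Since $\ci E(x)\in \Cl X'$, this block belongs to $L(\Cl X')$, i.e. $\ci E(v)\in L(\Cl X')$.

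Next I would invoke the second half of Lemma~\ref{l:symbol-exp-3} to conclude $v\in L(\Cl X)$. Since $v$ was an arbitrary block of $x$, this shows $L(\{x\})\subseteq L(\Cl X)$, whence $x\in \Cl X$ by the characterization of subshifts via their languages.

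I do not expect any real obstacle here: the content has been isolated in Lemma~\ref{l:symbol-exp-3}, and extending it to bi-infinite sequences is just the routine passage from languages to shift spaces. The only minor care needed is to ensure that $\ci E(v)$ is genuinely a factor of $\ci E(x)$ (as opposed to some awkward splitting across a $\dia$-expansion), but this is immediate from the componentwise definition of $\ci E$ on $A^{\mathbb Z}$.
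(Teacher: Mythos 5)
Your proposal is correct and is essentially identical to the paper's own (very short) proof: take an arbitrary block $u$ of $x$, note that $\ci E(u)\in L(\Cl X')$, apply Lemma~\ref{l:symbol-exp-3} to get $u\in L(\Cl X)$, and conclude via the characterization of subshifts by their languages. The extra care you mention about $\ci E(v)$ being a genuine factor of $\ci E(x)$ is indeed immediate from the letterwise definition of $\ci E$ on bi-infinite sequences.
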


\begin{proof}
  If $u$ is a finite block of $x$,
  then $\ci E(u)\in L(\Cl X')$. By
  Lemma~\ref{l:symbol-exp-3}, we have $u\in L(\Cl X)$. Hence $x\in\Cl X$.
\end{proof}

In~\cite{Lawson:2011}, a semigroup homomorphism $\theta\colon S\to T$
is termed a \emph{local isomorphism} if the following conditions are
satisfied:\footnote{Actually, Lawson only defines the notion for semigroups with local
units.}
\begin{enumerate}
\item $\theta|_{eSf}$ is a bijection of $eSf$ with $\theta(e)T\theta(f)$;
\item if $e'\in E(\theta(S))$, then there is $e\in E(S)$ with $\theta(e)=e'$;
\item for each $e\in E(T)$, there is $f\in E(\theta(S))$ with $e\mathrel{\D} f$.
\end{enumerate}

\begin{Rmk}\label{rmk:equivalence-induced-by-local-iso}
It is immediate from the definition (cf.~\cite{Lawson:2011}) that if
$\theta\colon S\to T$ is a local isomorphism, then $\theta$ induces an equivalence $\Theta\colon \KK(S)\to \KK(T)$ given by $\Theta(e)=\theta(e)$ on objects and $\Theta(e,s,f) = (\theta(e),\theta(s),\theta(f))$ on morphisms.
\end{Rmk}

\begin{Prop}\label{p:symbol-expansion-embed}
  There is a well-defined homomorphism
  $\ci E'\colon S(\Cl X)\to S(\Cl X')$
  sending
  $[u]_{\Cl X}$ to $[\ci E(u)]_{\Cl X'}$ and $0$ to $0$. Moreover, $\ci E'$ is a local isomorphism.
\end{Prop}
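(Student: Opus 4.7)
The plan is to work with the explicit description of $L(\Cl X')$ as the set of factors of $\ci E(L(\Cl X))$, routine to verify from the definition of symbol expansion together with prolongability of $L(\Cl X)$. The key structural observation is that for any $u\in A^+$, the first letter of $\ci E(u)$ is never $\diamond$ and the last is never $\alpha$; hence whenever $x\ci E(u)y\in L(\Cl X')$ is realized as a factor of some $\ci E(t)$ with $t\in L(\Cl X)$, the boundaries of $\ci E(u)$ inside $\ci E(t)$ align with $\ci E$-block boundaries after at most prepending one $\alpha$ to $x$ or appending one $\diamond$ to $y$. Applying Lemma~\ref{l:symbol-exp-3} twice then produces $a\in\{\epsilon,\alpha\}$, $b\in\{\epsilon,\diamond\}$ with $ax=\ci E(p)$, $yb=\ci E(q)$, and $t=puq\in L(\Cl X)$. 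Well-definedness of $\ci E'$ then falls out by replacing $u$ with $v$ using $[u]_{\Cl X}=[v]_{\Cl X}$ and applying $\ci E$ to conclude $x\ci E(v)y\in L(\Cl X')$; the case $u\notin L(\Cl X)$ is handled directly by the last assertion of Lemma~\ref{l:symbol-exp-3}. The homomorphism property is immediate from $\ci E(uv)=\ci E(u)\ci E(v)$.

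Injectivity of $\ci E'$ follows dually: if $\ci E'([u])=\ci E'([v])$ and $puq\in L(\Cl X)$, then $\ci E(pvq)=\ci E(p)\ci E(v)\ci E(q)\in L(\Cl X')$ and Lemma~\ref{l:symbol-exp-3} forces $pvq\in L(\Cl X)$. Condition~(2) of a local isomorphism is then immediate, since the preimage of an idempotent in $\ci E'(S(\Cl X))$ under the injective homomorphism $\ci E'$ must itself be idempotent. For condition~(1), injectivity of $\ci E'|_{eS(\Cl X)f}$ is automatic; for surjectivity onto $\ci E'(e)\,S(\Cl X')\,\ci E'(f)$, I fix idempotent representatives $p,q\in\Cl E(\Cl X)$ of $e,f$ and note that $\ci E(p)$ ends with a letter $\neq\alpha$ while $\ci E(q)$ begins with a letter $\neq\diamond$. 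Hence any $w\in B^+$ with $\ci E(p)\,w\,\ci E(q)\in L(\Cl X')$ lies in $\ci E(A^+)$ by two applications of Lemma~\ref{l:symbol-exp-3}, so writing $w=\ci E(w')$ gives the required preimage $[pw'q]\in eS(\Cl X)f$.

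The main obstacle is condition~(3): for each non-zero idempotent $[z]$ of $S(\Cl X')$, I must produce a $\D$-equivalent idempotent in $\ci E'(S(\Cl X))$. A boundary analysis of $z$ is central. If $z$ starts with $\diamond$ but does not end with $\alpha$, the second copy of $z$ inside $z^2$ forces an interior $\diamond$ with no preceding $\alpha$, violating the admissibility required of elements of $L(\Cl X')$; the dual case is symmetric. The remaining possibilities are $z=\ci E(w)$ (in which case $[w]_{\Cl X}$ is idempotent by injectivity and $[z]=\ci E'([w]_{\Cl X})$ is already in the image), or $z=\diamond\ci E(w')\alpha$ for some $w'\in A^\ast$. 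In this last case I apply Lemma~\ref{l:a-lemma-on-conjugate-words} to the cyclic decomposition $z=\diamond\cdot(\ci E(w')\alpha)$: the rotated word is $v=\ci E(w')\alpha\cdot\diamond=\ci E(w'\alpha)$, and the lemma yields $[z]\mathrel{\D}[v^2]$ with $[v^2]=\ci E'([(w'\alpha)^2]_{\Cl X})$ idempotent; injectivity of $\ci E'$ then ensures that the preimage $[(w'\alpha)^2]_{\Cl X}$ is itself an idempotent of $S(\Cl X)$, providing the required witness. This cyclic-rotation trick, combined with the boundary-case bookkeeping, is the most delicate part of the argument.
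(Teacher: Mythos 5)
Your proof is correct and follows essentially the same route as the paper's: well-definedness and injectivity via the block-boundary analysis packaged in Lemma~\ref{l:symbol-exp-3}, condition (1) by the same factorization of $\ci E(p)\,w\,\ci E(q)$, and condition (3) by reducing a non-image idempotent word to the normal form $\dia\,\ci E(w')\,\alpha$ and applying Lemma~\ref{l:a-lemma-on-conjugate-words} to a cyclic rotation landing in $\ci E(A^+)$. The only cosmetic difference is that you organize the condition-(3) case analysis by the first and last letters of $z$ (ruling out the mixed cases via $z^2$), whereas the paper extracts a maximal $\ci E$-factor of $z$; both yield the same normal form and the same $\D$-equivalent idempotent $\ci E'([(w'\alpha)^2])$.
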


\begin{proof}
We begin by showing that $[u]_{\Cl X}\subseteq [v]_{\Cl X}$
   if and only if
   $[\ci E(u)]_{\Cl X'}\subseteq [\ci E(v)]_{\Cl X'}$
   for $u,v\in A^+$.
  Suppose that
  $[u]_{\Cl X}\subseteq [v]_{\Cl X}$
  and
  let $x$ and $y$ be words such that
  $x\,\ci E(u)\,y$ belongs to $L(\Cl X')$.
  There are words $x'$ and $y'$ such that
  $x'x\,\ci E(u)\,yy'$ belongs to $L(\Cl X')\cap \ci E(A^+)$.
  By Lemma~\ref{l:symbol-exp-3},
  we have $\ci E^{-1}(x'x)\,u\,\ci E^{-1}(y'y)\in L(\Cl X)$.
  Since $[u]_{\Cl X}\subseteq [v]_{\Cl X}$,
  it follows that the word $z=\ci E^{-1}(x'x)\,v\,\ci E^{-1}(y'y)$
  also belongs to $L(\Cl X)$.
  Hence $x\,\ci E(v)\,y$ belongs to $L(\Cl X')$, since
  it is a factor of $\ci E(z)$.
  Therefore
  $[\ci E(u)]_{\Cl X'}\subseteq [\ci E(v)]_{\Cl X'}$.
  Let $z\in A^+\setminus L(\Cl X)$.
  Then $\ci E(z)\notin L(\Cl X')$, again by
  Lemma~\ref{l:symbol-exp-3}.
  Therefore, we have
  $\ci E'([z]_{\Cl X})
  =0=\ci E'(0)$.
  This proves $\ci E'$ is a well-defined homomorphism.

  On the other hand, if
  $[\ci E(u)]_{\Cl X'}\subseteq [\ci E(v)]_{\Cl X'}$ then,
  for every $x,y\in A^\ast$, we have the following chain of
  implications, where the last one uses Lemma~\ref{l:symbol-exp-3}:
  \begin{equation*}
    xuy\in L(\Cl X)\Rightarrow
    \ci E(xuy)\in L(\Cl X')\Rightarrow
    \ci E(xvy)\in L(\Cl X')\Rightarrow
    xvy\in L(\Cl X).
  \end{equation*}
  This shows that $[u]_{\Cl X}\subseteq [v]_{\Cl X}$.
  It follows that $\ci E'$ is injective.
  In particular, $\ci E'$
  satisfies the second condition in the definition of a local isomorphism.

  Suppose that $f=[w]_{\Cl X'}$ is an idempotent of
  $S(\Cl X')$
  with $f\notin \ci E'(S(\Cl X))$.
  Then $w\notin \ci E(A^+)$ on the one hand, and $w\in L(\Cl X')$, on
  the other hand (the latter because $f\neq 0$.)
  As $w\in L(\Cl X')$,  there is $v\in L(\Cl X)$
  such that
  $\ci E(v)=pwq$ for some $p,q$.
  Let $u$ be a (possibly empty) word of maximal length such that $\ci E(u)$ is a factor
  of $w$, and let $a,b$ be words with
  $w=a\,\ci E(u)\,b$.
  Since $\ci E(v)=pa\,\ci E(u)\,bq$,
  it follows that $pa,bq\in \ci E(A^+)$ by Lemma~\ref{l:symbol-exp-3}.
  By the maximality of $u$, we have $a,b\in\{1,\alpha,\dia\}$.
  Note also that
  $\{a,b\}\neq \{1\}$, because $w\notin \ci E(A^+)$.
  Since $[w]_{\Cl X'}$ is idempotent,
  the word $w^2=a\,\ci E(u)\,ba\,\ci E(u)\,b$
  belongs to $L(\Cl X')$,
  thus
  $ra\,\ci E(u)\,ba\,\ci E(u)\,bs\in \ci E(A^+)$
  for some $r,s$.
  Then $ba$ belongs to $\ci E(A^+)$
  by Remark~\ref{r:image-of-E}. The only possibility is
  $ba=\alpha\dia$, thus $w=\dia \,\ci E(u)\,\alpha$.
  Since $\ci E(\alpha u)=\alpha\dia \,\ci E(u)$,
  it follows from Lemma~\ref{l:a-lemma-on-conjugate-words}
  that $[\ci E(\alpha u)^2]_{\Cl X'}$
  is an idempotent in the image of~$\ci E'$
  which is $\D$-equivalent to $e$.

 It remains to show that $\ci E'(eS(\Cl X)f)=\ci E'(e)S(\Cl X')\ci E'(f)$,
 whenever $e,f\in E(S(\Cl X))$.
  Let $u,v\in A^+$ be
  such that $e=[u]_\Cl X$ and
  $f=[v]_\Cl X$.  We clearly have $0\in \ci E'(eS(\Cl X)f)$.
  Take $w\in L(\Cl X')$ such that $[w]_{\Cl X'}\in  \ci E'(e)S(\Cl X')\ci E'(f)$.
  Since $[w]_{\Cl X'}=[\ci E(u)\,w\,\ci E(v)]_{\Cl X'}$,
  there are words $p,q$ such that
  $p\,\ci E(u)\,w\,\ci E(v)\,q$ belongs to $L(\Cl X')\cap\mathrm{Im}\ {\ci E}$.
  From Remark~\ref{r:image-of-E}
  it follows that $w=\ci E(w')$ for some $w'\in A^+$.
  Moreover, $w'\in L(\Cl X)$ by Lemma~\ref{l:symbol-exp-3}.
  Clearly, $[uw'v]_{\Cl X}\in eS(\Cl X)f$.
  Furthermore,
  $\ci E'[uw'v]_{\Cl X}=
  \ci E'(e)\,[w]_{\Cl X'}\,\ci E'(f)=
  [w]_{\Cl X'}$, completing the proof.
\end{proof}

Consider the mapping  $F_{\ci E}\colon\KK(\Cl X)\to \KK(\Cl X')$
defined as follows:
\begin{enumerate}
\item $F_{\ci E}(e)=\ci E'(e)$ if $e$ is an object of $\KK(\Cl X)$;
\item $F_{\ci E}(e,s,f)=\bigl(\ci E'(e), \ci E'(s),\ci E'(f)\bigr)$
  if $(e,s,f)$ is morphism of $\KK(\Cl X)$.
\end{enumerate}
In view of Remark~\ref{rmk:equivalence-induced-by-local-iso} and
Proposition~\ref{p:symbol-expansion-embed}, we know that
$F_{\ci E}$ is an equivalence.
This completes the proof of Theorem~\ref{splittingisinvflow}.

We proceed with the proof of Theorem~\ref{t:main-D-cate-AYF},
by  steps, in order to produce an isomorphism
$\mathbb A_{\Cl X}\Rightarrow \mathbb A_{\Cl X'}\circ F_{\ci E}$.

\begin{Lemma}\label{l:symbol-expansion-preserves-order-contexts}
  The inclusion
    $C_{\Cl X}(x)\subseteq C_{\Cl X}(y)$
    holds if and only if
    the inclusion
    $C_{\Cl X'}(\ci E(x))\subseteq C_{\Cl X'}(\ci E(y))$
    holds, whenever $x,y\in A^{\mathbb Z^-}$.
\end{Lemma}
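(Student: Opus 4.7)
The plan is to prove the stronger identity
\[
C_{\Cl X'}(\ci E(x)) \;=\; \ci E(C_{\Cl X}(x))
\]
for every $x\in A^{\mathbb Z^-}$, where $\ci E$ is extended to $A^{\mathbb N}$ coordinate-by-coordinate in the obvious way. Since $\ci E\colon A^{\mathbb N}\to B^{\mathbb N}$ is injective, the lemma then follows immediately by passing to inclusions. The inclusion $\ci E(C_{\Cl X}(x))\subseteq C_{\Cl X'}(\ci E(x))$ is the easy half: if $z\in C_{\Cl X}(x)$, then $x.z\in\Cl X$, so $\ci E(x.z)=\ci E(x).\ci E(z)\in\ci E(\Cl X)\subseteq\Cl X'$, giving $\ci E(z)\in C_{\Cl X'}(\ci E(x))$.

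For the reverse inclusion, given $w\in B^{\mathbb N}$ with $\ci E(x).w\in\Cl X'$, I would first show that $w$ is $\ci E$-aligned, i.e., $w=\ci E(\tilde z)$ for some $\tilde z\in A^{\mathbb N}$. Once this is established, $\ci E(x).w=\ci E(x.\tilde z)\in\Cl X'$, so Lemma~\ref{l:symbol-exp-4} yields $x.\tilde z\in\Cl X$, hence $\tilde z\in C_{\Cl X}(x)$ and $w\in \ci E(C_{\Cl X}(x))$.

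The main obstacle is thus the $\ci E$-alignment of $w$. The key observation is that, by Remark~\ref{r:image-of-E} and $L(\Cl X')=\mathrm{Fac}(\ci E(L(\Cl X)))$, inside any bi-infinite word of $\Cl X'$ every $\alpha$ is immediately followed by $\dia$ and every $\dia$ is immediately preceded by $\alpha$ (the only exceptions are boundary letters of \emph{finite} factors, which are irrelevant here). Since $x_{-1}\in A$, the tail $\ci E(x_{-1})$ is either $\alpha\dia$ or a single letter of $A\setminus\{\alpha\}$, so $\ci E(x)$ never ends in~$\alpha$; consequently $w_0\neq\dia$. For $i\geq 1$, the same rules force $w_i=\dia\Rightarrow w_{i-1}=\alpha$ and, because $w$ is infinite, $w_i=\alpha\Rightarrow w_{i+1}=\dia$. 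These constraints permit $w$ to be parsed uniquely from left to right as a concatenation of blocks, each being either $\alpha\dia$ or a single letter of $A\setminus\{\alpha\}$. Applying $\ci E^{-1}$ to each block produces the desired $\tilde z\in A^{\mathbb N}$ with $\ci E(\tilde z)=w$, completing the argument.
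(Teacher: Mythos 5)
Your proof is correct and takes essentially the same route as the paper's: both rest on the easy push-forward under $\ci E$, the pull-back via Lemma~\ref{l:symbol-exp-4}, and the fact (from Remark~\ref{r:image-of-E}) that any right-infinite tail $w$ with $\ci E(x).w\in\Cl X'$ is $\ci E$-aligned. Packaging these as the identity $C_{\Cl X'}(\ci E(x))=\ci E(C_{\Cl X}(x))$ and invoking injectivity is only a cosmetic reorganization, though your explicit left-to-right parsing argument for the alignment is a nice elaboration of what the paper leaves to the reader.
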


\begin{proof}
  Suppose that $C_{\Cl X'}(\ci E(x))\subseteq C_{\Cl X'}(\ci E(y))$.
  Let $z\in C_{\Cl X}(x)$, that is $x.z\in \Cl X$.
  Then $\ci E(x).\ci E(z)=\ci E(x.z)\in\Cl X'$.
  Hence $\ci E(y).\ci E(z)=\ci E(y.z)\in\Cl X'$,
  by hypothesis.
  By Lemma~\ref{l:symbol-exp-4}, we have $y.z\in \Cl X$,
  showing $C_{\Cl X}(x)\subseteq C_{\Cl X}(y)$.

  Conversely, assume $C_{\Cl X}(x)\subseteq C_{\Cl X}(y)$.
  Let $z\in C_{\Cl X'}(\ci E(x))$, meaning $\ci E(x).z\in \Cl X'$.
  By Remark~\ref{r:image-of-E},
  we have $z=\ci E(t)$ for some unique $t\in A^{\mathbb N}$.
  Moreover, $x.t\in\Cl X$ by Lemma~\ref{l:symbol-exp-4}.
  Since $C_{\Cl X}(x)\subseteq C_{\Cl X}(y)$,
  we obtain $y.t\in\Cl X$,
  whence $\ci E(y).z\in \Cl X'$.
  Therefore, $C_{\Cl X'}(\ci E(x))\subseteq C_{\Cl X'}(\ci E(y))$.
\end{proof}

Consider the map $h\colon Q(\Cl X)\to Q(\Cl X')$
       with $h(C_{\Cl X}(x))=C_{\Cl X'}(\ci E(x))$,
     for every $x\in A^{\mathbb Z^-}$.
     By Lemma~\ref{l:symbol-expansion-preserves-order-contexts},
     this is a well-defined
     injective function.

\begin{Lemma}\label{l:hqs}
  We have $h(q\cdot s)=h(q)\cdot \ci E'(s)$,
  for every $q\in Q(\Cl X)$ and $s\in S(\Cl X)$.
\end{Lemma}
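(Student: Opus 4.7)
The plan is to verify the identity by a direct unwinding of definitions, after dispatching the degenerate cases. First I would handle the trivial situations: if $q=\emptyset$ or $s=0$, both sides reduce to the sink $\emptyset$ of $Q(\Cl X')$, since by convention $h(\emptyset)=\emptyset$ (which is forced by the well-definedness of $h$ already established) and $\ci E'(0)=0$, whereas $q\cdot s=\emptyset$ by the definition of the $S(\Cl X)$-action on the pointed set $Q(\Cl X)$.

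For the main case, I would write $q=C_{\Cl X}(x)$ with $x\in A^{\mathbb Z^-}$ and $s=[u]_{\Cl X}$ with $u\in L(\Cl X)$, and then compute both sides in parallel. By the definition of the Krieger action,
\[q\cdot s = C_{\Cl X}(x)\cdot [u]_{\Cl X}=C_{\Cl X}(xu),\]
and so, applying $h$ and using the equality $\ci E(xu)=\ci E(x)\,\ci E(u)$ (which follows from $\ci E$ being a monoid homomorphism extended coordinatewise to left-infinite words),
\[h(q\cdot s)=C_{\Cl X'}\!\bigl(\ci E(x)\,\ci E(u)\bigr).\]
On the other hand, the definition of the action in $Q(\Cl X')$ gives
\[h(q)\cdot \ci E'(s) = C_{\Cl X'}(\ci E(x))\cdot [\ci E(u)]_{\Cl X'}=C_{\Cl X'}\!\bigl(\ci E(x)\,\ci E(u)\bigr),\]
so the two sides coincide.

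The one technical point to check is that $C_{\Cl X}(xu)$ may itself be the sink $\emptyset$, in which case the step $h(C_{\Cl X}(xu))=C_{\Cl X'}(\ci E(xu))$ must be interpreted through the convention $h(\emptyset)=\emptyset$; consistency then requires $C_{\Cl X}(z)=\emptyset \Rightarrow C_{\Cl X'}(\ci E(z))=\emptyset$ for every $z\in A^{\mathbb Z^-}$. I would dispose of this by the alignment argument already implicit in Lemma~\ref{l:symbol-expansion-preserves-order-contexts}: any $y\in C_{\Cl X'}(\ci E(z))$ cannot begin with $\dia$ (the last letter of $\ci E(z)$ is never $\alpha$, by the structure of $\ci E$), so the $\alpha$-$\dia$ constraints from Remark~\ref{r:image-of-E} force $y=\ci E(y')$ for some $y'\in A^{\mathbb N}$; Lemma~\ref{l:symbol-exp-4} then yields $x.y'\in \Cl X$, witnessing $C_{\Cl X}(z)\neq\emptyset$.

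The hard part here is really no obstacle at all: the substance of the lemma is the single identity $\ci E(xu)=\ci E(x)\,\ci E(u)$, and everything else is bookkeeping at the sink. No ideas beyond those already developed for $\ci E'$ and $h$ in the preceding lemmas are needed.
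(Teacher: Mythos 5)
Your proof is correct and follows essentially the same route as the paper's: a direct unwinding of the definitions of $h$, $\ci E'$, and the two Krieger-cover actions, with the whole content residing in the identity $\ci E(xu)=\ci E(x)\,\ci E(u)$. The extra care you take at the sink (checking $C_{\Cl X}(z)=\emptyset\Rightarrow C_{\Cl X'}(\ci E(z))=\emptyset$ via Remark~\ref{r:image-of-E} and Lemma~\ref{l:symbol-exp-4}) is sound and slightly more scrupulous than the paper, which treats these degenerate cases implicitly.
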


\begin{proof}
  Take $u\in A^+$ with $s=[u]_{\Cl X}$,
  and $x\in A^{\mathbb Z^-}$ with $q=C_{\Cl X}(x)$.
  Then $q\cdot s=C_{\Cl X}(x)$ and
   $h(q\cdot s)=C_{\Cl X'}(\ci E(xu)) =C_{\Cl X'}(\ci E(x))\cdot
   [\ci E(u)]_{\Cl X'}$.
   Since $\ci E'(s)=[\ci E(u)]_{\Cl X'}$, this concludes the
   proof.
\end{proof}

  \begin{Lemma}\label{l:how-h-restricts}
    The equality $h(Q(\Cl X)\cdot s)=Q(\Cl X')\cdot \ci E'(s)$
    holds for every $s\in S(\Cl X)$.
  \end{Lemma}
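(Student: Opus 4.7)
The inclusion $h(Q(\Cl X)\cdot s)\subseteq Q(\Cl X')\cdot\ci E'(s)$ is immediate from Lemma~\ref{l:hqs}: for every $q\in Q(\Cl X)$ we have $h(q\cdot s)=h(q)\cdot\ci E'(s)\in Q(\Cl X')\cdot\ci E'(s)$. The substantive part is the opposite inclusion, for which I would pick $u\in A^+$ with $s=[u]_{\Cl X}$ (the case $s=0$ is trivial, both sides being $\{\emptyset\}$), take an arbitrary element $q'\cdot\ci E'(s)\in Q(\Cl X')\cdot \ci E'(s)$, and exhibit some $q\in Q(\Cl X)$ with $h(q\cdot s)=q'\cdot\ci E'(s)$. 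After disposing of the trivial cases $q'=\emptyset$ and $q'\cdot\ci E'(s)=\emptyset$ via $q=\emptyset$, I may assume $q'=C_{\Cl X'}(y)$ for some $y\in B^{\mathbb Z^-}$ and fix $w\in B^{\mathbb N}$ so that $z:=y\,\ci E(u).w$ belongs to $\Cl X'$.

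It then suffices to produce $x\in A^{\mathbb Z^-}$ with $y=\ci E(x)$: for then $q'\cdot\ci E'(s)=C_{\Cl X'}(\ci E(x)\,\ci E(u))=C_{\Cl X'}(\ci E(xu))=h(C_{\Cl X}(x)\cdot s)\in h(Q(\Cl X)\cdot s)$. To obtain $x$, I would exploit the rigid combinatorial structure of $\Cl X'$ inherited from the symbol expansion: by Remark~\ref{r:image-of-E}, in any element of $\Cl X'$ the symbol $\dia$ may appear only immediately after $\alpha$, so every element of $\Cl X'$ admits a unique decomposition into blocks of type $\alpha\dia$ or singleton $a$ with $a\in A\setminus\{\alpha\}$, each such block being $\ci E(a)$ for exactly one $a\in A$. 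The key claim is that $\ci E(u)$ occupies whole blocks in $z$, i.e., that both the position immediately before and the position immediately after $\ci E(u)$ in $z$ are block boundaries.

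The main (and essentially the only) technical step is the verification of this block-alignment claim, which reduces to a short case analysis. The first letter of $\ci E(u)$ is either $a_1\in A\setminus\{\alpha\}$ (starting a singleton block) or $\alpha$ (starting an $\alpha\dia$ block), so it always starts a block; dually, the last letter of $\ci E(u)$ is either some $a_n\in A\setminus\{\alpha\}$ (a singleton block) or $\dia$ (closing an $\alpha\dia$ block), so it always ends a block. Granted the claim, the portion of $z$ lying strictly to the left of $\ci E(u)$, which is $y$ positioned appropriately, also ends at a block boundary and is therefore a left-infinite concatenation of whole blocks. Reading off the associated letter of $A$ from each block yields the required $x\in A^{\mathbb Z^-}$ with $\ci E(x)=y$, completing the proof.
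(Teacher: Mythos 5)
Your proof is correct and follows essentially the same route as the paper: the forward inclusion via Lemma~\ref{l:hqs}, and for the reverse inclusion writing the element as $C_{\Cl X'}(y\,\ci E(u))$ and showing $y\in\ci E(A^{\mathbb Z^-})$, which the paper deduces directly from Remark~\ref{r:image-of-E} and you justify by unpacking that remark into the explicit block-alignment argument. The extra detail you supply (that $\ci E(u)$ cannot begin with $\dia$, so no block straddles the boundary) is exactly the content the paper leaves implicit.
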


  \begin{proof}
    By Lemma~\ref{l:hqs}, we have
    $h(Q(\Cl X)\cdot s)\subseteq Q(\Cl X')\cdot \ci E'(s)$.
    Conversely, let $q\in Q(\Cl X')\cdot \ci E'(s)$.
    We want to show that $q\in h(Q(\Cl X)\cdot s)$.
    Since $\emptyset=\emptyset\cdot 0$, by
    Lemma~\ref{l:hqs} we have
    $h(\emptyset)=h(\emptyset)\cdot 0=\emptyset$.
    Therefore, we may suppose $q\neq \emptyset$.
    Take $u\in A^+$ such that $s=[u]_{\Cl X}$.
    Then, there is $y\in B^{\mathbb Z^-}$ such that
    $q=C_{\Cl X'}(y\,\ci E(u))$.
    The assumption $q\neq 0$ means that $y\,\ci E(u).z\in \Cl X'$ for
    some $z\in B^\NN$,
    and so by Remark~\ref{r:image-of-E}
    there is
    $\tilde y\in B^{\mathbb Z^-}$
    such that
    $y=\ci E(\tilde y)$.
    Then $h(C_{\Cl X}(\tilde yu))=q$. Since
    $C_{\Cl X}(\tilde yu)\in Q(\Cl X)\cdot s$, this concludes the proof.
  \end{proof}

  We are now ready to exhibit an isomorphism
  $\mathbb A_{\Cl X}\Rightarrow\mathbb A_{\Cl X'}\circ F_{\ci E}$.

 \begin{Prop}\label{p:strong-lonk-symbol-expansion}
   For each idempotent $e\in S(\Cl X)$,
   let $\eta_e$ be the function $Q(\Cl X)e\to Q(\Cl X')\,\ci E'(e)$
   such that $\eta_e(r)=h(r)$ for every $r\in Q(\Cl X)e$.
   Let $\eta=(\eta_e)_{e\in E(S(\Cl X))}$. Then
   $\eta$ is an isomorphism
   $\mathbb A_{\Cl X}\Rightarrow\mathbb A_{\Cl X'}\circ F_{\ci E}$.
 \end{Prop}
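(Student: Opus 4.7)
The plan is to assemble the claim from the three preceding lemmas: well-definedness of each component $\eta_e$, bijectivity of each $\eta_e$, and naturality of the family $\eta$. Most of the work has already been done, so the proof is essentially a bookkeeping argument.

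First I would check that each $\eta_e$ is a well-defined, base-point preserving map $Q(\Cl X)e \to Q(\Cl X')\ci E'(e)$. For well-definedness, given $r \in Q(\Cl X)e$, write $r = r \cdot e$ (using that $r$ lies in the image of the action of the idempotent $e$); then Lemma~\ref{l:hqs} gives $h(r) = h(r\cdot e) = h(r)\cdot \ci E'(e) \in Q(\Cl X')\ci E'(e)$, which is what we want. The base point of $Q(\Cl X)e$ is $\emptyset$, and Lemma~\ref{l:hqs} applied to $\emptyset = \emptyset \cdot 0$ yields $h(\emptyset) = h(\emptyset)\cdot 0 = \emptyset$, so base points are preserved.

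Next I would establish that each $\eta_e$ is a bijection of pointed sets. Injectivity follows because $h$ itself is injective (as noted following Lemma~\ref{l:symbol-expansion-preserves-order-contexts}), so every restriction of $h$ is injective. Surjectivity is exactly the content of Lemma~\ref{l:how-h-restricts}: since $Q(\Cl X)e = Q(\Cl X)\cdot e$, that lemma gives $\eta_e(Q(\Cl X)e) = h(Q(\Cl X)\cdot e) = Q(\Cl X')\cdot \ci E'(e) = Q(\Cl X')\ci E'(e)$.

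Finally, for naturality, given any morphism $(e,s,f)$ of $\KK(\Cl X)$ (with source $f$ and target $e$), I need the square
\[
\xymatrix{Q(\Cl X)e \ar[r]^{\cdot s} \ar[d]_{\eta_e} & Q(\Cl X)f \ar[d]^{\eta_f} \\ Q(\Cl X')\ci E'(e) \ar[r]_{\cdot \ci E'(s)} & Q(\Cl X')\ci E'(f)}
\]
to commute, i.e., for every $q \in Q(\Cl X)e$, $\eta_f(q\cdot s) = \eta_e(q)\cdot \ci E'(s)$. But this is precisely the equality $h(q\cdot s) = h(q)\cdot \ci E'(s)$ of Lemma~\ref{l:hqs}. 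Since each component $\eta_e$ is a bijection of pointed sets and the naturality diagram commutes, $\eta$ is an isomorphism $\mathbb A_{\Cl X} \Rightarrow \mathbb A_{\Cl X'}\circ F_{\ci E}$, as required. There is no substantive obstacle; the lemmas preceding the proposition have already done all of the real work.
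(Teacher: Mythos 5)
Your proof is correct and follows essentially the same route as the paper's: the paper likewise derives well-definedness of the range and bijectivity of each $\eta_e$ from Lemma~\ref{l:how-h-restricts} together with the injectivity of $h$, and naturality from Lemma~\ref{l:hqs}. You simply spell out a few more of the routine checks (base points, the naturality square) that the paper leaves implicit.
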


 \begin{proof}
   By Lemma~\ref{l:how-h-restricts}, the range of
   $\eta_e$ is correctly defined,
   and $\eta_e$ is bijective (as $h$ is injective.)
   On the other hand, by Lemma~\ref{l:hqs},
   the family $\eta$ is a natural transformation from
   $\mathbb A_{\Cl X}$ to $\mathbb A_{\Cl X'}\circ F_{\ci E}$.
 \end{proof}

 \begin{proof}[Conclusion of the proof of Theorems~\ref{splittingisinvflow} and~\ref{t:main-D-cate-AYF}]
   As remarked in Subsection~\ref{sec:invar-under-conj},
   the conjugacy invariance part of
   Theorem~\ref{t:main-D-cate-AYF} follows immediately from
   Theorem~\ref{t:nasu}
   and Lemma~\ref{l:biglemma}.
   The symbol expansion part is contained in
   Proposition~\ref{p:strong-lonk-symbol-expansion}. Since flow
   equivalence is generated by conjugacy and symbol expansion, we are done.
 \end{proof}

\subsection{Proof of Theorem~\ref{t:main-D-cate-AYF-Fischer-version}}\label{sec:proof-theor-reft:m-1}
We are ready to show Theorem~\ref{t:main-D-cate-AYF-Fischer-version}

 \begin{proof}[Conclusion of the proof of
   Theorem~\ref{t:main-D-cate-AYF-Fischer-version}]

  Let $\Cl X$ and $\Cl Y$ be flow equivalent
  synchronizing shifts.
  By Theorem~\ref{t:main-D-cate-AYF}, there is an equivalence
    $F\colon \KK(\Cl X)\to \KK(\Cl Y)$
    for which there is an isomorphism
    $\eta\colon\mathbb A_{\Cl X}\to \mathbb A_{\Cl Y}\circ F$.

  Since $Q_{\mathfrak F}(\Cl X)$ is an $S(\Cl X)$-invariant subset of
  $Q(\Cl X)$, it follows that $\mathbb A^{\mathfrak F}_{\Cl X}$ is a
  subfunctor of $\mathbb A_{\Cl X}$.  Similarly,
  $\mathbb A^{\mathfrak F}_{\Cl Y}$ is a subfunctor of $\mathbb A_{\Cl
    Y}$.  Hence, the result will follow immediately as along as
  $\eta_e(Q_{\mathfrak F}(\Cl X)e)= Q_{\mathfrak F}(\Cl Y)$ for all
  $e\in E(S(\Cl X))$.  But this is the content of Remark~\ref{rmk:what-psi-does-to-fischer-cover}.
\end{proof}

\section{The proof of Theorem~\ref{t:master-syntactic-invariant}}\label{sec:master}

The proof of Theorem~\ref{t:master-syntactic-invariant} relies on the
next two propositions.

\begin{Prop}\label{p:conjugacy-induced-by-morita-equivalence}
  Let $S$ and $T$ be finite semigroups with zero having local units.
  If $S$ and $T$ are Morita equivalent, then
  there are shifts $\Cl X_S$ and $\Cl X_T$,
  respectively induced by $S$ and $T$,
  such that $\Cl X_S$ and $\Cl X_T$ are conjugate.
\end{Prop}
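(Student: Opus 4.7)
The plan is to apply Nasu's theorem (Theorem~\ref{t:nasu}) by constructing a bipartite right-resolving labeled graph $\mathcal{A}$ whose two components $\mathcal{A}_1$ and $\mathcal{A}_2$ have $S$ and $T$ as their transition semigroups respectively. Since $\mathcal{A}_1$ and $\mathcal{A}_2$ are components of a single bipartite graph, the shifts they present will be conjugate, and by design these shifts will be induced by $S$ and $T$. In a sense this reverses the passage from bipartite structure to Morita equivalence of transition semigroups established in Lemma~\ref{l:biglemma}.

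For the construction I would fix faithful actions of $S$ on the pointed set $V_S = S^1 \cup \{0\}$ and of $T$ on $V_T = T^1 \cup \{0\}$, both by right multiplication. The bipartite state set of $\mathcal{A}$ is $V_S \sqcup V_T$ with the two zeros identified; the alphabet is $B_1 \uplus B_2$, with letters in $B_1$ labeling edges $V_S \to V_T$ and letters in $B_2$ labeling edges $V_T \to V_S$. Using an equivalence $F \colon \KK(S) \to \KK(T)$ together with a quasi-inverse $G$, I would parametrize the letters in $B_1$ and $B_2$ by triples carrying the necessary data (source state, target state, and an element of the relevant hom-set in $\KK(S)$ or $\KK(T)$, transported across $F$ or $G$) so that: (i) $\mathcal{A}$ is right-resolving; (ii) every non-zero element of $S$, viewed as a transformation of $V_S$, is realized by at least one composition $ab$ with $a \in B_1$, $b \in B_2$; and dually (iii) every non-zero element of $T$ is realized by a composition $ba$. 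Conditions~(ii)--(iii) force the transition semigroups of $\mathcal{A}_1, \mathcal{A}_2$ to be isomorphic to $S$ and $T$ respectively (rather than merely Morita equivalent), thanks to the local-units hypothesis which lets every element be written as $esf$.

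Once this is set up, the shift presented by $\mathcal{A}_1$ has language equal to the preimage of $S \setminus \{0\}$ under the natural surjective homomorphism $(B_1B_2)^+ \to S$, so it is a shift $\Cl X_S$ induced by $S$; similarly $\mathcal{A}_2$ presents a shift $\Cl X_T$ induced by $T$. Since $\mathcal{A}_1 \sim \mathcal{A}_2$ by construction, Nasu's theorem (applied after confirming that the relation $\sim$ between these right-resolving presentations lifts to the relation $\simeq$ between their Krieger covers) yields that $\Cl X_S$ and $\Cl X_T$ are conjugate. The main obstacle will be the combinatorial design of the parametrizations of $B_1, B_2$ so that conditions~(ii)--(iii) hold simultaneously while keeping $\mathcal{A}$ right-resolving and finite; this will require exploiting the structural characterizations of Morita equivalence of semigroups with local units found in~\cite{Lawson:2011,FunkLawsonSteinberg,Talwar3}, in order to produce enough bridge morphisms to faithfully realize $S$ and $T$ as transition semigroups.
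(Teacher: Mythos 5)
Your overall strategy -- realize $S$ and $T$ as the two ``half-step'' semigroups of a single bipartite structure and conclude conjugacy from the bipartite recoding -- is indeed the idea behind the paper's proof. But there is a genuine gap: the entire content of the proposition lives in the step you defer as ``the main obstacle,'' namely producing the bridge letters. A categorical equivalence $F\colon \KK(S)\to\KK(T)$ only gives you bijections $eSf\to F(e)TF(f)$ \emph{within} each semigroup; it does not by itself supply any elements sitting ``between'' $S$ and $T$ out of which maps $V_S\to V_T$ and $V_T\to V_S$ with the required composition laws could be built. What is needed is a Morita context, and the paper gets it from Lawson's theorem: there is a \emph{finite} semigroup $R$ containing disjoint copies of $S$ and $T$ with $SRS=S$, $RSR=R$, $TRT=T$, $RTR=R$. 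Combined with $S=SS$ and $T=TT$ (local units), this yields $S=SRT\cdot TRS$ and $T=TRS\cdot SRT$, so one can take the bridge alphabets to be $C=S\times R\times T$ and $D=T\times R\times S$ and define the onto homomorphisms $\varphi\colon (C\times D)^+\to S$, $\psi\colon (D\times C)^+\to T$ by multiplying coordinates in $R$. Until you exhibit such a structure (and verify its finiteness, which Lawson's proof provides), conditions (ii)--(iii) of your plan are an assertion, not a construction; your appeal to ``structural characterizations'' in the cited references is pointing at the right place, but the proposition is essentially equivalent to extracting this data, so it cannot be left implicit.

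Two smaller points. First, Nasu's theorem is not needed and is the wrong tool for the direction you want: as stated it concerns Krieger covers, so invoking it saddles you with the lifting obligation you flag but do not discharge. The paper instead writes down the bipartite code $f((c_i,d_i)_{i\in\ZZ})=(d_i,c_{i+1})_{i\in\ZZ}$ explicitly and checks directly that $f(\Cl X_S)=\Cl X_T$; that check is itself not free -- it requires showing that the regrouped word maps to $0_T$ exactly when the original maps to $0_S$, which in the paper uses the identity $0_Tz0_T=0_T$ for $z=t_2r_2s_20_Ss_1r_1t_1\in T$, again relying on the identities of the Morita context. Second, the detour through labeled graphs, right-resolving presentations, and transition semigroups is unnecessary: ``shift induced by $S$'' is defined purely via an onto homomorphism $\varphi\colon A^+\to S$ and the language $\varphi^{-1}(S\setminus\{0\})$, so once the homomorphisms are in hand no graph-theoretic bookkeeping is required.
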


\begin{proof}
   Since $S$ and $T$ are Morita equivalent finite
   semigroups with local units, it follows by the results of
   Lawson~\cite{Lawson:2011} that there is a semigroup $R$ containing
     $S$ and $T$ (more precisely, disjoint isomorphic copies of $S$ and $T$) such that $SRS=S$, $RSR=R$, $TRT=T$
   and $RTR=R$.
  Moreover, Lawson's proof shows that $R$ can be taken to be finite.  
  Note that since $S,T$ have local units,
  it follows that $T=TT$ and $S=SS$, and so we have
  \begin{equation}\label{eq:product-expansion-of-S-and-T}
    S=SRT\cdot TRS,\quad T=TRS\cdot SRT.
  \end{equation}
  This suggests to take the finite sets $C=S\times R\times T$,
  $D=T\times R\times S$, $A=C\times D$ and $B=D\times C$.
  Thanks to~\eqref{eq:product-expansion-of-S-and-T},
  we may consider the homomorphisms $\varphi\colon A^+\to S$
  and $\psi\colon B^+\to T$
  defined by
  \begin{align*}
    \varphi\bigr((s_1,r_1,t_1),(t_2,r_2,s_2)\bigr)&=s_1r_1t_1t_2r_2s_2,\\
    \psi\bigr((t_2,r_2,s_2),(s_1,r_1,t_1)\bigr)&=t_2r_2s_2s_1r_1t_1,
  \end{align*}
  for every $s_i\in S$, $r_i\in R$, $t_i\in T$, $i\in \{1,2\}$.
  Also by~\eqref{eq:product-expansion-of-S-and-T},
  we have $S=\varphi(A)$
  and $T=\psi(B)$, thus $\varphi$ and $\psi$ are onto.
  Hence, we may consider the shifts
  $\Cl X_S=\Cl X_\varphi\subseteq \z A$
  and $\Cl X_T=\Cl X_\psi\subseteq \z B$,
   respectively induced by $\varphi$ and $\psi$.
   The proof is completed once we show that $\Cl X_S$
   and $\Cl X_T$ are conjugate.

Consider the map $f\colon\z A\to\z B$ given by
$f((c_i,d_i)_{i\in \ZZ})=(d_i,c_{i+1})_{i\in \ZZ}$.
One sees straightforwardly that $f$ is a conjugacy (it is actually
a \emph{bipartite code}, an important special class of conjugacies
introduced by Nasu~\cite{Nasu:1986}).
Its inverse is $f^{-1}((d_i,c_i)_{i\in \ZZ})=(c_{i-1},d_{i})_{i\in \ZZ}$.
We claim that $f(\Cl X_S)=\Cl X_T$.
Denote by $0_S$ the zero of $S$, and by $0_T$ the zero of~$T$.
Let 
\begin{equation*}
x=\ldots (c_{-2},d_{-2})(c_{-1},d_{-1}).(c_0,d_0)(c_1,d_1)(c_2,d_2)\ldots 
\end{equation*}
be an element of $\z A$.
Suppose we have $f(x)\notin \Cl X_T$.
Then, there are $i,j\in\ZZ$, with $i\leq j$,
such that $\psi\bigl((d_i,c_{i+1})(d_{i+1},c_{i+2})\cdots
(d_{j},c_{j+1})\bigr)= 0_T$.
Take $s=\varphi\bigl((c_i,d_i)(c_{i+1},d_{i+1})\cdots
(c_{j},d_{j})(c_{j+1},d_{j+1})\bigr)$.
To show that $x\notin\Cl X_S$
it suffices to show $s=0_S$.
Let $c_i=(s_1,r_1,t_1)$, $d_{j}=(t_2,r_2,s_2)$.
Then, we have in $R$ the factorization
$s=s_1r_1t_10_Tt_2r_2s_2$.
Also, we have
\begin{equation}\label{eq:0s-expanded}
  0_S=s0_Ss=s_1r_1t_10_Tt_2r_2s_2\cdot 0_S\cdot
  s_1r_1t_10_Tt_2r_2s_2=s_1r_1t_10_Tz0_Tt_2r_2s_2,
\end{equation}
where $z=t_2r_2s_20_Ss_1r_1t_1$.
By~\eqref{eq:product-expansion-of-S-and-T}, we have
$z\in T$, thus $0_Tz0_T=0_T$, and
so from~\eqref{eq:0s-expanded} we get $0_S=s_1r_1t_10_Tt_2r_2s_2=s$.
This shows $x\notin\Cl X_S$,
establishing $f(\Cl X_S)\subseteq \Cl X_T$.
Similarly, one shows
$f^{-1}(\Cl X_T)\subseteq \Cl X_S$.
Therefore, we have $f(\Cl X_S)=\Cl X_T$, whence
$\Cl X_S$ and $\Cl X_T$ are conjugate.
\end{proof}

\begin{Prop}\label{p:reduction-to-shift-with-local-units}
  Let $\Cl X$ be a sofic shift. Then there is a
  sofic shift $\Cl Z$ which is flow equivalent to
  $\Cl X$ and such that $S(\Cl Z)$ has local units.
\end{Prop}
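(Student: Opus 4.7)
The plan is to construct $\Cl Z$ from $\Cl X$ through a finite sequence of symbol expansions; since each symbol expansion is a flow equivalence, the resulting shift $\Cl Z$ will automatically be flow equivalent to $\Cl X$, so the task reduces to arranging that $S(\Cl Z)$ has local units.

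First I would note that since $\Cl X$ is sofic, $S(\Cl X)$ is finite and, by \cite[Theorem 1.11]{PinBook}, there is a constant $N$ such that any word of $L(\Cl X)$ of length at least $N$ has a factor mapping to a non-zero idempotent. The main tool for introducing new idempotents is Proposition~\ref{p:symbol-expansion-embed}: symbol expansion at a letter $\alpha \in A$ produces a shift $\Cl X'$ with a local isomorphism $\ci E' \colon S(\Cl X) \to S(\Cl X')$, and each idempotent of $S(\Cl X')$ outside the image of $\ci E'$ has the form $[\dia\, \ci E(u)\, \alpha]_{\Cl X'}$ for some $u$ such that $[(\alpha u)^2]_{\Cl X}$ is idempotent; moreover, the new idempotent is $\D$-equivalent in $S(\Cl X')$ to the image under $\ci E'$ of $[(\alpha u)^2]_{\Cl X}$.

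My proposed construction is to perform symbol expansion at each letter of $A$ in turn, obtaining $\Cl Z$. In this final shift, each original letter $\alpha \in A$ is paired with a bracket letter $\dia_\alpha$. The new idempotents at each stage should serve as left or right local units for elements of the growing syntactic semigroup whose representative words begin or end with the expanded letter or its bracket: concretely, for an element represented by a word starting with $\alpha$, one expects an idempotent of the form $[\dia_\alpha \cdots \alpha]$ to act as a left local unit, and dually on the right. Combined with Pin's theorem, which produces idempotent factors inside sufficiently long representative words, this should give local units for every element of $S(\Cl Z)$.

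The main obstacle will be to verify rigorously that $S(\Cl Z) = LU(S(\Cl Z))$ after this sequence of expansions. Given an arbitrary $[w]_{\Cl Z} \in S(\Cl Z)$, one has to exhibit explicit idempotents $e, f \in S(\Cl Z)$ such that $[w]_{\Cl Z} = e \cdot [w]_{\Cl Z} \cdot f$, with a case analysis on the first and last letters of $w$ (either a letter of $A$ or one of the bracket letters $\dia_\alpha$). This requires tracking, through each intermediate symbol expansion, which idempotents have been introduced in the sense of Proposition~\ref{p:symbol-expansion-embed}, and carefully using the local isomorphism structure together with Lemma~\ref{l:a-lemma-on-conjugate-words} to identify the needed idempotents among the new ones produced at each stage.
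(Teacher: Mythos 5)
Your strategy has a genuine gap: symbol expansion cannot create local units where none existed, so no sequence of single-letter expansions can produce the desired $\Cl Z$. First, the concrete mechanism you propose fails outright. In the expansion $\Cl X'$ of $\Cl X$ at $\alpha$, every occurrence of $\alpha$ is immediately followed by $\dia$ (Remark~\ref{r:image-of-E}), so $\alpha x\notin L(\Cl X')$ for every letter $x\neq\dia$; hence if $w'\in L(\Cl X')$ begins with a letter of $A$, the product $[\dia\,\ci E(u)\,\alpha]\cdot[w']$ is represented by a word containing a forbidden factor and equals $0$, not $[w']$. The same obstruction shows the defect propagates: if $e'$ is an idempotent of $S(\Cl X')$ with $e'\,\ci E'(s)=\ci E'(s)\neq 0$, then $e'$ cannot be one of the new idempotents $[\dia\,\ci E(u)\,\alpha]$ (these end in $\alpha$, while $\ci E'(s)$ is represented by a word of $\ci E(A^+)$ beginning with a letter of $A$), so $e'=\ci E'(e)$ for some idempotent $e$ of $S(\Cl X)$, and injectivity of $\ci E'$ gives $es=s$. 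Thus an element with no left (dually, right) local unit keeps this property after each expansion, and by induction after any sequence of them. A concrete witness: let $\Cl X$ be the sofic shift consisting of $a^\infty$ together with the orbit closure of $a^{-\infty}bc\,a^{+\infty}$. The only non-zero idempotent of $S(\Cl X)$ is $[a]$; the only words $u$ with $[bu]\neq 0$ begin with $c$, and no such word is idempotent because $c$ occurs at most once in any block, so $[b]$ has no right local unit and never acquires one under expansions.

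The missing idea is that one must first \emph{re-anchor the alphabet at idempotents}. The paper's proof conjugates $\Cl X$ onto a shift whose letters encode the minimal ``$\delta$-special'' blocks, i.e.\ segments running from one occurrence of a minimal idempotent factor to the next, and only then applies symbol \emph{contractions}. Because each resulting letter $\nu=(p_0,p_0t_0,p_1)$ begins with a word $p_0$ mapping to an idempotent, one gets $[\nu]_{\Cl Z}=[\bar\tau(p_0p_0)\nu]_{\Cl Z}$ by duplicating $p_0$, and a suitable power of $[\bar\tau(p_0p_0)]_{\Cl Z}$ is the required left local unit (dually on the right). That recoding is a genuine conjugacy, not a composition of symbol expansions, and it is exactly the step your outline omits; Pin's theorem enters only to bound the lengths of the special words, and does not by itself supply idempotent prefixes or suffixes for the generators of $S(\Cl Z)$.
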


\begin{proof}
     Let $\delta\colon A^+\to S(\Cl X)$ be
     the syntactic homomorphism.
   Say that $u\in A^+$ is \emph{$\delta$-idempotent}
   if $\delta(u)$ is idempotent.
   A $\delta$-idempotent word is \emph{minimal} if it has not proper factors
   which are  $\delta$-idempotent.
   Note that every $\delta$-idempotent word has some
   minimal $\delta$-idempotent factor.

    Say also that $u\in A^+$ is \emph{$\delta$-special} if it
    has a proper prefix and proper suffix which are minimal
    $\delta$-idempotents.
    A $\delta$-special word $u$ is \emph{minimal} if
    every proper factor of $u$ is not $\delta$-special.
    In a more informal and intuitive manner, one can say
    that a  minimal $\delta$-special
    word represents two consecutive occurrences
    of minimal $\delta$-idempotents words in an element of $\z A$.

    Denote by $W$ the set of minimal $\delta$-special words of $A^+$.
    Since $S(\Cl X)$ is finite, there is
    an integer $N\geq 1$  such that every element of $A^+$ with length $N$ has
    a $\delta$-idempotent factor (cf.~\cite[Theorem 1.11]{PinBook}.)
    Therefore, every word of $A^+$ with length at least $2N$
    has at least one factor which is minimal $\delta$-special,
    and so the elements of $W$ have length at most $2N$.
    In particular, $W$ is finite.

    For $u\in W$, let $\tau(u)=(p,u,q)$ be the triple
    such that $p$ and $q$ are respectively
    the unique minimal $\delta$-idempotent prefix
    and the unique minimal $\delta$-idempotent suffix
    of $u$. Let $V$ be the set $\{\tau(u)\mid u\in W\}$, which is in
    bijection with $W$.
    For future reference, we extend the bijection $\tau$
    as follows. Let $u$ be a $\delta$-special word, not necessarily minimal.
    We define $\bar\tau(u)\in V^+$
    recursively on the number of occurrences of
    minimal $\delta$-idempotent words, as follows:
    \begin{enumerate}
    \item if $v$ is minimal $\delta$-special, then
    $\bar\tau(u)=\tau(u)$ and the recursion stops;
    \item if $u$ is not minimal $\delta$-special, then
      $u$ admits a factorization $u=vqw$ in $A^+$,
      such that $vq$ is minimal $\delta$-special and $q$ is minimal
      $\delta$-idempotent; in particular,
      $qw$ is $\delta$-special with a number
      of occurrences of
    minimal $\delta$-idempotent words
    smaller than that of $u$; then
      we let $\bar\tau(u)=\tau(uq)\bar\tau(qw)$.
    \end{enumerate}

    For $u\in W$, let $\tau(u)=(p,u,q)$.
    We have a factorization $u=zq$. Define $m(u)=|z|-1$.
    Consider a set
    $\Lambda(u)=\{\dia_{u,1},\dia_{u,2},\ldots,\dia_{u,m(u)}\}$
    of $m(u)$ elements not in $V$. We use the convention that
    $\dia_{u,0}=\tau(u)$. Assume moreover
    that if $u,v$ are distinct elements of $W$, then
    $\Lambda(u)\cap\Lambda(v)=\emptyset$. Denote by $B$
    the alphabet $V\cup\bigl(\bigcup_{u\in W}\Lambda(u)\bigr)$.

    Take $x\in \z A$.
    Denote by $P(x)$ the set of occurrences in $x$ of elements of~$W$,
    which is precisely the set of occurrences in $x$ of minimal
    $\delta$-idempotents.
    If $i\in P(x)$, denote by $w_x(i)$ the element of $W$
    occurring in $x$ at position $i$, where we say that a word $w$ \emph{occurs} at $i$
      in $x$ if $x_{[i,j]}=w$, for some $j$. 
        For each $i\in\ZZ$, let $\kappa_x(i)=\max\{j\in P(x)\mid j\leq i\}$
    and consider the difference  $d_x(i)=i-\kappa_x(i)\geq 0$.
    We may then define $w_x(i)$ for all $i\in\ZZ$
    as being $w_x(i)=w_x(\kappa_x(i))$.
    That is, if $i,j$ are consecutive occurrences of elements
    of~$P(x)$,
    then $w_x(k)=w_x(i)$ for every $k$ such that $i\leq k<j$.
    For each $i\in\ZZ$, let $\tau(w_x(i))=(p_x(i),w_x(i),q_x(i))$.
        Note that if
        $i,j$ are consecutive elements of
        $P(x)$ then $q_x(i)=p_x(j)$.

    By the maximality of $\kappa_x(i)$, the
    word $q_x(i)$ cannot occur at a position belonging to $\{j\mid \kappa_x(i)<j\leq i\}$,
    and so $d_x(i)\leq m(w_x(i))$,
    with equality $d_x(i)= m(w_x(i))$
    if and only if $i+1\in P(x)$.
    Consider the mapping
    $f\colon\z A\to \z B$
    given by $f(x)=(\dia_{w_x(i),d_x(i)})_{i\in\ZZ}$,
    $x\in\z A$.
    More intuitively, $f$ is characterized by the following property:
    if $i,i+n$ are two consecutive elements of $P(x)$ (and so
    $m(w_x(i))=n-1$), then the sequence
    \begin{equation*}
      (p_x(i),w_x(i),q_x(i))
      \dia_{w_x(i),1}\cdots
      \dia_{w_x(i),n-1}
      (q_x(i),w_x(i+n),q_x(i+n))
    \end{equation*}
    is an word of $B^+$ at position $i$ in $f(x)$.

    For each $i\in\ZZ$, if  $d_x(i)<m(w_x(i))$
    then $\kappa_{\sigma(x)}(i)=\kappa_x(i)-1$
    and if $d_x(i)=m(w_x(i))$
    then $\kappa_{\sigma(x)}(i)=i$.
    In both cases, the word $w_{\sigma(x)}(i)$ is equal to
    $w_x(i+1)$. This amounts to say that $f$ commutes with the shift mapping.
    Observe also that $f(x)_i$ is determined by $x_{[i-N,i+N]}$,
     and so $f$ is continuous, whence a morphism of shifts.

     Let $x,y\in \z A$, and let $i\in\ZZ$.
          We have $f(x)_i=\dia_{u,d_x(i)}$
          and $f(y)_i=\dia_{v,d_x(i)}$, for some $u,v\in W$.
          Moreover,
          $x_i$ is the letter at position $d_x(i)+1$
          in $u$,
          and $y_i$ is the letter at position $d_y(i)+1$
          in $v$. Therefore, we have $x_i=y_i$ when
          $\dia_{u,d_x(i)}=\dia_{v,d_y(i)}$, as the latter
          implies $u=v$ and $d_x(i)=d_y(i)$.
          Hence, $x_i\neq y_i$ implies $f(x)_i\neq f(y)_i$, showing
          that $f$ is injective.
     We conclude that $f$ induces a conjugacy between $\Cl X$ and
     the shift $\Cl Y=f(\Cl X)$.

     Take $u\in W$ and $i\in \{1,\ldots,m(u)\}$.
     In an element of $\Cl Y$,
     an occurrence of $\diamond_{u,i}$ is always preceeded by
     an occurrence of $\diamond_{u,i-1}$.
     Hence, starting in $\Cl Y$, we may perform a
     sequence $C$ of symbol contractions
     $\diamond_{u,i-1}\diamond_{u,i}\mapsto \diamond_{u,i-1}$,
     with $i$ running $\{1,\ldots,m(u)\}$
     and $u$ running $W$, to obtain
     a subshift $\Cl Z$ of $\z V$
     which is flow equivalent to $\Cl X$.
     We denote by $f^C$ the mapping $\Cl X\to\Cl Z$
     assigning to each $x\in\Cl X$
     the element of
     $\Cl Z$ obtained from $f(x)$ by applying the sequence $C$ of symbol contractions.

     Let $\Cl X_0=\{x\in\Cl X\mid 0\in P(x)\}$.
     Then every element of $\Cl Z$ is in the orbit of an element of
     $f^C(\Cl X_0)$.
      Let $\nu=(p_0,p_0t_0,p_1)$ be an arbitrary element of~$V$.
      Suppose there is $x\in\Cl X_0$ such that $f^C(x)$ equals
     \begin{equation}\label{eq:of-the-form-contracted}
       \ldots
      (p_{-2},z_{-2}p_{-1},p_{-1})
      (p_{-1},z_{-1}p_0,p_{0}).(p_0,p_0t_0,p_1)
      (p_1,p_1t_1,p_2)
      \ldots.
     \end{equation}
     The sequence $x$ is completely determined
     by~\eqref{eq:of-the-form-contracted}. More precisely, $x$ equals
     \begin{equation}\label{eq:of-the-form-original}
       \ldots z_{-2}z_{-1}.p_0t_0t_1t_2\ldots.
     \end{equation}
     Since $\delta(p_0)$ is idempotent,
     it follows
     that the sequence
     \begin{equation}\label{eq:original-expanded-by-p0}
     \ldots z_{-2}z_{-1}.p_0p_0t_0t_1t_2\ldots
     \end{equation}
     also belongs to $\Cl X_0$.
     Its image under $f^C$ is
     \begin{equation}\label{eq:expanded-by-p0}
              \ldots
      (p_{-2},z_{-2}p_{-1},p_{-1})
      (p_{-1},z_{-1}p_0,p_{0}).\bar\tau(p_0p_0)(p_0,p_0t_0,p_1)
      (p_1,p_1t_1,p_2)
      \ldots.
     \end{equation}
     Comparing~\eqref{eq:of-the-form-contracted}
     and~\eqref{eq:expanded-by-p0}, we conclude that
     the context $[\nu]_{\Cl Z}$
     is contained in the context
     $[\bar\tau(p_0p_0)\nu]_{\Cl Z}$.
     Let us verify that the reverse inclusion also holds.
     If the sequence~\eqref{eq:expanded-by-p0}
     belongs to $\Cl Z$, then
     the sequence~\eqref{eq:original-expanded-by-p0}
     belongs to $\Cl X_0$, and therefore so
     does~\eqref{eq:of-the-form-original} by the hypothesis that
     $\delta(p_0)$
     is idempotent. We then deduce that the
     sequence~\eqref{eq:of-the-form-contracted}
     belongs to $\Cl Z$, showing the desired inclusion of contexts.
     Hence, we have
     $[\nu]_{\Cl Z}=[\bar\tau(p_0p_0)\nu]_{\Cl Z}$,
     thus $[\nu]_{\Cl Z}=s^n\cdot [\nu]_{\Cl Z}$,
     for every $n\geq 1$,
     where $s=[\bar\tau(p_0p_0)]_{\Cl Z}$.
     As $S(\Cl Z)$ is finite, there is $m\geq 1$ such that $s^m=e$
     is idempotent (cf.~\cite[Proposition 1.6]{PinBook}),
     thus $[\nu]_{\Cl  Z}=e\cdot [\nu]_{\Cl  Z}$.
     Similarly, one can show that $[\nu]_{\Cl  Z}=[\nu\bar\tau(p_1p_1)]_{\Cl Z}$
     and that therefore we have $[\nu]_{\Cl  Z}=[\nu]_{\Cl  Z}\cdot e'$ for some idempotent $e'$.
     Therefore, $[\nu]_{\Cl  Z}$ has local
     units.
     As the elements of the form $[\nu]_{\Cl  Z}$, $\nu\in V$, generate
     $S(\Cl Z)$, we conclude that $S(\Cl Z)$ has local units.
\end{proof}

We are now ready to prove Theorem~\ref{t:master-syntactic-invariant}.

\begin{proof}[Proof of Theorem~\ref{t:master-syntactic-invariant}]
  By Proposition~\ref{p:reduction-to-shift-with-local-units},
  there are sofic shifts $\widetilde{\Cl X}$ and $\widetilde{\Cl Y}$,
  respectively flow equivalent to $\Cl X$
  and $\Cl Y$,
  such that both $S(\widetilde{\Cl X})$
  and $S(\widetilde{\Cl Y})$
  have local units.
  Since $\vartheta$ is an invariant of flow equivalence,
  we have
  $\Cl X\mathrel{\vartheta}\widetilde{\Cl X}$
  and 
  $\Cl Y\mathrel{\vartheta}\widetilde{\Cl Y}$.
  Since the equivalence class of the Karoubi envelope is
  a flow invariant (Theorem~\ref{splittingisinvflow}), we
  know that $\KK(\Cl X)$ is equivalent to $\KK(\widetilde{\Cl X})$
  and $\KK(\Cl Y)$ is equivalent to $\KK(\widetilde{\Cl Y})$.
  As $\KK(\Cl X)$ is by hypothesis equivalent to $\KK(\Cl Y)$,
  we conclude that $\KK(\widetilde{\Cl X})$
  is equivalent to  $\KK(\widetilde{\Cl Y})$,
  that is,
  $S(\widetilde{\Cl X})$
  and $S(\widetilde{\Cl Y})$
  are Morita equivalent.
  By
  Proposition~\ref{p:conjugacy-induced-by-morita-equivalence},
  we know there are conjugate shifts $\Cl X^\sharp$
  and $\Cl Y^\sharp$
  respectively induced by
  $S(\widetilde{\Cl X})$
  and $S(\widetilde{\Cl Y})$.
  Again by the definition of flow invariant of sofic shifts,
  we have $\Cl X^\sharp\mathrel{\vartheta} \Cl Y^\sharp$.
  But by Lemma~\ref{l:0-disjunctive-are-syntactic-semigroups},
  we have $S(\widetilde{\Cl X})=S(\Cl X^\sharp)$
  and $S(\widetilde{\Cl Y})=S(\Cl Y^\sharp)$. So from the definition
  of syntactic invariant of sofic shifts we have that $\Cl
  X\mathrel{\vartheta} \Cl X^\sharp$ and  $\Cl Y\mathrel{\vartheta}
  \Cl Y^\sharp$.  Thus $\Cl X\mathrel{\vartheta} \Cl Y$, as required.
\end{proof}

\appendix

\section{Symbol expansion and subsynchronizing subshifts}\label{sec:symb-expans-subsynch}

Now that we have the tools developed in
Subsection~\ref{sec:invar-under-flow}, we are able to show the proposition needed
to conclude the proof of Theorem~\ref{t:flow-inv-sub-sync}.

  \begin{Prop}\label{p:symbol-expa-of-subsync}
     Let $\Cl X$ be a sofic shift, and let $e$ be a magic idempotent of
    $S(\Cl X)$.
    Consider a symbol expansion $\Cl X'$ of $\Cl X$, defined
    by a symbol expansion homomorphism $\mathcal E$.
    If $e$ is a magic idempotent for $\Cl X$, then
    the subsynchronizing subshift $S(F_{\mathcal E}(e))$ of $\Cl X'$
    is the symbol expansion of $S(e)$ defined by $\mathcal E$.
\end{Prop}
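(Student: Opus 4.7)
The plan is to fix a magic word $u \in A^+$ representing $e$, so that by definition of $F_{\mathcal E}$ we have $F_{\mathcal E}(e) = [\mathcal E(u)]_{\Cl X'}$, and hence $S(F_{\mathcal E}(e)) = S(\mathcal E(u))$. The conclusion then reduces to two claims: that $\mathcal E(u)$ is indeed a magic word for $\Cl X'$, and that $S(\mathcal E(u))$ coincides with the symbol expansion $(S(u))'$ of $S(u)$ defined by $\mathcal E$; by definition of symbol expansion, the latter is the subshift of $B^{\mathbb Z}$ whose language is $\mathrm{Fac}(\mathcal E(L(S(u))))$.

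First I would verify that $\mathcal E(u)$ is a magic word for $\Cl X'$. Given $x\, \mathcal E(u), \mathcal E(u)\, y \in L(\Cl X')$, extend on both sides inside $\mathcal E(L(\Cl X))$ to obtain $r\, x\, \mathcal E(u) = \mathcal E(w_1)$ and $\mathcal E(u)\, y\, s = \mathcal E(w_2)$ for suitable $w_i \in L(\Cl X)$ and $r,s \in B^\ast$. Lemma~\ref{l:symbol-exp-3} forces $rx$ and $ys$ to lie in $\mathcal E(A^\ast)$, producing $a,b \in A^\ast$ with $au, ub \in L(\Cl X)$; synchronization of $u$ in $\Cl X$ then gives $aub \in L(\Cl X)$, and $x\, \mathcal E(u)\, y$ appears as a factor of $\mathcal E(aub) \in L(\Cl X')$. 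Magicity transfers immediately: if $uvu \in L(\Cl X)$ then $\mathcal E(u)\mathcal E(v)\mathcal E(u) = \mathcal E(uvu) \in L(\Cl X')$.

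The core step is the language equality $L(S(\mathcal E(u))) = \mathrm{Fac}(\mathcal E(L(S(u))))$. For $\supseteq$, any $v \in L(S(u))$ comes with $uxv \in L(\Cl X)$ for some $x$, whence $\mathcal E(u)\mathcal E(x)\mathcal E(v) \in L(\Cl X')$; so $\mathcal E(v)$ and all its factors lie in $L(S(\mathcal E(u)))$. For $\subseteq$, given $w \in L(S(\mathcal E(u)))$ with $\mathcal E(u)\, z\, w \in L(\Cl X')$, extend to obtain $r\, \mathcal E(u)\, z\, w\, s = \mathcal E(\tilde w)$ for some $\tilde w \in L(\Cl X)$; Lemma~\ref{l:symbol-exp-3}, applied to the factorization $r \cdot \mathcal E(u) \cdot zws = \mathcal E(\tilde w)$, forces $r = \mathcal E(r')$ and $zws = \mathcal E(t)$, so $\tilde w = r'ut$ and $t \in L(S(u))$, while $w$ is a factor of $\mathcal E(t) \in \mathcal E(L(S(u)))$.

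The only real obstacle is the careful bookkeeping required to pull factorizations of words of $\mathcal E(L(\Cl X))$ back to factorizations in $L(\Cl X)$ across the symbol $\dia$ inserted by $\mathcal E$; this is precisely the content of Lemma~\ref{l:symbol-exp-3}, and once it is applied systematically, both the transfer of the magic property and the language identification are routine and no idea is needed beyond the symbol-expansion machinery already developed in Subsection~\ref{sec:invar-under-flow}.
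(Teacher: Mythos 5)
Your proposal is correct and follows essentially the same route as the paper: both directions of the language equality are proved exactly as in the paper's proof, with the forward inclusion obtained by applying $\mathcal E$ to a witnessing factorization $uxv\in L(\Cl X)$ and the reverse inclusion by extending a block of $S(F_{\mathcal E}(e))$ into $\mathrm{Im}\,\mathcal E$ and pulling the factorization back via Lemma~\ref{l:symbol-exp-3} (the paper invokes Remark~\ref{r:image-of-E} for the same purpose). Your extra verification that $\mathcal E(u)$ is a magic word is harmless but already covered by Remark~\ref{rmk:magic-idempotent}, which guarantees that magic idempotents are preserved under the equivalence $F_{\mathcal E}$.
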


\begin{proof}
    Let $u\in L(\Cl X)$ be such that $e=[u]_\Cl X$.
    
  Consider an element of $L(S(e)')$ of the form $\mathcal E(v)$, with
  $v\in L(S(e))$.
  Then $uwv\in L(\Cl X)$ for $w$, thus $\mathcal E(uwv)\in L(\Cl X')$.
  Since $F_{\mathcal E}(e)=[\mathcal E(u)]_{\Cl X'}$, this shows
  that
  $\mathcal E(v)$ is a finite block of $S(F_{\mathcal E}(e))$.
  Every finite block of $S(e)'$ is a factor of a word such as
  $\mathcal E(v)$. Therefore, we proved that $S(e)'\subseteq S(F_{\mathcal E}(e))$.

  Conversely, let $v$ be a finite block of $S(F_{\mathcal E}(e))$.
  Then $\mathcal E(u)wvw'\mathcal E(u')\in L(\Cl X')$ for some $w,w',u'$, with $u'$ a non-empty word over the alphabet of $\Cl X$.
  By Remark~\ref{r:image-of-E},
  $\mathcal E(u)wvw'\mathcal E(u')$
  and $wvw'$ are in $\mathrm{Im}\mathcal E$, thus
  $u\,\mathcal E^{-1}(wvw')u'\in L(\Cl X)$. This establishes $\mathcal E^{-1}(wvw')\in S(e)$.
  Hence, $wvw'$ belongs to $L(S(e)')$, and hence so does its
  factor $v$. This shows that
  $S(F_{\mathcal E}(e))\subseteq S(e)'$.
\end{proof}

\bibliographystyle{abbrv}

\def\malce{\mathbin{\hbox{$\bigcirc$\rlap{\kern-7.75pt\raise0,50pt\hbox{${\tt
  m}$}}}}}\def\cprime{$'$} \def\cprime{$'$} \def\cprime{$'$} \def\cprime{$'$}
  \def\cprime{$'$} \def\cprime{$'$} \def\cprime{$'$} \def\cprime{$'$}
  \def\cprime{$'$}

\end{document}